\numberwithin{equation}{section}
\newtheorem{theorem}{Theorem}[section]
\newtheorem{theorema}{Theorem}
\newtheorem{cor}[theorem]{Corollary}
\newtheorem{remark}[theorem]{Remark}
\newtheorem{proposition}[theorem]{Proposition}
\newtheorem{lemma}[theorem]{Lemma}
\newtheorem{problem}[theorem]{Problem}
\theoremstyle{definition}
\newtheorem{defi}{Definition}[section]
\newtheorem{rem}[defi]{Remark}
\newcommand\half{\frac{1}{2}}
\newcommand\be{\beta}
\newcommand\g{\mathfrak g}
\newcommand\h{\mathfrak h}
\newcommand\D{\Delta}
\renewcommand\l{\lambda}
\newcommand\Dp{\Delta^+}
\renewcommand\d{\delta}
\renewcommand\a{\alpha}
\renewcommand\aa{\mathfrak a}
\renewcommand\k{\mathfrak k}
\newcommand{\Z}{\mathbb Z}
\newcommand\ganz{\mathbb Z}
\renewcommand\aa{\mathfrak a}
\newcommand\e{\epsilon}
\newcommand\C{\mathbb C}
\newcommand{\ZZ}{\mathbb{Z}}
\newcommand{\sdim}{\text{\rm sdim}}
\newcommand{\vac}{{\bf 1}}
\newcommand{\spo}{{spo(2|3)}}
\newcommand{\bea}{\begin{eqnarray}}
\newcommand{\eea}{\end{eqnarray}}
\begin{document}
\title[Conformal embeddings of  superalgebras]{Conformal embeddings in affine vertex superalgebras}
\author[Adamovi\'c, M\"oseneder, Papi, Per\v{s}e]{Dra{\v z}en~Adamovi\'c}
\author[]{Pierluigi M\"oseneder Frajria}
\author[]{Paolo  Papi}
\author[]{Ozren  Per\v{s}e}
\begin{abstract} 
	This paper is a natural continuation of our previous work on conformal embeddings of vertex algebras \cite{AKMPP}, \cite{AKMPP-JA}, \cite{AKMPP-JJM}.  Here we consider conformal embeddings in simple affine vertex superalgebra  $V_k(\g)$ where  $\g=\g_{\bar 0}\oplus \g_{\bar 1}$ is a basic classical simple Lie superalgebra. Let $\mathcal V_k (\g_{\bar 0})$ be the subalgebra of  $V_k(\g)$  generated by $\g_{\bar 0}$. We first  classify all  levels $k$ for which the  embedding $\mathcal V_k (\g_{\bar 0})$ in $V_k(\g)$ is conformal. Next we prove that, for a large family of such conformal levels, $V_k(\g)$ is a  completely  reducible $\mathcal V_k (\g_{\bar 0})$--module and obtain decomposition  rules. Proofs are based on fusion rules arguments and on the representation theory of certain affine vertex algebras. The most interesting case is the decomposition of $V_{-2} (osp(2n +8 \vert 2n))$ as a finite, non simple current extension of $V_{-2} (D_{n+4}) \otimes V_1 (C_n)$. This decomposition uses our previous work \cite{AKMPP-IMRN} on the representation theory of $V_{-2} (D_{n+4})$.\par
	We also study conformal embeddings $gl(n \vert m) \hookrightarrow sl(n+1 \vert m)$ and in most cases we obtain decomposition rules.
\end{abstract}
\keywords{conformal embedding, vertex operator algebra, central charge}
\subjclass[2010]{Primary    17B69; Secondary 17B20, 17B65}
\maketitle
\tableofcontents
 \section{Introduction}
 
 This paper is a natural continuation of our previous work on conformal embeddings of vertex algebras \cite{AKMPP}, \cite{AKMPP-JA}, \cite{AKMPP-JJM}. We are focused on embeddings of  affine vertex algebras into vertex superalgebras $V_k(\g)$, where  $\g=\g_{\bar 0}\oplus \g_{\bar 1}$ is a basic classical simple Lie superalgebra. Recall that if $V$ is a VOA and $W$ is a subVOA, the embedding $W\subset V$ as vertex algebras  is said to be {\it conformal} if both VOAs share the same conformal vector. It is difficult to classify all conformal embeddings in $V_k(\g)$, so we confine ourselves to deal with  a simpler problem:
\begin{problem}[Classification problem] \label{p1} Classify levels $k$ such that the affine vertex subalgebra $\mathcal  V_k(\g_{\bar 0})$  generated by $\g_{\bar 0} \subset \g$ is conformally embedded into $V_k(\g)$.
 \end{problem}
 We completely solve this problem: see Theorem \ref{cf}. Classification of conformal levels (i.e., levels solving Problem \ref{p1})  is also important as a motivation for studying the representation theory of affine vertex algebras at conformal levels. In many cases such conformal levels  have also appeared in our earlier works on conformal and collapsing levels for affine $\mathcal W$--algebras \cite{AKMPP-JA}, \cite{AKMPP-IMRN}.
 
 After classification of conformal levels, we are ready to consider the next  important problem:
 
 \begin{problem}[Simplicity problem]  \label{pr-2}Assume that $k$ is a conformal level.
   Determine the structure of the subalgebra $\mathcal V_k(\g_{\bar 0})$. In particular, determine when $\mathcal V_k(\g_{\bar 0})$ is simple. \end{problem}
 In the current paper we  focus on proving  simplicity of  $\mathcal V_k(\g_{\bar 0})$ in several interesting cases.
    
    \par
    
 The proof of simplicity is very natural  when a free-field realization of $V_k(\g)$ is available.  Here are examples of such cases:
$$\g= sl(m \vert n),\quad \g\text{ is of type $B(m,n)$, $D(m,n)$ or $C(n)$,  $k=1$}.
$$
 
The  general simplicity problem, when a realization is missing, is usually very delicate. We can solve it  in the following cases: \begin{itemize}
 \item $\g= spo(2 \vert 3)$, $k=-3/4$.
 \item $\g=F(4)$, $k=1$.
 \item $\g = G(3)$, $k=1$.
 \item $\g = osp(2n +8| 2n)$, $k=-2$.
 \end{itemize}

The last case $\g = osp(2n +8| 2n)$, $k=-2$ is very interesting since the subalgebra 
$\mathcal V_{-2} (\g_{\bar 0}) \cong V_{-2} (D_{n+4} ) \otimes V_1 (C_n)$. 
Here we explore the representation theory of  the simple vertex algebra $V_{-2} (D_{n+4} )$ developed in \cite{AKMPP-IMRN}, which gives that $V_{-2}(\g)$ is semi--simple as $\mathcal V_{-2} (\g_{\bar 0})$--modules. 

There are interesting cases when $\mathcal V_k(\g_{\bar 0})$ is not simple (cf. Remark \ref{D21}). In our paper \cite{AKMPP-Selecta} we detected similar cases for non-regular conformal embeddings. It turns out that analysis of these cases requires different techniques, and we plan to investigate them in our future research.
\vskip10pt
The simplicity problem is related with the next natural problem:
  \begin{problem}[Decomposition problem]  \label{pr-3} Assume that $k$ is a conformal level.
Describe the structure of  $V_k(\g)$ as a  $\mathcal V_k(\g_{\bar 0})$--module.   \end{problem}

   In the  cases dealt with in this paper, we are able to solve  both the simplicity and the decomposition problem using what we call fusion rules argument. 
By 

this we mean  the following: suppose that $W\subset V$ is an embedding of vertex algebras. Let $\mathcal M$ be a collection of $W$--submodules of $V$ that generates $V$ as a vertex algebra. Then the structure of $span(\mathcal M)$ under the dot product (cf. \eqref{dot}) in the set of all $W$--submodules gives information about the structure of $V$ as a $W$--module.
If the embedding is conformal then there are constraints that allow in many cases to recover the structure of $span(\mathcal M)$ and solve the simplicity and decomposition problems. 

Since  we  study decomposition rules only in the cases when $\mathcal V_k(\g_{\bar 0})$ is a simple vertex algebra,  the decomposition of $V_k(\g)$ is naturally related with the extensions of  the simple vertex algebra $\mathcal V_{k}(\g_{\bar 0})$.  \par

 When $\g$ is even, $\k$ is a subalgebra of $\g$,   and  $V_k(\g)$ is an extension of simple current type of   the conformal subalgebra $\mathcal V_k(\k)$, we were able (cf.  \cite{AKMPP}, \cite{AKMPP-JJM}, \cite{AKMPP-Selecta}) to  get  explicit decomposition rules  without knowing precisely the  fusion rules for $\mathcal V_{k}(\k)$--modules. We can apply such methods here to  obtain  decomposition formulas when $V_k(\g)$ is a simple current extension of $\mathcal V_{k}(\g_{\bar 0})$. These decompositions are presented in Subsection \ref{easy} (see e.g. Proposition \ref{justapply}). Interestingly, in many such cases we also have explicit realizations.
 
The previous  analysis  does not apply to  $\g=psl(n \vert n)$ and  in this case $V_1(\g)$ does not have explicit realization. But using fusion rules for $V_{-1}(sl(n))$ from \cite{AP2} we obtain the following result (see Theorem \ref{psl(n,n)-dec}).
   \begin{theorema} For $n \ge 3$, $V_1(psl(n \vert n))$ is a simple current extension of $V_1(sl(n)) \otimes V_{-1}(sl(n))$; the related  decomposition is given in \eqref{decpslnn}.  
   \end{theorema}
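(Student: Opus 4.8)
The plan is to run the fusion rules argument announced in the introduction, feeding it the representation theory of the two factors $V_1(sl(n))$ and $V_{-1}(sl(n))$. First I would record the structural input. The Lie superalgebra $\g=psl(n\vert n)$ carries a short $\Z$-grading $\g=\g_{-1}\oplus\g_0\oplus\g_1$ in which $\g_0=\g_{\bar 0}=sl(n)\oplus sl(n)$ is even while $\g_{\pm1}$ are purely odd; explicitly, with $V=\C^n$ the standard module, $\g_1=V\otimes V^*$ and $\g_{-1}=V^*\otimes V$ sit in the two off-diagonal blocks. The invariant supertrace form restricts to the two copies of $sl(n)$ with opposite signs, so at the conformal level $k=1$ of Theorem \ref{cf} one gets $\mathcal V_1(\g_{\bar 0})\cong V_1(sl(n))\otimes V_{-1}(sl(n))$. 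Both tensor factors are simple vertex algebras — $V_1(sl(n))$ is the rational level-one affine VOA, and the simplicity together with the fusion rules of $V_{-1}(sl(n))$ are supplied by \cite{AP2} — and a tensor product of simple VOAs is simple, so $\mathcal V_1(\g_{\bar 0})$ is simple and the decomposition problem is genuinely a question about an extension of this tensor product.

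Second, I would transfer the Lie-superalgebra grading to the vertex algebra: the $\Z$-grading of $\g$ induces a charge grading $V_1(\g)=\bigoplus_{m\in\Z}U_m$ by $\mathcal V_1(\g_{\bar 0})$-submodules, with $U_0\supseteq\mathcal V_1(\g_{\bar 0})$ and $\g_{\bar 1}$ placed in charges $\pm1$. Hence $V_1(\g)$ is generated over $\mathcal V_1(\g_{\bar 0})$ by the two odd pieces $\g_1=V\otimes V^*$ and $\g_{-1}=V^*\otimes V$. As $\mathcal V_1(\g_{\bar 0})$-modules these are the top components generating, on the $V_1(sl(n))$ side, the simple current $L(\omega_1)$, and on the $V_{-1}(sl(n))$ side the module singled out in \cite{AP2}.

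Third, I would identify each graded piece $U_m$ by iterating fusion products. On the $V_1(sl(n))$ side the level-one fusion group is $\Z/n\Z$, so the first tensor factor of $U_m$ is forced to be $L(\omega_{\bar m})$ with $\bar m\equiv m\pmod n$. On the $V_{-1}(sl(n))$ side I would invoke the fusion rules of \cite{AP2} to show that the $m$-fold fusion power of the generating module remains irreducible and is again a simple current $T_m$; the conformality of the embedding (integrality of all conformal weights occurring in $V_1(\g)$, together with the growth of the minimal weight with $|m|$) then rules out extra components and pins down $U_m$ at each charge. This makes every $U_m$ irreducible with $U_1$ a simple current of $\mathcal V_1(\g_{\bar 0})$ and $U_m\cong U_1^{\times m}$ (the $m$-fold fusion power), so that $V_1(\g)=\bigoplus_{m\in\Z}U_1^{\times m}$ is a $\Z$-graded simple current extension; writing $U_m\cong L(\omega_{\bar m})\otimes T_m$ explicitly yields \eqref{decpslnn}.

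The main obstacle is the third step on the $V_{-1}(sl(n))$ side. Unlike the rational factor $V_1(sl(n))$, the level $-1$ factor lives at a negative level where neither irreducibility nor closure under fusion is automatic, so everything hinges on showing, via the explicit fusion rules of \cite{AP2}, that the fusion powers $T_m$ stay simple and that $\{T_m\}_{m\in\Z}$ is a family of distinct simple currents organized by the charge grading. This is precisely where the hypothesis $n\ge3$ is used, since $V_{-1}(sl(2))$ behaves differently and is excluded. Once the $T_m$ are controlled, the simple current extension structure and the decomposition formula follow formally from the charge grading.
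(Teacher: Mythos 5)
Your overall architecture (charge grading plus fusion rules for the two factors) is the right one, but the load-bearing step is missing, and it is exactly the step the paper's proof is built around. You assert that $\mathcal V_1(\g_{\bar 0})\cong V_1(sl(n))\otimes V_{-1}(sl(n))$ because a tensor product of simple vertex algebras is simple; that is a non sequitur. The question is not whether the simple quotients are simple, but whether the subalgebra of $V_1(psl(n|n))$ generated by the second copy of $sl(n)$ is the simple quotient $V_{-1}(sl(n))$ at all, i.e.\ whether the singular vectors of $V^{-1}(sl(n))$ vanish there. Likewise, your identification of each charge component $U_m$ ``by iterating fusion products'' tacitly assumes that $V_1(\g)$ is a completely reducible $\mathcal V_1(\g_{\bar 0})$--module: fusion rules only control composition factors, and without semisimplicity a dot product $U_1\cdot U_{m-1}$ could be an indecomposable, non-irreducible module that the top-level fusion computation does not detect. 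Note also that you cannot simply invoke Theorem \ref{generalhs}, since that theorem is stated for $\g^0_0=\C\varpi$, whereas here $\g_{\bar 0}=sl(n)\times sl(n)$ is semisimple: the charge operator is an outer derivation, not the zero mode of a Heisenberg field inside $V_1(\g)$.

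The paper supplies precisely this missing ingredient, and by a different mechanism from the integrality checks you gesture at. It proves semisimplicity of the $\mathcal V_1(sl(n)\times sl(n))$--action first, using the free-field realization $V_1(gl(n|n))\subset F_{(n)}\otimes M_{(n)}$: the action of $\mathcal V_1(gl(n)\times gl(n))$ there is the tensor product of the completely reducible actions of $\widehat{gl(n)}$ on $F_{(n)}$ (classical) and on $M_{(n)}$ (from \cite{AP2}), and semisimplicity is then transported to $V_1(psl(n|n))$ through the chain of subquotients $\mathcal V_1(sl(n|n))\cdot I\subset\mathcal V_1(sl(n|n))\subset V_1(gl(n|n))$, where $I$ is the identity matrix. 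Once semisimplicity is in hand, the argument of Theorem \ref{slmn-realisation} together with the fusion rules \eqref{prvi}, \eqref{second} yields \eqref{decpslnn}. If you wanted to avoid the realization and salvage your route, you would have to prove an analogue of Theorem \ref{generalhs} valid for an external charge grading and actually verify that the nonzero primitive weights occurring in $V_{\g_{\bar 0}}(\mu)\otimes V_{\g_{\bar 0}}(\mu^*)$, namely $(\theta,\theta)$, $(\theta,0)$, $(0,\theta)$, have conformal weights $\tfrac{2n^2}{n^2-1}$, $\tfrac{n}{n+1}$, $\tfrac{n}{n-1}$, none of which lies in $\Z_+$ for $n\ge 3$; neither of these steps appears in your write-up.
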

   
  Next we consider some cases   when $V_k(\g)$ is not a  simple  current extension of $\mathcal V_{k}(\g_{\bar 0})$.   The next theorem sums up the results proven in  Proposition \ref{osp(3|2)-dec}, Theorem \ref{conj-f4} and Theorem \ref{G2inG(3)}.  \begin{theorema} Assume that we are in the following cases of conformal embeddings
    \begin{itemize}
 \item $\g= spo(2 \vert 3)$, $k=-3/4$.
 \item $\g=F(4)$, $k=1$.
 \item $\g = G(3)$, $k=1$.
 \end{itemize}
   Then $V_k(\g)$ is a  finite, non simple current extension of  $\mathcal V_k(\g_{\bar 0})$
   \end{theorema}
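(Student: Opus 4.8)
Since the statement collects Proposition~\ref{osp(3|2)-dec}, Theorem~\ref{conj-f4} and Theorem~\ref{G2inG(3)}, the plan is to describe the common fusion rules argument that proves all three. The first step is bookkeeping. By Theorem~\ref{cf} the embedding $\mathcal V_k(\g_{\bar 0})\hookrightarrow V_k(\g)$ is conformal in each case, and by the simplicity results $\mathcal V_k(\g_{\bar 0})$ is a \emph{simple} vertex algebra, isomorphic to a tensor product of two simple affine vertex algebras at the levels induced by $\g_{\bar 0}\subset\g$: up to identifying the induced levels, $V_{k_1}(sl(2))\otimes V_{k_2}(sl(2))$ for $\spo$ at $k=-3/4$, $V_{k_1}(sl(2))\otimes V_1(so(7))$ for $F(4)$ at $k=1$, and $V_{k_1}(sl(2))\otimes V_1(G_2)$ for $G(3)$ at $k=1$. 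Matching central charges via $c(V_k(\g))=k\,\sdim(\g)/(k+h^\vee)$ and conformality pins down the numerical levels; for instance for $F(4)$ one gets $c=2=7/2-3/2$, forcing $so(7)$ at level $1$ and $sl(2)$ at level $-2/3$, while for $\spo$ one has $\sdim(\spo)=0$, so the total central charge is $0$ and the two $sl(2)$ levels are reciprocal in the sense $(k_1+1)(k_2+1)=1$.

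The second step is to read off $V_k(\g)$ as a module over $\mathcal V_k(\g_{\bar 0})$. In all three cases $\g_{\bar 1}$ is an \emph{irreducible} $\g_{\bar 0}$--module, so the fields $x(z)$, $x\in\g_{\bar 1}$, generate inside $V_k(\g)$ a single $\mathcal V_k(\g_{\bar 0})$--module $U$ of lowest conformal weight $1$ whose top component is $\g_{\bar 1}$; under the tensor factorization $U$ is the external product of the two--dimensional $sl(2)$--module with the spin module of $V_1(so(7))$ (resp. the $7$--dimensional module of $V_1(G_2)$, resp. the adjoint module of the level--$k_i$ $sl(2)$ factor for $\spo$). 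I would then run the dot product \eqref{dot} on the collection $\mathcal M$ of simple $\mathcal V_k(\g_{\bar 0})$--submodules produced from $\vac$ and $U$. The $\Z/2\Z$ superdegree together with conformality constrains the lowest conformal weights of the members of $\mathcal M$ to lie in $\half\Z$, with parity fixed by the superdegree; combined with the finite list of simple modules of the affine factors at the relevant levels, this forces $\mathcal M$ to be finite. Complete reducibility of $V_k(\g)$ over $\mathcal V_k(\g_{\bar 0})$ then upgrades this to an explicit finite decomposition.

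The third step is to show the extension is \emph{not} of simple current type, by computing the self--fusion of $U$. The decisive point is non--invertibility in the fusion ring of one tensor factor: the spin module of $V_1(so(7))$ satisfies Ising--type fusion (its square is the sum of the vacuum and the vector module), the $7$--dimensional module of $V_1(G_2)$ satisfies Fibonacci--type fusion (its square contains both the vacuum and itself), and the adjoint $sl(2)$--module entering $U$ for $\spo$ likewise has decomposable self--fusion. Since a single non--invertible tensor factor makes $U$ non--invertible, $U\cdot U^*$ strictly contains $\mathcal V_k(\g_{\bar 0})$, so $U$ cannot be a simple current and the finite extension $\mathcal V_k(\g_{\bar 0})\subset V_k(\g)$ is non--simple--current, as claimed.

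The main obstacle is the joint finiteness--and--reducibility step: proving that $\mathcal M$ closes into a \emph{finite} family and that $V_k(\g)$ is completely reducible requires the precise module categories and fusion rules of the affine factors, including the fractional--level $sl(2)$ pieces, which do not follow from any single off--the--shelf rationality statement. The $\spo$ case is the most delicate, since there the central charge vanishes and both $sl(2)$ factors sit at fractional levels, so the conformal--weight bookkeeping that bounds $\mathcal M$ and the verification that the relevant self--fusion is genuinely non--invertible must be carried out through a direct analysis of the $sl(2)$--modules at those levels rather than by invoking a general semisimplicity theorem.
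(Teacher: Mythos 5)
Your overall plan---simplicity of $\mathcal V_k(\g_{\bar 0})$, complete reducibility, a dot--product closure bounded by conformal--weight constraints, and non--invertibility of the module generated by $\g_{\bar 1}$---is the paper's strategy, and your observation that non--invertibility of a single tensor factor (the spin module of $V_1(so(7))$, the $7$--dimensional module of $V_1(G_2)$, the non--simple--current factor of the $spo(2|3)$ component) already rules out a simple current extension is a correct and clean way to get that part of the conclusion. However, the two inputs you take for granted are precisely where the work lies. First, you assert simplicity of $\mathcal V_k(\g_{\bar 0})$ ``by the simplicity results''; this is not off the shelf. The integrable factors $V_1(so(7))$, $V_1(G_2)$, $V_3(sl(2))$ are easy, but for the fractional--level $sl(2)$ factor one must show that the image of the singular vector generating the maximal ideal of $V^{-2/3}(sl(2))$ (weight $6\omega_1$) resp.\ $V^{-3/4}(sl(2))$ (weight $8\omega_1$) vanishes in $V_k(\g)$. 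The paper does this in Lemma \ref{singular}, in the unnumbered lemma opening the $F(4)$ subsection, and in Lemma \ref{singular-G3}, each time by a chain of dot--product inclusions pruned by \emph{integrality} of conformal weights, ending with the conclusion that such a vector would generate a proper ideal of the simple algebra $V_k(\g)$, a contradiction. Without this, the fusion rules for modules over the simple quotients that you invoke in your second and third steps are simply not available.

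Second, the finiteness--plus--complete--reducibility step that you single out as the main obstacle is in fact a consequence of the simplicity just discussed: once $\mathcal V_k(\g_{\bar 0})$ is simple, the admissible--level $sl(2)$ factor is rational in category $\mathcal{O}$ and semisimple in $KL_k$ with finitely many simple objects (the paper cites \cite{AdM} and \cite[\S 3]{AKMPP-IMRN}), and the other factors are rational; so no further ``direct analysis'' is needed, but some argument or citation is, and you supply neither. Two smaller inaccuracies: the conformal weights of $\widehat\g_{\bar 0}$--singular vectors in $V_k(\g)$ lie in $\Z_{\ge 0}$, not merely in $\tfrac12\Z$ with a parity rule---conformality forces $\Delta=1$ on $\g_{\bar 1}(-1)\vac$, so $V_k(\g)$ is integer--graded, and it is exactly this integrality that drives all the exclusions; and the induced levels are read off from the restriction of the invariant form as in Theorem \ref{cf}, not from matching central charges. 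Finally, note that for the \emph{precise} decompositions (as opposed to the qualitative statement you are proving) the paper additionally needs explicit normal--order computations (Lemmas \ref{30isnonzero}, \ref{struct_const}, \ref{sing-two-f4}) to certify that the extra components are nonzero in the simple quotient, and for $G(3)$ it takes a genuinely different route through the intermediate embedding $sl(2)\times spo(2|3)\subset G(3)$; your shortcut avoids these only because the module generated by $\g_{\bar 1}$ is automatically a nonzero, non--invertible summand once complete reducibility is in place.
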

     These cases (among others) have been previously  studied by T. Creutzig in    \cite{C-RIMS} using the extension theory of the vertex algebra $\mathcal V_k(\g_{\bar 0})$ and tensor category arguments. He impressively identifies the larger vertex algebra $V_k(\g)$ among all possible extensions of $\mathcal V_k(\g_{\bar 0})$. We  present a different (and more elementary) proof which uses only  some affine fusion rules.  In particular, we first prove that $\mathcal V_k(\g_{\bar 0})$ is simple. In our cases, this directly implies that $\mathcal V_k(\g_{\bar 0})$ is semi-simple in the category $KL_k$ (see \cite[\S 3]{AKMPP-IMRN}) and therefore $V_k(\g)$ is a completely reducible as $\mathcal V_k(\g_{\bar 0})$--module. To obtain the  precise decomposition we apply $\mathcal V_k(\g_{\bar 0})$--fusion rules.
    
    Our fusion rules method can be applied beyond the  affine vertex algebra setting. As an example, we present in Section \ref{FFR} a new proof of simplicity of the free-field realization of $V_1(osp(n|m))$ (cf. \cite{Kw}) and corresponding decomposition of the Fock space.
As a consequence, this also gives a new proof of  the simplicity of the realization of  $V_{-1/2} (sp(2n))$ from \cite{FF}.

     In Section \ref{osp-conformal} we deal with  $\g = osp(2n +8| 2n)$, $k=-2$.   We have the following result (cf. Theorems \ref{dosp}, \ref{ddosp}):
     \begin{theorema} \label{slutnja-uvod} Assume that $n \ge 1$.
     		We have the following decomposition
     	$$ V_{-2}(osp(2n +8| 2n)) = \bigoplus_{i = 0} ^{n} L_{-2} (i\omega_1) \otimes L_1(\omega_i). $$
     \end{theorema}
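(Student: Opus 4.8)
The plan is to exhibit $V_{-2}(\g)$ as a completely reducible module over the conformally embedded even subalgebra $\mathcal V_{-2}(\g_{\bar 0}) \cong V_{-2}(D_{n+4}) \otimes V_1(C_n)$, and then to identify the summands through conformal-weight bookkeeping combined with a fusion rules argument. First I would apply Theorem \ref{cf} to ensure that $k = -2$ is a conformal level for $\g = osp(2n+8|2n)$; the levels $-2$ on the $D_{n+4}$ factor and $1$ on the $C_n$ factor are forced by the restriction of the invariant form of $\g$ to $\g_{\bar 0}$. Granting the simplicity of $\mathcal V_{-2}(\g_{\bar 0})$ established earlier for this case, the representation theory of $V_{-2}(D_{n+4})$ from \cite{AKMPP-IMRN} together with the rationality of $V_1(C_n)$ gives that $\mathcal V_{-2}(\g_{\bar 0})$ is semisimple in $KL_{-2}$, whence $V_{-2}(\g)$ is a completely reducible $\mathcal V_{-2}(\g_{\bar 0})$--module. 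Thus $V_{-2}(\g) = \bigoplus_j L_{-2}(\mu_j) \otimes L_1(\nu_j)$ for certain dominant weights $\mu_j$ of $D_{n+4}$ and $\nu_j$ of $C_n$, and the problem reduces to pinning down this list.

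Next I would bound the possible summands. On the $C_n$ factor only level-$1$ integrable modules occur, and since the comarks of $C_n^{(1)}$ are all equal to $1$ these are exactly $L_1(\omega_i)$ with $i = 0, 1, \dots, n$; this bounds the number of $C_n$--isotypic components by $n+1$ and explains the truncation of the sum at $i = n$. Because the embedding is conformal the two Sugawara vectors agree, so the lowest conformal weight of every summand must lie in $\ZZ_{\ge 0}$ and match the grading of $V_{-2}(\g)$. A direct computation gives the lowest weights $h_{L_1(\omega_i)} = \tfrac{i(2n+2-i)}{4(n+2)}$ on the $C_n$ side and $h_{L_{-2}(i\omega_1)} = \tfrac{i^2 + 2i(n+3)}{4(n+2)}$ on the $D_{n+4}$ side, whose sum is the integer $i$. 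Using this integrality together with the explicit list of admissible $V_{-2}(D_{n+4})$--modules and their conformal weights from \cite{AKMPP-IMRN}, I would argue that the unique $D_{n+4}$--weight that can be paired with $\omega_i$ is $i\omega_1$, producing the candidate decomposition $\bigoplus_{i=0}^{n} L_{-2}(i\omega_1) \otimes L_1(\omega_i)$.

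It then remains to show that each candidate summand actually occurs, with multiplicity one. Here I would use the fusion rules argument. The odd part $\g_{\bar 1}$ is the tensor product of the vector representations of $so(2n+8)$ and $sp(2n)$, so the fields it generates form the module $M = L_{-2}(\omega_1) \otimes L_1(\omega_1)$, the $i = 1$ term; since these fields generate $V_{-2}(\g)$, the family $\mathcal M$ of modules obtained from iterated products of $M$ spans $V_{-2}(\g)$. Analyzing $\mathrm{span}(\mathcal M)$ under the dot product \eqref{dot}, and using the level-$1$ $C_n$ fusion rules, under which $\omega_1 \cdot \omega_i$ contains $\omega_{i+1}$ and truncates at $i = n$ because $\omega_{n+1}$ is not admissible, together with the corresponding $V_{-2}(D_{n+4})$--fusion, I would show inductively that the $i$-fold product $M^{\cdot i}$ yields $L_{-2}(i\omega_1) \otimes L_1(\omega_i)$, that these products are nonzero, and that no additional modules are produced; the integrality of the conformal grading then forces multiplicity one.

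The main obstacle I expect lies in the $D_{n+4}$ side of the fusion computation: one must control the products of the modules $L_{-2}(i\omega_1)$ and check that the $C_n$ truncation at $i = n$ is compatible with, rather than overridden by, the $D_{n+4}$ behaviour, so that no summand $L_{-2}(\mu) \otimes L_1(\omega_i)$ with $\mu \ne i\omega_1$ appears and no product degenerates to zero. This is exactly the point at which the detailed representation theory of $V_{-2}(D_{n+4})$ developed in \cite{AKMPP-IMRN}, and in particular the fusion rules among the modules $L_{-2}(i\omega_1)$, becomes indispensable.
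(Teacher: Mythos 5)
Your overall strategy (complete reducibility from the simplicity of $\mathcal V_{-2}(\g_{\bar 0})$ together with the rationality of $V_1(C_n)$ and the semisimplicity of $KL_{-2}$ for $V_{-2}(D_{n+4})$, followed by a fusion-rules analysis of iterated dot products of the component generated by $\g_{\bar 1}$) is the same as the paper's, and your conformal weight computations are correct. However, there are two genuine gaps. First, the claim that integrality of the lowest conformal weight forces the $D_{n+4}$--weight paired with $\omega_i$ to be $i\omega_1$ is false: with $h[i,j]=\tfrac{i(2n+6+i)+(2n+2-j)j}{4(n+2)}$ one has, for instance, $h[4,0]=3\in\ZZ$ when $n=4$, so $L_{-2}(4\omega_1)\otimes L_1(0)$ is not excluded by weight bookkeeping alone. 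The paper only needs non-integrality of the adjacent off-diagonal weights $h[i+1,i-1]$ and $h[i-1,i+1]$, combined with the fact that every component must arise in an iterated dot product of $\mathcal W_1=L_{-2}(\omega_1)\otimes L_1(\omega_1)$; your third paragraph essentially contains this repair, so this gap is fixable.

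The second gap is more serious: you assert that the $i$-fold products ``are nonzero'' and yield $L_{-2}(i\omega_1)\otimes L_1(\omega_i)$, but you give no mechanism for proving it. The absence of zero divisors only gives $\mathcal W_1\cdot\mathcal W_i\ne\{0\}$; since this product lies in $\mathcal W_{i+1}+\mathcal W_{i-1}$, it could a priori collapse onto $\mathcal W_{i-1}$ and the chain could terminate early. This is precisely where the paper invests most of its effort: it constructs an explicit realization $\Phi:V^{-2}(osp(2n+8|2n))\to M_{(4n|4n+16)}$, exhibits explicit $\widehat\g_{\bar 0}$--singular vectors $W_i=\,:v_iv_{i-1}\cdots v_1:$, verifies directly that they are nonzero in the Fock space (Proposition \ref{vectors-wi}), and then shows (Lemma \ref{lema-tezina}, Proposition \ref{components-identification}) via a conformal-weight comparison that they cannot lie in the maximal ideal and hence survive in $V_{-2}(\g)$. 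Without this, or some substitute, the existence of the summands with $i\ge 2$ is unproven. Relatedly, ``integrality of the conformal grading forces multiplicity one'' is not an argument; the paper derives multiplicity one from strong generation by $\g$, associativity of the fusion product, and the multiplicity-one statement for $V_{D_{n+4}}(j\omega_1)\otimes V_{C_n}(\omega_j)$ inside the relevant classical tensor product decomposition.
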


   \vskip 5mm
   
   {\bf Acknowledgments:} 
 We would like to thank Thomas  Creutzig and Victor Kac  for valuable  discussions. 
We also thank the referee for his/her careful reading of our paper.

Dra{\v z}en~Adamovi\'c  and Ozren  Per\v{s}e are  partially supported by  the
QuantiXLie Centre of Excellence, a project cofinanced
by the Croatian Government and European Union
through the European Regional Development Fund - the
Competitiveness and Cohesion Operational Programme
(KK.01.1.1.01.0004)
 
 \section{Setup and preliminary results}
\subsection{Notation}

Let  $\g=\g_{\bar 0}\oplus \g_{\bar 1}$ be a basic classical simple Lie superalgebra. Recall  that among all simple finite-dimensional Lie superalgebras it is characterized by the properties that its even part $\g_{\bar 0}$ is reductive and that it admits a non-degenerate invariant supersymmetric bilinear form $( \cdot | \cdot)$. 
 A complete list of basic classical simple Lie superalgebras  consists of simple 
finite-dimensional Lie algebras and the Lie superalgebras $sl(m|n)\ (m,n\ge 1, m\ne n)$, $psl(m|m)\ (m\ge 2)$, $osp(m|n)=spo(n|m)\ (m\geq 1, n\ge 2 \text{ even})$, $D(2,1;a)\ (a\in\C,\,a\ne 0,-1)$, $F(4)$, $G(3)$. Recall that $sl(2|1)$and $osp(2|2)$ are isomorphic. Also, the Lie superalgebras $D(2,1;a)$ and $D(2,1;a')$ are isomorphic if and only if $a, a'$  lie on  the same orbit of the group generated by the transformations $a\mapsto a^{-1}$ and $a\mapsto -1-a$, and $D(2,1;1)= osp(4|2)$. See \cite{KacLie} for details.\par
Choose a Cartan subalgebra $\h$ for $\g_{\bar 0}$  and let $\D$ be the set of roots. If $\D^+$ is a set of positive roots, then we let $\Pi$ be the corresponding set of simple roots. If $\g$ is not an even Lie algebra, we choose as $\D^+$ the distinguished set of positive roots (i.e. $\Pi$ has the minimal number of odd roots) from Table 6.1 of \cite{Kw}.
\par
We normalize the form $( \cdot | \cdot)$ as follows: if $\g$ is an even simple Lie algebra then require $(\theta|\theta)=2$ (where $\theta$ is the highest root of $\g$). If $\g$ is not even, then we let $(\cdot|\cdot)$ be the form described explicitly in Table 6.1 of of \cite{Kw}. Let $C_\g$ be the Casimir element of $\g$ and let $2h^\vee$ be the eigenvalue of its action on $\g$.

Let $k\in \C$ be non-critical, i.e. $k\ne - h^\vee$. We let $V^k(\g)$,  $V_k(\g)$ denote, respectively, the universal and the simple affine vertex algebra (see \cite[\S\ 4.7 and Example 4.9b]{KacV}). Note that the definition of $V^k(\g),V_k(\g)$ depends on the choice of $(\cdot|\cdot)$.

Let  $\g^0$ be an equal rank basic classical  subsuperalgebra of $\g$ such that the restriction of $( \cdot | \cdot)$ is nondegenerate.
We further assume that 
 $\g^0$  decomposes as  $\g^0=\g^0_0\oplus\dots\oplus \g^0_s$ with $\g^0_0$ even abelian and $\g^0_i$ basic classical simple ideals for $i>0$. A remarkable example of such a situation is the case $\g^0=\g_{\bar 0}$.
 If $\nu\in \h^*$, we set $\nu^j=\nu_{|\h\cap\g^0_j}$. 
For a simple basic classical Lie superalgebra $\aa$, we let $V_{\aa}(\mu)$ denote the irreducible finite dimensional representation  of $\aa$ of highest weight $\mu$. If $U$ is an irreducible finite dimensional representation of $\aa$, we let $L_{\aa,k}(U)$ be the irreducible representation of $V^k(\aa)$ with top component $U$. We simply write $L_{\aa}(\mu)$ or $L_{k}(\mu)$  for   $L_{\aa,k}(V_{\aa}(\mu))$.

Let $\{x_i\},\{y_i\}$ be dual bases of $\g$ (i.e.\ $(x_h|y_k)=\delta_{hk}$). If $j>0$, let $(\cdot|\cdot)_j$ be normalized invariant form on $\g^0_j$ and set $\{x_i^j\},\{y_i^j\}$ to be dual bases of $\g^0_j$ with respect to $(\cdot|\cdot)_j$. Let $h^\vee_j$ be the dual Coxeter number of $\g^0_j$. For $\g^0_0$, let $\{x_i^0\},\{y_i^0\}$ be dual bases of $\g^0_0$ with respect to  $(\cdot|\cdot)_0=(\cdot|\cdot)_{|\g^0_0\times\g^0_0}$ and set $h_0^\vee=0$.\par
 If $\mathbf{k}=(k_0,\ldots,k_s)$ is a multi-index of levels we set
 $$
 V^{\mathbf{k}}(\g^0)= V^{k_0}(\g^0_0)\otimes\dots\otimes V^{k_s}(\g^0_s),
 $$
 and, assuming $k_j+h_j^\vee\ne 0$ for all $j$, we let
$$
V_{\mathbf{k}}(\g^0)= V_{k_0}(\g^0_0)\otimes\dots\otimes V_{k_s}(\g^0_s).
$$
Here $V^k(\g^0_0)$ denotes the corresponding Heisenberg vertex algebra. We also set $V_{\g^0}(\mu)=\otimes V_{\g^0_j}(\mu^j)$ and $L_{\g^0}(\mu)=\otimes L_{\g^0_j}(\mu^j)$. 

 If $\g^0_0=\C\varpi$ and $k\ne0$, then, setting $c=\frac{\varpi}{\sqrt{k(\varpi|\varpi)}}$, $V^k(\g^0_0)$ is the vertex algebra $M_c(1)$ generated by $c$ with $\lambda$--product $[c_\lambda c]=\l\vac$. We denote by  $M_c(1, r)$ the irreducible $M_c(1)$--module generated by the highest weight vector $v_r$ such that
$$ c(0) v_r = r v_r, \quad c(s) v_r = 0 \quad (s \ge 1). $$
In particular $L_{\g^0_0}(\mu^0)=M_c(1,\frac{\mu(\varpi)}{\sqrt{k(\varpi|\varpi)}})$.

 We consider $V^{k}(\g)$,  $V^{\mathbf{k}}(\g^0)$ and all their quotients, including $V_{k}(\g),V_{\mathbf{k}}(\g^0)$, as  conformal vertex algebras with conformal vectors $\omega_\g, \omega_{\g^0}$ given by the Sugawara construction:
$$
\omega_\g=\frac{1}{2(k+h^\vee)}\sum_{i=1}^{\dim \g} :y_i x_i :,\quad
\omega_{\g^0}=\sum_{j=0}^s\frac{1}{2(k_j+h_j^\vee)}\sum_{i=1}^{\dim \g^0_j} :y^j_i x_i^j :.
$$

Recall that, if a vertex algebra $V$ admits a conformal vector $\omega$ and the corresponding field is
$Y(\omega,z)=\sum_{n\in\ganz}\omega_nz^{-n-2}$, then, by definition of conformal vector, $\omega_0$ acts semisimply on $V$. If $x$ is an eigenvector for $\omega_0$, then the corresponding eigenvalue $\D_x$ is called the conformal weight of $x$. 

Let $V$ be a vertex algebra. Denote by $T$ the translation operator on $V$ defined by $T u = u(-2) {\bf 1}$
. If $U$, $W$ are subspaces in a vertex algebra then we define their {\it dot product}:
\begin{equation}\label{dot}U\cdot W=span(u(n)w\mid u\in U,\ w\in W,\  n\in\ZZ).
\end{equation}
The dot product is associative
and, if the subspaces are $T$-stable, commutative (cf. \cite{BK}).
The dot product in a simple vertex algebra does not have zero divisors: if $U\cdot V=\{0\}$ then either $U=\{0\}$ or $W=\{0\}$.

We let $\mathcal V_k(\g^0)$ denote the vertex subalgebra of $V_k(\g)$ generated by $x(-1)\vac$, $x\in\g^0$.  Note that, given $k\in\C$,  there is a uniquely determined multi-index $\mathbf{u}(k)$ such that $\mathcal V_k(\g_{\bar 0})$ is a quotient of $V^{\mathbf{u}(k)}(\g^0)$ hence, if $u_j(k)+h^\vee_j\ne 0$ for each $j$,  $\omega_{\g^0}$ is a conformal vector in $\mathcal V_k(\g^0)$. We will say that $\mathcal V_k(\g^0)$ is conformally embedded in $V_{k}(\g)$ if 
$
\omega_\g=\omega_{\g^0}.
$

Our aim is the study of conformal embeddings of $\mathcal V_k(\g^0)$ in $V_{k}(\g)$; in particular we will describe the classification of all conformal embeddings of $\mathcal V_k(\g_{\bar 0})$ in $V_{k}(\g)$.
The basis of our investigation is the following result, which is a variation of \cite[Theorem 1]{A}. Let $\g^1$ be the orthocomplement
of $\g^0$ in $\g$.  
\begin{theorem} \label{thm-intro}
In the above setting, $\mathcal V_k(\g^0)$
is conformally embedded in
$V_k(\g)$ if and only for any $x\in\g^1$ we have
\bea (\omega_{\g^0})_0x(-1)\vac = x(-1)\vac.
\label{eqn-intro}\eea
\end{theorem}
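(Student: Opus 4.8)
The plan is to analyze the single vector $\omega':=\omega_\g-\omega_{\g^0}\in V_k(\g)$, since by definition $\mathcal V_k(\g^0)$ is conformally embedded in $V_k(\g)$ exactly when $\omega'=0$. The forward implication is immediate: if $\omega_\g=\omega_{\g^0}$, then $(\omega_{\g^0})_0=(\omega_\g)_0$ is the conformal grading, so it acts on every $x(-1)\vac$ ($x\in\g$) by its conformal weight $1$, and in particular \eqref{eqn-intro} holds. The substance is the converse, and the whole argument rests on reading \eqref{eqn-intro} as the statement that $\omega'$ is annihilated by the mode $x(1)$ for each $x\in\g^1$.

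The first step is a pair of parallel normal-ordering computations. Using that $(\cdot|\cdot)$ makes $\g^0$ and $\g^1$ orthogonal and that $[\g^0,\g^1]\subseteq\g^1$, one checks that for $x\in\g^1$ both quantities $(\omega_{\g^0})_0\,x(-1)\vac$ and $x(1)\,\omega_{\g^0}$ are equal to $\mathcal{C}(x)(-1)\vac$, where
$$\mathcal{C}=\sum_{j=0}^{s}\frac{1}{2(u_j(k)+h^\vee_j)}\,C_{\g^0_j}$$
is the sum of the Casimir operators $C_{\g^0_j}$ of the ideals $\g^0_j$ evaluated on the $\g^0$-module $\g^1$ (on an irreducible constituent of highest weight $\mu$ it is the scalar $c_\mu=\sum_{j=0}^{s}\frac{(\mu^j|\mu^j+2\rho_j)}{2(u_j(k)+h^\vee_j)}$; the super-signs in the dual bases $\{x_i^j\},\{y_i^j\}$ are handled in the usual way). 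Since the full Sugawara vector always satisfies $x(1)\,\omega_\g=x(-1)\vac$, and since $x(1)\,\omega_{\g^0}=x(-1)\vac$ already holds for $x\in\g^0$ by the same Sugawara relation inside $\mathcal V_k(\g^0)$, these two identities show that \eqref{eqn-intro} is equivalent to the condition $x(1)\,\omega'=0$ for every $x\in\g$.

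The second step upgrades this to annihilation by all positive modes. Both $\omega_\g$ and $\omega_{\g^0}$ are homogeneous of conformal weight $2$ for $(\omega_\g)_0$, so $\omega'$ lies in the weight-$2$ space; since $V_k(\g)$ has non-negative conformal weights, $x(n)\omega'=0$ automatically for $n\ge 3$, while the case $n=2$ follows from the short identity $x(2)\,\omega_\g=0=x(2)\,\omega_{\g^0}$ (again by orthogonality of $\g^0$ and $\g^1$). Combined with the first step, under hypothesis \eqref{eqn-intro} we obtain $x(n)\,\omega'=0$ for all $x\in\g$ and all $n\ge 1$.

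Finally I would conclude by non-degeneracy of the invariant (super)symmetric bilinear form $\langle\cdot,\cdot\rangle$ on the simple vertex superalgebra $V_k(\g)$, which satisfies $\langle x(n)u,w\rangle=-\langle u,x(-n)w\rangle$. The weight-$2$ subspace is spanned by monomials $x_1(-m_1)\cdots x_r(-m_r)\vac$ with all $m_i\ge 1$; pairing such a monomial against $\omega'$ and moving the leftmost factor across the form gives $-\langle x_1(m_1)\,\omega',\,x_2(-m_2)\cdots x_r(-m_r)\vac\rangle=0$, because $m_1\ge 1$. Thus $\omega'$ is orthogonal to the entire weight-$2$ space, and being itself of weight $2$ it must vanish, i.e. $\omega_\g=\omega_{\g^0}$. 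I expect the main obstacle to be the book-keeping in the first step, namely verifying that the two a priori different computations of $(\omega_{\g^0})_0\,x(-1)\vac$ and $x(1)\,\omega_{\g^0}$ produce the same operator $\mathcal{C}$ with the correct super-signs, together with invoking correctly the (standard but essential) non-degeneracy of the invariant form on the simple quotient $V_k(\g)$.
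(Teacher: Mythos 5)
Your proposal is correct. Note that the paper itself gives no proof of Theorem \ref{thm-intro}: it is stated as "a variation of \cite[Theorem 1]{A}", and the argument there follows the same core strategy you use --- the condition \eqref{eqn-intro} is recast as $x(1)\,(\omega_\g-\omega_{\g^0})=0$ for all $x\in\g^1$ (with the case $x\in\g^0$ and the modes $n\ge 2$ being automatic, exactly as you observe), after which one concludes by simplicity of $V_k(\g)$. The only genuine difference is the last step: rather than pairing $\omega'=\omega_\g-\omega_{\g^0}$ against the weight-two space via the nondegenerate invariant bilinear form (whose existence on the simple quotient requires the standard $L_1V_1=0$ argument of Li, valid here since the currents are Sugawara-primary of weight one), the cited argument observes directly that a vector of conformal weight two killed by all modes $x(n)$, $n\ge 1$, $x\in\g$, generates a $\widehat\g$-submodule contained in conformal weights $\ge 2$, hence a proper ideal, which must vanish. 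Both routes rest on the same input (simplicity of $V_k(\g)$ plus noncriticality of $k$ and of the induced levels $u_j(k)$, which you correctly use to get $x(1)\omega_{\g^0}=x(-1)\vac$ for $x\in\g^0$); the ideal-theoretic ending is marginally more economical since it avoids invoking the existence of the invariant form, but your version is complete and the key identification $(\omega_{\g^0})_0x(-1)\vac=x(1)\omega_{\g^0}$ for $x\in\g^1$ --- which also follows at once from skew-symmetry once one knows $x(n)\omega_{\g^0}=0$ for $n\ge2$ by orthogonality of $\g^0$ and $\g^1$ --- is exactly the computation underlying Corollary \ref{numcheckk}.
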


Assume that  $\g^1$ is completely reducible as a $\g^0$-module, and let  $$\g^1=\bigoplus_{i=1}^t V_{\g^0}(\mu_i)$$ be its decomposition.  Set $\mu_0=0$.

\begin{cor}\label{numcheckk}$ \mathcal{V}_{k}(\g^0)$ is
conformally embedded in $V_k({\g})$ if and only if
\bea\label{numcheck} && \sum_{j=0}^s\frac{ ( \mu_{i}^j , \mu _{i}^j + 2 \rho^j ) _j}{ 2 (u_j(k) + h_j
^{\vee} )} = 1  \eea
for all $i>0$.
\end{cor}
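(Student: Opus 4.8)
The plan is to read Theorem \ref{thm-intro} as the assertion that every state $x(-1)\vac$ with $x\in\g^1$ has $\omega_{\g^0}$-conformal weight equal to $1$, and then to compute that conformal weight explicitly as a sum of Sugawara eigenvalues. First I would observe that, since $\omega_{\g^0}=\sum_{j=0}^s\omega_j$ with $\omega_j=\frac{1}{2(u_j(k)+h_j^\vee)}\sum_i\,:y_i^jx_i^j:$, the operator $(\omega_{\g^0})_0$ splits as $\sum_{j=0}^s L_0^{(j)}$, where $L_0^{(j)}=(\omega_j)_0$ is the Sugawara energy operator of $\widehat{\g^0_j}$ at level $u_j(k)$. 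The assignment $x\mapsto x(-1)\vac$ identifies $\g^1$ with a $\g^0$-submodule of $V_k(\g)$, because $y(0)x(-1)\vac=[y,x](-1)\vac$ for $y\in\g^0$ (here I use that $\g^0$ is a subalgebra and the form is invariant, so that its orthocomplement $\g^1$ is $\g^0$-stable); hence this copy of $\g^1$ decomposes exactly as $\bigoplus_{i=1}^t V_{\g^0}(\mu_i)$. Since each $L_0^{(j)}$ commutes with the $\g^0$-action and therefore acts by a scalar on every irreducible summand, it suffices to evaluate $(\omega_{\g^0})_0$ on a single $\g^0$-highest weight vector $x$ of each $V_{\g^0}(\mu_i)$.

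The crux is to show that such a vector $x(-1)\vac$ is a highest weight (singular) vector for each affine algebra $\widehat{\g^0_j}$, of $\g^0_j$-weight $\mu_i^j$ at level $u_j(k)$, so that the standard Sugawara eigenvalue formula applies. Annihilation by the zero modes $e(0)$ of the positive root vectors $e\in\g^0_j$ is immediate, since $e(0)x(-1)\vac=[e,x](-1)\vac=0$ by maximality of the weight of $x$; likewise $h(0)x(-1)\vac=\mu_i^j(h)x(-1)\vac$ for $h\in\h\cap\g^0_j$. The one delicate point is the vanishing of the degree-raising modes: for $y\in\g^0_j$ one has $y(n)x(-1)\vac=[y,x](n-1)\vac+k\,(y|x)\,\delta_{n-1,0}\vac$, which is $0$ for $n\ge 2$ because $(n-1)$-modes annihilate $\vac$, while for $n=1$ it equals $[y,x](0)\vac+k\,(y|x)\vac$; the first term vanishes because $0$-modes annihilate $\vac$, and the second term vanishes precisely because $\g^0$ and $\g^1=(\g^0)^\perp$ are orthogonal, so $(y|x)=0$. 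Thus $x(-1)\vac$ is genuinely singular, and the classical computation of $L_0$ on a highest weight vector of weight $\lambda$ for $\widehat{\g^0_j}$ gives
\[
L_0^{(j)}x(-1)\vac=\frac{(\mu_i^j,\mu_i^j+2\rho^j)_j}{2(u_j(k)+h_j^\vee)}\,x(-1)\vac.
\]
For the abelian factor $j=0$ one has $\rho^0=0$ and $h_0^\vee=0$, and the same formula specializes to the Heisenberg (Fock module) conformal weight $\frac{(\mu_i^0,\mu_i^0)_0}{2u_0(k)}$, consistent with the $M_c(1,r)$ computation recalled above.

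Summing over $j$ then yields $(\omega_{\g^0})_0 x(-1)\vac=\Big(\sum_{j=0}^s\frac{(\mu_i^j,\mu_i^j+2\rho^j)_j}{2(u_j(k)+h_j^\vee)}\Big)x(-1)\vac$, and Theorem \ref{thm-intro} identifies conformal embedding with the requirement that this scalar equal $1$ on every component $V_{\g^0}(\mu_i)$, i.e. for every $i>0$, which is exactly \eqref{numcheck}. I expect the main obstacle to be the careful bookkeeping of the singular-vector verification, and in particular the observation that the orthogonality $(\g^0|\g^1)=0$ is exactly what is needed to kill the central-extension term at $n=1$; a secondary technical check is confirming that the Heisenberg factor is normalized so that the $j=0$ term of \eqref{numcheck} reproduces the Fock-space conformal weight, which is routine given the description of $\omega_{\g^0}$ and of $M_c(1,r)$.
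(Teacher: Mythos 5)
Your argument is correct and is precisely the derivation the paper intends: Corollary \ref{numcheckk} is stated as an immediate consequence of Theorem \ref{thm-intro}, obtained by checking that $x(-1)\vac$ is a $\widehat{\g^0_j}$--singular vector of weight $\mu_i^j$ at level $u_j(k)$ and applying the Sugawara eigenvalue formula factor by factor. You correctly isolate the one point that is not purely formal, namely that the central term $k(y|x)\vac$ in $y(1)x(-1)\vac$ vanishes because $\g^1=(\g^0)^\perp$, and your normalization check for the Heisenberg factor $j=0$ (with $\rho^0=0$, $h_0^\vee=0$) matches the paper's conventions.
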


Assume $\g^0_0=\{0\}$ and that $\g^0$ is the set of fixed points an automorphism $\sigma$ of $\g$ of order $s$ and let $\g=\oplus_{i\in \ZZ/s\ZZ}\g^{(i)}$ be the corresponding eigenvalue decomposition.
Note that $\g^0=\g^{(0)}$ and that $\g^1=\sum_{i\ne 0} \g^{(i)}$. 
Since $\g^1$ is assumed to be completely reducible as $\g^0$-module, we have 
$$\g^{(i)}=\sum_{r\in I(i)}V(\mu_r),$$
where  $I(i)$ is a subsets of $\{1,\ldots t\}$.
The map $\sigma$ can be extended
to a finite order automorphism of the simple vertex  algebra
$V_{k} (\g)$ which induces the eigenspace decomposition
$$V_{k} (\g) =\oplus_{i\in \ZZ/s\ZZ}V_{k} (\g) ^{(i)} .$$
Clearly $V_{k} (\g) ^{(i)}$ are  $\widehat\g^0$--modules. 
Note that $\g_{\bar 0}$ is the fixed point set of the involution defined  $\sigma(x)=(-1)^ix$ for $x\in \g_{\bar i}$, so the above setting applies to $\g_{\bar 0}$.

The following result is a super analog of \cite[Theorem 3]{A}. For the sake of completeness, we provide a proof.

\begin{theorem} \label{general}
Assume that, if $\nu$ is the weight of a $\g^0$-primitive vector occurring in $V(\mu_i) \otimes V(\mu_j) $, then there is a  $\widehat\g^0$-primitive vector in $V_k(\g)$ of weight $\nu$ if and only if 
$\nu=\mu_r$ for some $r$.

Then $\mathcal V_k(\g^0)$ is simple and 
\begin{equation}\label{d}V_k(\g) = V_{\bf k}(\g^0)\oplus (\oplus_{i=1}^tL_{\g^0 }( \mu_i)). \end{equation}
\end{theorem}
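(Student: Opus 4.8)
The plan is to analyze $V_k(\g)$ as a module for $\widehat{\g}^0$, using the equality $\omega_\g=\omega_{\g^0}$ provided by the conformal embedding (Theorem \ref{thm-intro}) as the main rigidity tool. The decisive preliminary observation is a \emph{conformal weight rigidity}: if $w\in V_k(\g)$ is $\widehat{\g}^0$-primitive of finite $\g^0$-weight $\nu$, then its conformal weight for $\omega_{\g^0}$ is the Sugawara value $\sum_{j}\frac{(\nu^j,\nu^j+2\rho^j)_j}{2(u_j(k)+h^\vee_j)}$, whereas its conformal weight for $\omega_\g$ is exactly its degree in the natural $\ZZ_{\ge 0}$-grading of $V_k(\g)$. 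Since $\omega_\g=\omega_{\g^0}$, the degree of $w$ is forced to equal its Sugawara value. In particular, by Corollary \ref{numcheckk} every primitive vector of weight $\mu_r$ with $r\ge 1$ lies in degree $1$, and the unique primitive vector of weight $0$ is the vacuum $\vac$, in degree $0$.

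First I would identify the primitive vectors in degree $1$. As a $\g^0$-module the degree-one space is $\g=\g^0\oplus\g^1$; a direct computation, using $(\cdot|\cdot)$-orthogonality of $\g^0$ and $\g^1$ and the non-vanishing of the level term on the adjoint summand, shows that the highest weight vectors $v_1,\dots,v_t$ of the summands $V(\mu_i)\subset\g^1$ are $\widehat{\g}^0$-primitive, while no vector of the adjoint summand $\g^0$ is primitive. Hence the complete list of primitive vectors in degrees $\le 1$ is $\{\vac,v_1,\dots,v_t\}$, of weights $\{\mu_0=0,\mu_1,\dots,\mu_t\}$. I then set $M_0=\mathcal V_k(\g^0)=U(\widehat{\g}^0)\vac$ and $M_i=U(\widehat{\g}^0)v_i$ for $i\ge 1$, so that each $M_i$ is a highest weight module of highest weight $\mu_i$ with cyclic vector in degree $\D_{v_i}\in\{0,1\}$.

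The heart of the argument is an induction on conformal weight proving that $\sum_{i=0}^t M_i=V_k(\g)$ and that no primitive vectors occur outside degrees $\le 1$. Assuming the degree-$\le n$ part lies in $\sum_i M_i$ and supports only primitive weights among the $\mu_r$, I would write the degree-$(n{+}1)$ part as a span of dot products $a(m)b$ with $a,b$ of strictly smaller degree, hence contained in $\sum_{i,j}M_i\cdot M_j$. Each $M_i\cdot M_j$ is $\widehat{\g}^0$-stable, the dot product being associative and, on $T$-stable subspaces, commutative (see \eqref{dot} and \cite{BK}). Any $\widehat{\g}^0$-primitive vector it contains has a $\g^0$-weight $\nu$ occurring as the weight of a $\g^0$-primitive vector in $V(\mu_i)\otimes V(\mu_j)$; the standing hypothesis then forces $\nu=\mu_r$, and the rigidity above places the associated primitive vector in degree $1$, i.e.\ on one of the lines $\C v_r$. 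Thus $M_i\cdot M_j\subseteq\sum_r M_r$, the induction closes, and since $\sum_r M_r$ contains the degree-one generators $\g$ it is a vertex subalgebra equal to $V_k(\g)$. The main obstacle is precisely this step: justifying that the primitive vectors of the dot product $M_i\cdot M_j$ are governed by the \emph{finite} tensor product $V(\mu_i)\otimes V(\mu_j)$, with no stray primitive hiding in higher degree. This is exactly where the conformal-weight rigidity (pinning each $\mu_r$-primitive to degree $1$) must be combined with the hypothesis, and where the super setting demands care, since one must track the $\ZZ/s\ZZ$-eigenspace grading induced by $\si$ and the attendant Koszul signs so that the products stay within the listed modules.

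Finally I would read off irreducibility, directness and simplicity. Because every primitive vector of $M_i$ has weight $\mu_i$ and degree $1$ (by the previous paragraph and rigidity), it must be proportional to $v_i$; so $M_i$ has a single primitive line and $M_i\cong L_{\g^0}(\mu_i)$ is irreducible. In particular $M_0=\mathcal V_k(\g^0)\cong L_{\g^0}(0)=V_{\mathbf k}(\g^0)$ is the simple affine vertex algebra, which settles simplicity. Directness follows by comparing the degree-zero and degree-one primitive spaces: the vectors $\vac,v_1,\dots,v_t$ are linearly independent and occupy distinct $\g^0$-isotypic pieces of $\g$, so for each $r$ the intersection $M_r\cap\sum_{r'\ne r}M_{r'}$ contains no primitive vector and, $M_r$ being irreducible, must vanish. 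This gives $V_k(\g)=\bigoplus_{i=0}^t M_i=V_{\mathbf k}(\g^0)\oplus\bigl(\bigoplus_{i=1}^t L_{\g^0}(\mu_i)\bigr)$, which is exactly \eqref{d}.
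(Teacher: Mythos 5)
Your proposal is correct and follows essentially the same route as the paper: both arguments rest on the dot--product (fusion rules) structure of the $\widehat\g^0$--modules generated by $\C\vac\oplus\g^1$, combined with the observation that the hypothesis plus equality of conformal vectors forces any primitive vector of weight $\mu_r$ into conformal degree $\le 1$, hence into $\g(-1)\vac+\C\vac$. The one step you flag as ``the main obstacle'' --- ruling out stray primitives in $M_i\cdot M_j$ --- is exactly where the paper supplies its technical device, namely choosing the maximal $n$ with $U(n)U\not\subset\mathcal U$ and exhibiting the offending vector as a singular vector of a subquotient $(\mathcal V+\mathcal U)/(\mathcal W+\mathcal U)$, so that the hypothesis on primitive (not merely singular) vectors applies; filling in your outline would amount to reproducing that construction.
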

\begin{proof}
Set $U=\C\vac\oplus \g^1\subset V_k(\g)$ and $\mathcal U=\mathcal V_k(\g^0)\cdot U$. It is enough to show that $V_k(\g)=\mathcal U$. Since $\mathcal U$ generates $V_k(\g)$ it suffices to check that $\mathcal U$ is a vertex subalgebra, which is equivalent to checking that $\mathcal U\cdot\mathcal U\subset \mathcal U$.

Since $\mathcal U\cdot \mathcal  V_k(\g^0)=\mathcal V_k(\g^0)\cdot \mathcal U$, we have
$$
\mathcal U\cdot \mathcal U=\mathcal V_k(\g^0)\cdot U\cdot \mathcal V_k(\g^0)\cdot U=\mathcal V_k(\g^0)\cdot U\cdot U
$$
so it is enough to check that $U\cdot U\subset\mathcal U$. Assume the contrary. Then there is $n$ such that $U(n)U+\mathcal U$ is nonzero in $V_k(\g)/\mathcal U$. Since $U$ is finite dimensional, we can assume $n$ to be maximal. It follows that there are $i,j$ and a vector $v$ in $V_{\g^0}(\mu_i)(n) V_{\g^0}(\mu_j)$ such that $(v+\mathcal U)/\mathcal U$ is nonzero. Since $V_{\g^0}(\mu_i)(n) V_{\g^0}(\mu_j)$ is finite--dimensional, it is $\g^0$--generated by $\g^0$--primitive vectors, thus we can assume that $v$ is $\g^0$--primitive. Let $V$, $W$ be $\g^0$--submodules of $V_{\g^0}(\mu_i)(n) V_{\g^0}(\mu_j)$ such that $v+W$ is the highest weight vector of $V/W\simeq V_{\g^0}(\nu)$. In particular, if $\eta$ is a weight occurring in $W$, then $\eta<\nu$. 

Note that, if $x\in \g^0$ and $m>0$, then 
\begin{equation}\label{VdotV}x(m)V_{\g^0}(\mu_i)(n) V_{\g^0}(\mu_j)\subset ad(x)(V_{\g^0}(\mu_i))(n+m)V_{\g^0}(\mu_j).
\end{equation}
In particular, $x(m)W\subset\mathcal U$. Set $\mathcal W=\mathcal V_k(\g^0)\cdot W$. By the above observation if $\eta$ is a weight occurring in $(\mathcal W+\mathcal U)/\mathcal U$, then $\eta<\nu$. This implies that $v+\mathcal U\notin (\mathcal W+\mathcal U)/\mathcal U$. If $\a$ is a positive root in $\g^0$, then $x_\a(0)v\in W$, so $x_\a(0)(v+\mathcal U)\in \mathcal W+\mathcal U$. Moreover, by \eqref{VdotV} again, $x(m)v\in\mathcal U$ for $m>0$. Set $\mathcal V=\mathcal V_k(\g^0)\cdot V$. It follows that $v+\mathcal U$ is $\widehat \g^0$--singular vector in $(\mathcal V+\mathcal U)/(\mathcal W+\mathcal U)$.

By our hypothesis, $\nu=\mu_r$ for some $r$. Then, by \eqref{numcheck}, $\Delta_v=1$ or $\Delta_v=0$. This implies that $v\in\g(-1)\vac+\C\vac$, thus $v\in \mathcal U$, and we reach a contradiction.
\end{proof}
 \begin{rem}\label{conditionsfinite}
The hypothesis of the previous theorem hold whenever for all primitive vectors of weight $\nu$ occurring in $V_{\g^0}(\mu_i)\otimes  V_{\g^0}(\mu_j) $, one has that either $\nu=\mu_r$ for some $r$ or 
\begin{equation}\label{finiote}
 \sum_{r=1}^s\frac{ ( \nu^r, \nu ^r+ 2 \rho^{r})_r }{ 2 (u_r(k)+ h^\vee _{r})}\not\in \ganz_+.
\end{equation}
 \end{rem}
Let now assume that   $\g^0_0=\C\varpi$ and that $\g^1$ decomposes as 
$$\g^1=V_{\g^0}(\mu)\oplus V_{\g^0}(\mu^*).$$
Observe that this is the case when $\g^0=\g_{\bar 0}$ and $\g_{\bar 0}$ is not semisimple.

 By a suitable choice of $\varpi$ we can assume that $\varpi$  acts as the identity on $V_{\g_0}(\mu)$ and as minus the identity on its dual. Define $\epsilon\in (\g^0_0)^*$ by setting \begin{equation}\label{epsilon}\epsilon(\varpi)=1.\end{equation}

If $q\in\ganz$, let $V_k(\g)^{(q)}$ be the eigenspace for the action of $\varpi{(0)}$ on $V_k(\g)$ corresponding to the eigenvalue $q$. Let $\{0,\nu_1,\cdots,\nu_m\}$ be the set of weights of $\g^0$--primitive vectors occurring in $V_{\g^0}(\mu) \otimes V_{\g^0}(\mu^*)$.

The following result is a super analog of \cite[Theorem 2.4]{AKMPP}.

\begin{theorem}\label{generalhs}
Assume that  $k\ne 0$ and  that $V_k(\g)^{(0)} $ does not contain\break 
$\widehat\g^0$-primitive  vectors of weight $\nu_{r}$, where $r= 1,\ldots,m$. 
Then  
\item \begin{equation}\label{dhs}\mathcal V_k(\g^0)\cong V_\mathbf{k}(\g^0)= V_k(\g)^{(0)}\end{equation}
and  $V_k(\g)^{(q)}$ is a simple $V_\mathbf{k}(\g^0)$--module, so that
 $V_k(\g)$ is completely reducible as a $\widehat\g^0$-module.
Moreover
\begin{align*}V_k(\g)^{(q)}&=\underbrace{V_{\g^0}(\mu)\cdot V_{\g^0}(\mu)\cdot \ldots\cdot V_{\g^0}(\mu)}_{\text{$q$ times}}&\text{if $q>0$,}\\
V_k(\g)^{(q)}&=\underbrace{V_{\g^0}(\mu^*)\cdot V_{\g^0}(\mu^*)\cdot \ldots\cdot V_{\g^0}(\mu^*)}_{\text{$|q|$ times}}\!\!\!\!\!\!\!&\text{if $q<0$}.
\end{align*}

\end{theorem}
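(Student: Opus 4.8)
The plan is to exploit the $\ZZ$-grading of $V_k(\g)$ induced by $\varpi(0)$. Since $\g$ generates $V_k(\g)$ and $\varpi$ acts with eigenvalue $0$ on $\g^0$, $+1$ on $A:=V_{\g^0}(\mu)$ and $-1$ on $B:=V_{\g^0}(\mu^*)$, the derivation $\varpi(0)$ has integer eigenvalues and $V_k(\g)=\bigoplus_{q\in\ZZ}V_k(\g)^{(q)}$, with the dot product additive in $q$. Here $A\subset V_k(\g)^{(1)}$, $B\subset V_k(\g)^{(-1)}$ and $\mathcal V_k(\g^0)\subseteq V_k(\g)^{(0)}$. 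I record two structural facts that drive the argument: first, $A$ and $B$ are abelian and isotropic, since $[A,A]=[B,B]=0$ (the $\varpi$-weight-$2$ and $(-2)$ subspaces of $\g=\g^0\oplus A\oplus B$ vanish) and $(A|A)=(B|B)=0$ (by $\varpi$-invariance of the form, $2(a|c)=([\varpi,a]|c)+(a|[\varpi,c])=0$); hence the modes of the fields attached to $A$, and those attached to $B$, mutually commute. Second, $V_k(\g)$ being simple, the dot product has no zero divisors.

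The first real step is to identify the degree-zero part: $V_k(\g)^{(0)}=\mathcal V_k(\g^0)$, and this algebra is simple, hence isomorphic to $V_{\mathbf{k}}(\g^0)$. The inclusion $\supseteq$ is clear. For the reverse I argue exactly as in the proof of Theorem \ref{general}: a hypothetical $\widehat\g^0$-primitive vector in $V_k(\g)^{(0)}\setminus\mathcal V_k(\g^0)$ of minimal conformal weight must occur inside a product $V_{\g^0}(\mu)\cdot V_{\g^0}(\mu^*)$, so its $\g^0$-weight lies in $\{0,\nu_1,\dots,\nu_m\}$; the weight $0$ contributions stay in $\mathcal V_k(\g^0)$, while the weights $\nu_r$ are excluded by hypothesis. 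This forces $V_k(\g)^{(0)}=\mathcal V_k(\g^0)$; simplicity then follows from simplicity of $V_k(\g)$ together with the grading and the no-zero-divisor property, and since $\mathcal V_k(\g^0)$ is a quotient of $V^{\mathbf{u}(k)}(\g^0)$ it must be the simple quotient $V_{\mathbf{k}}(\g^0)$.

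With the degree-zero algebra understood, the simplicity of each $V_k(\g)^{(q)}$ as a $V_{\mathbf{k}}(\g^0)$-module is the clean part. As the embedding is conformal, $(\omega_{\g^0})_0=T$, so every $V_{\mathbf{k}}(\g^0)$-submodule of $V_k(\g)^{(q)}$ is $T$-stable and the no-zero-divisor property applies. Given a nonzero submodule $M\subseteq V_k(\g)^{(q)}$, both $M\cdot V_k(\g)^{(-q)}$ and $V_k(\g)^{(-q)}\cdot V_k(\g)^{(q)}$ are nonzero $V_{\mathbf{k}}(\g^0)$-ideals inside the simple algebra $V_k(\g)^{(0)}$, hence each equals $V_{\mathbf{k}}(\g^0)$ (the first in particular containing $\vac$); associativity and commutativity of the dot product then give
\[
V_k(\g)^{(q)}=V_{\mathbf{k}}(\g^0)\cdot V_k(\g)^{(q)}=\big(M\cdot V_k(\g)^{(-q)}\big)\cdot V_k(\g)^{(q)}=M\cdot\big(V_k(\g)^{(-q)}\cdot V_k(\g)^{(q)}\big)=M\cdot V_{\mathbf{k}}(\g^0)=M,
\]
proving simplicity of each eigenspace and, summing over $q$, complete reducibility of $V_k(\g)$ as a $\widehat\g^0$-module.

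Finally, to obtain the explicit formula $V_k(\g)^{(q)}=A^{\cdot q}$ for $q>0$ (and its mirror with $B$ for $q<0$), I note that $A^{\cdot q}$ is a nonzero subspace of $V_k(\g)^{(q)}$ by the no-zero-divisor property, and I argue that it is in fact a $V_{\mathbf{k}}(\g^0)$-submodule; simplicity of $V_k(\g)^{(q)}$ then forces $A^{\cdot q}=V_k(\g)^{(q)}$. This is where the abelian--isotropic nature of $A$ is decisive: commutation of the modes of $A$ lets me rewrite $x(m)\big(a(n)b\big)$, for $x\in\g^0$ and $a,b\in A$, as a combination of elements of $A\cdot A$, the vanishing of $[A,A]$ and $(A|A)$ killing the terms that would otherwise escape. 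I expect this last step---showing that the iterated dot product $A^{\cdot q}$ is already $\widehat\g^0$-stable rather than merely generating $V_k(\g)^{(q)}$ over $V_{\mathbf{k}}(\g^0)$---to be the main technical obstacle, since $A^{\cdot q}$ naively only produces vectors with one factor at mode $-1$, and closing it up under $T$ ultimately relies on the null-vector relations special to the simple quotient at the conformal level $k$. The degree-zero identification of the second paragraph is the other delicate point, being the sole place where the hypothesis on primitive vectors is invoked.
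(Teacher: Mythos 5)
The paper gives no proof of this theorem: it is stated as a super analog of \cite[Theorem 2.4]{AKMPP} and the proof is left to that reference, so the comparison below is with that argument and with the proof of Theorem \ref{general} given here. Your overall architecture is the right one and is essentially the standard one: identify $V_k(\g)^{(0)}$ with $\mathcal V_k(\g^0)$ via the primitive-vector hypothesis, deduce simplicity of the degree-zero subalgebra from simplicity of $V_k(\g)$, and then use associativity/commutativity of the dot product together with the absence of zero divisors to show each $V_k(\g)^{(q)}$ is a simple module; your third paragraph, in particular, is a clean and correct way to get simplicity of the graded pieces. Two small points in the degree-zero step deserve to be made explicit: you need the exhaustion $V_k(\g)^{(0)}=\sum_{j\ge 0}\mathcal V_k(\g^0)\cdot\bigl(V_{\g^0}(\mu)\cdot V_{\g^0}(\mu^*)\bigr)^{\cdot j}$ before you can say every offending primitive vector ``occurs inside'' $V_{\g^0}(\mu)\cdot V_{\g^0}(\mu^*)$, and since the hypothesis does not exclude primitive vectors of weight $0$, disposing of those requires conformality of the embedding (so that such a vector has conformal weight $0$ and hence lies in $\C\vac$) --- exactly as at the end of the proof of Theorem \ref{general}. (Also, $(\omega_{\g^0})_0$ is $L_0$, not $T$, in the paper's indexing; what you actually need is that $T=(\omega_\g)_{(0)}=(\omega_{\g^0})_{(0)}$ preserves $\mathcal V_k(\g^0)$-submodules.)

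The genuine gap is your final step, and it comes from reading the displayed formula too literally. You set out to prove that the span $A^{\cdot q}$ of iterated $n$-th products of the finite-dimensional space $A=V_{\g^0}(\mu)$ is itself $\widehat\g^0$-stable, concede that you cannot close this, and appeal to unspecified null-vector relations in the simple quotient. That route cannot work as stated: for $q=1$ the literal claim would read $V_k(\g)^{(1)}=V_{\g^0}(\mu)$, which is impossible on dimension grounds, so the formula must be understood (as it is used in Proposition \ref{justapply}) as saying that $V_k(\g)^{(q)}$ is the $\mathcal V_k(\g^0)$-module generated by $V_{\g^0}(\mu)^{\cdot q}$, i.e.\ the $q$-fold fusion product of $L_{\g^0}(\mu)$. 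With that reading the step is immediate from what you have already proved: $\mathcal V_k(\g^0)\cdot A^{\cdot q}$ is a $\mathcal V_k(\g^0)$-submodule of $V_k(\g)^{(q)}$ by associativity, it is nonzero because the dot product in the simple algebra $V_k(\g)$ has no zero divisors, and simplicity of $V_k(\g)^{(q)}$ forces equality. So the ``main technical obstacle'' you flag is not an obstacle at all once the statement is interpreted correctly; the abelian--isotropic structure of $A$ and $B$, while true, is not needed here.
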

 \begin{rem}\label{conditionsfinitered}
The assumption of Theorem \ref{generalhs} holds whenever, for $r=1,\ldots,m$,
\bea\label{finioths}
 \sum_{j=0}^t\frac{ ( \nu_{r}^j , \nu _{r}^j + 2 \rho^j) _j}{ 2 (u_j(k)+ h^\vee _{j})}\not\in \ganz_+.
\eea
\end{rem}

\section{Conformal levels} 

\begin{defi}A level $k\in\C$ is said to be a conformal level for the emebedding $\g^0\subset \g$ if
\begin{enumerate}
\item $k+h^\vee\ne0$,
\item $u_j(k)+h^\vee_j\ne 0$ for all $j$,
\item $\mathcal V_k(\g^0)$ is conformally embedded in $V_{k}(\g)$.
\end{enumerate}
\end{defi}

 \begin{theorem} \label{cf} The conformal levels for the embeddings $\g_{\bar 0}\subset \g$ are as follows.
 
\begin{enumerate}
\item \label{slmn}If ${\mathfrak g} =sl(m|n) $, $m>n\ge 2, m\ne n+1$, the conformal levels are $k=1,-1,\frac{n-m}{2}$; \\
   If  $n =1$,  $m \ge 3$,  the conformal levels are $k=-1,\frac{1-m}{2}$;  \\
    If  $m = n+1$, $m \ge 3$, the conformal levels are $k=1,-  \frac{1}{2}$;  \\
    If $m=2, n=1$ the only conformal level is $k=-\frac{1}{2}$;
\item \label{pslmm} If ${\mathfrak g} =psl(m|m) $, the conformal levels are $k=1,-1$;
\item  \label{Bnm} If ${\mathfrak g}$ is of type $B(m,n)$, the    conformal levels are $k=1,\frac{3-2m+2n}{2}$   if $m\ne n$, $k=\frac{3}{2}$ if $m=n$, and $k=-\frac{2n+3}{2}$ if $m=0$.
\item  \label{Dnm} If ${\mathfrak g}$ is of type $D(m,n)$,  the conformal levels are $k=1,2-m+n$   if $m\ne n$ and $k=1$ if $m=n$;
\item  \label{Cn1} If ${\mathfrak g}$ is of type $C(n+1)$,  the conformal levels are $k=1,1+n$  if $n>1$ and $k=2$ if $n=1$;
\item  \label{F4} If ${\mathfrak g}$ is of type $F(4)$,  the conformal levels are $k=1,-\frac{3}{2}$;
\item  \label{G3} If ${\mathfrak g}$ is of type $G(3)$,  the conformal levels are $k=1,-\frac{4}{3}$;
\item  \label{D21a} If ${\mathfrak g}$ is of type $D(2,1,a)$,  the conformal levels are $k=1,-1-a,a$  for $a\notin{1,-1/2, -2}$;
 the only conformal level for $D(2,1; -\frac{1}{2})$ is  $k=\half$; the only conformal level for both $D(2,1;1)$ and $D(2,1;-2)$ is $k=1$.
\end{enumerate}
\end{theorem}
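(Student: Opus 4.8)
The plan is to reduce the entire statement to the numerical criterion of Corollary \ref{numcheckk}. We take $\g^0=\g_{\bar 0}$, so that the orthocomplement is $\g^1=\g_{\bar 1}$, which is completely reducible as a $\g_{\bar 0}$-module (so the hypothesis of the Corollary is met). Then a noncritical level $k$ is conformal precisely when equation \eqref{numcheck} holds for every irreducible constituent $V_{\g_{\bar 0}}(\mu_i)$ of $\g_{\bar 1}$, subject to the nondegeneracy constraints $k+h^\vee\ne 0$ and $u_j(k)+h^\vee_j\ne 0$ from the definition of conformal level. Thus the theorem becomes, type by type in the classification, a single explicit rational equation in the one unknown $k$, and the proof is the solution of these equations together with the removal of roots that violate the nondegeneracy constraints.

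First I would assemble, for each isomorphism type, the data feeding \eqref{numcheck}: (i) the splitting of $\g_{\bar 0}$ into its abelian part $\g^0_0$ and simple ideals $\g^0_1,\dots,\g^0_s$; (ii) the decomposition of $\g_{\bar 1}$ as a $\g_{\bar 0}$-module; (iii) the Casimir numerators $(\mu_i^j,\mu_i^j+2\rho^j)_j$ of each constituent on each factor; and (iv) the dual Coxeter numbers $h^\vee_j$ together with the proportionality constants $\lambda_j$ relating the restriction of $(\cdot|\cdot)$ to $\g^0_j$ to the normalized form on that ideal, so that $u_j(k)=\lambda_j k$; all of this is read off Table 6.1 of \cite{Kw}. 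For the orthosymplectic families $B(m,n)$, $D(m,n)$ and the exceptionals $F(4)$, $G(3)$, $D(2,1;a)$ the module $\g_{\bar 1}$ is \emph{irreducible}, so \eqref{numcheck} is a single equation; here the essential structural point is that the supertrace form imposes opposite signs (up to the $\lambda_j$) on the orthogonal and symplectic factors. For $sl(m|n)$, $psl(m|m)$ and $C(n+1)$ the even part carries an abelian summand $\g^0_0=\C\varpi$ and $\g_{\bar 1}=V_{\g_{\bar 0}}(\mu)\oplus V_{\g_{\bar 0}}(\mu^*)$ splits into two mutually dual pieces; the abelian contribution is handled through the $M_c(1)$-description of $V^{u_0(k)}(\g^0_0)$ and the functional $\epsilon$ of \eqref{epsilon}, and the two dual constituents yield the same equation by symmetry of the Casimir.

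Next I would clear denominators in \eqref{numcheck}. Each factor of $\g_{\bar 0}$ with nonzero numerator contributes one denominator linear in $k$ (the abelian factor through $\tfrac{1}{2\lambda_0 k}$, using $h^\vee_0=0$); if $p$ distinct such denominators occur then the constant $1$ produces the degree-$p$ top term while the sum of numerator terms has degree $p-1$, so the cleared equation has degree $p$ and yields exactly $p$ candidate levels. This already predicts the right count: two for $D(m,n)$, $B(m,n)$, $C(n+1)$, $F(4)$, $G(3)$, $psl(m|m)$ (two contributing factors), and three for $sl(m|n)$ (two simple ideals plus the abelian one) and for $D(2,1;a)$ (three simple ideals). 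Substituting the Casimir and form data then produces the type-dependent roots $1$, $2-m+n$, $\tfrac{3-2m+2n}{2}$, $1+n$, $-\tfrac32$, $-\tfrac43$, $-1$, $\tfrac{n-m}{2}$, $-1-a$, $a$ appearing in the statement; for the $sl(m|n)$ family, for instance, the cubic has the uniform root set $\{1,-1,\tfrac{n-m}{2}\}$.

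The \emph{main obstacle}, and where the genuine work lies, is the treatment of the degenerate and coincidental cases, which is exactly what generates the many exceptional clauses. When a simple ideal degenerates (e.g. $n=1$ in $sl(m|n)$, where $sl(1)=0$ drops a denominator, or $n=1$ in $C(n+1)$) the equation and its root count change; when $m=n$ in type $B$ or $D$ the two factors and their $h^\vee_j$ coincide; and for $D(2,1;a)$ the three $sl(2)$ factors carry $a$-dependent forms, so the candidate roots $1,-1-a,a$ must be tested against the orbit identifications $a\mapsto a^{-1}$, $a\mapsto -1-a$ and against the special values $a\in\{1,-1/2,-2\}$ (a single orbit, with $D(2,1;1)=osp(4|2)$) where roots collide or fail a nondegeneracy condition, collapsing to one surviving level. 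Most importantly, some roots of the cleared equation must be discarded because they violate condition (1) or (2): e.g. for $m=n+1$ one has $h^\vee=m-n=1$, so $k=-1$ fails $k+h^\vee\ne0$ and is excluded, which is precisely why the list shrinks from $\{1,-1,-\tfrac12\}$ to $\{1,-\tfrac12\}$. Finally, the low-rank isomorphisms $sl(2|1)\cong osp(2|2)$ and $D(2,1;1)=osp(4|2)$ must be checked for consistency, the apparent mismatch in the numerical levels being entirely due to the different normalizations of $(\cdot|\cdot)$ in Table 6.1. Verifying this exclusion-and-identification bookkeeping uniformly, rather than the algebra of solving the equations, is the delicate part of the argument.
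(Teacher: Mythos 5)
Your proposal is correct and follows essentially the same route as the paper: apply Corollary \ref{numcheckk}, write down equation \eqref{numcheck} case by case from the decomposition of $\g_{\bar 1}$ and the restriction of the supertrace form to the factors of $\g_{\bar 0}$, solve the resulting rational equation in $k$, and then discard roots violating the non-criticality conditions (your observation that for $m=n+1$ it is the root $k=-1=-h^\vee$ that must be discarded is the correct reading of the exclusion). No substantive difference in method.
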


\begin{proof}
We apply Corollary \ref{numcheckk} and solve \eqref{numcheck}.  For each case we list here the relevant data. 

\noindent (1) $\g=sl(m|n)$, $m>n \ge 1$: in this case $\g_{\bar 0}= \C \varpi\times sl(m)\times sl(n)$ (disregard the rightmost factor when $n=1$), where 
$$
\varpi=\frac{1}{n-m}\begin{pmatrix} nI_m&0\\0&mI_n\end{pmatrix}.
$$
The form is the supertrace form, hence it restricts to the normalized invariant form on $sl(m)$ and to its opposite on $sl(n)$. It follows that $u_0(k)=u_1(k)=k$ and $u_2(k)=-k$. As $\g_{\bar 0}$-module,
$$
\g_{\bar 1}=V_{\C\varpi}(\epsilon)\otimes V_{sl(m)}(\omega_1)\otimes V_{sl(n)}(\omega_{n-1})\oplus V_{\C\varpi}(-\epsilon)\otimes V_{sl(m)}(\omega_{m-1})\otimes V_{sl(n)}(\omega_{1})
$$
Since
$(\epsilon,\epsilon)=\frac{n-m}{mn}$ and, in the normalized invariant form of  $sl(r)$, 
$$
(\omega_1,\omega_1+2\rho)=(\omega_{r-1},\omega_{r-1}+2\rho)=\frac{(r-1)(r+1)}{r},
$$
equation \eqref{numcheck} reads for both factors of $\g_{\bar 1}$
$$
\frac{(m-1)(m+1)}{2m(k + m)}+ \frac{(n-1)(n+1)}{2n(-k + n)} + \frac{n-m}{2mnk}=1
$$
whose solutions are $1,-1,\half(n-m)$  if $n>1$ and $-1,\half(1-m)$ if $n=1$. Next we have to check that the previous values are not critical for $\g$: this excludes $k=1$ when $m=n+1$.

\noindent (2) $\g=psl(m|m)$: in this case $\g_{\bar 0}= sl(m)\times sl(m)$.
The form is the form induced by the supertrace form on $sl(m|m)$, hence it restricts to the normalized invariant form on the first $sl(m)$-factor of $\g_{\bar 0}$ and to its opposite on the second factor. It follows that $u_1(k)=k$ and $u_2(k)=-k$. As $\g_{\bar 0}$-module,
$$
\g_{\bar 1}= V_{sl(m)}(\omega_1)\otimes V_{sl(m)}(\omega_{m-1})\oplus  V_{sl(m)}(\omega_{m-1})\otimes V_{sl(m)}(\omega_{1}),
$$
thus equation \eqref{numcheck} reads for both factors of $\g_{\bar 1}$
$$
\frac{(m-1)(m+1)}{2m(k + m)}+ \frac{(m-1)(m+1)}{2m(-k + m)} =1
$$
whose solutions are $1,-1$.

\noindent (3) $\g$ of type $B(m,n)$: in this case $\g_{\bar 0}= so(2m+1)\times sp(2n)$.
The form is half the supertrace form. If $m>1$, $(\cdot|\cdot)$ restricts to the normalized invariant form on $so(2m+1)$ and to $-1/2$ the normalized invariant form on  $sp(2n)$. It follows that $u_1(k)=k$ and $u_2(k)=-k/2$.
As $\g_{\bar 0}$-module,
$$
\g_{\bar 1}= V_{so(2m+1)}(\omega_1)\otimes V_{sp(2n)}(\omega_{1}),
$$
thus equation \eqref{numcheck} reads
$$
\frac{m}{k+2m-1}+\frac{2n+1}{2(-k+2n+2)}=1.
$$
Its  solutions are $1,\frac{3-2m+2n}{2}$. Next we have to check that the previous values are not critical for $\g$: this excludes $k=1$ when $m=n$ .

If $m=1$ then $(\cdot|\cdot)$ restricts to twice the normalized invariant form on $so(3)$ and to $-1/2$ the normalized invariant form on  $sp(2n)$. It follows that $u_1(k)=2k$ and $u_2(k)=-k/2$.
As $\g_{\bar 0}$-module,
$$
\g_{\bar 1}= V_{so(3)}(2\omega_1)\otimes V_{sp(2n)}(\omega_{1}),
$$
thus equation \eqref{numcheck} reads
$$
\frac{1}{k+1}+\frac{2n+1}{2(-k+2n+2)}=1.
$$
Its  solutions are $1,\frac{1+2n}{2}$. Next we have to check that the previous values are not critical for $\g$: this excludes $k=1$ when $n=1$ .

Finally, in the case $m=0$, $(\cdot|\cdot)$ restricts to  $1/2$ the normalized invariant form on  $sp(2n)$. It follows that  $u_1(k)=k/2$.
As $\g_{\bar 0}$-module,
$$
\g_{\bar 1}= V_{sp(2n)}(\omega_{1}),
$$
thus equation \eqref{numcheck} reads
$$
\frac{2n+1}{2(k+2n+2)}=1.
$$
whose unique solution is $-\frac{3+2n}{2}$. This value is never critical for $\g$.

\noindent (4) $\g$ of type $D(m,n)$: in this case $\g_{\bar 0}= so(2m)\times sp(2n)$.
The form is half the supertrace form, hence it restricts to the normalized invariant form on $so(2m)$ and to $-1/2$ the normalized invariant form on  $sp(2n)$. It follows that $u_1(k)=k$ and $u_2(k)=-k/2$.
As in case (3), as $\g_{\bar 0}$-module, 
$$
\g_{\bar 1}= V_{so(2m)}(\omega_1)\otimes V_{sp(2n)}(\omega_{1}),
$$
thus equation \eqref{numcheck} reads
$$
\frac{2m-1}{2(k+2m-2)}+\frac{n+1/2}{2(1-k/2+n)}=1.
$$
Its solutions are $1, 2-m+n$. Examining the critical values  excludes $k=2$ when $m=n$.

\noindent (5) $\g$ of type $C(n+1)$: in this case $\g_{\bar 0}= \C \varpi\times sp(2n)$, where 

$$
\varpi=\left(
\begin{array}{c|c}
H&0 \\
\hline
0 & 0
\end{array}
\right),\quad H=\begin{pmatrix} 1&0\\0&-1\\\end{pmatrix}.
$$
 The form is $1/2$ the supertrace form, and $u_0(k)=k$ and $u_1(k)=-(1/2)k$. As $\g_{\bar 0}$-module, 
$$
\g_{\bar 1}= V_{\C\varpi}(\epsilon)\otimes V_{sp(2n)}(\omega_{1})\oplus V_{\C\varpi}(-\epsilon)\otimes V_{sp(2n)}(\omega_{1}),
$$
thus equation \eqref{numcheck} reads
$$
\frac{1}{2k}+\frac{n+1/2}{2-k+2n}=1.
$$
Its solutions are $1,1+n$, and $k=1$ should be excluded when $n=1$.

\noindent (6) $\g$ of type $F(4)$: in this case $\g_{\bar 0}= sl(2)\times so(7)$. We choose the invariant form  in such a way  that it restricts to the normalized invariant form on $so(7)$, and $u_1(k)=-2/3k, 
u_2(k)=k$.
We have $$
\g_{\bar 1}= V_{sl(2)}(\omega_1)\otimes V_{so(7)}(\omega_{3})
$$
thus equation \eqref{numcheck} reads
$$
\frac{9}{8(-k+3)}+\frac{21}{8(k+5)}=1.
$$
Its solutions are $-\frac{3}{2}, 1$.

\noindent (7) $\g$ of type $G(3)$: in this case $\g_{\bar 0}= sl(2)\times G_2$. We choose the invariant form in such a way    that it restricts to the normalized invariant form on $G_2$, and $u_1(k)=-3/4k, 
u_2(k)=k$.
We have $$
\g_{\bar 1}= V_{sl(2)}(\omega_1)\otimes V_{G_2}(\omega_{1})
$$
thus equation \eqref{numcheck} reads
$$
\frac{3}{-3k+8}+\frac{2}{k+4}=1.
$$
Its solutions are $-\frac{4}{3}, 1$.

\noindent (8) $\g$ of type $D(2,1;a)$: in this case $\g_{\bar 0}= sl(2)\times sl(2)\times sl(2)$. We choose the invariant form in such a way   that it restricts to the normalized invariant form on the first $sl(2)$  and $u_1(k)=k,  u_2(k)=k/a, u_3(k)=-\frac{k}{1+a}$.
We have $$
\g_{\bar 1}= V_{sl(2)}(\omega_1)\otimes V_{sl(2)}(\omega_{1})\otimes V_{sl(2)}(\omega_{1})
$$
thus equation \eqref{numcheck} reads
$$
\frac{3}{4(k+2)}+\frac{3}{4(\frac{k}{a}+2)}+\frac{3}{4(-\frac{k}{1+a}+2)}=1.
$$
Its solutions are $1,-1-a,a$ and some cases are excluded as specified in the statement.
\end{proof}

\begin{remark} \label{D21}
 Note that   for $\g$ of type $D(2,1;a)$  one can choose the parameter $a$ so that the subalgebra $\mathcal V _k(\g_{\bar 0})$ is non simple. For example, for $a= -\tfrac{3}{4}$, one can show that  $\mathcal V_k(\g_{\bar 0}) =V_1(sl(2)) \otimes V^{-4/3} (sl(2)) \otimes V_{-4}(sl(2))$. These non simple embeddings will  be investigated in our future papers.
\end{remark}

The next result gives some examples of conformal embeddings for $\g^0\subset \g$ with $\g^0$ not a Lie algebra.

\begin{theorem}\label{super-super}\ 

\noindent (1) Assume $n\ne m,m-1$.The conformal levels for the embedding $gl(n|m)\subset sl(n+1|m)$ are $k=1$ and $k=-\frac{n+1-m}{2}$.

\noindent (2) The conformal levels for the embedding $sl(2) \times osp(3|2)\subset G(3)$ are $k=1$ and $k=-4/3$.
\end{theorem}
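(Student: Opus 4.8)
The plan is to treat both embeddings by the same scheme used in the proof of Theorem~\ref{cf}: decompose $\g^0$ as the sum of its (at most one--dimensional) centre $\g^0_0$ and its basic classical simple ideals $\g^0_1,\dots,\g^0_s$, determine how the normalized invariant form of $\g$ restricts to each ideal --- this fixes the rescaled levels $u_j(k)$ and the dual Coxeter numbers $h^\vee_j$ --- decompose the orthocomplement $\g^1$ as a $\g^0$--module $\bigoplus_i V_{\g^0}(\mu_i)$, compute each super--Casimir eigenvalue $(\mu_i^j,\mu_i^j+2\rho^j)_j$, and finally invoke Corollary~\ref{numcheckk}, i.e. solve \eqref{numcheck} for $k$. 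In each case the non--criticality conditions in the definition of conformal level are checked at the end, and this is what discards a few boundary values of the parameters.

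For (1) realize $gl(n|m)\hookrightarrow sl(n+1|m)$ by $A\mapsto\mathrm{diag}(A,-\mathrm{str}\,A)$, the extra coordinate being even. Since $n\ne m$ we have $gl(n|m)=\C\varpi\oplus sl(n|m)$ with $\varpi=\mathrm{diag}(I_n,\,m-n,\,I_m)$ the supertrace--corrected central element, so $s=1$, $\g^0_0=\C\varpi$ and $\g^0_1=sl(n|m)$. The form on $sl(n+1|m)$ is the supertrace form; it restricts to the standard form on $sl(n|m)$ (whence $u_1(k)=k$, $h^\vee_1=n-m$) and satisfies $(\varpi|\varpi)=(m-n)(m-n-1)$. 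The hypothesis $n\ne m-1$ is exactly what guarantees $(\varpi|\varpi)\ne 0$, so that $\C\varpi$ is a genuine Heisenberg factor, while $n\ne m$ is what makes $sl(n|m)$ basic simple with $h^\vee_1\ne 0$. The orthocomplement is $\g^1=V_{\g^0}(\mu)\oplus V_{\g^0}(\mu^*)$ with $V_{\g^0}(\mu)$ the standard $(n|m)$--module (this is precisely the situation of Theorem~\ref{generalhs}). Writing the conformal weight of a highest weight vector of $V_{\g^0}(\mu)$ as the sum of its Heisenberg and its $sl(n|m)$ contribution, \eqref{numcheck} becomes
\[
\frac{n-m+1}{2k(n-m)}+\frac{(n-m)^2-1}{2(n-m)\,(k+n-m)}=1,
\]
which clears to the quadratic $2k^2+(n-m-1)k-(n-m+1)=0$, i.e. $(k-1)\bigl(2k+(n-m+1)\bigr)=0$, with roots $k=1$ and $k=-\tfrac{n+1-m}{2}$. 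A check of the non--criticality conditions for $\g$ and for $sl(n|m)$ completes this part.

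For (2) we have $\g=G(3)$ and $\g^0=sl(2)\oplus osp(3|2)$ with no abelian part, so $s=2$, $\g^0_1=sl(2)$, $\g^0_2=osp(3|2)$. The representation--theoretic input is the description of the embedding inside $G(3)_{\bar 0}=sl(2)\oplus G_2$: the $sl(2)$--summand of $\g^0$ and the $so(3)$--factor of $osp(3|2)_{\bar 0}=so(3)\oplus sp(2)$ are the two commuting root $A_1$'s of $G_2$ (long and short, respectively), while the $sp(2)$--factor is the $sl(2)$--summand of $G(3)_{\bar 0}$, acting as $\mathbf{2}$ on $G(3)_{\bar 1}=\mathbf{2}\otimes\mathbf{7}$. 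Feeding the $A_1\times A_1$ branchings $\mathbf{7}=(\mathbf{2},\mathbf{2})\oplus(\mathbf{1},\mathbf{3})$ of the $\mathbf{7}$ and $\mathbf{14}=(\mathbf{3},\mathbf{1})\oplus(\mathbf{1},\mathbf{3})\oplus(\mathbf{2},\mathbf{4})$ of the adjoint of $G_2$ into $G(3)=\g_{\bar 0}\oplus(\mathbf{2}\otimes\mathbf{7})$ and subtracting $osp(3|2)$, one finds that the $16$--dimensional module $\g^1$ is the single tensor product $V_{sl(2)}(\omega_1)\otimes V_{osp(3|2)}(\lambda)$, where $V_{osp(3|2)}(\lambda)$ is the irreducible $8$--dimensional $osp(3|2)$--module whose even part is the $so(3)$ spin--$\tfrac32$ representation and whose odd part is (spin--$\tfrac12$ of $so(3)$)$\,\otimes\,$(standard of $sp(2)$). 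Reading off from Table 6.1 of \cite{Kw} the restrictions of the form (hence $u_1(k)$, $u_2(k)$ and $h^\vee_{osp(3|2)}$) together with the super--Casimir $(\lambda,\lambda+2\rho)_{osp(3|2)}$, equation \eqref{numcheck} becomes a rational identity in $k$ which clears to a quadratic with roots $k=1$ and $k=-\tfrac43$. As a consistency check these are exactly the levels obtained for $\g_{\bar 0}=sl(2)\times G_2\subset G(3)$ in Theorem~\ref{cf}\,(\ref{G3}).

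The computation in (1) is essentially routine once $\varpi$ and the restriction of the supertrace form are pinned down, the only subtleties being the degeneracies excluded by $n\ne m,m-1$ and the non--criticality checks (which, as in Theorem~\ref{cf}\,(\ref{slmn}), may force the removal of $k=1$ or of the second root at isolated values of $n,m$). The genuine work --- and the main obstacle --- is the representation theory in (2): one must establish the precise branching of $G(3)$ under $sl(2)\times osp(3|2)$, in particular the irreducibility of $\g^1=V_{sl(2)}(\omega_1)\otimes V_{osp(3|2)}(\lambda)$ and the identity of the highest weight $\lambda$, and then correctly normalize the $osp(3|2)$--form and evaluate the super--Casimir $(\lambda,\lambda+2\rho)_{osp(3|2)}$, the delicate point being the indefinite, super--symmetric nature of the form and the unusual root--length ratios it induces. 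Once these data are in hand, \eqref{numcheck} is solved directly, and the coincidence with Theorem~\ref{cf}\,(\ref{G3}) furnishes an independent check of the arithmetic.
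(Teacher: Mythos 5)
Your part (1) is correct and follows the paper's own route essentially verbatim: the same splitting $gl(n|m)=\C\varpi\oplus sl(n|m)$, the same application of Corollary \ref{numcheckk}, and the same quadratic $(k-1)\bigl(2k+n-m+1\bigr)=0$ (your second summand $\tfrac{(n-m)^2-1}{2(n-m)(k+n-m)}$ is the paper's $\tfrac{(m-n+1)(1-m+n)}{2(k+n-m)(m-n)}$ rewritten). Your remark that the hypothesis $n\ne m,m-1$ is precisely the nondegeneracy of $(\varpi|\varpi)=(m-n)(m-n-1)$ is a useful gloss that the paper leaves implicit.

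In part (2) you identify $\g^1$ by branching $G(3)=\bigl(sl(2)\oplus G_2\bigr)\oplus(\mathbf 2\otimes\mathbf 7)$ under the three commuting $A_1$'s, whereas the paper realizes $\g^0$ as the fixed points of $e^{\pi\sqrt{-1}\omega_2^\vee}$ and reads off $\g^1$ as the sum of the root spaces $\g_\a$ with $\a(\omega_2^\vee)$ odd, identifying its highest weight as $\be_1+\tfrac32\be_2$ for $\be_1=\a_1$, $\be_2=2\a_2+\a_3$. Your branching data are correct and both descriptions yield the same $16$--dimensional module $V_{sl(2)}(\omega_1)\otimes V_{osp(3|2)}(\be_1+\tfrac32\be_2)$ (the irreducible $8$--dimensional $osp(3|2)$--module tensored with the standard $sl(2)$--module), so this part of your argument is a legitimate alternative to the paper's. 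The gap is that you stop exactly at the point you yourself call the main obstacle: you never record the restrictions of the form (hence $u_1(k)$, $u_2(k)$ and $h^\vee_{osp(3|2)}$) nor the super--Casimir eigenvalue $(\l,\l+2\rho)$ of the $8$--dimensional module, and consequently never exhibit the rational equation whose roots you claim are $1$ and $-\tfrac43$. The paper does exhibit it: with its normalization, \eqref{numcheck} specializes to
$$\frac{3}{4(k+2)}+\frac{3}{8\left(\tfrac{3}{2}k-1\right)}=1,$$
which clears to $3k^2+k-4=(k-1)(3k+4)=0$. Without this (or an equivalent computation in your own normalization) the stated levels are unverified; the agreement with Theorem \ref{cf}\,(\ref{G3}) is a consistency check, not a substitute, since conformal levels of distinct subalgebras of the same $\g$ need not coincide.
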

\begin{proof}
Consider first the embedding $gl(n|m)\subset sl(n+ 1|m)$. We have
 $$\g^0=\C\varpi\oplus sl(n|m),\qquad \varpi=\frac{1}{n-m+1}I_{m,n}$$
 where
 $$
I_{m,n}= \begin{pmatrix} m-n & 0 & 0 \\0 & I_n & 0 \\ 0 & 0 &I_m\end{pmatrix}.$$
Then  $\g=\g^0\oplus \g^1$, where 
$$\g^1=V_{\C\varpi}(\epsilon)\otimes \C^{n|m}\oplus V_{\C\varpi}(-\epsilon)\otimes (\C^{n|m})^*.$$

The invariant form is the supertrace form.  We now compute the conformal levels. Equation \eqref{numcheckk} becomes in the present case
$$\frac{n-m+1}{2k(n-m)}+\frac{(m-n+1)(1-m+n)}{2(k+n-m)(m-n)}=1.$$
 Its solutions are $k=1$ and $k=-\frac{n+1-m}{2}$.
 
 Consider now the case of $sl(2) \times osp(3|2)\subset G(3)$. 
Recall that $\Dp$ is the distinguished positive set of roots for $G(3)$. Let $\a_1,\a_2,\a_3$ be the corresponding  simple roots ordered as in Table 6.1 of \cite{Kw}. Let $\omega_i^\vee\in\h$ be such that $\a_j(\omega_i^\vee)=\delta_{ji}$. Then $\g^0$ is the fixed point set of $\sigma=e^{\pi\sqrt{-1}\omega^\vee_2}$. In particular, one sees that 
$$\g^0=\h\oplus\bigoplus\limits_{\a(\omega_2^\vee)\text{ even}}\g_\a,\quad\g^1=\bigoplus\limits_{\a(\omega_2^\vee)\text{ odd}}\g_\a.
$$ 

From this explicit description one sees that one can choose the simple roots for $osp(3|2)$ to be $\be_1=\a_1$ and $\be_2=2\a_2+\a_3$ and that 
 $$
 \g^1= V_{sl(2)}(\omega_1)\otimes V_{osp(3|2)}(\be_1+3/2\beta_2).
 $$
 Equation \eqref{numcheckk} becomes in the present case
$$\frac{3}{4(k+2)}+\frac{3}{8(\tfrac{3}{2}k-1)}=1$$
whose solutions are $k=1$ and $k=-4/3$.
\end{proof}
\begin{remark}
Note that a  conformal level is either $1$ or collapsing (see \cite{AKMPP-JA} for the notion of collapsing level). There are however a few negative collapsing levels which are not conformal.
\end{remark}

\section{Decompositions for the embedding $\g_{\bar 0}\subset \g$}

\subsection{Easy cases }\label{easy}

In the following proposition we list the cases when \eqref{d},                 \eqref{dhs} hold since conditions \eqref{finiote}, \eqref{finioths} are verified. To simplify some formulas we also introduce the following notation for some $V_{-1}(sl(m))$--modules:
$$   U_ s ^{(m)}  = L_{sl(m)} ( s \omega_1), \quad   U_ {-s}  ^{(m)}  = L_{sl(m)} (s \omega_{m-1} ),\ s\in\Z_+.$$

\begin{proposition}\label{justapply}
\bea\nonumber
(1)\ V_{-4/3}(G(3))&=&V_1(sl(2))\otimes V_{-4/3}(G_2) \oplus L_{sl(2)}(\omega_1)\otimes L_{G_2}(\omega_1),\\\nonumber
(2)\ V_{-3/2}(F(4))&=&V_1(sl(2))\otimes V_{-3/2}(so(7)) \oplus L_{sl(2)}(\omega_1)\otimes L_{so(7)}(\omega_3),\\\nonumber
(3)\ V_{1}(B(m,n))&=&V_1(so(2m+1))\otimes V_{-1/2}(sp(2n))\\\nonumber
&& \oplus L_{so(2m+1)}(\omega_1)\otimes L_{sp(2n)}(\omega_1), m\ne n,\\\nonumber
(4)\ \ V_{k}(B(0,n))&=& V_{-(2n+3)/4}(sp(2n))\\\nonumber
&& \oplus  L_{sp(2n)}(\omega_1),\ k=-(2n+3)/2,\\\nonumber
(5)\ V_{1}(D(m,n))&=&V_1(so(2m))\otimes V_{-1/2}(sp(2n)) \\\nonumber
&&\oplus L_{so(2m)}(\omega_1)\otimes L_{sp(2n)}(\omega_1),\,\, m\ne n+1,\\\nonumber
(6)\ V_{1}(C(n+1))&=& M_c(1)\otimes V_{-1/2}(sp(2n))\\\nonumber
&&\oplus\sum_{q\in {\Z\backslash\{0\}} } M_c(1,2q)\otimes V_{-1/2}(sp(2n))\\\nonumber
&&\oplus\sum_{q \in {\Z} } M_c(1,2q+1)\otimes L_{sp(2n)}(\omega_1),\\\nonumber
(7)\ V_{1}(sl(m|n))&=&M_c(1)\otimes V_1(sl(m))\otimes V_{-1}(sl(n))\\\nonumber
&&\oplus \sum_{q \in\Z\backslash\{0\}}  M_c(1,\sqrt{\tfrac{n-m}{nm}}qm)\otimes V_1(sl(m)) \otimes U^{(n)}_{-qm} \\\nonumber
&&\oplus \!\!\!\!\!\!\!\!\sum_{
\substack{j =1,\ldots,m-1\\q\in\mathbb Z}}\!\!\!\!\!\!\!\!\!M_c(1,\sqrt{\tfrac{n-m}{nm}}(qm+j))\otimes L_{sl(m)} (\omega_j)  \otimes U_{ -qm - j} ^{(n)}.\nonumber
\eea
In case (7), $m\ne n, n-2, m \ge 2, n \ge 3$.
\end{proposition}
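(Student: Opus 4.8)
The plan is to dispatch the seven cases by the two uniform mechanisms already prepared in Section~2. In cases (1)--(5), where $\g_{\bar 0}$ is semisimple, the assertion is an instance of \eqref{d} and I would deduce it from Theorem~\ref{general} via the sufficient condition of Remark~\ref{conditionsfinite}; in cases (6)--(7), where $\g_{\bar 0}$ has a one--dimensional centre $\C\varpi$, the assertion is an instance of \eqref{dhs} and I would deduce it from Theorem~\ref{generalhs} via Remark~\ref{conditionsfinitered}. In every case $\g^0=\g_{\bar 0}$, and the decomposition of $\g^1=\g_{\bar 1}$ as a $\g_{\bar 0}$--module, together with the values $u_j(k)$, $h_j^\vee$ and the relevant inner products, has already been recorded in the proof of Theorem~\ref{cf}; I would import these data and reduce everything to the verification of the numerical hypotheses \eqref{finiote}/\eqref{finioths} and, for (6)--(7), the explicit identification of the graded summands.

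For cases (1)--(5) the module $\g_{\bar 1}$ is a single irreducible $V_{\g_{\bar 0}}(\mu)$, so $t=1$ and the claimed formula is literally \eqref{d} with $V_{\mathbf{k}}(\g^0)$ and $L_{\g^0}(\mu)$ evaluated at the levels from Theorem~\ref{cf}. To apply Remark~\ref{conditionsfinite} I would decompose the tensor square $V_{\g_{\bar 0}}(\mu)\otimes V_{\g_{\bar 0}}(\mu)$ into irreducibles --- in (3),(5) this factors as a product of the classical decompositions $V(\omega_1)^{\otimes 2}=V(2\omega_1)\oplus V(\omega_2)\oplus V(0)$ for the $so$-- and $sp$--factors, while in (1),(2),(4) it is a short fixed list --- read off every highest weight $\nu$ occurring, and for each $\nu\neq 0$ substitute into the Sugawara expression $\sum_j\frac{(\nu^j,\nu^j+2\rho^j)_j}{2(u_j(k)+h_j^\vee)}$. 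The point is that each such value fails to lie in $\ganz_+$, so no $\widehat{\g_{\bar 0}}$--primitive vector of weight $\nu$ can occur in $V_k(\g)$; since $\nu=0=\mu_0$ is the only admissible value, the hypothesis of Theorem~\ref{general} holds. This is a finite, mechanical computation once the conformal levels are inserted.

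For cases (6)--(7) we have $\g_{\bar 1}=V_{\g_{\bar 0}}(\mu)\oplus V_{\g_{\bar 0}}(\mu^*)$ with $\mu,\mu^*$ of opposite $\varpi$--charge. I would decompose the charge--zero product $V_{\g_{\bar 0}}(\mu)\otimes V_{\g_{\bar 0}}(\mu^*)$, evaluate the Sugawara value of each primitive weight $\nu_r$ and check \eqref{finioths}; this supplies the hypothesis of Theorem~\ref{generalhs} and hence gives $V_k(\g)^{(0)}\cong V_{\mathbf{k}}(\g_{\bar 0})$ together with the abstract description $V_k(\g)^{(q)}=V_{\g_{\bar 0}}(\mu)^{\cdot q}$ (resp.\ $V_{\g_{\bar 0}}(\mu^*)^{\cdot|q|}$) as a \emph{simple} $V_{\mathbf{k}}(\g_{\bar 0})$--module. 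The remaining step is to match each such simple module with the explicit tensor product in the statement, which I would do by tracking three data through the iterated dot product: the Heisenberg charge, which is additive, so that a length--$p$ product carries Fock label proportional to $p$ (namely $\sqrt{(n-m)/nm}\,(qm+j)$ with $p=qm+j$ in (7), and the integer $p$ itself, written $2q$ or $2q+1$ by parity in (6)); the $sl(m)$--factor at level $1$, whose finite simple--current group $\ganz_m$ forces $L_{sl(m)}(\omega_1)^{\cdot p}=L_{sl(m)}(\omega_{p\bmod m})$ to wrap around; and the $sp(2n)$-- resp.\ $sl(n)$--factor, where in (7) the modules $U^{(n)}_s$ form a $\ganz$--family with $U^{(n)}_s\cdot U^{(n)}_{s'}=U^{(n)}_{s+s'}$ (no wrapping, giving $U^{(n)}_{-qm-j}$), while in (6) the order--two current $L_{sp(2n)}(\omega_1)$ alternates with the vacuum sector according to the parity of the charge.

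I expect the main obstacle to be precisely this last identification in case (7): reconciling the two $sl$--factors, one of which (level $1$) wraps modulo $m$ while the other (level $-1$) does not, so that the single integer $p=qm+j$ must simultaneously index the periodic label $\omega_j$ on one side and the unbounded label $(qm+j)\omega_{n-1}$ on the other. Making this rigorous relies on the simplicity of each $V_k(\g)^{(q)}$ furnished by Theorem~\ref{generalhs} (so that matching top components suffices) together with the $V_{-1}(sl(n))$ fusion data from \cite{AP2}; by comparison the conformal--weight bookkeeping and the tensor--product decompositions underlying (1)--(5) are routine.
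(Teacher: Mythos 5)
Your proposal is correct and follows essentially the same route as the paper: cases (1)--(5) are handled by verifying \eqref{finiote} on the tensor square of $\g_{\bar 1}$ so that Theorem \ref{general} applies, and cases (6)--(7) by Theorem \ref{generalhs} combined with the Heisenberg fusion rules, the $\ganz_m$ simple-current fusion for $V_1(sl(m))$, the additive fusion $U^{(n)}_{s_1}\times U^{(n)}_{s_2}=U^{(n)}_{s_1+s_2}$ for $V_{-1}(sl(n))$ from \cite{AP2}, and the order-two fusion of $L_{sp(2n)}(\omega_1)$ from \cite{We}. The identification of the graded summands via the iterated dot product of $V_{\g_{\bar 0}}(\mu)$ and $V_{\g_{\bar 0}}(\mu^*)$ that you describe is exactly the computation carried out in the paper.
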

\begin{proof}In cases (1)--(5) one needs only to check \eqref{finiote}. As an example, here we give  the details only for case (4): $\g=osp(1|2n)$ and the invariant form is $(x|y)=-\tfrac{1}{2}str(xy)$. Moreover, as $\g_{\bar 0}$-module,
$
\g_{\bar 1}= V_{sp(2n)}(\omega_{1}),
$
thus 
$$\g_{\bar 1}\otimes \g_{\bar 1}=\begin{cases}V_{sp(2n)}(2\omega_1)\oplus V_{sp(2n)}(\omega_2)\oplus V_{sp(2n)}(0)&\text{if $n>1$}\\
V_{sp(2n)}(2\omega_1)\oplus V_{sp(2n)}(0)&\text{if $n=1$}
\end{cases}.
$$
Since, $u_1(k)=1/2$ and $h^\vee _{1}=n+1$,
$$
\frac{ ( 2\omega_1,2\omega_1+ 2 \rho_0)_1}{ 2 (u_1(k)+ h^\vee _{1})}=\frac{2 n+2}{2 \left(\frac{1}{4} (-2 n-3)+ n+1\right)}=\frac{2}{2 n+1}+2
$$
and, if $n>1$,
$$
\frac{ ( \omega_2,\omega_2+ 2 \rho_0)_1}{ 2 (u_1(k)+ h^\vee _{1})}=\frac{2n}{2 \left(\frac{1}{4} (-2 n-3)+ n+1\right)}=-\frac{2}{2 n+1}+2
$$
thus \eqref{finiote} holds.

In cases (6), (7) the only nontrivial step is the computation of the decomposition for $V_1(C(n+1))$ and $V_1(sl(n|m))$.

The decomposition for $V_1(C(n+1))$ follows readily from Theorem \ref{generalhs}, the fusion rule for $V_{-1/2}(sp(2m))$ (see  \cite{We})
\bea\label{fusionCm}
L_{sp(2m)} (\omega_1) \times L_{sp(2m)} (\omega_1) = V_{-1/2} (sp(2m)),
\eea
and the well known fusion rules
\bea M_c( 1, r) \times M_c(1,s) = M_c(1, r+s). \label{fr-heisenberg} \eea

For $V_{1}(sl(n|m))$, consider the $V_1(sl(m))$--modules
$$Z^{(m)}_{0}=V_1(sl(m)),\ Z^{(m)}_{j}=L_{sl(m)}(\omega_j),\ j=1,\ldots,m-1.
$$
 and recall the following fusion rules:
\bea  U_ {s_1} ^{(n)} \times  U_ {s_2} ^{(n)}  &  = & U_{s_1+ s_2} ^{(n)} \quad (s_1, s_2 \in {\Z}), \label{prvi} \\
 Z_ {j_1} ^{(m)} \times  Z_ {j_2} ^{(m)}  &=& Z ^{(m)}  _{j_1+ j_2 \mod m   }\quad (  j_1, j_2 \in \{0, \dots , m-1\} ) \label{second}. \eea

 The fusion rules (\ref{prvi}) were proved in \cite{AP2} and (\ref{second}) in \cite{DL}.

Since in this case $V_{\g_{\bar 0}}(\mu)=L_{\C\varpi}(\epsilon)\otimes  Z^{(m)}_1\otimes U^{(n)}_1$, $V_{\g_{\bar 0}}(\mu^*)=L_{\C\varpi}(-\epsilon)\otimes  Z^{(m)}_{m-1}\otimes U^{(n)}_{-1}$ and $L_{\C\varpi}(\pm\epsilon)=M_c(1,\pm\sqrt{\tfrac{n-m}{nm}})$, we obtain from \eqref{fr-heisenberg}, \eqref{prvi}, \eqref{second} that
$$\underbrace{V_{\g_{\bar 0}}(\mu)\cdot V_{\g_{\bar 0}}(\mu)\cdot \ldots\cdot V_{\g_{\bar 0}}(\mu)}_{\text{$q$ times}}=M_c(1,\sqrt{\tfrac{n-m}{nm}}q)\otimes Z^{(m)}_{q \mod m   }\otimes U^{(n)}_{-q}$$
and
$$\underbrace{V_{\g_{\bar 0}}(\mu^*)\cdot V_{\g_{\bar 0}}(\mu^*)\cdot \ldots\cdot V_{\g_{\bar 0}}(\mu^*)}_{\text{$-q$ times}}=M_c(1,\sqrt{\tfrac{n-m}{nm}}q)\otimes Z^{(m)}_{q \mod m   }\otimes U^{(n)}_{-q},$$
so Theorem \ref{generalhs} provides the desired decomposition.
\end{proof}

\begin{remark}
In  Subsection  \ref{CCn1} below we derive the decomposition above for $V_1(C(n+1))$ using  a different  approach that has the advantage of clarifying the vertex algebra structure of the even part of $V_1(C(n+1))$.
\end{remark}

\subsection{Another approach to the case  ${\mathfrak g}=sl(m|n)$, $k = 1$  }\label{slmn-realization}
Here we give a different approach to the decomposition of $V_1(sl(m|n))$ as $\mathcal V_1(\g_{\bar 0})$--module that extends the result in Proposition \ref{justapply} to the missing $m=n-2$ case.
\begin{theorem}\label{slmn-realisation}  Let $\g=sl(m|n)$ with $m\ne n$, $m \ge 2$, $n \ge 3$. Then 
\begin{equation}\label{slmn-simple}
\mathcal V_1(\g_{\bar 0})=V_1(sl(m))\otimes V_{-1}(sl(n))\otimes M_c(1)
\end{equation}
and the decomposition in Proposition \ref{justapply} (6) holds.
In particular, $V_1 ( {\mathfrak g} )$ is a simple current extension of the vertex algebra $V_{1} (sl(m)) \otimes V_{-1} (sl(n)) \otimes M_c(1) $
\end{theorem}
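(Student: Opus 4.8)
The plan is to bypass the numerical obstruction that forced the restriction $m\ne n-2$ in the $sl(m|n)$ case of Proposition~\ref{justapply} by working inside the free-field realization of $V_1(sl(m|n))$ from \cite{Kw}. Recall that the exclusion $m=n-2$ there came only from the sufficient condition \eqref{finioths} of Remark~\ref{conditionsfinitered}: when $m=n-2$ one of the numbers attached to the $\g_{\bar 0}$-primitive vectors of $V_{\g_{\bar 0}}(\mu)\otimes V_{\g_{\bar 0}}(\mu^*)$ lands in $\ganz_+$, so Theorem~\ref{generalhs} cannot be invoked to exclude a spurious $\widehat{\g_{\bar 0}}$-primitive vector in $V_1(\g)^{(0)}$. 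The free-field realization removes this issue because it exhibits $V_1(\g)^{(0)}$ directly as a tensor product of simple vertex algebras, leaving no room for such vectors regardless of the value of $n-m$.

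First I would recall the level-one realization $\Phi\colon V_1(sl(m|n))\hookrightarrow\mathcal F$, where $\mathcal F$ is the Fock space of $m$ charged free fermions and $n$ symplectic bosons. Under $\Phi$ the even part $\g_{\bar 0}=\C\varpi\times sl(m)\times sl(n)$ maps to three mutually commuting families of currents: the fermionic bilinears, which generate $\widehat{gl}(m)$ at level $1$; the symplectic-boson bilinears, which generate $\widehat{gl}(n)$ at level $-1$; and the diagonal $u(1)$-current, which is $\Phi(\varpi)$. I would then identify the vertex algebra generated by each family. The fermionic bilinears generate the simple VOA $V_1(sl(m))$; the symplectic-boson bilinears generate $V_{-1}(sl(n))$, which is simple for $n\ge3$ by \cite{AP2}; and $\Phi(\varpi)$ generates the Heisenberg vertex algebra $M_c(1)$, which is nondegenerate since $m\ne n$. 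As these three subalgebras commute and are each simple, the vertex algebra they generate is the tensor product $V_1(sl(m))\otimes V_{-1}(sl(n))\otimes M_c(1)$, which is again simple; since this vertex algebra is by construction $\mathcal V_1(\g_{\bar 0})$, the identity \eqref{slmn-simple} follows.

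Finally I would obtain the decomposition by grading $\Phi(V_1(\g))$ according to the eigenvalue $q\in\ganz$ of $\varpi(0)$. In the charge-$q$ sector the fermionic Fock space contributes the $V_1(sl(m))$-module $Z^{(m)}_{q\bmod m}=L_{sl(m)}(\omega_{q\bmod m})$, the symplectic-boson Fock space contributes the $V_{-1}(sl(n))$-module $U^{(n)}_{-q}$, and the Heisenberg sector contributes $M_c(1,\sqrt{\tfrac{n-m}{nm}}\,q)$; assembling these reproduces exactly the computation in the proof of Proposition~\ref{justapply}, now with no constraint relating $m$ and $n$ beyond $m\ne n$. Since each nonzero charge sector is a simple $\mathcal V_1(\g_{\bar 0})$-module and the charges add under the dot product, $V_1(\g)$ is a simple current extension of $V_1(sl(m))\otimes V_{-1}(sl(n))\otimes M_c(1)$. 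I expect the main obstacle to be the second step: proving that the symplectic-boson bilinears generate precisely the \emph{simple} algebra $V_{-1}(sl(n))$ and that the associated charged Fock spaces are the irreducible modules $U^{(n)}_{-q}$. This is exactly where the representation theory of $V_{-1}(sl(n))$ from \cite{AP2}, valid for $n\ge3$, is indispensable; the remaining steps are bookkeeping with the $u(1)$-charge.
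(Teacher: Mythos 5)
Your proposal is correct and follows essentially the same route as the paper: the paper's proof also rests entirely on the free-field realization $V_1(gl(m|n))\subset F_{(m)}\otimes M_{(n)}$, on the complete reducibility of $F_{(m)}$ over $\widehat{gl}(m)$ and of $M_{(n)}$ over $\widehat{gl}(n)$ (the latter from \cite{AP2}, which is exactly where $n\ge 3$ enters), and then on the fusion rules \eqref{prvi}, \eqref{second}, \eqref{fr-heisenberg} via Theorem \ref{generalhs} to identify the charge sectors. The only cosmetic difference is that the paper phrases the last step as "semisimplicity plus Theorem \ref{generalhs}" rather than reading the sectors off the Fock space directly, but the underlying computation is the one you describe.
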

\begin{proof}It is enough to prove that  the action of $\mathcal V_1(\g_{\bar 0})$ on $V_1(sl(m|n))$ is semisimple. In fact, in such a case, by the fusion rules 
(\ref{prvi}), (\ref{second})   and  (\ref{fr-heisenberg}), Theorem \ref{generalhs} can be applied.
The semisimplicity follows from the free field realization of $V_1(gl(m|n))$ in   $M_{(2m, 2n)} = F_{(m)} \otimes M_{(n)}$ where
$F_{(m)}$ and $M_{(n)}$ are respectively the fermionic and Weyl vertex algebras (cf.  Section \ref{FFR}). In fact the composition of the  embeddings
$$\mathcal V_1(gl(m)\times gl(n))\subset V_1(gl(m|n))\subset F_{(m) }\otimes M_{(n)}$$
is the tensor product of the embeddings of $\mathcal V_1(gl(m))$ in $F_{( m) } $ and of $\mathcal V_{-1}(gl(n))$ in $M_{(n)}$. It is well known that  $F_{(m)}$ is completely reducible as $\widehat{gl(m)}$-module. The fact that $M_{(n)}$ is completely reducible as $\widehat{gl(n)}$-module is proven in \cite{AP2}.
\end{proof}

\subsection{The case $ {\mathfrak g} =psl(m|m) $,   $k=1$.  }
\label{psl(n,n)-section}

The approach of \S\ \ref{slmn-realization} readily extends to the case of $\g=psl(m|m)$:
\begin{theorem} \label{psl(n,n)-dec} Assume that $m \ge 3$.  Then
\begin{equation}\label{decpslnn}
V_{1}(psl(m|m))= \sum_{j = 0} ^{m-1} \sum_{q \in {\Z}} L_{sl(m)} (\omega_j)  \otimes U_{ -q  m - j} ^{(m)}. 
\end{equation}
In particular, $V_1 ( {\mathfrak g} )$ is a simple current extension of the vertex algebra \break $V_1 (sl(m)) \otimes V_{-1} (sl(m))$.
\end{theorem}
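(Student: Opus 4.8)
The plan is to carry over the logical skeleton of Subsection~\ref{slmn-realization}, but with the role of the free-field realization replaced by the intrinsic representation theory of $V_{-1}(sl(m))$. Here $\g_{\bar 0}=sl(m)\oplus sl(m)$ sits at levels $1$ and $-1$, and the odd part splits as $\g_{\bar 1}=\g_{\bar 1}^{+}\oplus\g_{\bar 1}^{-}$, with $\g_{\bar 1}^{\pm}$ of top $\g_{\bar 0}$-weight $(\omega_1,\omega_{m-1})$ and $(\omega_{m-1},\omega_1)$ respectively. First I would equip $V_1(psl(m|m))$ with the charge grading $V_1(psl(m|m))=\bigoplus_{q\in\Z}V_1(psl(m|m))^{(q)}$ coming from the semisimple zero mode of the outer derivation $\xi=\mathrm{diag}(I_m,-I_m)$ of $psl(m|m)$, under which $\g_{\bar 1}^{\pm}$ lie in charge $\pm1$. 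The aim is then to identify each charge sector with a single simple-current module for $\mathcal V_1(\g_{\bar 0})=V_1(sl(m))\otimes V_{-1}(sl(m))$.

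The core step is to observe that $\g_{\bar 1}^{+}$ and $\g_{\bar 1}^{-}$ generate, over $\mathcal V_1(\g_{\bar 0})$, highest weight modules with tops $V_{sl(m)}(\omega_1)\otimes V_{sl(m)}(\omega_{m-1})$ and $V_{sl(m)}(\omega_{m-1})\otimes V_{sl(m)}(\omega_1)$; by rationality of $V_1(sl(m))$ and by the irreducibility results of \cite{AP2} for the negative level, these are the irreducibles $Z^{(m)}_1\otimes U^{(m)}_{-1}$ and $Z^{(m)}_{m-1}\otimes U^{(m)}_1$. Since $V_1(psl(m|m))$ is simple, the dot product has no zero divisors, and the fusion rules \eqref{second} (level $1$) and \eqref{prvi} (level $-1$) are those of simple currents, every product of odd generators of net charge $q$ lands in the single irreducible $L_{sl(m)}(\omega_{q\bmod m})\otimes U^{(m)}_{-q}$: for $a$ copies of $\g_{\bar 1}^{+}$ and $b$ copies of $\g_{\bar 1}^{-}$ with $a-b=q$ one computes the level-$1$ factor as $Z^{(m)}_{a+b(m-1)\bmod m}=Z^{(m)}_{q\bmod m}$ and the level-$(-1)$ factor as $U^{(m)}_{-a+b}=U^{(m)}_{-q}$. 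Hence $V_1(psl(m|m))^{(q)}=L_{sl(m)}(\omega_{q\bmod m})\otimes U^{(m)}_{-q}$; summing over $q$ and writing $q=q'm+j$ with $0\le j\le m-1$ reproduces \eqref{decpslnn} and exhibits a simple current extension.

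To promote the containments above to equalities I would invoke complete reducibility of $V_1(psl(m|m))$ over $\mathcal V_1(\g_{\bar 0})$: its composition factors are all of the form $Z^{(m)}_j\otimes U^{(m)}_s$, and \cite{AP2} together with \cite{DL} place these in a semisimple category. Multiplicity one in each sector is then forced by the conformal-weight constraint exactly as in the proof of Theorem~\ref{generalhs}: a $\widehat{\g_{\bar 0}}$-primitive vector of weight $\nu_r$ in the relevant fusion product can only survive if its conformal weight is $0$ or $1$, which pins $V_1(psl(m|m))^{(q)}$ to the single predicted irreducible.

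The main obstacle is precisely the passage from $sl(m|m)$ to $psl(m|m)$. Unlike the case $m\ne n$, the even part here has no center, so Theorem~\ref{generalhs} (whose hypothesis requires $\g^0_0=\C\varpi$) cannot be quoted verbatim; the grading element $\xi$ is an outer derivation rather than an inner current, so its proof must be rerun with this external grading. Moreover $V_1(psl(m|m))$ admits no clean free-field model: the central element $I_{2m}$ of $sl(m|m)$ is a null current, and in the realization $V_1(sl(m|m))\subset F_{(m)}\otimes M_{(m)}$ it appears as the nonzero combination $J_1+J_2$, so one cannot read off semisimplicity from a free-field description of $psl$ itself. This is exactly why the argument must rest on the intrinsic fusion rules \eqref{prvi} of $V_{-1}(sl(m))$ from \cite{AP2}, rather than on a realization as in Subsection~\ref{slmn-realization}.
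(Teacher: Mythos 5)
Your charge-sector bookkeeping is correct and agrees with what the paper obtains: each sector of charge $q$ is generated under the dot product by $a$ copies of $\g_{\bar 1}^{+}$ and $b$ copies of $\g_{\bar 1}^{-}$ with $a-b=q$, and the fusion rules \eqref{prvi}, \eqref{second} collapse all such products to the single simple current $Z^{(m)}_{q\bmod m}\otimes U^{(m)}_{-q}$. But there is a genuine gap at the step you yourself flag as the main obstacle: the complete reducibility of $V_1(psl(m|m))$ as a $\mathcal V_1(sl(m)\times sl(m))$--module is asserted, not proved. Fusion rules for the irreducibles $Z^{(m)}_j\otimes U^{(m)}_s$ do not place an arbitrary module built from them ``in a semisimple category'': they say nothing about non-split self-extensions or extensions between these simples occurring inside $V_1(psl(m|m))$, nor do they rule out that the level $-1$ factor of $\mathcal V_1(\g_{\bar 0})$ is a non-simple quotient of $V^{-1}(sl(m))$ (compare Remark \ref{D21}, where exactly this failure happens for $D(2,1;a)$). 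The complete reducibility result of \cite{AP2} that the paper actually relies on is a statement about a specific module --- the Weyl vertex algebra $M_{(m)}$ as a $\widehat{gl(m)}$--module --- not an abstract semisimplicity theorem for $KL_{-1}(sl(m))$. Likewise, ``rerunning Theorem \ref{generalhs} with an external grading'' is not a remark one can make in passing: the hypothesis $\g^0_0=\C\varpi$ is used to produce the grading \emph{and} the simplicity/complete-reducibility conclusions, and extending it to an outer derivation is precisely the content that would have to be supplied. Without semisimplicity, your containments $V_1(psl(m|m))^{(q)}\supseteq L_{sl(m)}(\omega_{q\bmod m})\otimes U^{(m)}_{-q}$ cannot be promoted to equalities, since a charge sector could a priori be a non-split extension.

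The paper closes exactly this gap with a short argument you are missing, and it does so by \emph{not} abandoning the free-field realization: one considers the chain of $\mathcal V_1(sl(m|m))$--modules $\mathcal V_1(sl(m|m))\cdot I\subset \mathcal V_1(sl(m|m))\subset V_1(gl(m|m))$, where $I$ is the identity matrix, and observes that $x\bmod \C I\mapsto x(-1)\vac$ induces a surjection from $V^1(psl(m|m))$ onto the subquotient $\mathcal V_1(sl(m|m))/(\mathcal V_1(sl(m|m))\cdot I)$. Since $\mathcal V_1(sl(m)\times sl(m))$ acts semisimply on $V_1(gl(m|m))$ --- which is known because $V_1(gl(m|m))$ embeds in $F_{(m)}\otimes M_{(m)}$ and $M_{(m)}$ is completely reducible over $\widehat{gl(m)}$ by \cite{AP2} --- semisimplicity passes to the subquotient and then to its further quotient $V_1(psl(m|m))$. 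So the obstruction you correctly identify (that $psl(m|m)$ itself has no clean free-field model because $I$ maps to a nonzero current) is circumvented by routing the semisimplicity through $gl(m|m)$ rather than by replacing the realization with fusion-rule considerations. Once semisimplicity is in hand, your second and third paragraphs do finish the proof.
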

\begin{proof}The proof follows as in Theorem \ref{slmn-realisation} from the semisimplicity of the action of $\mathcal V_1(sl(m)\times sl(m))$ on $V_1(psl(m|m))$. To prove semisimplicity, let $I\in sl(m|m)$ be the identity matrix. Then we have $\mathcal V_1(sl(m|m))$--modules
$$\mathcal V_1(sl(m|m))\cdot I\subset \mathcal V_1(sl(m|m))\subset V_1(gl(m|m)).
$$
 Moreover the map $x\mod \C I\mapsto x(-1)\vac\mod \mathcal V_1(sl(m|m)\cdot I$ extends to a vertex algebra map from $V^1(psl(m|m))$ to $\mathcal V_1(sl(m|m))/(\mathcal V_1(sl(m|m)\cdot I)$. Let $\mathcal V_1(psl(m|m))$ be the image of this map. Since $\mathcal V_1(sl(m)\times sl(m))$ acts semisimply on $V_1(gl(m|m))$, it acts semisimply also on $\mathcal V_1(psl(m|m))$ and therefore on its quotient $V_1(psl(m|m))$. 
\end{proof}
\begin{remark}
The simple current extension in the theorem above is a  super analog of extensions studied in \cite{KMPX}. We should also mention that the super-character formula  for  $V_1 ( {\mathfrak g} )$ is presented in \cite{AdM-2018}.
\end{remark}

The case $m=2$ was given by Creutzig and Gaiotto as one of the main results in their paper \cite{CG}. We shall here only state their result on the decomposition.
\begin{proposition} \cite[Remark 9.11]{CG} Assume that $m=2$. We have:
$$V_1(\g) =   \bigoplus_ {i = 0} ^{\infty}  \left(     (2i+1 )  Z_0 ^{(2)}  \otimes  U _ { 2i} ^{(2)} \right)\oplus \bigoplus_ {i = 0} ^{\infty}     \left(  (2i+2)  Z_1 ^{(2)}  \otimes  U _ { 2i + 1} ^{(2)} \right).  $$ 
In particular,  $V_1(\g)$ is an extension of  $V_1 (sl(2)) \otimes V_{-1} (sl(2))$ which is not  simple current.
\end{proposition}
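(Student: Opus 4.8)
Since this Proposition is quoted from \cite{CG}, I will only outline how one could reprove it in the spirit of the present paper; the essential point is to explain the multiplicities $2i+1$ and $2i+2$, which are exactly what makes the extension fail to be of simple current type. First I would reorganize the claimed formula. Because $m=2$ forces $\omega_1=\omega_{m-1}$, one has $U_s^{(2)}\cong U_{-s}^{(2)}$ and $Z_j^{(2)}$ depends only on $j\bmod 2$, so the asserted decomposition is the single series
\begin{equation}\label{reorg}
V_1(psl(2|2))=\bigoplus_{s\ge 0}(s+1)\,Z_{s\bmod 2}^{(2)}\otimes U_s^{(2)},
\end{equation}
in which the multiplicity $s+1$ is precisely the dimension of the spin-$\tfrac{s}{2}$ irreducible representation of $sl(2)$. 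This shape is exactly what one expects from an auxiliary $sl(2)$ symmetry, and locating that symmetry is the organizing idea.

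The organizing principle is the outer symmetry special to $psl(2|2)$: its odd part decomposes as $\g_{\bar 1}\cong \C^2\otimes\C^2\otimes\C^2$ under (inner $sl(2)$) $\times$ (inner $sl(2)$) $\times$ (outer $sl(2)$), the last factor being the algebra of outer derivations, which acts trivially on $\g_{\bar 0}=sl(2)\oplus sl(2)$ and preserves the supertrace form (it acts on the third tensor factor by the standard representation, respecting its symplectic form). Hence this outer $SL(2)$ exponentiates to a group of vertex algebra automorphisms of $V_1(\g)$ that fix the Sugawara vector $\omega$ and restrict to the identity on $\mathcal V_1(\g_{\bar 0})=V_1(sl(2))\otimes V_{-1}(sl(2))$; an automorphism fixing a subalgebra pointwise acts by module automorphisms of the whole algebra. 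Since every conformal weight space is finite dimensional and $SL(2)$-stable, $V_1(\g)$ becomes a module for the commuting pair $SL(2)\times\big(\widehat{sl(2)}\oplus\widehat{sl(2)}\big)$, so that all $\mathcal V_1(\g_{\bar 0})$-isotypic multiplicity spaces are finite dimensional $SL(2)$-modules; in particular every multiplicity is the dimension of an honest $SL(2)$-representation.

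Next I would establish semisimplicity and cut down the list of constituents. Semisimplicity of the $\mathcal V_1(sl(2)\times sl(2))$-action follows exactly as in the proof of Theorem \ref{psl(n,n)-dec}: one uses the free field realization inside $F_{(2)}\otimes M_{(2)}$ together with the complete reducibility of $F_{(2)}$ over $\widehat{gl(2)}$ and of $M_{(2)}$ over $\widehat{gl(2)}$ (see \cite{AP2}), and this part of that argument uses no lower bound on $m$. The fusion rules \eqref{prvi}, \eqref{second} and the decomposition of $\g_{\bar 1}$ then force every irreducible constituent to have the form $Z_{s\bmod 2}^{(2)}\otimes U_s^{(2)}$ for some integer $s\ge 0$, whence $V_1(\g)=\bigoplus_{s\ge 0}W_s\otimes Z_{s\bmod 2}^{(2)}\otimes U_s^{(2)}$ with each $W_s$ a finite dimensional $SL(2)$-module.

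Finally I would pin down $\dim W_s=s+1$, which is the crux. The outer $SL(2)$ already guarantees that $W_s$ is a genuine $SL(2)$-representation, so what remains is to prove that $W_s$ is the single irreducible of spin $\tfrac{s}{2}$, and not some reducible $SL(2)$-module of different total dimension. The cleanest finish is a graded character computation: the character of $V_1(psl(2|2))$ is available from the free field realization, and one equates it with $\sum_{s\ge 0}(\dim W_s)\,\chi_{Z_{s\bmod 2}^{(2)}}\chi_{U_s^{(2)}}$; since the products $\chi_{Z_j}\chi_{U_s}$ are linearly independent, comparing conformal-weight-graded dimensions determines each $\dim W_s$ uniquely and yields $s+1$, hence \eqref{reorg}. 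The genuinely nontrivial ingredient is the explicit character of $V_1(psl(2|2))$; the alternative, followed in \cite{CG}, is to work in a concrete symplectic boson model in which the outer $SL(2)$ appears as a manifest $R$-symmetry and the strings $W_s$ can be read off directly. I expect this last step, establishing the exact spin $\tfrac{s}{2}$ rather than merely that the multiplicities are $SL(2)$-dimensions, to be the main obstacle.
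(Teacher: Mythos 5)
First, note that the paper does not prove this proposition at all: it is stated as a quotation of Remark 9.11 of \cite{CG} (``We shall here only state their result on the decomposition''), so there is no internal argument to compare yours against. Your outline is nevertheless a sensible reconstruction of what actually drives the result: the outer $sl(2)$ of $psl(2|2)$, acting on the third tensor factor of $\g_{\bar 1}\cong\C^2\otimes\C^2\otimes\C^2$, commuting with $\g_{\bar 0}$ and preserving the form, does lift to automorphisms of $V_1(\g)$ fixing $\mathcal V_1(\g_{\bar 0})$ pointwise and fixing the conformal vector; this is precisely the mechanism behind the multiplicities and the failure of the simple current property. Your reorganization of the stated formula into $\bigoplus_{s\ge 0}(s+1)\,Z^{(2)}_{s\bmod 2}\otimes U^{(2)}_s$ is also correct.

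However, two steps are genuinely gapped. First, you invoke the fusion rules \eqref{prvi} to constrain the constituents, but \eqref{prvi} cannot hold as stated for $n=2$: since $\omega_1=\omega_{n-1}$ one has $U^{(2)}_s\cong U^{(2)}_{-s}$, and \eqref{prvi} would then force $U^{(2)}_1\times U^{(2)}_1$ to equal both $U^{(2)}_2$ and $U^{(2)}_0$. This is exactly why the paper restricts Theorem \ref{psl(n,n)-dec} and Proposition \ref{justapply}(7) to larger rank; for $m=2$ the fusion of $V_{-1}(sl(2))$--modules is not of simple current type, and the list of possible constituents has to be obtained by other means (for instance from the $\h$--weight and conformal weight gradings). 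Second, and more importantly, the identification $\dim W_s=s+1$ --- equivalently, that $W_s$ is the irreducible $SL(2)$--module of spin $s/2$ rather than some other finite dimensional module --- is essentially the whole content of the proposition, and your outline defers it to a character computation that is not carried out; the outer $SL(2)$ alone only tells you that the multiplicities are dimensions of finite dimensional $SL(2)$--modules, which is no constraint on a nonnegative integer. So what you have is a plausible strategy rather than a proof, and the honest course, which is what the paper takes, is to cite \cite{CG}, where the explicit symplectic boson model makes the $R$--symmetry multiplets visible and the multiplicities can be read off.
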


\subsection{The case  ${\mathfrak g}=sl(m|n)$, $k = -h^{\vee} / 2$  }

An interesting case is to consider embeddings to $V_k(\g)$ where   ${\mathfrak g}=sl(m|n)$ and conformal level is $k = -h^{\vee} / 2$. In this paper we only consider the case $n=1$. The general case is more complicated and we plan to consider it in our future work.
\subsubsection{The case $n=1$} 
 
In this case, $k =  1/ 2 - m$. This level is   admissible for $sl(2 m)$, and the fusion rules were determined in  \cite[Proposition 5.1]{AKMPP-JJM}. We have:

\begin{itemize}
\item The set of irreducible $V_k(sl(2m))$--modules in $KL_k$ is
$$ \{ L_{sl(2m)} (\omega_i ) \ \vert \ i = 0, \dots, 2m -1 \}. $$
\item  The following fusion rules hold:
$$ L_{sl(2m)} (\omega_{i_1}) \times   L_{sl(2m)} (\omega_{i_2}) =  L_{sl(2m)} ( \omega_{i_3}) $$
where $ 0 \le i_1, i_2 , i_3 \le 2m -1$ such that $i_1 + i_2 \equiv i_3  \ \mbox{mod}  (2m)$. 
\end{itemize}

 Now we are ready to analyse the conformal embedding $sl(2 m) \times {\C} \hookrightarrow sl (2m, 1)$ at level $k$. We have:
\begin{itemize}
\item $V_k (\g) ^0  \cong V_k( sl(2m)) \otimes M_c(1)$, where
$c= (I_{2m,1}) _{(-1)}{\bf 1}$.
\item  $V_k (\g) ^j $ is irreducible $V_k( sl(2m)) \otimes M_c(1)$ on which $c(0)$ acts as  $j (2m-1). $ In particular,
$$   V_k (\g) ^j \cong L_{sl(2m)} (\omega_{i_j } ) \otimes M_c(1,  j (2m -1)).  $$
where $ 0 \le i_j  \le 2m -1$ such that $i_j   \equiv  j \ \mbox{mod}  (2m)$.
\item Now  we get:
$$ \mbox{Com} (V_k (sl(2m)), V_k(\g)) = F_{2m (2m-1)} =  \bigoplus_{i \in {\mathbb Z}} M_c( 1, i 2m (2m-1)) $$
where $ F_{2m (2m-1)}$ is the rank one lattice vertex algebra $V_{\Z \alpha}$ such that $\langle \alpha, \alpha \rangle = 2 m (2m-1). $
\end{itemize}

\subsection {The case  ${\mathfrak g}$ of type $D(m,n)$, $k = 1$  }\ 
The following approach to the decomposition includes  also the case 
$m=n+1$, not covered by Theorem \ref{justapply}.

Assume first that  $n \ge 2$. We consider  the universal affine vertex algebra $V^{1} (\g)$. 
The vector
\begin{equation}\label{Omega} \Omega =  \left( X_{\varepsilon_1 + \varepsilon_2} (-1) ^2 - X_{2 \varepsilon_1} (-1)  X_{2 \varepsilon_2} (-1)  \right){\bf 1}
\end{equation}
 is a singular vector in $V^{-1/2} (sp(2n) )$, and  it defines a non-trivial graded ideal  $J^{1} (\g)= V^{1} (\g) \cdot \Omega $ in $V^1 (\g)$. Set
 $$ \mathcal Q^{1 } (\g) = V^{1} (\g) /  J ^{1} (\g). $$
 
 \begin{proposition} \label{dec-dmn} Assume that $n \ge 2$.
 \item[(1)] The even subalgebra of $\mathcal Q^{1} (\g)$  is isomorphic  to $$V_{-1/2} (sp(2n) ) \otimes V_1 (so(2m) ), $$
 \item[(2)] The following decomposition holds
 $$ \mathcal Q^{1} (\g) = V_{-1/2} (sp(2n)) \otimes V_1 (so(2m) ) \oplus L_{sp(2n)} (\omega_1) \otimes L_{so(2m)}  (\omega_1). $$
 \item[(3)] $\mathcal Q^{1} (\g) =  V_{1} (\g)$. 
 \end{proposition}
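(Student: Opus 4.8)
The plan is to recover the entire structure of $\mathcal Q^{1}(\g)$ in three stages matching the three assertions, letting the singular vector $\Omega$ do all the work. Write $\g=osp(2m|2n)$, let $\delta_1,\dots,\delta_m$ be the weights of the vector representation of $so(2m)$ and $\varepsilon_1,\dots,\varepsilon_n$ those of $sp(2n)$, so the odd roots are $\pm\delta_i\pm\varepsilon_j$ and the highest root of $so(2m)$ is $\theta=\delta_1+\delta_2$. Since $J^{1}(\g)=V^{1}(\g)\cdot\Omega$ is the proper graded ideal asserted in the statement, $\mathcal Q^{1}(\g)\neq 0$ and inherits the parity grading $\mathcal Q^{1}(\g)=\mathcal Q^{1}(\g)_{\bar 0}\oplus\mathcal Q^{1}(\g)_{\bar 1}$; I will denote by $B$ its even subalgebra.

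For part (1), the even subalgebra of $V^{1}(\g)$ is, by the PBW theorem for the universal affine vertex algebra, isomorphic to $V^{1}(so(2m))\otimes V^{-1/2}(sp(2n))$, so $B$ is a quotient of it. On the $sp(2n)$ factor $\Omega$ generates the maximal ideal, so the image there is $V_{-1/2}(sp(2n))$. The heart of the matter is to show that the level-one $so(2m)$ singular vector $s=X_{\theta}(-1)^2\vac$ also lies in $J^{1}(\g)$. Here I would apply to $\Omega$ the product of odd zero modes $X_{\delta_1-\varepsilon_1}(0)X_{\delta_2-\varepsilon_1}(0)X_{\delta_1-\varepsilon_2}(0)X_{\delta_2-\varepsilon_2}(0)$: since zero modes preserve $J^{1}(\g)$ and the conformal weight, the result lies in $J^{1}(\g)$, is even, has conformal weight $2$ and $\g_{\bar 0}$-weight $2\theta$. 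Because $2\delta_i$ is not a root of $so(2m)$, the space of even vectors of weight $2\theta$ and conformal weight $2$ is one-dimensional, spanned by $s$; hence the image of $\Omega$ is automatically a scalar multiple of $s$, and a structure-constant computation shows the scalar is nonzero. Thus $B$ is a quotient of the simple vertex algebra $V_{1}(so(2m))\otimes V_{-1/2}(sp(2n))$, and being nonzero it equals it, proving (1).

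For part (2), since $\mathcal Q^{1}(\g)$ is generated by $\g$, the odd part is $\mathcal Q^{1}(\g)_{\bar 1}=B\cdot\g_{\bar 1}$, a $B$-module whose conformal-weight-one top is $\g_{\bar 1}=V_{so(2m)}(\omega_1)\otimes V_{sp(2n)}(\omega_1)$. Running the conformal-weight estimate from the proof of Theorem \ref{general}, any $\widehat{\g_{\bar 0}}$-primitive vector beyond the top would have the weight $\nu$ of a $\g_{\bar 0}$-primitive vector in $V_{so(2m)}(\omega_1)\otimes V_{sp(2n)}(\omega_1)$ and conformal weight forced to equal $1$, hence would lie in $\g(-1)\vac$; the one dangerous primitive weight, for which \eqref{finiote} fails and which is carried by the image of $\Omega$, has been annihilated in passing to $\mathcal Q^{1}(\g)$ (here the fusion rule \eqref{fusionCm} guarantees the $sp(2n)$ content closes). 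Therefore $\mathcal Q^{1}(\g)_{\bar 1}$ has no primitive vector above its top and equals the simple module $L_{so(2m)}(\omega_1)\otimes L_{sp(2n)}(\omega_1)$, giving (2).

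For part (3), I would prove $\mathcal Q^{1}(\g)$ simple and then use that it is a nonzero quotient of $V^{1}(\g)$. Let $I$ be a nonzero $\Z/2$-graded ideal (reducing to this case via the parity automorphism). Then $I_{\bar 0}$ is an ideal of the simple algebra $B$, so either $I_{\bar 0}=B$, whence $\vac\in I$ and $I=\mathcal Q^{1}(\g)$, or $I_{\bar 0}=0$; in the latter case $I_{\bar 1}$ is a $B$-submodule of the irreducible module $\mathcal Q^{1}(\g)_{\bar 1}$ with $I_{\bar 1}\cdot\mathcal Q^{1}(\g)_{\bar 1}\subseteq I_{\bar 0}=0$, but $\mathcal Q^{1}(\g)_{\bar 1}\cdot\mathcal Q^{1}(\g)_{\bar 1}$ contains $[\g_{\bar 1},\g_{\bar 1}]=\g_{\bar 0}\neq 0$, forcing $I_{\bar 1}=0$ and hence $I=0$. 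Thus $\mathcal Q^{1}(\g)$ is simple and coincides with the unique simple quotient $V_{1}(\g)$ of $V^{1}(\g)$, which is (3). The main obstacle is the computation in (1): one genuinely needs the affine (not merely finite-dimensional) modes, and must track the structure constants carefully to confirm that the iterated odd zero modes carry $\Omega$ to a \emph{nonzero} multiple of $s$, with no accidental cancellation.
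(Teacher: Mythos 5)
Your part (1) follows the paper's route exactly (the paper's proof likewise rests on the three facts that $\Omega$ generates the maximal ideal of $V^{-1/2}(sp(2n))$, that $X_{\bar\Theta}(-1)^2\vac$ generates that of $V^{1}(so(2m))$, and that $X_{\bar\Theta}(-1)^2\vac\in V^{1}(\g)\cdot\Omega$), and your simplicity argument in (3) is the standard one. The genuine gap is in part (2), and it sits exactly where the proposition earns its keep. You begin with ``since $\mathcal Q^{1}(\g)$ is generated by $\g$, the odd part is $B\cdot\g_{\bar 1}$'' --- but this already presupposes that the algebra closes, i.e.\ that $\g_{\bar 1}\cdot\g_{\bar 1}\subseteq B$ and hence that triple products of odd generators fall back into $B\cdot\g_{\bar 1}$. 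Proving that closure is the whole content of (2), and your treatment of it is both incomplete and partly incorrect. The dangerous primitive weights are \emph{not} carried by the image of $\Omega$: $\Omega$ has $\g_{\bar 0}$--weight $(0,2\omega_2)$ on the $sp(2n)$ side, whereas the components of $\g_{\bar 1}\otimes\g_{\bar 1}$ that violate \eqref{finiote} occur precisely when $m=n+1$ and have weights $(2\omega_1,\omega_2)$ and $(\omega_2,2\omega_1)$, each of conformal weight $2$ (indeed $\tfrac{2m}{2m-1}+\tfrac{2n}{2n+1}=2$ iff $m=n+1$); this is exactly why the case $m=n+1$ is excluded from Proposition \ref{justapply}(5) and why Proposition \ref{dec-dmn} exists. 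So passing to $\mathcal Q^{1}(\g)$ does not ``annihilate'' these weights in the direct way you suggest, and the conformal-weight estimate of Theorem \ref{general} cannot rule them out.

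What actually closes the algebra in the paper is the pair of fusion rules for the \emph{simple} factors: $L_{sp(2n)}(\omega_1)\times L_{sp(2n)}(\omega_1)=V_{-1/2}(sp(2n))$ from \eqref{fusionCm} \emph{together with} $L_{so(2m)}(\omega_1)\times L_{so(2m)}(\omega_1)=V_{1}(so(2m))$ from \cite{DL}. Once (1) is established, $U=V_{-1/2}(sp(2n))\otimes V_1(so(2m))$ is simple, semisimplicity of $U$--modules identifies $M=U\cdot\g_{\bar 1}$ with $L_{sp(2n)}(\omega_1)\otimes L_{so(2m)}(\omega_1)$, and the two fusion rules force $M\cdot M\subseteq U$; then $U\oplus M$ is a vertex subalgebra containing all generators, which yields (2) and (3) simultaneously. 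You cite only the $sp(2n)$ fusion rule, in a parenthesis, and never deploy it to establish $M\cdot M\subseteq U$; without the $so(2m)$ fusion rule as well, the components of weight $(2\omega_1,\omega_2)$ and $(\omega_2,2\omega_1)$ are not excluded. A smaller point: ``the even subalgebra of $V^{1}(\g)$'' must be read as the subalgebra generated by $\g_{\bar 0}$ (not the parity-even part, which is strictly larger and is not $V^{1}(so(2m))\otimes V^{-1/2}(sp(2n))$); your parts (2)--(3) silently need the parity-even part of $\mathcal Q^{1}(\g)$ to coincide with $B$, which again is exactly the closure statement you have not proved.
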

 \begin{proof}
First we notice the following facts
 \begin{itemize}
 \item The maximal ideal in $V^ {-1/2} (sp(2n))$ is generated by $\Omega$ (cf. \cite{A-1994}).
 \item The maximal ideal in $V^ {1} (so(2m) )$ is generated by $X_{\bar \Theta}(-1) ^2 {\bf 1}$, where $\bar \Theta$ is the highest root in $so(2m)$, and $X_{\bar \Theta} $ is a corresponding root vector.
 \item $X_{\bar \Theta}(-1) ^2 {\bf 1} \in V^{1} (\g)\cdot \Omega .$
 \end{itemize}
 This implies that $\mathcal Q^{1} (\g) $ contains a vertex subalgebra $U$  isomorphic to $$V_{-1/2} (sp(2n)) \otimes V_1 (so(2m) ). $$
 By using the decomposition of $\g$ as $sp(2n) \times so(2m)$--module, the semi--simplicity of $U$--modules, we find a $U$--submodule $M$ inside of $\mathcal V^{1} (\g) $ which is isomorphic to
 $$L_{sp(2n)} (\omega_1) \otimes L_{so(2m)}  (\omega_1).$$ Recall the following the fusion rules
 \begin{itemize}
\item \cite{We}
$L_{sp(2n)} ( \omega_1) \times L_{sp(2n)} (\omega_1) = V_{-1/2} (sp(2n))$.
\item \cite{DL} $L_{so(2m)}  (\omega_1) \times L_{so(2m)}  (\omega_1) = V_1 (so(2m))$.
\end{itemize}
which implies the fusion rules $M \times M = U$  and therefore 
 $U \oplus M$ is a vertex subalgebra of $\mathcal Q^{1} (\g)$. Since $U \oplus M$ contains all generators of  $\mathcal Q^{1} (\g)$, we get the assertion.
 \end{proof}
 
 The above decomposition holds also in the case $n=1$.
 \begin{proposition} \label{dec-dm1}Assume that $n=1$. Then we have:
 $$  V_{1} (\g) = V_{-1/2} (sl(2)) \otimes V_1 (so(2m) ) \oplus L_{sl(2)} ( \omega_1) \otimes L_{so(2m)}  (\omega_1). $$
 \end{proposition}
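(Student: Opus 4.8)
The plan is to adapt the argument of Proposition \ref{dec-dmn} to the case $n=1$, where $\g = D(m,1)$ and $\g_{\bar 0} = sl(2) \times so(2m)$ (recall $sp(2) = sl(2)$). The essential obstacle in the $n=1$ case is that the universal vertex algebra $V^{-1/2}(sl(2))$ need not carry the singular vector $\Omega$ of the form \eqref{Omega}, since $\Omega$ was built from distinct root vectors $X_{2\varepsilon_1}, X_{2\varepsilon_2}$ that are no longer available when $sp(2n)$ collapses to $sp(2) = sl(2)$. Consequently the clean quotient construction $\mathcal Q^1(\g)$ is not literally available, and one cannot simply invoke the maximal-ideal description of $V^{-1/2}(sp(2n))$ verbatim. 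So the first thing I would do is replace that structural input by the direct fact that, at level $-1/2$, the relevant $sl(2)$ vertex algebra is simply $V_{-1/2}(sl(2))$ itself, and the fusion rule $L_{sl(2)}(\omega_1) \times L_{sl(2)}(\omega_1) = V_{-1/2}(sl(2))$ still holds (this is again the Weyl vertex algebra fusion rule from \cite{We} specialized to $n=1$).

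Concretely, I would proceed as follows. First, by Theorem \ref{cf}\eqref{Dnm}, $k=1$ is a conformal level for $\g_{\bar 0} \subset \g$ of type $D(m,1)$, so $\mathcal V_1(\g_{\bar 0})$ is conformally embedded in $V_1(\g)$. Next I would identify $\mathcal V_1(\g_{\bar 0})$ with $V_{-1/2}(sl(2)) \otimes V_1(so(2m))$: the $so(2m)$ factor sits at level $1$ and its image is the simple quotient because the relevant singular vector $X_{\bar\Theta}(-1)^2\vac$ lies in the ideal, while the $sl(2)$ factor sits at level $-1/2$ and is simple since $V^{-1/2}(sl(2))$ is already simple (there is no nontrivial ideal to quotient by at this level). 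This gives the subalgebra $U \cong V_{-1/2}(sl(2)) \otimes V_1(so(2m))$ inside $V_1(\g)$.

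Then I would analyze $\g_{\bar 1}$ as a $\g_{\bar 0}$-module: as in Proposition \ref{justapply}(5) the odd part decomposes as $\g_{\bar 1} = V_{sl(2)}(\omega_1) \otimes V_{so(2m)}(\omega_1)$. Using the complete reducibility of the action of $U$ (which holds because both tensor factors act semisimply in category $KL$), I would locate a $U$-submodule $M \subset V_1(\g)$ isomorphic to $L_{sl(2)}(\omega_1) \otimes L_{so(2m)}(\omega_1)$ generated by the image of $\g_{\bar 1}$. The decisive computation is the fusion rule $M \times M = U$, which follows by combining the $sl(2)$ fusion rule $L_{sl(2)}(\omega_1) \times L_{sl(2)}(\omega_1) = V_{-1/2}(sl(2))$ from \cite{We} with the $so(2m)$ fusion rule $L_{so(2m)}(\omega_1) \times L_{so(2m)}(\omega_1) = V_1(so(2m))$ from \cite{DL}. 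This fusion product shows that $U \oplus M$ is closed under the dot product and hence is a vertex subalgebra of $V_1(\g)$. Since $U \oplus M$ already contains all of $\g = \g_{\bar 0} \oplus \g_{\bar 1}$, it contains a generating set of $V_1(\g)$, forcing $U \oplus M = V_1(\g)$ and yielding the asserted decomposition.

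I expect the main obstacle to be the verification that $\mathcal V_1(\g_{\bar 0})$ really is the full simple tensor product $V_{-1/2}(sl(2)) \otimes V_1(so(2m))$ rather than some proper quotient or subalgebra; more precisely, one must confirm that the $so(2m)$ factor is the simple quotient $V_1(so(2m))$ (equivalently, that the singular vector $X_{\bar\Theta}(-1)^2\vac$ vanishes in $V_1(\g)$, just as $X_{\bar\Theta}(-1)^2\vac \in V^1(\g)\cdot\Omega$ held in the $n\ge 2$ case) while the $sl(2)$ factor, being already simple at level $-1/2$, requires no quotient. Once this structural identification is secured, the rest is a direct transcription of the fusion-rules argument of Proposition \ref{dec-dmn}.
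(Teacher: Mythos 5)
Your overall strategy---identify $U=\mathcal V_1(\g_{\bar 0})$ with $V_{-1/2}(sl(2))\otimes V_1(so(2m))$, locate a $U$-submodule $M\simeq L_{sl(2)}(\omega_1)\otimes L_{so(2m)}(\omega_1)$ generated by $\g_{\bar 1}$, and close up $U\oplus M$ under the dot product via the fusion rules of \cite{We} and \cite{DL}---is the same as the paper's, and the fusion-rule step is fine. The gap is in the structural identification of $U$: your claim that ``the $sl(2)$ factor \dots is simple since $V^{-1/2}(sl(2))$ is already simple (there is no nontrivial ideal to quotient by at this level)'' is false. The level $k=-1/2$ is admissible for $sl(2)$ (here $k+h^\vee=3/2$), and $V^{-1/2}(sl(2))$ has a nontrivial maximal ideal, generated by a singular vector of $sl(2)$-weight $4\omega_1$ and conformal weight $4$; the simple quotient $V_{-1/2}(sl(2))$ is a proper quotient (it is the even part $M_{(1)}^+$ of the Weyl vertex algebra). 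So the simplicity of the $sl(2)$-factor of $\mathcal V_1(\g_{\bar 0})$ is precisely the nontrivial point, not something you get for free; and since you also invoke semisimplicity of the $U$-action ``because both tensor factors act semisimply in category $KL$''---which holds for the simple quotient but not for the universal algebra---the error propagates to the construction of $M$. In your closing paragraph you flag the $so(2m)$-factor as the one needing verification and wave away the $sl(2)$-factor; it is the other way around.

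The paper closes this gap by appealing to the explicit free-field realization: $V_1(osp(2m|2))$ embeds in $M_{(2m|2)}\cong F_{(m)}\otimes M_{(1)}$ (Section \ref{FFR}), and under this embedding the $sp(2)$-factor of $\g_{\bar 0}$ generates $M_{(1)}^{+}\simeq V_{-1/2}(sl(2))$ (Feingold--Frenkel, \cite{FF}), the $so(2m)$-factor generates $F_{(m)}^{+}\simeq V_1(so(2m))$, and $\g_{\bar 1}$ generates a copy of $L_{sl(2)}(\omega_1)\otimes L_{so(2m)}(\omega_1)$ inside $M_{(1)}^{-}\otimes F_{(m)}^{-}$, with complete reducibility inherited from the Fock space. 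If you want to avoid the realization, you must instead rule out by hand a nonzero $\widehat\g_{\bar 0}$-singular vector of $\g_{\bar 0}$-weight $(4\omega_1,0)$ and conformal weight $4$ in $V_1(\g)$ (e.g.\ by a dot-product/fusion-rules argument in the style of Lemma \ref{singular}); some such input is indispensable, and without it the identification of $U$, and hence the decomposition, is not established.
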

 \begin{proof}
 From the  explicit realization we conclude that $V_{1} (\g)$ has a subalgebra isomorphic to  $V_{-1/2} (sl(2)) \otimes V_1 (so(2m) )$ and contains the  $V_{-1/2} (sl(2)) \otimes V_1 (so(2m) )$--module $ L_{sl(2)} (\omega_1) \otimes L_{so(2m)}  (\omega_1)$. The claim follows by using fusion rules.
 \end{proof}

 \begin{remark}
 The same argument applied to $\g$ of type $B(n,m)$ yields the same result of Theorem \ref{justapply} and, moreover, shows that, if $n\ge 2$, the vector $\Omega$ given in \eqref{Omega} generates the maximal ideal in $V^1 (\g)$.
 \end{remark}

\subsection {The case  ${\mathfrak g}$ of type $D(m,n)$, $k = 2-m+n$. }

 We conjecture that  in this case $\mathcal V_{-2} (\g_{\bar 0})$ will be a simple vertex algebra, and that $V_k(\mathfrak g)$ is the semi-simple $\mathcal V_{-2} (\g_{\bar 0})$--module. But at the moment  we can prove these conjectures only in the case 
$\g = osp(2n + 8|2n)$ and conformal level $k=-2$. Note that $k=-2$ is also a collapsing level, and therefore  we can use results from \cite{AKMPP-IMRN}.
The general case, i.e., when $k$ is non-collapsing, is at the moment beyond the range of applicability of our methods.
 
\begin{theorem}  \
(1). The vertex algebra $\mathcal V_{-2} (\g_{\bar 0})$ is simple and it is isomorphic to $V_{-2} (so(2n +8)) \otimes V_1 (sp(2n))$.
 
\noindent (2).
We have the following decomposition
     	$$ V_{-2}(\g) = \bigoplus_{i = 0} ^{n} L_{-2} (i\omega_1) \otimes L_1(\omega_i). $$
\end{theorem}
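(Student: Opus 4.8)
The plan is to run the fusion rules argument of the Introduction. We are in the situation of Theorem \ref{cf} with $\g$ of type $D(m,n)$, $m=n+4$ (case \ref{Dnm}), so that $\g_{\bar 0}=so(2n+8)\times sp(2n)$ and, as recorded there, the induced levels are $u_1(-2)=-2$ on the $so(2n+8)$ factor and $u_2(-2)=1$ on the $sp(2n)$ factor, while $\g_{\bar 1}=V_{so(2n+8)}(\omega_1)\otimes V_{sp(2n)}(\omega_1)$. Since $V_{-2}(\g)$ is generated as a vertex algebra by $\mathcal V_{-2}(\g_{\bar 0})$ together with $\g_{\bar 1}(-1)\vac$, I would set $M_i=L_{-2}(i\omega_1)\otimes L_1(\omega_i)$, so that $M_0=\mathcal V_{-2}(\g_{\bar 0})$ and, once semisimplicity is in force, $M_1$ is the submodule generated by $\g_{\bar 1}(-1)\vac$ (its top being the irreducible $\g_{\bar 0}$-module $V_{so(2n+8)}(\omega_1)\otimes V_{sp(2n)}(\omega_1)$, whence $M_1\cong L_{-2}(\omega_1)\otimes L_1(\omega_1)$), and prove $V_{-2}(\g)=\bigoplus_{i=0}^n M_i$ by controlling the iterated dot products $M_i\cdot M_1$.

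First I would prove (1). The subalgebra $\mathcal V_{-2}(\g_{\bar 0})$ is generated by the two commuting current subalgebras coming from $so(2n+8)$ and $sp(2n)$, which are quotients of $V^{-2}(so(2n+8))$ and $V^{1}(sp(2n))$ respectively. I would show that inside the simple algebra $V_{-2}(\g)$ these images are already the simple quotients, i.e. the singular vectors generating the respective maximal ideals vanish; for the $so(2n+8)$ factor at the collapsing level $-2$ the maximal ideal and the full representation theory are provided by \cite{AKMPP-IMRN}, and $V_1(sp(2n))$ is simple classically. As the two factors commute and generate, $\mathcal V_{-2}(\g_{\bar 0})\cong V_{-2}(so(2n+8))\otimes V_1(sp(2n))$, a simple object of $KL_{-2}$. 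By the principle recalled in the Introduction, simplicity in $KL_{-2}$ forces $\mathcal V_{-2}(\g_{\bar 0})$ to act semisimply, so that $V_{-2}(\g)$ is completely reducible as a $\mathcal V_{-2}(\g_{\bar 0})$-module; this is what legitimizes the dot-product analysis and proves (1).

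The heart of (2) is a conformal-weight computation. Using the Sugawara vector $\omega_{\g^0}$, the dual Coxeter numbers $h^\vee(so(2n+8))=2n+6$ and $h^\vee(sp(2n))=n+1$, and the normalized forms, I would evaluate the minimal conformal weights
\[ \Delta^{so}_i=\frac{(i\omega_1,i\omega_1+2\rho)}{2(u_1(-2)+h^\vee_1)},\qquad \Delta^{sp}_i=\frac{(\omega_i,\omega_i+2\rho)}{2(u_2(-2)+h^\vee_2)}, \]
and find that the minimal weight of $M_i$ is
\[ \Delta_i=\Delta^{so}_i+\Delta^{sp}_i=i \]
(for $i=1$ this is precisely the conformality condition \eqref{numcheck}). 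The same evaluation for the two off-diagonal products gives
\[ \Delta^{so}_{i+1}+\Delta^{sp}_{i-1}=i+\frac{i+1}{n+2},\qquad \Delta^{so}_{i-1}+\Delta^{sp}_{i+1}=i-\frac{i+1}{n+2}, \]
which are not integers for $0\le i\le n$, since then $1\le i+1\le n+1<n+2$. Because $V_{-2}(\g)$ is non-negatively integer graded by conformal weight, no $\mathcal V_{-2}(\g_{\bar 0})$-submodule with non-integral minimal weight can occur in it.

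Finally I would carry out the induction. The fusion rules $L_{-2}(i\omega_1)\times L_{-2}(\omega_1)=L_{-2}((i+1)\omega_1)\oplus L_{-2}((i-1)\omega_1)$ from \cite{AKMPP-IMRN}, together with the level-one $sp(2n)$ rules $L_1(\omega_i)\times L_1(\omega_1)=L_1(\omega_{i+1})\oplus L_1(\omega_{i-1})$ (with $L_1(\omega_n)\times L_1(\omega_1)=L_1(\omega_{n-1})$), show that $M_i\cdot M_1$ is contained in the four tensor products of the listed summands. By the previous paragraph the two off-diagonal products, having non-integral minimal weight, cannot appear in $V_{-2}(\g)$, so $M_i\cdot M_1\subseteq M_{i+1}\oplus M_{i-1}$, each summand with multiplicity one. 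Iterating from $M_0$ and $M_1$ then produces exactly $\bigoplus_{i=0}^n M_i$; the chain terminates at $i=n$ because on the $sp(2n)$ side there is no module $L_1(\omega_{n+1})$, so no further integral-weight summand can be reached. Since these modules generate $V_{-2}(\g)$ and it is completely reducible, this yields the asserted decomposition. The main obstacle is the input from \cite{AKMPP-IMRN}: one needs, at the non-admissible level $-2$, both the semisimplicity of the relevant category of $V_{-2}(so(2n+8))$-modules and the fact that the symmetric powers $L_{-2}(i\omega_1)$ are fusion-closed with the Clebsch--Gordan-type rule above, so that no mixed-symmetry $so(2n+8)$-modules intrude. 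Granting this, the remainder is the conformal-weight bookkeeping displayed above.
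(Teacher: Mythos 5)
Your overall strategy (input from \cite{AKMPP-IMRN}, complete reducibility, fusion rules constrained by integrality of conformal weights) is the same as the paper's, and your conformal-weight bookkeeping is correct: $h[i,i]=i$ while $h[i\pm1,i\mp1]=i\pm\frac{i+1}{n+2}\notin\ZZ$, which is exactly how the paper excludes the off-diagonal components. But there are two genuine gaps, and they sit precisely at the two hardest points of the argument.

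First, part (1). You say you would "show that the singular vectors generating the respective maximal ideals vanish," but you give no mechanism for doing so, and this is the crux of the whole section. The point is that the quotient of $V^{-2}(D_{n+4})$ by the obvious quadratic singular vector $w_1$ (the algebra $\mathcal R_{-2}(D_{n+4})$ of \cite{AKMPP-IMRN}) is \emph{still not simple}: it contains a further singular vector $w_\ell=\bigl(\sum_{i\ge2}e_{\epsilon_1-\epsilon_i}(-1)e_{\epsilon_1+\epsilon_i}(-1)\bigr)^{\ell-3}\vac$ of $\g_{\bar 0}$--weight $((2n+2)\omega_1,0)$ and conformal weight $2n+2$, generating its unique nontrivial ideal. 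The paper first uses the free-field realization $\Phi:V^{-2}(\g)\to M_{(4n|4n+16)}$ to kill $w_1$ and $e_{-\theta'}(-1)^2\vac$, and then kills $w_\ell$ by a separate inductive fusion argument: if $v_{2n+2,0}\ne0$ then the fusion rules together with the integrality constraints force a chain of singular vectors $v_{2n+2-r,r}$ down to $v_{n+2,n}$, which in turn forces a singular vector of weight $((n+1)\omega_1,\omega_{n-1})$ whose conformal weight $n+1-\frac{1}{n+2}$ is not an integer — a contradiction. Knowing the maximal ideal of $V^{-2}(D_{n+4})$ from \cite{AKMPP-IMRN} does not by itself tell you that it maps to zero in $\mathcal V_{-2}(\g_{\bar 0})$; without this step part (1), and hence the complete reducibility you rely on, is unproven.

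Second, the multiplicities in part (2). The containment $M_i\cdot M_1\subseteq M_{i+1}\oplus M_{i-1}$ is only an upper bound on which isotypic components can arise: it does not show that $M_{i+1}$ actually occurs (the dot product could collapse entirely onto $M_{i-1}$), nor that the total multiplicity of each $M_i$ in $V_{-2}(\g)$ is one (distinct iterated products of $M_1$ could a priori contribute independent copies of the same $M_i$). The paper settles the lower bound by exhibiting explicit nonzero $\widehat{\g_{\bar 0}}$--singular vectors $W_i=\,:v_iv_{i-1}\cdots v_1:$ in the free-field realization, checking they are not $\widehat\g$--singular (a conformal-weight mismatch), and concluding $\mathcal W_i\cong L_{-2}(i\omega_1)\otimes L_1(\omega_i)$ survives in $V_{-2}(\g)$; it settles the upper bound by combining strong generation with associativity of the fusion product and the multiplicity-one property of the tensor product decompositions \eqref{decomp-d}--\eqref{fusion-c}. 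Your proposal asserts "each summand with multiplicity one" without either of these inputs. (The paper also treats $n=1$ separately as a simple-current extension, but that is a minor point.)
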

The proof will be given in Section  \ref{osp-conformal}. It uses explicit realization of one non simple quotient of $V^{-2}(\g)$, the fusion rules and the representation theory of the vertex algebra  $V_{-2} (so(2n +8))$ from \cite{AKMPP-IMRN}.
  
 \subsection{The case $\g = spo(2|3)$, $k =-3/4$  }
 We now discuss the case of $\g$ of type $B(1,1)$. According to Theorem \ref{cf}, the only conformal level is $k=3/2$. In this case
  $\g_{\bar 0} =sp(2)\times so(3)\simeq sl(2) \times sl(2)$. Let $\a,\be$ be roots of $sp(2)$ and $so(3)$ respectively. In the normalization of the form $(\cdot|\cdot)$ used in  Theorem \ref{cf} we have $(\a|\a)=-4$ and $(\be|\be)=1$. In this section we normalize the form so that $(\a|\a)=2$ and $(\be|\be)=-1/2$. With this normalization $k=-3/4$. As in \cite{AKMPP-JA}, we let $spo(2|3)$ denote $\g$ with this latter choice of the invariant form. Different normalizations occur in the literature: for example in \cite{ C-RIMS} the form is chosen so that $(\a|\a)=-8$ and $(\be|\be)=2$, hence $k=3$.

We have a vertex algebra homomorphism  $\Phi : V ^ {k}  (sl(2)) \otimes  V ^ {-4k} (sl(2))\rightarrow V_{k} (spo(2|3)) $.

 \begin{lemma} \label{singular}
 There are no  $\widehat \g_{\bar 0}$--singular vectors in $V_{-3/4} (spo(2|3))$  of $\g_{\bar 0}$--weights
 $$ (8\omega_1, 0), (7\omega_1, 2 \omega_1), ( 6\omega_1, 2\omega_1), (5\omega_1, 0), (0, 8 \omega_1), (\omega_1 , 6\omega_1). $$
 \end{lemma}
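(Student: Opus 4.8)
The plan is to prove the non-existence of $\widehat\g_{\bar 0}$--singular vectors of the listed weights by a conformal weight argument. Recall that $V_{-3/4}(spo(2|3))$ is a conformal vertex algebra with the Sugawara conformal vector $\omega_\g$. Since the embedding $\mathcal V_{-3/4}(\g_{\bar 0})\subset V_{-3/4}(spo(2|3))$ is conformal (Theorem \ref{cf}, case \eqref{Bnm} with $m=n=1$), we have $\omega_\g=\omega_{\g_{\bar 0}}$, so the conformal grading of $V_{-3/4}(spo(2|3))$ coincides with the one coming from the Sugawara vector of $\widehat\g_{\bar 0}=\widehat{sl(2)}\oplus\widehat{sl(2)}$ at levels $(u_1(k),u_2(k))$. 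Any $\widehat\g_{\bar 0}$--singular vector $v$ of weight $\nu=(\nu^1,\nu^2)$ would be a highest weight vector for $\widehat\g_{\bar 0}$, hence its conformal weight is computed by the Sugawara formula
\begin{equation*}
\Delta_v=\frac{(\nu^1\mid\nu^1+2\rho^1)_1}{2(u_1(k)+h_1^\vee)}+\frac{(\nu^2\mid\nu^2+2\rho^2)_2}{2(u_2(k)+h_2^\vee)}.
\end{equation*}

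First I would pin down the two levels and the normalizations. With $(\alpha\mid\alpha)=2$ on the $sp(2)\cong sl(2)$ factor and $(\beta\mid\beta)=-1/2$ on the $so(3)\cong sl(2)$ factor, one reads off from case \eqref{Bnm} of Theorem \ref{cf} (translated through the form normalization used in this section) the precise values $u_1(-3/4)$ and $u_2(-3/4)$, together with $h_1^\vee=h_2^\vee=2$. Plugging each of the six candidate weights into the displayed Sugawara formula gives a definite rational number $\Delta_v$ in each case. The key structural constraint is that $V_{-3/4}(spo(2|3))$ is $\tfrac12\ganz_{\ge 0}$--graded by conformal weight with $\Delta_{\vac}=0$, and more importantly that its graded pieces in low degree are completely controlled: by the free-field/PBW description, $V_{-3/4}(spo(2|3))$ is spanned by normally ordered products of the generators $x(-n)$, whose conformal weights are the degrees $n$, so the conformal weight of any nonzero state is a nonnegative (half-)integer bounded below in each $\widehat\g_{\bar 0}$--weight space.

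The main step, then, is to observe that a $\widehat\g_{\bar 0}$--singular vector of weight $\nu$ must have conformal weight at least equal to the ``minimal fill'' determined by how much of $\g_{\bar 1}$ (which generates $V_{-3/4}(spo(2|3))$ over $\mathcal V_{-3/4}(\g_{\bar 0})$) is needed to build a state of that weight. Since $\g_{\bar 1}=V_{so(3)}(2\omega_1)\otimes V_{sp(2)}(\omega_1)$ has conformal weight $1$ and a fixed, explicit set of weights, any state of $\g_{\bar 0}$--weight $\nu$ obtained by the dot product of $r$ copies of $\g_{\bar 1}$ lives in conformal weight $\ge r$, and the $sl(2)$--weight along each factor is bounded by $r$ times the extremal weight appearing in $\g_{\bar 1}$. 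For each of the six weights I would compare the Sugawara value $\Delta_v$ against this combinatorial lower bound on the conformal weight forced by the $\widehat\g_{\bar 0}$--weight. A singular vector exists only if these two quantities are compatible; I expect that in each of the six listed cases the Sugawara $\Delta_v$ is strictly smaller than the minimal conformal weight at which the required $\widehat\g_{\bar 0}$--weight can first appear, yielding the contradiction and hence non-existence.

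The hard part will be making the ``minimal fill'' bound rigorous and sharp enough to exclude all six weights simultaneously, rather than just generically. The $sl(2)\times sl(2)$ weight lattice and the half-integral level normalization make the bookkeeping delicate, and one must be careful that singular vectors could in principle arise from cancellations among normally ordered monomials rather than from a single PBW term; so the cleanest route is to work weight space by weight space, listing the finitely many monomials in $\g_{\bar 1}(-n)$ and $\g_{\bar 0}(-n)$ that could contribute to a given $\nu$ below the Sugawara conformal weight, and checking directly that the $\widehat\g_{\bar 0}$--highest weight condition $x_\alpha(0)v=0$, $x(m)v=0$ for $m>0$ has no nonzero solution there. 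An alternative, and perhaps more efficient, reduction is to use Theorem \ref{generalhs} together with Remark \ref{conditionsfinitered}: if each candidate $\nu_r$ fails the integrality condition \eqref{finioths}, i.e. the corresponding Sugawara value is not a nonnegative integer, then no $\widehat\g_{\bar 0}$--singular vector of that weight can occur in the relevant eigenspace, which is exactly the conclusion sought. I would therefore first test each of the six weights against \eqref{finioths}, and fall back on the explicit low-degree analysis only for those weights (if any) where the integrality criterion is inconclusive.
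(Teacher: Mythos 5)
Your proposal does not work, and the failure is instructive: the six weights listed in the lemma are precisely those for which the integrality test you want to use is inconclusive. Computing the Sugawara conformal weight with the formula $h_{r,s}=\tfrac{r(r+2)}{5}+\tfrac{s(s+2)}{20}$ (levels $-3/4$ and $3$ on the two $sl(2)$--factors), one finds $h_{8,0}=16$, $h_{7,2}=13$, $h_{6,2}=10$, $h_{5,0}=7$, $h_{0,8}=4$, $h_{1,6}=3$ --- all nonnegative integers. So the criterion of Remark \ref{conditionsfinitered}\,/\,\eqref{finioths} excludes none of them; indeed that criterion is what disposes of all the \emph{other} weights, and the lemma exists exactly to handle the residual integral cases. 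Your primary ``minimal fill'' argument also fails, because the inequality goes the wrong way: the weight $(8\omega_1,0)$ already occurs at conformal weight $4$ (e.g.\ via $(x_\alpha(-1))^4\vac$ with $x_\alpha$ the positive root vector of the $sp(2)$--factor of $\g_{\bar 0}$), which is far \emph{below} $\Delta_v=16$, and the weight space at conformal weight $16$ is huge and certainly nonzero. There is no clash between the Sugawara value and the grading, so no contradiction can be extracted from weight bookkeeping alone; the question is whether a specific vector (the image of the singular vector generating the maximal ideal of $V^{-3/4}(sl(2))$, resp.\ $V^{3}(sl(2))$) vanishes in the simple quotient, and that cannot be decided by comparing degrees. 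The fallback of listing all monomials up to conformal weight $16$ and solving the highest-weight equations is not a feasible proof.

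The paper's actual argument is indirect and uses the simplicity of $V_{-3/4}(spo(2|3))$ in an essential way. Assuming a singular vector $v_{r,s}$ of one of the listed weights exists, one forms the dot product $V_{1,2}\cdot V_{r,s}$ with the module generated by $\g_{\bar 1}$; Clebsch--Gordan constrains the possible weights of the summands, and the integrality criterion eliminates all but one or two of them. Since the dot product in a simple vertex algebra has no zero divisors, a singular vector of one of the surviving weights must be nonzero; iterating this produces a cascade (e.g.\ $(0,8\omega_1)\Rightarrow(\omega_1,6\omega_1)$, and $(8\omega_1,0)\Rightarrow(7\omega_1,2\omega_1)\Rightarrow(6\omega_1,2\omega_1)\Rightarrow(5\omega_1,0)$) that terminates in a vector which is forced to be $\widehat\g$--singular, i.e.\ generates a proper ideal of $V_{-3/4}(spo(2|3))$ --- contradicting simplicity. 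This mechanism (existence of one singular vector propagating to a $\widehat\g$--singular vector) is the missing idea in your proposal.
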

 \begin{proof}
 Let  $v_{n,m}$  be the space of $\widehat \g_{\bar 0}$--singular vectors in $V_{-3/4} (spo(2|3))$  of $\g_{\bar 0}$--weight $(n \omega_1, m \omega_2)$. 
 Let $V_{n,m} = \mathcal V_k(\g_{\bar 0})\cdot v_{n,m}$. The fusion rules argument and Clebsch-Gordan formulas (see e.g. \cite[\S 22]{Hum}) imply that 
\bea  \label{sum} V_{n_1,m_1} \cdot V_{n_2, m_2} \subset \sum_{i= 0} ^{ \mbox{min} \{ n_1, n_2\}}   \sum_{j= 0} ^{ \mbox{min} \{ m_1, m_2\}}  V_{n_1 + n_2 - 2 i, m_1 + m_2 -2j}. \eea
We can exclude summands $V_{r,s}$  in (\ref{sum})  such that the conformal weight of $v_{r,s}$, i.e.
$$h_{r,s} = \frac{ r  (r+2)} {5} +  \frac{ s (s+ 2)} {20},$$
is not an integer.  

Assume first that  
$v_{0,8}\ne\{0\}$. 
The fusion rules 
\bea   V_{1,2} \cdot V_{0, 8 } \subset  V_{1,10} + V_{1,8} +  V_{1,6}, \nonumber   \eea
and 
\bea   h_{1,10} = 33/5,  h_{1,8} = 23/5,  h _{1,6} = 3,  \nonumber   \eea
imply that $V_{-3/4}(spo(2|3))$ must contain a  $\widehat \g_{\bar 0}$--singular vector $v$ of  $\g_{\bar 0}$ weight $(\omega_1,6 \omega_1)$.

Using
\bea   V_{1,2} \cdot V_{1, 6} \subset  V_{2,8} + V_{2,6} +  V_{2,4} + V_{0,8} + V_{0,6} +  V_{0,4} , \nonumber   \eea
and
\bea   h_{2,8} = 28/5,  h_{2,6} = 4, h _{2,4} = 14/5,  h_{0,8} = 4,  h_{0,6} = 12/5,   h_{0,4} = 6/5,  \nonumber   \eea
we get that $v$ is  $\widehat {\g}$--singular. A contradiction, since $V_{k}(\g)$ is simple.

In this way we have proved that there are no  $\widehat \g_{\bar 0}$-- singular vectors of weights $ (\omega_1, 6\omega_1)$, $(0, 8\omega_1)$.

 Since   the maximal ideal  of $  V ^ {3} (sl(2))$ is generated by a  singular vector  of $\g_{\bar 0}$-weight  $(0,8\omega_1)$, we also have that $\mathcal V ^ {3} (sl(2))= V _ {3} (sl(2)).$
In particular, we can refine the fusion rule information from (\ref{sum}) by using fusion rules for $V_3 (sl(2))$ and get
\bea  
&&   V_{1,2} \cdot V_{5, 0} \subset  V_{6,2} + V_{4,2}, \nonumber \\ 
&&  V_{1,2} \cdot V_{6, 2} \subset    V_{7,2}+V_{7,0}+ V_{5,2} + V_{5, 0},  \nonumber  \\
&&  V_{1,2} \cdot V_{7, 2} \subset    V_{8,2}+V_{8,0}+ V_{6,2} + V_{6, 0},  \nonumber  \\
&&  V_{1,2} \cdot V_{8, 0} \subset    V_{9,2}+V_{7,2}.  \nonumber  
\eea
Since  the only integral values of conformal weights associated with the above decompositions are  $h_{8,0} = 16, h_{7,2} = 13, h_{6,2} = 10, h_{5,0} = 7$, we get 
\bea  
&&   V_{1,2} \cdot V_{5, 0} \subset  V_{6,2}, \label{5,0} \\ 
&&  V_{1,2} \cdot V_{6, 2} \subset    V_{7,2}+ V_{5, 0},  \label{6,2}  \\
&&  V_{1,2} \cdot V_{7, 2} \subset    V_{8,0}+ V_{6,2},  \label{7,2}  \\
&&  V_{1,2} \cdot V_{8, 0} \subset    V_{7,2}.  \label{8,0}
\eea

The remaining assertions of the Lemma can be obtained by using the following arguments: 

\begin{itemize}
 \item[(a)] (\ref{5,0}) implies that if $v$ is a $\widehat \g_{\bar 0}$-- singular vectors of weight $ (5\omega_1, 0)$, then $v$ must be $\widehat {\g}$-- singular.  A contradiction.
 
\item[(b)]  (\ref{6,2}) implies that if $v$ is a non-trivial $\widehat \g_{\bar 0}$-- singular vector of weight $ (6\omega_1, 2 \omega_1)$, then  there is a non-trivial  $\widehat \g_{\bar 0}$-- singular vector of  weight $ (5\omega_1, 0)$. A contradiction because of (a).

 \item[(c)] (\ref{7,2}) implies that if $v$ is a non-trivial $\widehat \g_{\bar 0}$-- singular vector of weight $ (7\omega_1, 2 \omega_1)$, then  there is a non-trivial  $\widehat \g_{\bar 0}$-- singular vector of  weight $ (6\omega_1, 2\omega_1)$. A contradiction because of (b). 
 
 \item[(d)]  (\ref{8,0}) implies that if $v$ is a non-trivial $\widehat \g_{\bar 0}$-- singular vector of weight $ (8\omega_1, 0)$, then  there is a non-trivial  $\widehat \g_{\bar 0}$-- singular vector of  weight $ (7\omega_1, 2\omega_1)$. A contradiction because of (c).
 \end{itemize} 
 \end{proof}

Let $e_{nm},f_{nm}$ be as in \S\ 8.5 of \cite{KW}: $e_{nm}$ is a root vector for the root $n\a_1+m\a_2$, and $f_{nm}$ is a root vector for the root $-(n\a_1+m\a_2)$. If $\{h_1,h_2\}$ is a basis for $\h$, then a basis of $\spo$ is
$$
\mathcal B=\{e_{22},e_{12},e_{11},e_{10},e_{01},h_{1},h_{2},f_{22},f_{12},f_{11},f_{10},f_{01}\}.
$$

Moreover, up to a renormalization of the generators, we have
\begin{equation}\label{strconst}
 [e_{22},e_{12}]=[e_{01},e_{12}]=0,\ [f_{22},e_{12}]=-f_{10},
 \end{equation}
\begin{equation}\label{strconst2}
[f_{10},e_{12}]=0,\ [f_{01},e_{12}]=-(1/2)e_{11},\ [e_{11},e_{12}]=0.
 \end{equation}
  \begin{equation}\label{strconst3}
[e_{22},f_{22}]=h_1,\ [h_{1},e_{01}]=0,\ [f_{22},e_{11}]=f_{11},\ [f_{22},e_{12}]=-f_{10}.
 \end{equation}
   \begin{equation}\label{strconst4}
[f_{11},e_{12}]=-1/2e_{01},\ [e_{01},f_{01}]=h_2,\ [f_{01},e_{11}]=-(1/2)e_{10}.
 \end{equation}
    \begin{equation}\label{strconst5}
[e_{10},e_{12}]=e_{22},\ [e_{01},e_{11}]=e_{12} ,\ [e_{11},e_{11}]=-e_{22}.
\end{equation}
If $V$ is a vertex algebra and $a,b\in V$, denote by $:a b: = a(-1)b$ their normal order. 
\begin{lemma}\label{30isnonzero} We have
$$:e_{10}e_{11}e_{12}:\notin V_{1,2}.$$
\end{lemma}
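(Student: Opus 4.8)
The plan is to prove the non-containment $:e_{10}e_{11}e_{12}:\notin V_{1,2}$ by a weight and conformal-weight bookkeeping argument, combined with the structural information extracted in Lemma \ref{singular}. First I would identify the $\g_{\bar 0}$--weight of the vector $w=\,:e_{10}e_{11}e_{12}:$. Reading off the roots from the root-vector notation, $e_{10},e_{11},e_{12}$ carry roots $\a_1$, $\a_1+\a_2$, $\a_1+2\a_2$, so $w$ sits in the weight space for $3\a_1+3\a_2$; translating this into the fundamental weights of the two $sl(2)$--factors $sp(2)$ and $so(3)$ gives a specific pair $(n\omega_1,m\omega_1)$ which I would compute explicitly. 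I would likewise compute its conformal weight (here each $e_{ij}(-1)$ contributes the appropriate $\D$ coming from the Sugawara vector on $V_{-3/4}(spo(2|3))$), so that $w$ lives in a definite bigraded piece of $V_k(\g)$.

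Next, recall that $V_{1,2}=\mathcal V_k(\g_{\bar 0})\cdot v_{1,2}$ is the $\mathcal V_k(\g_{\bar 0})$--submodule generated by the odd singular vector of weight $(\omega_1,2\omega_1)$; by the fusion/Clebsch--Gordan mechanism behind \eqref{sum}, every $\g_{\bar 0}$--weight appearing in $V_{1,2}$ lies in $(\omega_1,2\omega_1)$ shifted by the root lattice of $\g_{\bar 0}$ in a way compatible with the irreducible constituents of that cyclic module, and the conformal weight of any element of weight $(r\omega_1,s\omega_1)$ in $V_{1,2}$ is forced to be $h_{r,s}=\tfrac{r(r+2)}{5}+\tfrac{s(s+2)}{20}$ up to a non-negative integer shift. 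So the strategy is: I would show that either the weight of $w$ is not among the weights carried by $V_{1,2}$, or — if the weight does occur — that the conformal-weight slot that $w$ occupies is strictly below (or otherwise incompatible with) the minimal conformal weight at which $V_{1,2}$ can realize that $\g_{\bar 0}$--weight. In other words, the bigraded position $(\text{weight},\D)$ of $w$ must be shown to fall outside the bigraded support of $V_{1,2}$.

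The key computational inputs I would assemble are: (i) the structure constants \eqref{strconst}--\eqref{strconst5}, which let me verify that $w$ is genuinely nonzero in $V_k(\g)$ and understand how the $\g_{\bar 0}$--raising operators act on it (needed to locate $w$ relative to a highest-weight vector of the relevant $\mathcal V_k(\g_{\bar 0})$--constituent); and (ii) the list of excluded singular-vector weights from Lemma \ref{singular}, which pins down precisely which $\widehat\g_{\bar 0}$--primitive vectors can and cannot appear, thereby constraining the constituents of $V_{1,2}$. The argument would run by assuming $w\in V_{1,2}$, projecting onto the $\g_{\bar 0}$--isotypic component of $w$'s weight, and deriving from \eqref{sum} together with the integrality obstruction on $h_{r,s}$ a contradiction with the conformal weight actually carried by $w$.

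The hard part will be the matching of the conformal weight of $w$ against the admissible conformal weights inside $V_{1,2}$: I must be careful that $V_{1,2}$, being an affine ($\widehat\g_{\bar 0}$) module rather than a finite-dimensional one, contains each finite-dimensional $\g_{\bar 0}$--weight in infinitely many conformal-weight slots (the base value $h_{r,s}$ plus arbitrary non-negative integers from the action of negative-mode Heisenberg/current generators), so a crude weight count does not suffice. The delicate point is to show that the particular slot occupied by $:e_{10}e_{11}e_{12}:$ is the \emph{lowest} possible for its $\g_{\bar 0}$--weight and that this lowest slot is not attained inside $V_{1,2}$ — equivalently, that producing $w$ inside $V_{1,2}$ would force a singular vector of one of the weights already excluded in Lemma \ref{singular}. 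I expect this reduction to Lemma \ref{singular} to be the crux; once it is in place, the contradiction is immediate.
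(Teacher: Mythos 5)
There is a genuine gap, and it sits exactly where you place the ``hard part''. Your central mechanism is to show that the bigraded position of $w=\,:e_{10}e_{11}e_{12}:$ (its $\g_{\bar 0}$--weight $(3\omega_1,0)$ together with its conformal weight $3$) falls outside the bigraded support of $V_{1,2}$, or that $w$ sits below the minimal conformal-weight slot at which $V_{1,2}$ realizes that weight. Both claims are false. The module $V_{1,2}$ is generated by $\g_{\bar 1}(-1)\vac$, whose top sits at conformal weight $1$; already at conformal weight $2$ the vector $:e_{22}e_{11}:$ has weight $(3\omega_1,0)$ and lies in $V_{1,2}$, and at conformal weight $3$ — where $w$ lives — the weight-$(3\omega_1,0)$ component of $V^k(\spo)$ is the $8$-dimensional space $\mathcal C$, of which the seven vectors other than $w$ (each containing exactly one odd generator) all lie in $V_{1,2}$. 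So the slot is occupied with multiplicity, $w$ lies \emph{above} (not below) the minimal slot, and the integrality obstruction is vacuous here since $h_{3,0}=3$ is an integer equal to the conformal weight of $w$. The proposed reduction to Lemma \ref{singular} also has no traction: that lemma excludes singular vectors of weights such as $(8\omega_1,0)$, $(5\omega_1,0)$, $(0,8\omega_1)$, none of which is implicated by the hypothetical containment $w\in V_{1,2}$; in this paper Lemma \ref{singular} feeds into the simplicity of $\mathcal V_{-3/4}(sl(2))\otimes\mathcal V_3(sl(2))$ and the fusion constraints, not into Lemma \ref{30isnonzero}.

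What is actually required — and what the paper does — is an explicit linear-algebra computation inside the finite-dimensional component $span(\mathcal C)$. One imposes the condition $x(1)y(1)v=0$ for $x,y\in\spo$ on a generic $v\in span(\mathcal C)$ and finds that the intersection of the maximal ideal of $V^k(\spo)$ with $span(\mathcal C)$ is at most one-dimensional, spanned by the vector in \eqref{inmax}, whose coefficient on $:e_{10}e_{11}e_{12}:$ vanishes. Since the image of $V_{1,2}$ in this component is spanned by $\mathcal C\setminus\{:e_{10}e_{11}e_{12}:\}$, an identity $w=\sum_i c_iu_i$ in the simple quotient would force $w-\sum_i c_iu_i$ into the maximal ideal with nonzero coefficient on $w$, a contradiction. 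Your proposal never contemplates computing this intersection with the maximal ideal, and no amount of weight or conformal-weight bookkeeping can substitute for it.
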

\begin{proof}
A basis of the space of vectors in $V^k(\spo)$ of $sl(2)\times sl(2)$--weight $(3\omega_1,0)$ and of conformal weight 3 is given by
\begin{align*}
\mathcal C=\{:T(e_{22})e_{11}:,&:T(e_{11})e_{22}:,:f_{11}e_{22}e_{22}:,:e_{22}e_{01}e_{10}:,\\&:f_{01}e_{22}e_{12}:,:h_{1}e_{22}e_{11}:,:h_{2}e_{22}e_{11}:,:e_{10}e_{11}e_{12}:\}.
\end{align*}
If $v$ is in the span of $\mathcal C$  and in the maximal ideal of $V^k(\spo)$, then  $x(1)y(1)v=0$ for all $x,y\in\spo$. By computing $x(1)y(1)v$ with $x,y\in \mathcal B$ and $v$ a generic linear combination of elements of $\mathcal C$, if $v$ is in the maximal ideal of $V^k(\spo)$, 
\begin{align}\label{inmax}
v\in \C(:T(e_{22})e_{11}:&-3/4:T(e_{11})e_{22}:-:f_{11}e_{22}e_{22}:-1/2:e_{22}e_{01}e_{10}:\\\notag
&+:f_{01}e_{22}e_{12}:-:h_{1}e_{22}e_{11}:).
\end{align}

If $:e_{10}e_{11}e_{12}:\in V_{1,2}$ then it is a linear combination in $V_k(\spo)$ of $\mathcal C\setminus\{:e_{10}e_{11}e_{12}:\}$, but this implies that $:e_{10}e_{11}e_{12}:$ plus a linear combination of elements of $\mathcal C\setminus\{:e_{10}e_{11}e_{12}:\}$ belongs to the maximal ideal of $V^k(\spo)$, and this contradicts \eqref{inmax}.
\end{proof}

  \begin{proposition}   \label{osp(3|2)-dec}
(1). The vertex algebra $ V_{-3/4} (sl(2)) \otimes V_3 (sl(2) )$ is conformally embedded into  $V_{-3/4}(spo(2|3))$.\par\noindent
(2).    The following decomposition holds
\bea  V_{-3/4}(spo(2|3))  & =&   ( V_{-3/4} (sl(2))  \oplus     L_{sl(2)} (3 \omega_1)  )\otimes V_3 (sl(2) )  \bigoplus \nonumber  \\ &&  (  L_{sl(2)} ( \omega_1) \oplus  L_{sl(2)} (2 \omega_1)  ) \otimes L_{sl(2)}  (2\omega_1). \nonumber \eea
 \end{proposition}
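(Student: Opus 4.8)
The plan is to determine the subalgebra $\mathcal V_{-3/4}(\g_{\bar 0})$ first and then decompose $V_{-3/4}(\spo)$ over it by a fusion-rules argument. Throughout I write $A=V_{-3/4}(sl(2))$ for the $sp(2)$-factor and $B=V_3(sl(2))$ for the $so(3)$-factor, and I label the irreducible $\widehat\g_{\bar 0}$-module $L_{sl(2)}(n\omega_1)\otimes L_{sl(2)}(m\omega_1)$ by the pair $(n,m)$, recording its top conformal weight $h_{n,m}=\tfrac{n(n+2)}{5}+\tfrac{m(m+2)}{20}$ as in Lemma \ref{singular}. Note that $\g_{\bar 1}\cong V_{sl(2)}(\omega_1)\otimes V_{sl(2)}(2\omega_1)$ as a $\g_{\bar 0}$-module, so the copy of $\g_{\bar 1}$ inside $V_{-3/4}(\spo)$ generates the module $V_{1,2}$.

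For part (1), conformality is exactly the case $m=n=1$ of Theorem \ref{cf}(3), so it remains to see that $\mathcal V_{-3/4}(\g_{\bar 0})$ is the full simple tensor product $A\otimes B$, i.e. that both factors are simple. The subalgebra is a quotient of $V^{-3/4}(sl(2))\otimes V^3(sl(2))$, whose two universal factors have maximal ideals generated, respectively, by singular vectors of $\g_{\bar 0}$-weight $(8\omega_1,0)$ and $(0,8\omega_1)$: for $V^3(sl(2))$ this is $X_{\overline\Theta}(-1)^4\vac$, and for the admissible algebra $V^{-3/4}(sl(2))$ (here $k+2=5/4$) the generating singular vector sits at weight $(8\omega_1,0)$ and conformal weight $16=h_{8,0}$. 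Lemma \ref{singular} forbids $\widehat\g_{\bar 0}$-singular vectors of either weight in $V_{-3/4}(\spo)$; hence both singular vectors are killed, the images of the two factors are the simple quotients $A$ and $B$, and since they commute and $A\otimes B$ is simple, $\mathcal V_{-3/4}(\g_{\bar 0})\cong A\otimes B$.

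For part (2), because $A\otimes B$ is a simple conformal subalgebra that is semisimple in $KL$ (cf. \cite[\S3]{AKMPP-IMRN}), $V_{-3/4}(\spo)$ is a completely reducible $A\otimes B$-module, a direct sum of modules of type $(n,m)$. I would now bound the possible $(n,m)$ by three constraints: $n\in\{0,1,2,3\}$, since these index the ordinary irreducible $A$-modules at the admissible level $-3/4$; $m\in\{0,1,2,3\}$, since $B=V_3(sl(2))$ is rational with exactly these modules; and $h_{n,m}\in\mathbb Z_{\ge0}$, since the Sugawara grading of $V_{-3/4}(\spo)$ is integral. Running through the finitely many pairs, only $(0,0)$, $(1,2)$, $(2,2)$, $(3,0)$ survive, which are precisely the four summands in the asserted formula.

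It then remains to show each of these four summands occurs with multiplicity one. The vacuum $(0,0)=A\otimes B$ and the copy $(1,2)$ of $\g_{\bar 1}$ in conformal weight $1$ clearly occur once. For $(3,0)$ I would use Lemma \ref{30isnonzero}: the odd vector $:e_{10}e_{11}e_{12}:$ has weight $(3\omega_1,0)$ and conformal weight $3=h_{3,0}$, and since it does not lie in $V_{1,2}$ it must have a nonzero component in a summand of type $(3,0)$, so $V_{3,0}\ne 0$. Since $V_{-3/4}(\spo)$ is simple, its dot product has no zero divisors, so $V_{3,0}\cdot V_{1,2}\ne 0$; the $sl(2)$-fusion rules ($L(3\omega_1)\times L(\omega_1)=L(2\omega_1)$ for $A$, and $L(0)\times L(2\omega_1)=L(2\omega_1)$ for $B$, the latter from \cite{DL}) place this product inside $V_{2,2}$, giving $V_{2,2}\ne 0$. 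Finally, the (multiplicity-free) fusion rules show that the span of the four modules is closed under the dot product; being $T$-stable and containing $\g$, it is a sub-vertex-algebra containing a generating set of $V_{-3/4}(\spo)$, hence equals it, and multiplicity-freeness of the fusion forces each multiplicity to be one. The main obstacle is the pair of inputs I have quoted as lemmas: the vanishing of the $(8\omega_1,0)$ and $(0,8\omega_1)$ singular vectors in Lemma \ref{singular}, whose proof requires a delicate bootstrap through the intermediate weights $(5,0),(6,2),(7,2),(1,6)$ using integrality of conformal weights and affine fusion, and the explicit structure-constant computation of Lemma \ref{30isnonzero}, which is what certifies that the extension properly exceeds the first shell $A\otimes B\oplus\g_{\bar1}$ and is therefore not covered by Theorem \ref{general}.
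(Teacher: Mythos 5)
Your part (1) and your enumeration of the candidate summands are sound and essentially reproduce the paper's argument: simplicity of both factors follows from Lemma \ref{singular} killing the weight-$(8\omega_1,0)$ and $(0,8\omega_1)$ singular vectors, and your list $(0,0),(1,2),(2,2),(3,0)$ obtained from the module classifications at levels $-3/4$ and $3$ together with integrality of $h_{n,m}$ agrees with what the paper extracts through its chains of fusion products. The existence statements ($V_{3,0}\ne 0$ via Lemma \ref{30isnonzero}, $V_{2,2}\ne 0$ via $V_{1,2}\cdot V_{3,0}\ne 0$ and absence of zero divisors) are also correct.

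The gap is in the final step, where you assert that ``multiplicity-freeness of the fusion forces each multiplicity to be one.'' A fusion rule of dimension one bounds the multiplicity of a given type in a \emph{single} dot product $M\cdot N$, but the isotypic components of $V_{-3/4}(\spo)$ are fed by several different dot products: for instance the $(2,2)$-isotypic part receives contributions from $V_{1,2}\cdot V_{1,2}$, from $V_{1,2}\cdot V_{3,0}$, and from $V_{2,2}\cdot V_{2,2}$, each of which is at most one copy, but nothing in your argument forces these copies to coincide inside $V_{-3/4}(\spo)$; a priori the multiplicity of $(2,2)$ could be $2$ (and then the claimed closure of the span of four fixed irreducibles under the dot product becomes circular, since one cannot place $V_{1,2}\cdot V_{3,0}$ in the \emph{same} copy used to absorb $V_{1,2}\cdot V_{1,2}$). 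The paper closes exactly this hole by explicit linear algebra in low conformal weight: the subspace of conformal weight $2$ and $\h$--weight $(2\omega_1,2\omega_1)$ is spanned by $:e_{22}e_{01}:$ and $:e_{11}e_{12}:$, and the $\lambda$--bracket computation with $f_{22}$ shows that the space of $\widehat\g_{\bar 0}$--singular vectors there is one-dimensional, so $m_{2,2}=1$; similarly, seven of the eight elements of the basis $\mathcal C$ of the weight-$(3\omega_1,0)$, conformal-weight-$3$ subspace already lie in $V_{1,2}$, which caps $m_{3,0}$ at $1$. You correctly flagged Lemma \ref{30isnonzero} as an unavoidable explicit computation certifying existence of the $(3,0)$ summand, but an explicit computation of the same kind is equally unavoidable for the upper bounds on the multiplicities, and your proposal omits it.
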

 \begin{proof}
Since  $ V ^ {-3/4} (sl(2))$  (resp. $  V ^ {3} (sl(2))$) contains a unique singular vector of $\g_{\bar 0}$-weight $(8 \omega_1, 0)$ (resp. $(0,8\omega_1)$),  Lemma \ref{singular} implies that   $ \mathcal V_ {-3/4} (sl(2))$ and $ \mathcal V_{3} (sl(2))$ are simple vertex algebras. This proves (1).

\medskip

\noindent(2) 
First we notice that $V_{-3/4}(spo(2|3))$ is semisimple as a module for its subalgebra $V_{-3/4} (sl(2)) \otimes V_3 (sl(2)) $ (we use the  facts that $V_{-3/4} (sl(2))$ is rational in the category $\mathcal{O}$ \cite{AdM} and that $V_3(sl(2))$ is a rational vertex operator algebra).
It is very easy to check that $:e_{10}e_{11}e_{12}:$ is a $\widehat \g_{\bar 0}$--singular vector in $V_k(\spo)/V_{1,2}$. By Lemma \ref{30isnonzero}, $:e_{10}e_{11}e_{12}:$ is nonzero in $V_k(\spo)/V_{1,2}$. Thus, by semisimplicity, $V_{3,0}$ is nonzero in $V_k(\spo)$. Moreover, since $:e_{10}e_{11}e_{12}:$ is the unique element of $\mathcal C$ which is not in $V_{1,2}$, we see that $V_{3,0}\simeq L_{sl(2)}(3\omega_1)\otimes V_3(sl(2))$.

By fusion rules, we see that $V_{1,2}\cdot V_{3,0}=V_{2,2}$.
 
The subspace of $ V^k(\g)$ of vectors having conformal weight $2$ and $\h$-weight $(2\omega_1,2\omega_1)$ has basis $\{:e_{22}e_{01}:,:e_{11}e_{12}:\}$.
Using \eqref{strconst3}, we find
$$
[(f_{22})_\l:e_{22}e_{01}:]=-:h_1e_{01}:-(3/4)\l e_{01}
$$
$$
[(f_{22})_\l:e_{11}e_{12}:]=:f_{11}e_{12}:-:e_{11}f_{10}:-(1/2)\l e_{01}.
$$

This implies that, up to a constant, there is only one $\widehat \g_{\bar 0}$--singular vector of weight $(2\omega_1,2\omega_1)$. Since $V_{-3/4}(spo(2|3))$ is semisimple as  $V_{-3/4} (sl(2)) \otimes V_3 (sl(2)) $--module, it follows that $V_{2,2}\simeq L_{sl(2)}(2\omega_1)\otimes L_{sl(2)}(2\omega_1)$. 

Note that, by Lemma \ref{singular}, we have the following fusion rules
\bea
 && V_{1,2} \cdot V_{1,2} \subset V_{0,0} + V_{2,2},\nonumber \\
 &&V_{1,2}\cdot  V_{2,2}\subset  V_{3,0}+ V_{1,2},  \nonumber \\
    &&V_{1,2}\cdot V_{3,0}\subset V_{2,2}, \nonumber \\
&& V_{2,2} \cdot V_{2,2} \subset V_{0,0} + V_{2,2}, \label{fibonacci}
  \nonumber \\
  &&V_{2,2}\cdot V_{3,0}\subset  V_{3,0}+V_{1,2},\nonumber \\
    &&V_{3,0}\cdot V_{3,0}\subset V_{0,0}. \nonumber
 \eea
\noindent Thus $ \mathcal U = V_{0,0} \oplus V_{1,2} \oplus V_{3,0} \oplus V_{2,2} $ 
is a vertex subalgebra of $V_{-3/4}(spo(2|3))$.
 Since $\g \subset \mathcal U$, we get $\mathcal U = V_{-3/4}(spo(2|3))$. 
 \end{proof}

\begin{remark}
The decomposition in Proposition  \ref{osp(3|2)-dec} has recently  also appeared in the lecture notes of T. Creutzig \cite{C-RIMS} presented at RIMS. In the proof of decomposition he uses some very non-trivial result 
on the extension theory of vertex operator algebras based on vertex tensor categories. 

We should mention that our approach uses neither tensor product theory nor  extension theory of vertex algebras. 
It would be interesting to understand how the  tensor category approach imposes further constraints on the dot product structure and possibly makes our approach more effective.
\end{remark}
\subsection {The case  ${\mathfrak g}=C(n+1)$, $k=1$}
\label{CCn1}

Let $M_{(m|2n)}$ be the vertex algebra introduced in Section \ref{FFR} below. Here we specialize to the case $m=2$. In particular we let $V$ be the superspace $\C^{(2|2n)}$ with reversed parity. 

The vertex algebra $M_{(2|2n)}$ is isomorphic to $F_{(1)}\otimes M_{(n)}$, where $F_{(1)}$ is the fermionic vertex algebra generated by $V_{\bar 1}$ equipped with the symmetric form $\langle\cdot,\cdot\rangle_{|V_{\bar 1}}$ and $M_{(n)}$ is the Weyl vertex algebra generated by $V_{\bar 0}$  equipped with the symplectic form $\langle\cdot,\cdot\rangle_{|V_{\bar 0}}$.
By the boson-fermion correspondence \cite{KacV} $F_{(1)}\cong V_L$ where $V_L = M_\alpha(1) \otimes {\C}[L]$ is the lattice vertex algebra  associated to the lattice $L={\Z}\alpha$, $\langle \alpha , \alpha \rangle = 1$.  We have
$ V_L = V_L ^{0} \oplus V_L ^{1}$, where $V_L ^{0} $ ( resp.  $V_L ^{1} ) $  is the even part  (resp. odd part)   of $V_L$.
Moreover,
$$ V_{L} ^0 = M_\a(1) \otimes {\C}[\Z (2 \alpha)], \quad V_{L} ^1 = M_\a(1) \otimes {\C}[\alpha + \Z (2 \alpha)].$$

The conformal vector in $M_\a(1) \subset V_{L} ^0 \subset F_{(1)}$ is $\omega_F = \frac{1}{2} : \alpha \alpha:$.
\begin{proposition}\label{decCn1}
\item[(1) ]   There is a conformal embedding $V_{1} (\g) \rightarrow M_{(2|2n)} $ uniquely determined by
\begin{equation}\label{embeddingM}
X\mapsto \half \sum_{i}:X(e_i)e^i:, \quad X\in osp(2|2n).
\end{equation}

\item[(2)] There is a conformal embedding of  $V_{-1/2} (sp(2n) ) \otimes  V_L ^0 $  in $V_{1} (\g)$ and  we have the following decomposition
$$ V_{1} (\g)  = V_{-1/2} (sp(2n) ) \otimes  V_L ^0 \bigoplus L_{sp(2n)} ( \omega_1) \otimes V_L ^1. $$
\end{proposition}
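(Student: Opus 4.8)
The plan for part (1) is to verify by a direct OPE computation that the fields $\Theta(X)=\half\sum_i :X(e_i)e^i:$, $X\in osp(2|2n)$, close into the affine vertex superalgebra $\widehat{osp(2|2n)}$ at level $k=1$ in the normalization of $(\cdot|\cdot)$ fixed for $C(n+1)$ in Theorem \ref{cf}. This is the standard orthosymplectic Clifford$\,\times\,$Weyl construction: since $osp(2|2n)$ preserves the form $\langle\cdot,\cdot\rangle$ on $V=\C^{(2|2n)}$, the bilinears $\Theta(X)$ automatically reproduce the brackets of $\g$, while the central term, hence the level, is read off from a single contraction of two oscillators. First I would check that the induced map $V^1(\g)\to M_{(2|2n)}$ is nonzero and restricts on $\g$ to \eqref{embeddingM}. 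The essential point is that it descends to the simple quotient $V_1(\g)$, which amounts to the simplicity of the image of this free-field realization; I would invoke the simplicity established in Section \ref{FFR} (see also \cite{Kw}) to conclude that the image is simple and therefore equals $V_1(\g)$.

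For the conformal statement in (1) I would compare conformal vectors directly. The plan is to show that the Sugawara vector $\omega_\g$ of $V_1(\g)$ maps to $\omega_F+\omega_M$, the sum of the fermionic vector $\omega_F=\half:\alpha\alpha:$ and the Weyl conformal vector $\omega_M$ of $M_{(n)}$; given the explicit free-field OPEs this is a routine quadratic computation. As a consistency check both sides carry central charge $c(F_{(1)})+c(M_{(n)})=1-n$, the left-hand side via the Sugawara formula and the right-hand side from $c(M_{(n)})=-n$ and $c(F_{(1)})=1$.

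For part (2) I would first identify the images of $\g_{\bar 0}=\C\varpi\oplus sp(2n)$. The currents $\Theta(sp(2n))$ act only on $V_{\bar 0}$, so they lie in $M_{(n)}$ and generate $V_{-1/2}(sp(2n))$ (the realization reproved in Section \ref{FFR}, cf.\ \cite{FF}), while $\Theta(\varpi)$ acts only on $V_{\bar 1}$, so it lies in $F_{(1)}=V_L$ and, under the boson--fermion correspondence, is proportional to $\alpha(-1)\vac$, generating the Heisenberg subalgebra $M_\alpha(1)\subset V_L^0$. The conformal embedding of $V_{-1/2}(sp(2n))\otimes V_L^0$ then follows at once from the conformality of $\g_{\bar 0}\subset\g$ at $k=1$ (the $C(n+1)$ case of Theorem \ref{cf}): indeed $\omega_\g=\omega_{sp(2n)}+\omega_{\C\varpi}$ and $\omega_{\C\varpi}=\half:\alpha\alpha:=\omega_F$ is exactly the conformal vector of $V_L^0$.

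To obtain the decomposition I would grade $V_1(\g)$ by the charge $\varpi(0)$. The charge-$0$ sector is $\mathcal V_1(\g_{\bar 0})=V_{-1/2}(sp(2n))\otimes M_\alpha(1)$, and the odd generators $\Theta(\g_{\bar 1})$ fill the charge-$(\pm1)$ sectors with $L_{sp(2n)}(\omega_1)\otimes(M_\alpha(1)e^{\pm\alpha})$, because each such current is linear in the symplectic bosons, giving the vector module $L_{sp(2n)}(\omega_1)$, and carries a single fermion, giving $e^{\pm\alpha}$. Running the fusion-rules argument of the paper, the $sp(2n)$ rule $L_{sp(2n)}(\omega_1)\times L_{sp(2n)}(\omega_1)=V_{-1/2}(sp(2n))$ from \eqref{fusionCm} together with the lattice/Heisenberg rule $e^{r\alpha}\cdot e^{s\alpha}=e^{(r+s)\alpha}$ (cf.\ \eqref{fr-heisenberg}) shows that the dot-product closure of these sectors is precisely $V_{-1/2}(sp(2n))\otimes V_L^0\oplus L_{sp(2n)}(\omega_1)\otimes V_L^1$; this is a vertex subalgebra containing all the $\Theta(\g)$, hence equals $V_1(\g)$. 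The main obstacle is the bookkeeping linking the $V_L$-charge parity to the $sp(2n)$-parity, i.e.\ checking that every odd-charge sector pairs with $L_{sp(2n)}(\omega_1)$ and every even-charge sector with $V_{-1/2}(sp(2n))$; equivalently one must verify the hypothesis of Theorem \ref{generalhs} that the charge-$0$ part of $V_1(\g)$ carries no spurious $\widehat\g_{\bar 0}$-primitive vectors, which is guaranteed by \eqref{finioths} as in Proposition \ref{justapply} or, given the realization, directly by the known $\widehat{sp(2n)}$-module structure of $M_{(n)}$.
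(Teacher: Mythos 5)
Your proposal is correct, and for part (1) it follows the paper's own route: the map is the free--field realization of Section \ref{FFR}, its image is simple by the theorem proved there (or by Theorem 7.1 of \cite{Kw}), and conformality is exactly the content of Lemma \ref{isconformal} (your central charge cross-check $c=1-n$ is consistent, since $h^\vee=m-2n-2=-2n$ here). For part (2), however, you take a genuinely different path. The paper's proof is structural and very short: it uses the parity involution induced by $-Id$ on $V$, the identification $M_{(2|2n)}^{+}=V_1(osp(2|2n))$ from Section \ref{FFR}, and the factorization $M_{(2|2n)}^{+}=M_{(n)}^{+}\otimes F_{(1)}^{+}\oplus M_{(n)}^{-}\otimes F_{(1)}^{-}$, together with the known identifications $M_{(n)}^{\pm}\simeq V_{-1/2}(sp(2n)),\,L_{sp(2n)}(\omega_1)$ and $F_{(1)}^{\pm}=V_L^{0},\,V_L^{1}$; the two-term decomposition is then immediate. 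You instead grade by the charge $\varpi(0)$ and run the fusion-rules machinery of Theorem \ref{generalhs} with \eqref{fusionCm} and \eqref{fr-heisenberg} --- which is precisely the argument the paper uses for case (6) of Proposition \ref{justapply} and then deliberately contrasts with the approach of Subsection \ref{CCn1}. Your route is valid (the verification of the hypothesis of Theorem \ref{generalhs}, or of \eqref{finioths}, is done in Proposition \ref{justapply}, and the realization lets you resum the even and odd charge sectors into $V_L^{0}$ and $V_L^{1}$ via boson--fermion correspondence), but it yields the answer as an infinite sum of charge sectors that must be regrouped, whereas the involution argument exhibits the two summands directly and, as the paper remarks, makes the vertex algebra structure of the even part transparent.
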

\begin{proof}\ 

\noindent (1) The fact that \eqref{embeddingM} extends to a map from   $V^{1} (\g)$ to $M_{(2|2n)}$ and that the image is simple is given in Theorem 7.1 of \cite{Kw}. We provide an alternative proof in Section \ref{FFR} .  The check that the emebdding is conformal is given in Lemma \ref{isconformal} below.

\noindent (2) Let $M_{(2|2n)}^\pm$ be as in Section \ref{FFR}. By Theorem 7.1 of \cite{Kw}, $M_{(2|2n)}^+\simeq V_1(osp(2|2n))$. Clearly
$$
M_{(2|2n)}^+=M_{(n)}^+\otimes F_{(1)}^+\oplus M_{(n)}^-\otimes F_{(1)}^-=M_{(n)}^+\otimes V_L ^0\oplus M_{(n)}^-\otimes V_L ^1.$$
one has $M_{(n)}^+\simeq V_{-1/2}(sp(2n))$ and $M_{(n)}^-\simeq L_{sp(2n)}(\omega_1)$. 
\end{proof}
 
 \begin{remark}
 The decomposition in Proposition \ref{decCn1} (2) is the eigenspace decomposition of $V_1(\g)$ for the involution induced by the parity involution of $\g$. Indeed, it is enough to verify that, if $X\in \g_{\bar 0}$, then $X(-1)\vac\in M_{(n)} ^+\otimes F_{(2)}^+$ and, if $X\in \g_{\bar 1}$, then $X(-1)\vac\in  M_{(n)} ^- \otimes F_{(1)}^-$. This follows from \eqref{embeddingM}.
 \end{remark}

\subsection{The case  ${\mathfrak g}=F(4)$, $k=1$}

 \begin{lemma}
 The vertex subalgebra of $V_1(\g)$ generated by $\g_{\bar 0}$ is simple and isomorphic to  $V_1 (so(7)) \otimes V_{-\frac{2}{3}} (sl(2))$.
 \end{lemma}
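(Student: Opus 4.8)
The plan is to realize $\mathcal V_1(\g_{\bar 0})$ as a quotient of $V^{-2/3}(sl(2))\otimes V^{1}(so(7))$ and to prove simplicity by showing that each universal factor collapses to its simple quotient. Indeed, by the data in the proof of Theorem \ref{cf}(\ref{F4}) we have $\g_{\bar 0}=sl(2)\times so(7)$ with levels $u_1(1)=-2/3$, $u_2(1)=1$, and $\g_{\bar 1}=V_{sl(2)}(\omega_1)\otimes V_{so(7)}(\omega_3)$; writing $\mathcal V^{sl(2)}$, $\mathcal V^{so(7)}$ for the subalgebras of $V_1(\g)$ generated by the two ideals of $\g_{\bar 0}$, each is a quotient of the corresponding universal affine vertex algebra. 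Since $V_{-2/3}(sl(2))\otimes V_1(so(7))$ is simple (a tensor product of simple vertex algebras), it suffices to show that the generator of the maximal ideal of each factor maps to $0$ in $V_1(\g)$. For $V^{1}(so(7))$ this generator is the singular vector $e_{\theta}(-1)^2\vac$ of $\g_{\bar 0}$--weight $(0,2\omega_2)$ and conformal weight $2$, where $\theta$ is the highest root of $so(7)$; for the admissible algebra $V^{-2/3}(sl(2))$ it is the singular vector of $\g_{\bar 0}$--weight $(6\omega_1,0)$ and conformal weight $9$ (exactly as the weight $(8\omega_1,0)$ appears for $V^{-3/4}(sl(2))$ in Lemma \ref{singular}).

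The mechanism for the vanishing is the fusion rules argument already used in Lemma \ref{singular}. For a $\g_{\bar 0}$--dominant weight $\lambda=(n\omega_1,\mu)$ write $V_\lambda=\mathcal V_1(\g_{\bar 0})\cdot v_\lambda$ for the submodule generated by a $\widehat\g_{\bar 0}$--singular vector of that weight, whenever one exists, and record its conformal weight
\[
h_\lambda=\frac{3\,n(n+2)}{16}+\frac{(\mu,\mu+2\rho)_{so(7)}}{12}.
\]
The Clebsch--Gordan decomposition for $sl(2)\times so(7)$ gives $V_{\lambda_1}\cdot V_{\lambda_2}\subset\sum_\nu V_\nu$, the sum over $\g_{\bar 0}$--primitive weights $\nu$ of $V_{\g_{\bar 0}}(\lambda_1)\otimes V_{\g_{\bar 0}}(\lambda_2)$, and since $V_1(\g)$ is graded by $\ZZ_{\ge 0}$ one discards every $\nu$ with $h_\nu\notin\ZZ_{\ge 0}$. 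Because $V_1(\g)$ is simple its dot product has no zero divisors, so fusing an assumed nonzero singular vector with the odd generating module $V_{(\omega_1,\omega_3)}$ (of conformal weight $1$) forces further nonzero singular vectors; iterating, one produces a chain of admissible weights that must terminate in a $\widehat\g$--singular vector, which is impossible in the simple algebra $V_1(\g)$.

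I would run this for the two weights separately. For $(0,2\omega_2)$ one first checks by weight counting that $e_\theta(-1)^2\vac$ is the \emph{only} vector of that $\g_{\bar 0}$--weight at conformal weight $2$: two spinor weights of $\g_{\bar 1}$ cannot sum to $2\omega_2=2\varepsilon_1+2\varepsilon_2$, and $2\omega_2$ is not a weight of $\g$, so the putative singular vector is exactly $e_\theta(-1)^2\vac$. One then fuses with $V_{(\omega_1,\omega_3)}$ and follows the integer-$h_\nu$ components of $V_{sl(2)}(\omega_1)\otimes V_{so(7)}(\omega_3)\otimes V_{so(7)}(2\omega_2)$ until a $\widehat\g$--singular vector appears, contradicting $e_\theta(-1)^2\vac\neq 0$. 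For $(6\omega_1,0)$, which sits at grade $9$, the argument is the exact analogue of the treatment of $V^{-3/4}(sl(2))$ in Lemma \ref{singular}: the crude Clebsch--Gordan bounds are refined step by step using the integrality of $h_\lambda$ along a chain of $sl(2)$--weights descending from $6\omega_1$, each step pruning all but one summand, until the chain is forced to contain a forbidden $\widehat\g$--singular vector.

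Once both generators vanish we get $\mathcal V^{sl(2)}=V_{-2/3}(sl(2))$ and $\mathcal V^{so(7)}=V_1(so(7))$; the multiplication map from the simple algebra $V_{-2/3}(sl(2))\otimes V_1(so(7))$ onto $\mathcal V_1(\g_{\bar 0})$ is then an isomorphism by simplicity of the source, which proves the claim. The hard part will be purely combinatorial: assembling the $so(7)$ Clebsch--Gordan data (spinor times vector, adjoint, and $V_{so(7)}(2\omega_3)$) together with the conformal-weight filter into a chain that genuinely closes, and, for the $sl(2)$ factor, controlling the long chain emanating from the deep grade-$9$ singular vector so that exactly one summand survives at each step. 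This bookkeeping, rather than any new idea, is where the real work lies.
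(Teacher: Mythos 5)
Your strategy coincides with the paper's, and your numerical setup is correct: the levels $(-2/3,1)$ on the two factors, the weight $6\omega_1$ and conformal weight $9$ of the admissible singular vector of $V^{-2/3}(sl(2))$, and the conformal weight function $h_{(n\omega_1,\mu)}=\tfrac{3n(n+2)}{16}+\tfrac{(\mu,\mu+2\rho)_{so(7)}}{12}$ all agree with the paper's data. For the $sl(2)$ factor the paper runs exactly the chain you propose, and it closes in three fusions, $(0,6\omega_1)\to(\omega_3,5\omega_1)\to(\omega_1,4\omega_1)\to$ contradiction; note that at the middle step \emph{two} integral summands survive the integrality filter and the paper discards one by a grading argument (modes $x(n)$, $n\ge 0$, of $\g_{\bar 1}$ applied to the $h=7$ generator cannot land in a module generated at conformal weight $9$), not because exactly one summand survives at each step. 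Since all of this bookkeeping is deferred in your write-up, what you have is a plan rather than a proof, but for this factor the plan is sound.

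The genuine gap is in your treatment of the $so(7)$ factor. The fusion chain you propose for $(0,2\omega_2)$ does not terminate as described: the only component of $V_{sl(2)}(\omega_1)\otimes V_{so(7)}(\omega_3)\otimes V_{so(7)}(2\omega_2)$ with integral conformal weight is $(\omega_1,\omega_2+\omega_3)$, whose conformal weight is again $2$, so the first step yields no contradiction; the next fusion returns $(0,2\omega_2)$ at $h=2$ together with $(2\omega_1,\omega_1+\omega_2)$ at $h=3$, so the chain cycles among integral weights instead of being driven into a $\widehat\g$--singular vector. To close it one must check directly that the unique vector of weight $(\omega_1,\omega_2+\omega_3)$ and conformal weight $2$, namely $x_{\e_1+\e_2}(-1)x_{\frac{1}{2}(\d+\e_1+\e_2+\e_3)}(-1)\vac$, is not $\widehat\g_{\bar 0}$--primitive --- an explicit computation of the type carried out in Lemma \ref{sing-two-f4} which your plan does not foresee. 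Moreover the detour is unnecessary: since $2\theta$ is not a root of $F(4)$ and the $so(7)$--level equals $1$, one has $[x_\theta(-1),x_{-\theta}(1)]=h_\theta(0)-K$, so $\vac$ generates a two--dimensional module for this $sl(2)$--triple and $x_{-\theta}(1)\,x_\theta(-1)^2\vac=0$; a weight inspection shows that every other positive mode also annihilates $x_\theta(-1)^2\vac$, hence this vector lies in the maximal ideal of $V^1(\g)$ and vanishes in $V_1(\g)$. This elementary level--one computation is what the paper implicitly invokes when it asserts without comment that the $so(7)$ factor is $V_1(so(7))$.
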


\begin{proof} 
The vertex subalgebra of $V_1(\g)$ generated by $\g_{\bar 0}$ is isomorphic to $V_1 (so(7)) \otimes \mathcal V_{-2/3} (sl(2))$ where  $\mathcal V_{-2/3} (sl(2))$  is either simple or universal affine vertex algebra associated to $sl(2)$ at level $-2/3$. Similarly, the $V_1 (so(7)) \otimes \widetilde V_{-2/3} (sl(2))$--module generated by $\g_{\bar 1}$ is isomorphic to
  $L_{so(7)} ( \omega_3) \otimes  \widetilde  L_{sl(2)} (  \omega _1)  $, where $\widetilde  L_{sl(2)} (\omega_1)  $ is a highest weight $  \mathcal V_{-2/3} (sl(2))$--module, of $sl(2)$--highest weight  $\omega_1$.
  
We let  $v_{\l,\mu}$ be the set of $\widehat\g_{\bar0}$--singular vectors of $\g_{\bar0}$--weight $(\l,\mu)$ and $V_{\l,\mu}=\mathcal V_1(\g_{\bar0})\cdot v_{\l,\mu}$. Let also $h_{\l,\mu}$ be the conformal weight of a vector $v\in v_{\l,\mu}$.

 Assume that  $\mathcal V_{-2/3} (sl(2)) = V^{-2/3} (sl(2))$. Then it has a unique singular vector $\Omega_0$  of $sl(2)$--weight $6  \omega_1$, thus $V_{0,6\omega_1}\ne\{0\}$.

  By using  the tensor product decomposition
 $$ V_{sl(2)} (\omega_1) \otimes V_{sl(2)} (6 \omega_1) = V_{sl(2)} ( 5 \omega_1) \oplus V_{sl(2)} (7 \omega_1), $$
 we see that
 $$
 V_{0,6\omega_1}\cdot   V_{\omega_3,\omega_1}  \subset V_{\omega_3,5\omega_1}+V_{\omega_3,7\omega_1}.
 $$ 
Since $h_{\omega_3,7\omega_1}=49/4$ , we see that $V_{\omega_3,5\omega_1}\ne\{0\}$. 
  Next we use the decomposition
 $$ V_{sl(2)} (\omega_1) \otimes V_{sl(2)} (5 \omega_1) = V_{sl(2)} ( 6 \omega_1) \oplus V_{sl(2)} (4 \omega_1) $$ to deduce that
$$ V_{\omega_3,\omega_1}\cdot V_{\omega_3,5\omega_1}\subset V_{0,6\omega_1}+V_{0,4\omega_1}+V_{\omega_1,6\omega_1}+V_{\omega_1,4\omega_1}.$$
Since $h_{0,4\omega_1}=9/2$, $h_{0,6\omega_1}=9$, $h_{\omega_1,4\omega_1}=5$, $h_{\omega_1,6\omega_1}=19/2$, and $h_{\omega_3,5\omega_1}=7$, we see that $V_{\omega_1,4\omega_1}\ne\{0\}$ otherwise any $v\in v_{\omega_3,5\omega_1}$ is $\widehat \g$--singular.

By using decomposition 
 $$ V_{sl(2)} (\omega_1) \otimes V_{sl(2)} (4 \omega_1) = V_{sl(2)} ( 5 \omega_1) \oplus V_{sl(2)} (3 \omega_1), $$
 and fusion rules of $V_1(so(7))$--modules
 $$ L_{so(7)} ( \omega_1)  \times L_{so(7)} ( \omega_3)   = L_{so(7)} ( \omega_3)  $$
 we conclude that 
 $$ V_{\omega_3,\omega_1}\cdot V_{\omega_1,4\omega_1}\subset V_{\omega_3,5\omega_1}+V_{\omega_3,3\omega_1}.$$
 
 Since $h_{\omega_3,3\omega_1}=13/4$, $h_{\omega_3,5\omega_1}=7$, and $h_{\omega_1,4\omega_1}=5$,  we conclude that any  $v\in v_{\omega_1,4\omega_1}$ is $\widehat{\g}$--singular.   This is in contradiction with  the simplicity of $V_1(\g)$. 
 
 Therefore $\mathcal V_{-2/3}(sl(2)) = V_{-2/3} (sl(2))$ and the claim follows.
\end{proof}

In this section we follow the description of the roots of $\g$ given in \cite{KacReps}.

\begin{lemma}\label{struct_const}
The following formulas hold in $V^1(F(4))$:
\begin{enumerate}
\item $[(x_{\e_1-\e_2})_\l :x_\d x_{\e_1}:]=0$,
\item $[(x_{\e_1-\e_2})_\l :x_{1/2(\d+\e_1+\e_2+\e_3)} x_{1/2(\d+\e_1-\e_2-\e_3)}:]=0$,
\item $[(x_{\e_1-\e_2})_\l :x_{1/2(\d+\e_1+\e_2-\e_3)} x_{1/2(\d+\e_1-\e_2+
\e_3)}:]=0$,
\item $[(x_{\e_2-\e_3})_\l :x_\d x_{\e_1}:]=0$,
\item $[(x_{\e_2-\e_3})_\l :x_{1/2(\d+\e_1+\e_2+\e_3)} x_{1/2(\d+\e_1-\e_2-\e_3)}:]=0$,
\item $[(x_{\e_2-\e_3})_\l :x_{1/2(\d+\e_1+\e_2-\e_3)} x_{1/2(\d+\e_1-\e_2+
\e_3)}:]=0$,
\item $[(x_{\e_3})_\l :x_\d x_{\e_1}:]=:x_\d x_{\e_1+\e_3}:$,
\item $[(x_{\e_3})_\l :x_{1/2(\d+\e_1+\e_2+\e_3)} x_{1/2(\d+\e_1-\e_2-\e_3)}:]=$\newline
$-:x_{1/2(\d+\e_1+\e_2+\e_3)}x_{1/2(\d+\e_1-\e_2+\e_3)}:$,
\item $[(x_{\e_3})_\l :x_{1/2(\d+\e_1+\e_2-\e_3)} x_{1/2(\d+\e_1-\e_2+
\e_3)}:]=$\newline
$:x_{1/2(\d+\e_1+\e_2+\e_3)}x_{1/2(\d+\e_1-\e_2+\e_3)}:$,
\item $[(x_{\d})_\l :x_\d x_{\e_1}:]=0$,
\item $[(x_{\d})_\l :x_{1/2(\d+\e_1+\e_2+\e_3)} x_{1/2(\d+\e_1-\e_2-\e_3)}:]=0$,
\item $[(x_{\d})_\l :x_{1/2(\d+\e_1+\e_2-\e_3)} x_{1/2(\d+\e_1-\e_2+
\e_3)}:]=0$,
\item $[(x_{-\e_1-\e_2})_\l :x_\d x_{\e_1}:]=-2:x_\d x_{-\e_2}:$,
\item $[(x_{-\e_1-\e_2})_\l :x_{1/2(\d+\e_1+\e_2+\e_3)} x_{1/2(\d+\e_1-\e_2-\e_3)}:]=$\newline
$-2:x_{1/2(\d-\e_1-\e_2+\e_3)}x_{1/2(\d+\e_1-\e_2+\e_3)}:$,,
\item $[(x_{-\e_1-\e_2})_\l :x_{1/2(\d+\e_1+\e_2-\e_3)} x_{1/2(\d+\e_1-\e_2+
\e_3)}:]=0$\newline
$2:x_{1/2(\d-\e_1-\e_2+\e_3)}x_{1/2(\d+\e_1-\e_2+\e_3)}:$,
\item $[(x_{-\d})_\l :x_\d x_{\e_1}:]=-:h_\d x_{\e_1}:-2/3\l x_{\e_1}$,
\item $[(x_{-\d})_\l :x_{1/2(\d+\e_1+\e_2+\e_3)} x_{1/2(\d+\e_1-\e_2-\e_3)}:]=$\newline
$:x_{1/2(-\d+\e_1+\e_2+\e_3)}x_{1/2(\d+\e_1-\e_2-\e_3)}:$\newline$+:x_{1/2(\d+\e_1+\e_2+\e_3)}x_{1/2(-\d+\e_1-\e_2-\e_3)}:+8/3\l x_{\e_1}$,
\item $[(x_{-\d})_\l :x_{1/2(\d+\e_1+\e_2-\e_3)} x_{1/2(\d+\e_1-\e_2+
\e_3)}:]=$\newline
$:x_{1/2(-\d+\e_1+\e_2-\e_3)}x_{1/2(\d+\e_1-\e_2+\e_3)}:$\newline$+:x_{1/2(\d+\e_1+\e_2-\e_3)}x_{1/2(-\d+\e_1-\e_2+\e_3)}:-8/3\l x_{\e_1}$,
\item $[(x_{-1/2(\d+\e_1+\e_2+\e_3)})_\l :x_{1/2(\d+\e_1+\e_2+\e_3)} x_{1/2(\d+\e_1-\e_2-
\e_3)}:]=$\newline
$-16/3:h_{1/2(\d+\e_1+\e_2+\e_3)}x_{1/2(\d+\e_1-\e_2-\e_3)}:$\newline$-8/3:x_{1/2(\d+\e_1+\e_2+\e_3)}x_{-\e_2-\e_3)}:+32/3\l x_{1/2(\d+\e_1-\e_2-\e_3)}$,
\item $[(x_{-1/2(\d+\e_1+\e_2+\e_3)})_\l :x_{1/2(\d+\e_1+\e_2-\e_3)} x_{1/2(\d+\e_1-\e_2+
\e_3)}:]=$\newline
$8/3:x_{-\e_3}x_{1/2(\d+\e_1-\e_2+\e_3)}:$\newline$-8/3:x_{1/2(\d+\e_1+\e_2-\e_3)}x_{-\e_2)}:+8/3\l x_{1/2(\d+\e_1-\e_2-\e_3)}$.
\end{enumerate}
\end{lemma}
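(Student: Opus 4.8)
The entire lemma is a list of twenty-one $\l$-bracket computations in the universal affine vertex algebra $V^1(F(4))$, and the plan is to reduce every one of them to brackets of the generators by means of the non-commutative Wick formula. Recall that for $a,b\in\g$ the $\l$-bracket in $V^k(\g)$ is
$$[a_\l b]=[a,b]+\l\,k(a|b)\vac,$$
and here $k=1$ with $(\cdot|\cdot)$ the form fixed in the proof of Theorem \ref{cf}(6); this is the normalization for which the copy of $sl(2)$ spanned by $x_\d,h_\d,x_{-\d}$ sits at level $-2/3$, and that is the ultimate source of the coefficient $-2/3$ appearing in item (16). For the structure constants $[x_\gamma,x_\nu]$ among root vectors I would use the explicit realization of the root system of $F(4)$ from \cite{KacReps}.

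First I would record the super non-commutative Wick formula
$$[(x_\gamma)_\l :x_\a x_\be:]=:[(x_\gamma)_\l x_\a]\,x_\be:+(-1)^{p(\gamma)p(\a)}:x_\a\,[(x_\gamma)_\l x_\be]:+\int_0^\l[[(x_\gamma)_\l x_\a]_\mu x_\be]\,d\mu,$$
and then, for each item, classify the two brackets $[x_\gamma,x_\a]$ and $[x_\gamma,x_\be]$ into three types: zero (the root sum is not a root and the two vectors are orthogonal), a root vector (the root sum is again a root), or a Cartan element (when $\gamma=-\a$ or $\gamma=-\be$; for odd roots this is the super-bracket $\{x_{-\a},x_\a\}$). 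In the first case every term vanishes, which disposes at once of items (1)--(6) and (10)--(12). When exactly one of the brackets yields a root vector, only the corresponding Leibniz term survives (the iterated bracket in the integral term vanishing), and one reads off items (7)--(9) and (13)--(15) directly.

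The delicate items are (16)--(20), where, besides the normal-ordered products coming from the surviving Leibniz terms, $\l$-linear scalar multiples of a single generator appear, and this is where I expect the main difficulty. Two distinct mechanisms produce such terms and must be added: a Leibniz bracket $[(x_\gamma)_\l x_\nu]$ with $\gamma=-\nu$ carries a central term $\l(x_\gamma|x_\nu)\vac$ which, normal-ordered against the remaining generator $w$ via $:\vac\,w:=w$, gives a multiple of $\l w$; and the integral term contributes $\l$ times the iterated bracket $[[x_\gamma,x_\a],x_\be]$ whenever the latter is nonzero. In item (16) only the first mechanism is active, in (17)--(18) only the second, while in (19)--(20) both contribute and the outer vector is odd, so that the sign $(-1)^{p(\gamma)p(\a)}=-1$ and super-skewsymmetry enter simultaneously. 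The rational coefficients $-16/3$, $\pm 8/3$ and $32/3$ are then exactly the combinations of the $F(4)$ structure constants with the fixed normalization of $(\cdot|\cdot)$. The only genuine obstacle is the careful bookkeeping of these structure constants and of the super signs in the last two items; once the realization of \cite{KacReps} is fixed, each identity becomes a finite and mechanical computation.
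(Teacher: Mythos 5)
Your proposal is correct and follows essentially the same route as the paper, which proves the lemma by applying Wick's formula together with an explicit calculation of the structure constants of $F(4)$ (the paper takes these from the realization of DeWitt--van Nieuwenhuizen rather than \cite{KacReps}, but this is immaterial). Your more detailed classification of which terms survive in each item, and of the two mechanisms producing the $\l$-linear contributions in items (16)--(20), is a faithful expansion of the paper's one-line argument.
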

\begin{proof}
We apply Wick's formula and an explicit calculation of the structure constants for $F(4)$ following \cite{DeW}.
\end{proof}

\begin{lemma} \label{sing-two-f4} In $V^1(F(4))$ the unique (up to a multiplicative constant) $\widehat \g_{\bar 0}$--singular vector of conformal weight $2$ and $\h$--weight $\d+\e_1$ is
$$
v_{\d+\e_1}\!=:x_{1/2(\d+\e_1+\e_2+\e_3)} x_{1/2(\d+\e_1-\e_2-\e_3)}:\!+\!:x_{1/2(\d+\e_1+\e_2-\e_3)} x_{1/2(\d+\e_1-\e_2+
\e_3)}:
$$
Moreover (the image of) $v_{\d+\e_1}$ is nonzero in $V_1(F(4))$.
\end{lemma}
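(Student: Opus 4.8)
The plan is to first pin down the ambient weight space and then read the singular vector directly off the $\l$-bracket data recorded in Lemma \ref{struct_const}. Write $W$ for the space of vectors in $V^1(F(4))$ of conformal weight $2$ and $\h$-weight $\d+\e_1$, and set
$$
A=\,:\!x_\d x_{\e_1}\!:,\quad B=\,:\!x_{1/2(\d+\e_1+\e_2+\e_3)}x_{1/2(\d+\e_1-\e_2-\e_3)}\!:,\quad C=\,:\!x_{1/2(\d+\e_1+\e_2-\e_3)}x_{1/2(\d+\e_1-\e_2+\e_3)}\!:,
$$
so that $v_{\d+\e_1}=B+C$. First I would determine $W$ explicitly. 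A PBW basis of the conformal weight $2$ part of $V^1(\g)$ consists of the vectors $x(-2)\vac$ and $:\!xy\!:$ with $x,y\in\g$; since $\d+\e_1$ is not a root of $F(4)$ there is no contribution $x(-2)\vac$ and no $:\!h\,x\!:$ with $h\in\h$. Splitting $\d+\e_1$ as an unordered sum of two weights of $\g$ leaves exactly the even--even splitting $\d+\e_1=(\d)+(\e_1)$, giving $A$, and the two odd--odd splittings into roots $\tfrac12(\d+\e_1\pm\e_2\pm\e_3)$, giving $B$ and $C$. Hence $W=\C A\oplus\C B\oplus\C C$ is three-dimensional.

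Next I would impose that $v=aA+bB+cC$ be $\widehat\g_{\bar0}$-singular. As $\g_{\bar0}=sl(2)\times so(7)$, the positive part $\widehat{\mathfrak n}_+$ of $\widehat\g_{\bar0}$ is generated by the finite simple root modes $x_{\e_1-\e_2}(0),x_{\e_2-\e_3}(0),x_{\e_3}(0),x_\d(0)$ and by the two affine (lowest-root) modes $x_{-\e_1-\e_2}(1)$ and $x_{-\d}(1)$. Items (1)--(6) and (10)--(12) of Lemma \ref{struct_const} show $x_{\e_1-\e_2}(0),x_{\e_2-\e_3}(0),x_\d(0)$ kill $A,B,C$, and items (13)--(15) show the same for $x_{-\e_1-\e_2}(1)$ (no $\l$-terms occur). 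The decisive condition is $x_{\e_3}(0)v=0$: by items (7)--(9) this equals $a\,{:}x_\d x_{\e_1+\e_3}{:}+(c-b)\,{:}x_{1/2(\d+\e_1+\e_2+\e_3)}x_{1/2(\d+\e_1-\e_2+\e_3)}{:}$, whose two summands are linearly independent, forcing $a=0$ and $b=c$, i.e. $v\in\C(B+C)$. Finally the $\l^1$-coefficients of items (16)--(18) give $x_{-\d}(1)(aA+bB+cC)=\left(-\tfrac23a+\tfrac83b-\tfrac83c\right)x_{\e_1}$, which vanishes on $B+C$; thus $v_{\d+\e_1}$ is genuinely singular and is the unique such vector up to scale.

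For the last assertion I would exhibit a single mode that does not annihilate $v_{\d+\e_1}$ and lands on something manifestly nonzero in the simple quotient. With $\mu=1/2(\d+\e_1+\e_2+\e_3)$, the $\l^1$-coefficients of items (19)--(20) yield
$$
x_{-\mu}(1)\,v_{\d+\e_1}=\left(\tfrac{32}{3}+\tfrac{8}{3}\right)x_{1/2(\d+\e_1-\e_2-\e_3)}=\tfrac{40}{3}\,x_{1/2(\d+\e_1-\e_2-\e_3)}.
$$
Since $1/2(\d+\e_1-\e_2-\e_3)$ is a root of $\g$, its root vector is a nonzero element of the copy of $\g$ in degree $1$ of $V_1(\g)$, and the mode $x_{-\mu}(1)$ descends to the quotient; if $v_{\d+\e_1}$ were zero in $V_1(F(4))$ its image under $x_{-\mu}(1)$ would vanish as well, a contradiction. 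Hence $v_{\d+\e_1}\ne0$ in $V_1(F(4))$.

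Because all the bracket computations are delegated to Lemma \ref{struct_const}, the proof is essentially bookkeeping and I do not expect a serious obstacle; the only genuinely non-mechanical points are the two structural inputs. These are: verifying that $W$ is exhausted by $A,B,C$ (the root combinatorics together with $\d+\e_1\notin\D$, which also rules out any $x(-2)\vac$ term), and correctly identifying the affine generators of $\widehat{\mathfrak n}_+$ — in particular retaining the lowest-root conditions $x_{-\e_1-\e_2}(1)$ and $x_{-\d}(1)$ when confirming that the candidate $B+C$ is actually singular, even though uniqueness is already forced by $x_{\e_3}(0)v=0$ alone.
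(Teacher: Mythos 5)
Your proposal is correct and follows essentially the same route as the paper: identify the three-dimensional weight space spanned by $:x_\d x_{\e_1}:$ and the two odd--odd normally ordered products, impose the singularity conditions from Lemma \ref{struct_const} (items (7)--(9) and (16)--(18)) to force $a=0$, $b=c$, and then use the $\l$-coefficients of items (19)--(20) to see that $v_{\d+\e_1}$ survives in the simple quotient. Your extra bookkeeping (exhibiting the generators of the positive part of $\widehat\g_{\bar 0}$ and checking them all) only makes explicit what the paper leaves implicit, and your output $x_{-1/2(\d+\e_1+\e_2+\e_3)}(1)v_{\d+\e_1}=\tfrac{40}{3}x_{1/2(\d+\e_1-\e_2-\e_3)}$ is the weight-consistent reading of items (19)--(20).
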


\begin{proof}By Lemma \ref{struct_const}, one checks that $v_{\d+\e_1}$ is $\widehat \g_{\bar 0}$--singular. To check that it is the only one, we observe 
that
a basis of the space of vectors in $V^1(F(4))$ of conformal weight $2$ and $\h$--weight $\d+\e_1$ is $\{v_1,v_2,v_3\}$ where
$$v_1=:x_\d x_{\e_1}:,\quad v_2=:x_{1/2(\d+\e_1+\e_2+\e_3)} x_{1/2(\d+\e_1-\e_2-\e_3)}:,$$ 
$$v_3=:x_{1/2(\d+\e_1+\e_2+\e_3)} x_{1/2(\d+\e_1-\e_2-\e_3)}:.
$$

If a linear combination $av_1+bv_2+cv_3$ is $\widehat \g_{\bar 0}$--singular, then, by Lemma \ref{struct_const} (7)--(9) and (16)--(18) we have
$$
a-b+c=0, -\tfrac{2}{3}(a-4b+4c)=0,
$$
hence $a=0$ and $b=c$.

By Lemma \ref{struct_const} (19) and (20), we have
$$
x_{-1/2(\d+\e_1+\e_2+\e_3)}(1)v_{\d+\e_1}=\tfrac{40}{3}x_{1/2(\d+\e_1+\e_2+\e_3)}\ne0.
$$
\end{proof}

\begin{theorem} \label{conj-f4}
We have:
\bea  V_1(\g) & =&  V_1 (so(7)) \otimes V_{-\tfrac{2}{3}} (sl(2)) \bigoplus L_{so(7)} ( \omega_3) \otimes  L_{sl(2)} ( \omega_1)   \nonumber \\ && \bigoplus   L_{so(7)} ( \omega_1) \otimes  L_{sl(2)} (  2 \omega_1). \eea
\end{theorem}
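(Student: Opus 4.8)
The plan is to follow the fusion-rules strategy used above for $spo(2|3)$, now anchored on rationality of the two tensor factors of $\mathcal V_1(\g_{\bar 0})$. By the preceding lemma $\mathcal V_1(\g_{\bar 0})\cong V_1(so(7))\otimes V_{-\frac23}(sl(2))$ is simple. Since $V_1(so(7))$ is rational and $V_{-\frac23}(sl(2))$ is rational in category $\mathcal O$ (cf. \cite{AdM}), I would first conclude that $V_1(\g)$ is completely reducible as a $\widehat\g_{\bar 0}$--module and that every irreducible summand has the form $L_{so(7)}(\lambda)\otimes L_{sl(2)}(\mu)$ with $\lambda\in\{0,\omega_1,\omega_3\}$, these being the only simple $V_1(so(7))$--modules. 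Because the embedding is conformal, the $L_0$--grading of $V_1(\g)$ agrees with that of $\omega_{\g^0}$, so every such summand has minimal conformal weight $h_\lambda+h_\mu\in\ZZ_{\ge0}$; here $h^{so(7)}_{\omega_1}=\tfrac12$, $h^{so(7)}_{\omega_3}=\tfrac{7}{16}$ and $h^{sl(2)}_{n\omega_1}=\tfrac{3n(n+2)}{16}$.

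Next I would pin down the three summands that must occur. The vacuum gives $U:=V_{0,0}=V_1(so(7))\otimes V_{-\frac23}(sl(2))$. As $\g_{\bar 1}\cong V_{so(7)}(\omega_3)\otimes V_{sl(2)}(\omega_1)$ has conformal weight $1$, the odd part generates $M_1:=V_{\omega_3,\omega_1}\cong L_{so(7)}(\omega_3)\otimes L_{sl(2)}(\omega_1)$. Finally, Lemma \ref{sing-two-f4} furnishes a $\widehat\g_{\bar 0}$--singular vector of $\g_{\bar 0}$--weight $(\omega_1,2\omega_1)$ (its $\h$--weight being $\d+\e_1$) that is nonzero in $V_1(\g)$, so by semisimplicity $M_2:=V_{\omega_1,2\omega_1}\cong L_{so(7)}(\omega_1)\otimes L_{sl(2)}(2\omega_1)$ is a genuine summand.

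The heart of the proof is to show that $\mathcal A:=U\oplus M_1\oplus M_2$ is a vertex subalgebra, i.e. that $M_i\cdot M_j\subseteq\mathcal A$ for all $i,j$. By the same Clebsch--Gordan bound as in \eqref{sum}, the dot product $M_i\cdot M_j$ is a $\widehat\g_{\bar 0}$--submodule whose constituents have $\g_{\bar 0}$--highest weight $(\lambda,\mu)$ controlled by the tensor product of the top spaces; imposing in addition $\lambda\in\{0,\omega_1,\omega_3\}$ and $h_\lambda+h_\mu\in\ZZ$ leaves in each case only summands already lying in $\mathcal A$. Concretely, for $M_1\cdot M_1$ the $so(7)$--weight lies in $\{0,\omega_1,\omega_3\}$ and the $sl(2)$--weight in $\{0,2\omega_1\}$, and integrality of $h_\lambda+h_\mu$ rules out every pair except $(0,0)$ and $(\omega_1,2\omega_1)$, whence $M_1\cdot M_1\subseteq U\oplus M_2$; in the same way $M_1\cdot M_2\subseteq M_1$ and $M_2\cdot M_2\subseteq U$. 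Since $\mathcal A$ is closed under the dot product and contains $\g\subset U\oplus M_1$, and $V_1(\g)$ is generated by $\g$, I conclude $\mathcal A=V_1(\g)$; as $U,M_1,M_2$ are pairwise non-isomorphic irreducibles, this is exactly the asserted decomposition.

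The main obstacle is the rigorous justification of the dot-product bound, namely that the constituents of $M_i\cdot M_j$ really sit inside the corresponding fusion product and are therefore governed by Clebsch--Gordan together with the integrality of conformal weights, and securing complete reducibility from the rationality of the two factors. The potentially delicate nonvanishing of the weight $(\omega_1,2\omega_1)$ singular vector is precisely the content of Lemma \ref{sing-two-f4}, so it does not present an additional difficulty here.
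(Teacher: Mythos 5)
Your proposal is correct and follows essentially the same route as the paper: complete reducibility over the simple subalgebra $V_1(so(7))\otimes V_{-\frac{2}{3}}(sl(2))$, identification of the summand $L_{so(7)}(\omega_1)\otimes L_{sl(2)}(2\omega_1)$ via the nonzero singular vector of Lemma \ref{sing-two-f4}, and closure of $V_{0,0}\oplus V_{\omega_3,\omega_1}\oplus V_{\omega_1,2\omega_1}$ under the dot product by combining Clebsch--Gordan constraints, the level-one fusion constraint on the $so(7)$ side, and integrality of conformal weights. Your computed conformal weights agree with those the paper uses ($h_{0,2\omega_1}=3/2$, $h_{\omega_1,0}=1/2$, $h_{\omega_3,3\omega_1}=13/4$, $h_{0,4\omega_1}=9/2$), so no further comment is needed.
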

\begin{proof}

Observe that 
$V_1(\g)$ is completely reducible as $V_1 (so(7)) \otimes V_{-\frac{2}{3}} (sl(2))$--module.
 Clearly $V_{\omega_3,\omega_1}\simeq L_{so(7)} ( \omega_3) \otimes  L_{sl(2)} (\omega_1)$ and, by Lemma \ref{sing-two-f4}, $$V_{\omega_1,2\omega_1}\simeq L_{so(7)} ( \omega_1) \otimes  L_{sl(2)} (2\omega_1).$$
For the proof it is enough to check that $V_{0,0}+V_{\omega_3,\omega_1}+V_{\omega_1,\omega_2}$ is a vertex subalgebra. This follows from the subsequent remarks.
\begin{itemize}
\item Since  $h_{0,2\omega_1}=3/2$ and $h_{\omega_1,0}=1/2$, 
$$
V_{\omega_3,\omega_1}\cdot V_{\omega_3,\omega_1}\subset V_{0,0}+V_{\omega_1,2\omega_1}.
$$
\item  Since $h_{\omega_3,3\omega_1}=13/4$, 
$$
V_{\omega_3,\omega_1}\cdot V_{\omega_2,2\omega_1}\subset V_{\omega_3,\omega_1}.
$$
\item Since $h_{0,4\omega_1}=9/2$ and  $h_{0,2\omega_1}=3/2$,
$$
V_{\omega_1,2\omega_1}\cdot V_{\omega_1,2\omega_1}\subset V_{0,0}.
$$
\end{itemize}
\end{proof}

\begin{remark}
The decomposition in Theorem \ref{conj-f4}  has also  appeared  in   \cite{C-RIMS}. 
\end{remark}
 \subsection{The case  ${\mathfrak g}=G(3)$, $k=1$ }

In this case $\g_{\bar 0}=(\g_{\bar 0})_1\oplus (\g_{\bar 0})_2$ with $(\g_{\bar 0})_1\simeq sl(2)$ and $(\g_{\bar 0})_2$ of type $G_2$.

 \begin{lemma} \label{singular-G3}
 There are no  $\widehat \g_{\bar 0}$--singular vectors in $V_{1} (G(3))$  of $\g_{\bar 0}$--weight
 $ (8 \omega_1, 0)$.
  The vertex subalgebra of $V_1(\g)$ generated by $\g_{\bar 0}$ is  isomorphic to  $ V_{-\frac{3}{4}} (sl(2))\otimes V_1 (G_2)$.
 \end{lemma}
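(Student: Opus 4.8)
The plan is to show that each of the two commuting affine subalgebras generated by the simple ideals of $\g_{\bar 0}=sl(2)\times G_2$ collapses to its simple quotient. Since $(sl(2)\mid G_2)=0$ and $[sl(2),G_2]=0$ in $\g$, the corresponding currents have regular mutual OPE, so $\mathcal V_1(\g_{\bar 0})$ is the tensor product $\mathcal V_{-3/4}(sl(2))\otimes\mathcal V_1(G_2)$ of the two generated subalgebras, each a quotient of the universal vertex algebra $V^{-3/4}(sl(2))$, resp. $V^1(G_2)$. Thus it suffices to prove that the two singular vectors generating the maximal ideals vanish in $V_1(\g)$: the $\widehat{sl(2)}$--singular vector of $sl(2)$--weight $8\omega_1$ (this is precisely assertion (1), a $\widehat\g_{\bar 0}$--singular vector of weight $(8\omega_1,0)$), and the $\widehat{G_2}$--singular vector $X_\theta(-1)^2\vac$ of $G_2$--weight $2\theta=2\omega_2$, i.e. of weight $(0,2\omega_2)$.

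As in Lemma \ref{singular} and Theorem \ref{conj-f4} I would argue by fusion rules. For a $\widehat\g_{\bar 0}$--singular vector $v$ of weight $(n\omega_1,\mu)$ write $V_{n,\mu}=\mathcal V_1(\g_{\bar 0})\cdot v$; using Clebsch--Gordan for the $sl(2)$--factor and the tensor product decompositions for $G_2$, together with the fact that the dot product in the simple vertex algebra $V_1(\g)$ has no zero divisors, one bounds each $V_{n_1,\mu_1}\cdot V_{n_2,\mu_2}$ by a sum of $V_{n',\mu'}$. Since all generators of $V_1(\g)$ have conformal weight $\le 1$, the space is $\ZZ_{\ge 0}$--graded and every $\widehat\g_{\bar 0}$--singular vector has integer conformal weight $h_{n\omega_1,\mu}=\frac{n(n+2)}{5}+\frac{(\mu,\mu+2\rho)}{10}$; summands with non--integral $h$ are discarded. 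The $G_2$--factor is the easy one (the analogue of $V_1(so(7))$ in the $F(4)$ case): the candidate weight $(0,2\omega_2)$ sits in conformal weight $2$, and I would rule out a $\widehat\g_{\bar 0}$--singular vector there by a direct computation of the weight--two, weight--$2\omega_2$ subspace of $V^1(\g)$, exactly as in Lemma \ref{sing-two-f4}. This gives $\mathcal V_1(G_2)=V_1(G_2)$, whose Fibonacci fusion $L_{G_2}(\omega_1)\times L_{G_2}(\omega_1)=V_1(G_2)+ L_{G_2}(\omega_1)$ then sharpens the pruning above.

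To rule out $(8\omega_1,0)$ I would assume such a singular vector $v$ exists and fuse repeatedly with the odd generating module $\g_{\bar 1}=V_{\omega_1,\omega_1}$. Integrality of $h_{n\omega_1,\mu}$ forces a descending chain: $V_{\omega_1,\omega_1}\cdot V_{8,0}\subset V_{7,\omega_1}$ (the summand $(9\omega_1,\omega_1)$ has $h=101/5$), then $V_{\omega_1,\omega_1}\cdot V_{7,\omega_1}\subset V_{8,0}+ V_{6,\omega_1}$, $V_{\omega_1,\omega_1}\cdot V_{6,\omega_1}\subset V_{7,\omega_1}+ V_{5,0}$ and $V_{\omega_1,\omega_1}\cdot V_{5,0}\subset V_{6,\omega_1}$, all other products being killed by non--integral conformal weight. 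Following the pattern of the $F(4)$ argument, at the terminal step the dot product of $\g_{\bar 1}$ with one of these singular vectors lands, after pruning, in a single module whose highest weight leaves no room for the positive odd (simple) root operator to act; that vector is then $\widehat\g$--singular, contradicting the simplicity of $V_1(\g)$. Hence no $\widehat\g_{\bar 0}$--singular vector of weight $(8\omega_1,0)$ exists, which is (1), and together with the $G_2$--computation this yields $\mathcal V_1(\g_{\bar 0})\cong V_{-3/4}(sl(2))\otimes V_1(G_2)$.

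The main obstacle is precisely this terminal step. Conformal--weight pruning alone never closes off all the odd raising channels simultaneously (for every weight in the chain there remains an admissible neighbouring weight), so the contradiction cannot be read off from the chain combinatorics. One must instead show that the relevant odd simple--root vector annihilates the terminal singular vector, i.e. that the projection of the dot product onto the top of the final module vanishes. As in Lemmas \ref{30isnonzero} and \ref{sing-two-f4}, I expect this to require an explicit computation with the structure constants of $G(3)$ via Wick's formula, exploiting the fact that the higher $G_2$--isotypic components produced by $V_{G_2}(\omega_1)\otimes V_{G_2}(\omega_1)$ are excluded by conformal weight; organizing that computation so the escape route is provably closed is the delicate point.
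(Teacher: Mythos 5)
Your strategy is the paper's: reduce the lemma to the vanishing of the two generating singular vectors, and for the weight $(8\omega_1,0)$ dot repeatedly with $\g_{\bar 1}=V_{1,1}$ and prune by integrality of conformal weights. Indeed you write down exactly the four inclusions that the paper's proof produces, namely $V_{1,1}\cdot V_{8,0}\subset V_{7,1}$, $V_{1,1}\cdot V_{7,1}\subset V_{8,0}+V_{6,1}$, $V_{1,1}\cdot V_{6,1}\subset V_{7,1}+V_{5,0}$, $V_{1,1}\cdot V_{5,0}\subset V_{6,1}$. But then you stop, asserting that ``the contradiction cannot be read off from the chain combinatorics'' and that one must compute structure constants of $G(3)$ to kill an odd raising channel. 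This is the genuine gap, and it comes from aiming at the wrong contradiction. You are trying to exhibit a $\widehat\g$--singular vector; the paper's contradiction is instead that the four inclusions say the family $\{V_{8,0},V_{7,1},V_{6,1},V_{5,0}\}$ is \emph{closed} under the dot product with $V_{1,1}$, while each $V_{n,m}=\mathcal V_1(\g_{\bar 0})\cdot v_{n,m}$ is $\mathcal V_1(\g_{\bar 0})$--stable by construction. Since $\mathcal V_1(\g_{\bar 0})+V_{1,1}$ generates $V_1(\g)$ and the dot product is associative and commutative, the ideal generated by $V_{8,0}$ is contained in $V_{8,0}+V_{7,1}+V_{6,1}+V_{5,0}$, whose minimal conformal weights are $16,13,10,7$; in particular it misses $\vac$, so if $V_{8,0}\ne\{0\}$ this is a nonzero proper ideal of the simple vertex algebra $V_1(G(3))$ --- contradiction. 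No Wick--formula computation and no analysis of odd simple--root operators is needed. Your observation that ``for every weight in the chain there remains an admissible neighbouring weight'' is precisely the point in favour of the argument, not against it: the chain never escapes, so it spans a proper ideal. As written, your proof is therefore incomplete at its decisive step, and the extra computation you defer to is not the missing ingredient.

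A secondary divergence concerns the $G_2$ factor. You propose to rule out a $\widehat\g_{\bar 0}$--singular vector of weight $(0,2\theta)$ by an explicit computation in the conformal--weight--two subspace, as in Lemma \ref{sing-two-f4}; the paper instead dismisses $:x_\theta x_\theta:$ by a one--line conformal weight criterion. Your route is legitimate (and, since the Sugawara conformal weight of a primitive vector of $G_2$--weight $2\theta$ at level $1$ is borderline, arguably the more robust one), but you do not carry the computation out, so this step of your proposal is also left open. It is, however, a minor issue next to the one in the previous paragraph.
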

 \begin{proof}

   The vertex subalgebra of $V_1(\g)$ generated by $\g_{\bar 0}$ is isomorphic to $\mathcal V_{-3/4} (sl(2))\otimes V_1 (G_2)$  where  $\mathcal V_{-3/4} (sl(2))$  is a quotient of $V^{-3/4}(sl(2))$. Indeed, the maximal ideal of $V^1(G_2)$ is generated by $:x_\theta x_\theta:$ where $\theta$ here is the highest root of $G_2$.  But  $:x_\theta x_\theta:$ is $\widehat \g_{\bar 0}$--singular and $\frac{(2\theta,2\theta+2\rho^2)_2}{2(1+h^\vee_2)}\ne 2$. 
   
 By Theorem 5.3 of \cite{AKMPP-JJM}, $\mathcal V_{-3/4} (sl(2))$ is either  the universal  or simple affine vertex algebra associated to $sl(2)$ at level $-3/4$ and the maximal ideal in  $V^{-3/4} (sl(2))$ is generated by a  unique singular vector of $sl(2)$--weight  $8 \omega_1$. Let us now show that such singular vector cannot exist.
  
 Let  $v_{n,m}$  be a the set of $\widehat \g_{\bar 0}$ singular vector in $V_{1} (G(3))$  of $\g_{\bar 0}$ weight $(n \omega_1, m \omega_2)$, where $ n \in {\Z}_{\ge 0}$ and $ m \in \{ 0, 1\}$.
 Let $V_{n,m} = \mathcal V_1(\g_{\bar0})\cdot v_{n,m}$. The fusion rules argument implies that 
\bea  V_{n , 0 } \cdot V_{1, 1} &\subset & V_{n+1, 1} + V_{n-1,1} . \nonumber \\
  V_{n , 1 } \cdot V_{1, 1} &\subset & V_{n+1, 1} + V_{n-1,1}  + V_{n+1,0} + V_{n-1, 0} . \nonumber \eea
We can exclude summands $V_{r,0}$     such that the conformal weight 
$$h_{r,0} = \frac{ r  (r+2)} {5}$$ of $v_{r,0}$ is not in $\Z_+$
and summands $V_{r,1}$  such that  the conformal weight 
$$ h_{r,1} =  \frac{ r  (r+2)} {5} + \frac{2}{5} $$ 
 of $v_{r,1}$ is not in $\Z_+$.

The only integral conformal weights in the above decompositions are
$$ h_{8,0} = 16, \ h_{5,0} = 7, \ h_{3,0} = 3, \  h_{0,0} = 1, \   h_{7,1} = 13, \ h_{6 ,1} =  10, \ h_{2,1} = 2. $$ 
It follows that
\bea
&& V_{8,0} \cdot V_{1,1} \subset V_{7,1}, \nonumber  \\
&& V_{7,1} \cdot V_{1,1} \subset V_{6,1} + V_{8,0},  \nonumber \\
&& V_{6,1} \cdot V_{1,1} \subset  V_{5,0} + V_{7,1},  \nonumber \\
&& V_{5,0} \cdot V_{1,1} \subset V_{6,1},\nonumber
\eea
and this 
implies that   $V_{8, 0}$  generates a proper ideal in $V_1(\g)$. A contradiction.
    This implies that
 $\mathcal V _ {-3/4} (sl(2))= V _ {-3/4} (sl(2)) $.
  The claim follows.
 \end{proof}
 
The next result is obtained as a consequence of the results of Section \ref{super-super-section} below, thus we postpone its proof to the end of \S\  \ref{ospinG(3)}.

\begin{theorem}\label{G2inG(3)}
We have
\bea V_1 (\g)  &=&  \left( V_{-\frac{3}{4}} (sl(2))   \oplus L_{sl(2)} (3 \omega_1)  \right) \otimes  V_1 (G_2) \nonumber \\
&& \bigoplus     \left(  L_{sl(2)} (  \omega_1)  \oplus L_{sl(2)} (2 \omega_1) \right) \otimes L_{G_2} (\omega_1) \nonumber \eea
\end{theorem}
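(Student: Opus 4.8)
The plan is to argue exactly as for $F(4)$ in Theorem~\ref{conj-f4} and for $spo(2|3)$ in Proposition~\ref{osp(3|2)-dec}, running a dot-product computation inside the simple vertex algebra $V_1(\g)$. By Lemma~\ref{singular-G3} the subalgebra generated by $\g_{\bar0}$ is the simple vertex algebra $\mathcal V_1(\g_{\bar0})\cong V_{-3/4}(sl(2))\otimes V_1(G_2)$. First I would record that $V_1(\g)$ is completely reducible over this subalgebra: $V_1(G_2)$ is rational and $V_{-3/4}(sl(2))$ is rational in category $\mathcal O$ (as used in Proposition~\ref{osp(3|2)-dec}), so their tensor product acts semisimply. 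Keeping the notation $V_{n,m}=\mathcal V_1(\g_{\bar0})\cdot v_{n,m}$ of Lemma~\ref{singular-G3} for the module generated by a $\widehat\g_{\bar0}$--singular vector of weight $(n\omega_1,m\omega_2)$, the submodule generated by $\g_{\bar1}$ is $V_{1,1}\cong L_{sl(2)}(\omega_1)\otimes L_{G_2}(\omega_1)$, since $\g_{\bar1}=V_{sl(2)}(\omega_1)\otimes V_{G_2}(\omega_1)$ is its top component (as computed in the proof of Theorem~\ref{cf}).

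The computation then has two inputs: the fusion rules of $V_{-3/4}(sl(2))$ on the modules $L_{sl(2)}(j\omega_1)$, $j=0,1,2,3$, exactly as established in Proposition~\ref{osp(3|2)-dec} (with $L_{sl(2)}(3\omega_1)$ a simple current, $L_{sl(2)}(3\omega_1)\times L_{sl(2)}(3\omega_1)=V_{-3/4}(sl(2))$), and the Fibonacci fusion $L_{G_2}(\omega_1)\times L_{G_2}(\omega_1)=V_1(G_2)\oplus L_{G_2}(\omega_1)$ of $(G_2)_1$. Combining the two and discarding every summand $V_{r,s}$ whose conformal weight $h_{r,s}=\tfrac{r(r+2)}{5}+\tfrac{2s}{5}$ fails to be a non-negative integer, one obtains
\begin{align*}
V_{1,1}\cdot V_{1,1}&\subset V_{0,0}+V_{2,1}, & V_{1,1}\cdot V_{2,1}&\subset V_{1,1}+V_{3,0},\\
V_{1,1}\cdot V_{3,0}&\subset V_{2,1}, & V_{2,1}\cdot V_{2,1}&\subset V_{0,0}+V_{2,1},\\
V_{2,1}\cdot V_{3,0}&\subset V_{1,1}, & V_{3,0}\cdot V_{3,0}&\subset V_{0,0}.
\end{align*}
Hence $\mathcal U=V_{0,0}\oplus V_{1,1}\oplus V_{2,1}\oplus V_{3,0}$ is closed under the dot product and is therefore a vertex subalgebra; since it contains $\g$ and $\g$ generates $V_1(\g)$, we conclude $\mathcal U=V_1(\g)$, which is the asserted decomposition once each summand is identified by complete reducibility.

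\textbf{The step that really needs work} is showing that $V_{2,1}$ and $V_{3,0}$ are nonzero, so that all four summands genuinely occur (otherwise one could only conclude $V_1(\g)=V_{0,0}\oplus V_{1,1}$, a two-term simple-current extension). For this I would proceed as in Lemma~\ref{sing-two-f4}: produce an explicit weight-$(2\omega_1,\omega_2)$ vector of conformal weight $2$ in $V^1(\g)$, a normally ordered product of two odd root vectors, verify by a structure-constant computation that it is $\widehat\g_{\bar0}$--singular, and check that it does not lie in the maximal ideal of $V^1(\g)$, so that it is nonzero in $V_1(\g)$. This gives $V_{2,1}\neq\{0\}$; then from $V_{1,1}\cdot V_{2,1}\subset V_{1,1}+V_{3,0}$ and simplicity of $V_1(\g)$ one gets $V_{3,0}\neq\{0\}$ as well, since otherwise the singular vector of weight $(2\omega_1,\omega_2)$ would generate a proper ideal.

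\emph{An alternative route}, the one indicated by the remark preceding the statement, avoids these explicit computations by importing the structure through an intermediate conformal embedding. By Theorem~\ref{super-super} there is a conformal embedding $V_1(sl(2))\otimes V_{-3/4}(spo(2|3))\subset V_1(\g)$, and Proposition~\ref{osp(3|2)-dec} already decomposes $V_{-3/4}(spo(2|3))$ over $V_{-3/4}(sl(2))\otimes V_3(sl(2))$. Reassembling the level-$1$ and level-$3$ $sl(2)$ factors through the conformal embedding $V_1(sl(2))\otimes V_3(sl(2))\subset V_1(G_2)$ (the long- and short-root $sl(2)$'s of $G_2$, whose central charges satisfy $1+\tfrac{9}{5}=\tfrac{14}{5}$) recovers the $V_1(G_2)$ and $L_{G_2}(\omega_1)$ factors and yields the decomposition directly, with the non-vanishing of every summand inherited from Proposition~\ref{osp(3|2)-dec}. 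In either route the crux is the same: establishing that exactly the four modules $V_{0,0},V_{1,1},V_{2,1},V_{3,0}$ appear.
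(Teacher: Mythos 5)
Your second, ``alternative'' route is in fact the paper's actual proof: the authors deduce Theorem \ref{G2inG(3)} from Theorem \ref{spo-G3} and Proposition \ref{osp(3|2)-dec} by decomposing $V_1(G_2)$ and $L_{G_2}(\omega_1)$ over $V_1(sl(2))\otimes V_3(sl(2))$ (using \cite{KS}), writing $V_1(\g)$ as $V_{-3/4}(sl(2))\otimes V_1(G_2)$ plus terms $m_{\l,\mu}L_{sl(2)}(\l)\otimes L_{G_2}(\mu)$ with unknown multiplicities (complete reducibility from Lemma \ref{singular-G3} together with the conformal-weight constraint leaves only $(3\omega_1,0)$, $(\omega_1,\omega_1)$, $(2\omega_1,\omega_1)$), expanding $L_{spo(2|3)}(\be_1+3/2\be_2)$ over $V_{-3/4}(sl(2))\otimes V_3(sl(2))$ with unknown coefficients $c_{i,j}$, and comparing coefficients. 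Both the non-vanishing and the multiplicity-one assertions are thus obtained by bookkeeping against the already established $spo(2|3)$ decomposition, with no new singular-vector computation.

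Your primary route (the direct dot-product computation) is sound in its fusion-rule bookkeeping --- the six inclusions you list are correct --- but it has a genuine gap exactly at the step you flag. The problem is the direction of your non-vanishing argument: from $V_{2,1}\ne\{0\}$ you cannot conclude $V_{3,0}\ne\{0\}$. If $V_{3,0}=\{0\}$, the ideal generated by $V_{2,1}$ is contained in $V_{0,0}+V_{1,1}+V_{2,1}$, and since $V_{2,1}\cdot V_{2,1}\subset V_{0,0}+V_{2,1}$ may well meet $V_{0,0}$, that ideal need not be proper, so simplicity of $V_1(\g)$ yields no contradiction. The implication runs the other way: because the dot product in a simple vertex algebra has no zero divisors, $V_{3,0}\ne\{0\}$ forces $\{0\}\ne V_{1,1}\cdot V_{3,0}\subset V_{2,1}$, hence $V_{2,1}\ne\{0\}$. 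This is exactly the order used in Proposition \ref{osp(3|2)-dec}, where the explicit computation of Lemma \ref{30isnonzero} establishes $V_{3,0}\ne\{0\}$ first. So in your route you should produce the weight-$(3\omega_1,0)$ singular vector (a normally ordered cube of odd root vectors, the analogue of $:e_{10}e_{11}e_{12}:$) and show it survives in the simple quotient, then get $V_{2,1}\ne\{0\}$ for free. Finally, to land on the stated decomposition rather than one with multiplicities, you must also check that the singular vectors of weights $(3\omega_1,0)$ and $(2\omega_1,\omega_1)$ are unique up to scalar (as is done in Lemma \ref{sing-two-f4} for $F(4)$ and for the weight $(2\omega_1,2\omega_1)$ in Proposition \ref{osp(3|2)-dec}); your write-up only addresses non-vanishing.
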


  \section{Some examples of decompositions of  embeddings $\g^0\subset\g$} \label{super-super-section}
 \subsection{The conformal embedding  $gl(n|m)\hookrightarrow sl(n+1|m)$} \

Recall that $V_k(\g)^{(q)}$ is the eigenspace for the action of $\varpi{(0)}$ on $V_k(\g)$ corresponding to the eigenvalue $q$.
 
 \begin{theorem} Assume that we are in the following cases:
 \begin{itemize}
 \item Conformal level $k=1$, $m \ne n+2$.
 \item Conformal level $k =- \frac{h^{\vee}}{2}= - \frac{n+1-m}{2}$,  $n \ne m+2$, $n \ne m+3$.
 \end{itemize}
 Then each $V_k(\g)^{(q)} $ is a simple $V_k(\g^0)$--module.
 \end{theorem}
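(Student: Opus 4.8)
The plan is to read this off from Theorem~\ref{generalhs}. In the present situation $\g^0 = gl(n|m)$ splits as $\g^0_0 \oplus \g^0_1$ with $\g^0_0 = \C\varpi$ abelian and $\g^0_1 = sl(n|m)$ simple (recall that a conformal level forces $n \ne m$), and by the computation in Theorem~\ref{super-super} the orthocomplement decomposes as $\g^1 = V_{\g^0}(\mu) \oplus V_{\g^0}(\mu^*)$ with $V_{\g^0}(\mu) = V_{\C\varpi}(\epsilon) \otimes \C^{n|m}$. This is exactly the framework introduced just before Theorem~\ref{generalhs}, so the conclusion---that $V_k(\g)^{(q)}$ is a simple $V_{\mathbf{k}}(\g^0)$--module for every $q$---will follow once I verify the hypothesis of that theorem. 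By Remark~\ref{conditionsfinitered} it is enough to establish the non-integrality condition \eqref{finioths} for every nonzero weight of a $\g^0$--primitive vector in $V_{\g^0}(\mu) \otimes V_{\g^0}(\mu^*)$.

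First I would list those primitive weights. Since $V_{\C\varpi}(\epsilon) \otimes V_{\C\varpi}(-\epsilon) = V_{\C\varpi}(0)$, each primitive vector has $\varpi$--eigenvalue $0$, while on the $sl(n|m)$--side $\C^{n|m} \otimes (\C^{n|m})^* \cong gl(n|m) = sl(n|m) \oplus \C I$, whose adjoint summand is irreducible because $sl(n|m)$ is simple. Hence the only primitive weights are $0$ and $\theta$, where $\theta$ is the highest root of $sl(n|m)$ viewed as a weight of $\g^0$ with vanishing $\varpi$--component; so there is a single weight, $\nu_1 = \theta$, to test.

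Next I would compute the corresponding quantity. As $\theta^0 = 0$, the Heisenberg factor drops out, and for the $sl(n|m)$--factor the Casimir identity $(\theta,\theta+2\rho^1)_1 = 2h^\vee_1$ together with $h^\vee_1 = n-m$ and $u_1(k) = k$ (the supertrace form restricts to the normalized form on $sl(n|m)$) gives
$$\Delta_\theta = \frac{(\theta,\theta+2\rho^1)_1}{2(u_1(k)+h^\vee_1)} = \frac{n-m}{k+n-m}.$$
For $k=1$ this equals $1 - \tfrac{1}{n-m+1}$, which lies in $\ganz_+$ only if $n-m \in \{0,-2\}$, both being excluded by $n \ne m$ and $m \ne n+2$. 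For $k = -\tfrac{n+1-m}{2}$ one has $k+n-m = \tfrac{n-m-1}{2}$, hence $\Delta_\theta = \frac{2(n-m)}{n-m-1} = 2 + \tfrac{2}{n-m-1}$, which lies in $\ganz_+$ only if $n-m \in \{-1,0,2,3\}$, and these are excluded by $n \ne m-1, m$ and $n \ne m+2, m+3$. (The denominator never vanishes, since a conformal level satisfies $u_1(k)+h^\vee_1 \ne 0$ by definition.) Thus \eqref{finioths} holds and Theorem~\ref{generalhs} applies.

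The one step that is not mere bookkeeping is the representation-theoretic claim that $\C^{n|m} \otimes (\C^{n|m})^*$ produces no $\g^0$--primitive weight other than $0$ and $\theta$; in the super setting I would want to be sure no extra (in particular odd) primitive vectors slip in, which is exactly what the identification with $gl(n|m) = sl(n|m) \oplus \C I$ and the irreducibility of the adjoint module of the simple superalgebra $sl(n|m)$ guarantee. Everything after that is the arithmetic of the two conformal levels, and it is reassuring that the forbidden values $m = n+2$ (for $k=1$) and $n = m+2, m+3$ (for $k=-\tfrac{n+1-m}{2}$) are precisely the ones rendering $\Delta_\theta$ a non-negative integer.
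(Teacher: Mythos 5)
Your proposal is correct and follows essentially the same route as the paper: identify $\C^{n|m}\otimes(\C^{n|m})^*\cong gl(n|m)=sl(n|m)\oplus\C I$ so that the only nonzero primitive weight to test is the highest root of $sl(n|m)$, then verify the non-integrality condition \eqref{finioths} and invoke Theorem \ref{generalhs}. Your arithmetic (including the identification of the excluded values $m=n+2$, resp.\ $n=m+2,m+3$, as exactly the cases where $\Delta_\theta\in\ganz_+$) agrees with the paper's computation.
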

 \begin{proof} 
 We have to decompose the tensor product of the two pieces  of $\g^1$.
 Observe that $\C^{n|m}\otimes (\C^{n|m})^*\cong gl(n,m)$, hence the desired decomposition is
 $$\left(V_{\C\varpi}(\epsilon)\otimes \C^{n|m}\right)\otimes \left( V_{\C\varpi}(-\epsilon)\otimes (\C^{n|m})^*\right)=\C\otimes\C\oplus \C\otimes sl(n,m).
 $$
(Recall from \eqref{epsilon} the definition of $\epsilon$). We can now  apply Theorem \ref{generalhs}. If $k=1$  formula \eqref{finioths} reads
$$\frac{-m+n}{1-m+n}=1-\frac{1}{1-m+n},$$
 which is never integral in our hypothesis. For $k = - \frac{n+1-m}{2}$ we obtain
 $$ \frac{2(m-n)}{1+m-n}=2-\frac{2}{1+m-n},$$
 which is again never integral in our hypothesis. The claim follows.
 \end{proof}
 
Using the free field realization of \cite{KWW} for $k=1$, we can  actually write down the decomposition and also cover the missing $m=n+2$ case. In $sl(n|m)$ set $\alpha_i^\vee=E_{ii}-E_{i+1i+1}$ for $i\ne n$ and $\alpha_n^\vee=E_{nn}+E_{n+1n+1}$. Define $\omega_i\in \h^*$ by setting $\omega_i(\alpha_j^\vee)=\delta_{ij}$ and $\omega_0=0$.
 Set
 $$
 \lambda_{(q)}=\begin{cases}\omega_q&\text{if $0\le q\le n$,}\\
(1+q-n) \omega_n+(q-n)\omega_{n+1}&\text{if $q\ge n$,}\\
-q\omega_{m+n-1}&\text{if $q\le 0$.}
\end{cases}
$$
\begin{proposition}
As a $M_{c}(1)\otimes  V_1(sl(n|m))$--module
$$
V_1(sl(n+1|m))=\sum_{q\in\ZZ} M_{c}(1,\tfrac{-q}{\Vert \varpi\Vert})\otimes L_{sl(n|m)}(\lambda_{(-q)}).
$$
\end{proposition}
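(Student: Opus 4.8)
The plan is to use the free field realization of \cite{KWW} at level one, exactly in the spirit of the proof of Theorem \ref{slmn-realisation}, and to read the decomposition off from the charge grading attached to $\varpi$. Concretely, I would realize $V_1(sl(n+1|m))$ inside the Fock space $\mathcal F=F_{(n+1)}\otimes M_{(m)}$, where $F_{(n+1)}$ is the fermionic vertex algebra built on the $n+1$ even coordinates and $M_{(m)}$ is the Weyl vertex algebra built on the $m$ odd coordinates, the currents $:\psi^a\psi^*_b:$ generating $\widehat{sl}(n+1|m)$ at level $1$ for the supertrace form. In this picture $\mathcal V_1(\g^0)=M_c(1)\otimes V_1(sl(n|m))$ is the subalgebra generated by the $gl(n|m)$--currents supported on the coordinates $1,\dots,n$ (even) and $1',\dots,m'$ (odd) together with $\varpi(-1)\vac$, the distinguished extra even coordinate $0$ being the one that carries the $\varpi$--charge.

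The first step is semisimplicity. Since $F_{(n+1)}$ is completely reducible as a $\widehat{gl}(n+1)$--module and, by \cite{AP2}, $M_{(m)}$ is completely reducible as a $\widehat{gl}(m)$--module, the space $\mathcal F$, and hence its vertex subalgebra $V_1(sl(n+1|m))$, is a semisimple $\mathcal V_1(\g^0)$--module; this settles the missing case $m=n+2$ uniformly with the rest. Next I would use the eigenvalue $q$ of $\varpi(0)$ to write $V_1(sl(n+1|m))=\bigoplus_{q\in\ZZ}V_1(\g)^{(q)}$, where by the previous theorem (and again directly from the realization) each $V_1(\g)^{(q)}$ is a simple $\mathcal V_1(\g^0)$--module, so only its isomorphism type remains to be found. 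The Heisenberg factor is immediate: on $V_1(\g)^{(q)}$ the field $c=\varpi/\Vert\varpi\Vert$ acts with $c(0)$--eigenvalue $q/\Vert\varpi\Vert$, giving $M_c(1,q/\Vert\varpi\Vert)$, which after relabelling the charge $q\mapsto -q$ matches the stated $M_c(1,-q/\Vert\varpi\Vert)$.

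The heart of the proof is the computation of the $sl(n|m)$--highest weight of each charged sector, which I would carry out on the lowest--conformal--weight vector of $V_1(\g)^{(q)}$ directly in the free fields, using the standard coordinate weights $\epsilon_i$ (even) and $\delta_j$ (odd) of $sl(n|m)$. For charge $+1$ the sector is the image of the column generators $E_{j,0}$, which transform as the natural module $\C^{n|m}$, so the top weight is $\omega_1$; for $1\le q\le n$ one stacks $q$ necessarily distinct even creation operators $\psi^1,\dots,\psi^q$, obtaining $\epsilon_1+\cdots+\epsilon_q=\omega_q$. When $q>n$ the $n$ fermionic coordinates are exhausted and the excess charge must be absorbed into the bosonic (odd) directions, where repetitions are allowed; maximising the weight piles the surplus onto the top odd coordinate, giving $\epsilon_1+\cdots+\epsilon_n+(q-n)\delta_1$, whose distinguished Dynkin labels one checks to equal $(1+q-n)\omega_n+(q-n)\omega_{n+1}$. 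For $q<0$ the conjugate generators $E_{0,j}$ produce the dual module, whose lowest coordinate $\delta_m$ is bosonic, so symmetric powers pile $|q|$ copies of $-\delta_m$ with no Pauli obstruction, yielding $-q\,\omega_{n+m-1}$ uniformly. Matching these weights with $\lambda_{(-q)}$ completes the identification.

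The main obstacle I anticipate is precisely this weight bookkeeping, and in particular the asymmetry between positive and negative charge: the regime change at $q=n$ originates from the even parity of the distinguished extra coordinate $0$, which forces the positive--charge ground states to fill the $n$ fermionic slots before spilling into the bosonic ones, whereas the negative--charge ground states occupy a single bosonic direction and grow linearly without a break. The delicate point is to verify that the free field ground states are genuinely $\widehat\g^0$--singular of the claimed weight, rather than merely $\g^0$--highest weight vectors at the correct conformal grade, and that no lower singular vectors intervene, so that each sector is the full irreducible $L_{sl(n|m)}(\lambda_{(-q)})$; this, however, is guaranteed by the semisimplicity established in the first step together with the simplicity of each $V_1(\g)^{(q)}$.
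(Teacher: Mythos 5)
Your overall route --- the Kac--Wakimoto level-one free field realization of $sl(n+1|m)$ in $F_{(n+1)}\otimes M_{(m)}$, the charge decomposition with respect to $\varpi(0)$, and Heisenberg bookkeeping --- is the same as the paper's, and your weight combinatorics for the ground states does reproduce the $\lambda_{(q)}$ of the statement. But there is a genuine gap at the step you rely on most heavily: semisimplicity. You argue that since $F_{(n+1)}$ is completely reducible over $\widehat{gl}(n+1)$ and $M_{(m)}$ over $\widehat{gl}(m)$, the Fock space is a semisimple $M_c(1)\otimes V_1(sl(n|m))$--module. That inference is the tensor-product trick of Theorem \ref{slmn-realisation}, and it works there precisely because the subalgebra in question is the \emph{even} part $gl(n)\times gl(m)$, which splits as a tensor product of a subalgebra of the fermions and a subalgebra of the bosons. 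Here $\g^0=gl(n|m)$ is a \emph{super} subalgebra: its odd currents are of the form $:\psi^i b_j:$ and mix the two tensor factors, so $V_1(sl(n|m))$ is not a tensor product of subalgebras of $F_{(n+1)}$ and $M_{(m)}$. Complete reducibility over the even part of a superalgebra does not imply complete reducibility over the superalgebra (already false for finite-dimensional $gl(n|m)$--modules), so the first step does not go through as written. The same gap undermines your final paragraph: you invoke "simplicity of each $V_1(\g)^{(q)}$" from "the previous theorem," but that theorem explicitly excludes $m=n+2$, which is exactly the case this proposition is meant to recover, and you give no independent argument from the realization.

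The paper closes this gap by not re-deriving the structure of the charge sectors at all: it quotes from \cite{KWW}, \S 3, the decomposition $M_{(2n|2m)}^q=M_{c_3}(1,\tfrac{q}{\Vert\varpi_3\Vert})\otimes L_{sl(n|m)}(\lambda_{(q)})$ as an irreducible $M_{c_3}(1)\otimes V_1(sl(n|m))$--module --- i.e., both the semisimplicity and the identification of highest weights that you are trying to establish by hand. It then keeps track of three commuting Heisenberg elements $\varpi_1=I_{n+1+m}$, $\varpi_2=E_{11}$, $\varpi_3$, identifies $V_1(sl(n+1|m))$ as the $M_{c_1}(1)^+$--invariants of the zero-charge sector of $M_{(2n+2|2m)}$ (a point your proposal also glosses over: the Fock space realizes $gl(n+1|m)$, and one must strip off the Heisenberg field attached to the identity matrix before reading off the $sl(n+1|m)$--decomposition), applies boson--fermion correspondence to the $M_{(2|0)}$ factor, and uses the relation $\varpi=\tfrac{m-n}{1+n-m}\varpi_2+\tfrac{1}{1+n-m}\varpi_3$ to convert the charges. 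To repair your proof you would either cite the \cite{KWW} decomposition directly (at which point your explicit ground-state computation becomes a consistency check rather than a proof) or supply an independent argument that each charge sector of $F_{(n)}\otimes M_{(m)}$ is an irreducible highest weight $\widehat{gl}(n|m)$--module at level one; the latter is a nontrivial theorem, not a formal consequence of complete reducibility over the even currents.
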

\begin{proof}
Set $\varpi_1=I_{n+1+m},\varpi_2=E_{11},\varpi_3=\begin{pmatrix}0&0\\
0&I_{n+m}\end{pmatrix}\in gl(n+1|m)$ and let $c_i=\varpi_i/\Vert\varpi_i\Vert$. By \cite{KWW}, \S \ 3, there is an embedding of $M_{c_1}(1)\otimes V_1(sl(n+1|m)$ in $M_{(2n+2|2m)}$. The action of $\varpi_1(0)$ on $M_{(2n+2|2m)}$ defines the charge decomposition $M_{(2n+2|2m)}=\oplus_{q\in\ZZ}M_{(2n+2|2m)}^q $ and 
$$
M_{c_1}(1)\otimes V_1(sl(n+1|m)=M_{(2n+2|2m)}^0.
$$
In particular, if $M_{c_1}(1)^+=span(\varpi_1(n)\mid n>0)$,
$$
V_1(sl(n+1|m)=(M_{(2n+2|2m)}^0)^{M_{c_1}(1)^+}.
$$
Clearly, $M_{(2n+2|2m)}=M_{(2|0)}\otimes M_{(2n|2m)}$. By boson-fermion correspondence, as a $M_{c_2}(1)$--module,  $M_{(2|0)}=\sum_{q\in\ZZ}M_{c_2}(1,q)$. The action of $\varpi_3(0)$ on $M_{(2n|2m)}$ defines the charge decomposition $M_{(2n|2m)}=\oplus_{q\in\ZZ}M_{(2n|2m)}^q $ and, by \cite{KWW}, \S \ 3,
$$
M_{(2n|2m)}^q=M_{c_3}(1,\tfrac{q}{\Vert \varpi_3\Vert})\otimes L_{sl(n|m)}(\lambda_{(q)})
$$
as a $M_{c_3}(1)\otimes V_1(sl(n|m))$--module. Since $\varpi_1=\varpi_2+\varpi_3$,
$$
M_{(2n+2|2m)}^0=\sum_{q\in\ZZ}M_{(2|0)}^q\otimes M_{(2n|2m)}^{-q},
$$
so
$$
M_{(2n+2|2m)}^0=\sum_{q\in\ZZ}M_{c_2}(1,q)\otimes M_{c_3}(1,\tfrac{-q}{\Vert \varpi_3\Vert})\otimes L_{sl(n|m)}(\lambda_{(-q)})
$$
as a $M_{c_2}(1)\otimes M_{c_3}(1)\otimes V_1(sl(n|m))$--module.
Since $\varpi_1=\varpi_2+\varpi_3$ and $\varpi=\tfrac{m-n}{1+n-m}\varpi_2+\frac{1}{1+n-m}\varpi_3$, we obtain that 
$$
M_{c_2}(1,q)\otimes M_{c_3}(1,\tfrac{-q}{\Vert \varpi_3\Vert})=M_{c_1}(1,0)\otimes M_{c}(1,\tfrac{-q}{\Vert \varpi\Vert}).
$$
The final outcome is that
\bea
V_1(sl(n+1|m))&=&(\sum_{q\in\ZZ}M_{c_1}(1,0)\otimes M_{c}(1,\tfrac{-q}{\Vert \varpi\Vert})\otimes L_{sl(n|m)}(\lambda_{(-q)}))^{M_{c_1}(1)^+}\nonumber\\&=&\sum_{q\in\ZZ} M_{c}(1,\tfrac{-q}{\Vert \varpi\Vert})\otimes L_{sl(n|m)}(\lambda_{(-q)})\nonumber
\eea
as wished.
\end{proof}

  \subsection{The conformal embedding $sl(2) \times spo(2|3) \hookrightarrow G(3)$, $k=1$}\label{ospinG(3)}
 
 In this section we consider $\g= G(3)$ and its subalgebra $\g^0 = sl(2) \times spo(2|3)$. We will use the notation established in the proof of Theorem \ref{super-super} (2).

Recall that
 $\g ^1 =  V_{sl(2)} (\omega_1) \otimes V_{spo(2|3)}(\beta_1+3/2\beta_2)$.

In order to apply  Theorem \ref{general} we need to compute the factors occurring in the composition series of $V_{spo(2|3)}(\beta_1+3/2\beta_2)\otimes V_{spo(2|3)}(\beta_1+3/2\beta_2)$. Clearly $V_{spo(2|3)}(2\beta_1+3\beta_2)$ occurs. By looking at Table 3.65 of \cite{dictionary} one sees that $V_{spo(2|3)}(2\beta_1+3\beta_2)$ has dimension 30. Observe that $\dim V_{spo(2|3)}(\beta_1+3/2\beta_2)=8$ and its  $sp(2)\times so(3)$ decomposition is $V_{sp(2)}(\omega_1)\otimes V_{so(3)}(\omega_1)+ V_{sp(2)}(0)\otimes V_{so(3)}(3\omega_1)$, so $V_{sp(2)}(0)\otimes V_{so(3)}(6\omega_1)$ must occur in the tensor product. The only representation of dimension less that 34 where such a factor occurs is $V_{spo(2|3)}(\beta_1+3\beta_2)$ which has dimension 20. The remaining $sp(2)\times so(3)$--factors in the tensor product are 
$$V_{sp(2)}(2\omega_1)\otimes V_{so(3)}(0),V_{sp(2)}(0)\otimes V_{so(3)}(2\omega_1),V_{sp(2)}(\omega_1)\otimes V_{so(3)}(2\omega_1)
$$
and $V_{sp(2)}(0)\otimes V_{so(3)}(0)$ with multiplicity 2.

By searching Table 3.65 of \cite{dictionary} we see that the only possibility is that the remaining $spo(2|3)$--factors are $V_{spo(2|3)}(2\beta_1+2\beta_2)$ and $V_{spo(2|3)}(0)$, the latter with multiplicity 2.

\begin{proposition}\label{spo-simple} There is a chain of conformal embeddings
$$ V_1 (sl(2)) \otimes V_3(sl(2))  \otimes V_{-3/4} (sl(2))\!  \hookrightarrow V_1(sl(2)) \otimes  \mathcal V_{-3/4} (spo(2|3))\! \hookrightarrow V_1(G(3))\!.$$
\end{proposition}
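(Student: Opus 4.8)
The plan is to realize the asserted chain as a composition of two conformal embeddings that are already available, so that conformality of the whole chain is automatic: once each arrow identifies the two Sugawara vectors, all three coincide pairwise. The right-hand arrow $V_1(sl(2))\otimes\mathcal V_{-3/4}(\spo)\hookrightarrow V_1(G(3))$ is precisely the assertion that $k=1$ is a conformal level for $sl(2)\times\spo\subset G(3)$, which is Theorem \ref{super-super}(2); as the two ideals of $\g^0=sl(2)\times\spo$ are orthogonal, the generated subalgebra factors as $\mathcal V_1(sl(2))\otimes\mathcal V_{-3/4}(\spo)$, with the displayed levels fixed by the normalization used in the proof of Theorem \ref{super-super}(2). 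After cancelling the common tensor factor $V_1(sl(2))$, the left-hand arrow is exactly the conformal embedding of the even part $sp(2)\times so(3)\cong sl(2)\times sl(2)$, at levels $-3/4$ and $3$, into $\spo$ that is furnished by Proposition \ref{osp(3|2)-dec}(1).

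The genuine content is to check that the subalgebras generated inside $V_1(G(3))$ really are the simple affine vertex algebras displayed, so that Proposition \ref{osp(3|2)-dec} applies verbatim. First I would locate the three commuting copies of $sl(2)$ inside $\g_{\bar 0}=sl(2)\times G_2$: the factor $sp(2)$ coincides with the $sl(2)$-summand of $\g_{\bar 0}$ (both carry level $-3/4$), whereas $so(3)$ and the outer $sl(2)$-factor of $\g^0$ sit inside $G_2$ at Dynkin indices $3$ and $1$; as a consistency check, the central charges $1$ and $\tfrac{9}{5}$ of these two factors add up to the central charge $\tfrac{14}{5}$ of $V_1(G_2)$, reflecting the conformal embedding $sl(2)\times sl(2)\hookrightarrow G_2$ at levels $1$ and $3$. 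By Lemma \ref{singular-G3} the subalgebra generated by $\g_{\bar 0}$ is the simple $V_{-3/4}(sl(2))\otimes V_1(G_2)$, so $\mathcal V_{-3/4}(sl(2))=V_{-3/4}(sl(2))$ is immediate; moreover the maximal ideal of $V^1(G_2)$ is generated by $:x_\theta x_\theta:$, whence $:x_\gamma x_\gamma:=0$ in $V_1(G_2)$ for every long root $\gamma$ and therefore $\mathcal V_1(sl(2))=V_1(sl(2))$ for the level-$1$ (long-root) factor.

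The hard part will be the simplicity of $\mathcal V_{-3/4}(\spo)$, which a priori is only some quotient of $V^{-3/4}(\spo)$. I would first secure its even subalgebra: $sp(2)$ already gives $V_{-3/4}(sl(2))$, and for the short-root factor $so(3)\subset G_2$ I would prove $\mathcal V_3(sl(2))=V_3(sl(2))$ by ruling out a $\widehat{so(3)}$-singular vector of weight $8\omega_1$ (the non-integrable level-$3$ weight generating the maximal ideal of $V^3(sl(2))$) in $V_1(G_2)$, using the conformal-weight bookkeeping of Lemma \ref{singular}: one propagates the candidate through dot products with $\g^1=V_{sl(2)}(\omega_1)\otimes V_{\spo}(\beta_1+\tfrac{3}{2}\beta_2)$ by Clebsch--Gordan, discards every summand of non-integral Sugawara weight, and observes that any survivor would be $\widehat\g$-singular, contradicting the simplicity of $V_1(G(3))$. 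Granting that the even part is the simple algebra $V_{-3/4}(sl(2))\otimes V_3(sl(2))$, which is rational in category $\mathcal O$, the subalgebra $\mathcal V_{-3/4}(\spo)$ is completely reducible over it; being generated by $\spo$, it must then coincide with the conformal extension identified in Proposition \ref{osp(3|2)-dec}(2), namely the simple $V_{-3/4}(\spo)$. This closes the left-hand arrow, and with it the whole chain.
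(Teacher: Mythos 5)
Your handling of the right-hand arrow coincides with the paper's (Theorem \ref{super-super}(2) gives $\omega_{sl(2)}+\omega_{\spo}=\omega_{G(3)}$), but for the left-hand arrow you have taken a much harder road than the one the paper takes, and the hard step is where the gap sits. The paper does \emph{not} prove at this stage that $\mathcal V_{-3/4}(\spo)$ is the simple $V_{-3/4}(\spo)$ -- that identification is part of the later Theorem \ref{spo-G3}, whose proof relies on the present proposition, so building your argument on it risks circularity besides being unnecessary. Instead the paper observes that the \emph{composite} embedding $V_1(sl(2))\otimes V_3(sl(2))\otimes V_{-3/4}(sl(2))\hookrightarrow V_1(G(3))$ is conformal, because it factors through $V_{1}(G_2)\otimes V_{-3/4}(sl(2))$ (conformal by Lemma \ref{singular-G3} together with Theorem \ref{cf}(7)) and through the classical conformal pair $V_1(sl(2))\otimes V_3(sl(2))\subset V_1(G_2)$ of \cite{KS}. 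Since the outer subalgebra and the middle subalgebra then both carry $\omega_{G(3)}$ as their conformal vector, the inner arrow is conformal for free; no information about the ideal structure of $\mathcal V_{-3/4}(\spo)$ is needed. This ``subtraction'' trick is the entire content of the paper's proof, and it is the idea your proposal misses.

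The concrete gap in your route is the last step of your third paragraph: complete reducibility of $\mathcal V_{-3/4}(\spo)$ over its even subalgebra, together with the fact that it is generated by $\spo$, does not imply that it equals the simple quotient. A non-simple quotient of $V^{-3/4}(\spo)$ could a priori still be generated by $\g$ and decompose as a larger direct sum of irreducible $V_{-3/4}(sl(2))\otimes V_3(sl(2))$--modules; excluding the extra components is exactly the fusion-rules and dot-product analysis of Lemma \ref{singular} and Proposition \ref{osp(3|2)-dec}, and those arguments are run inside the \emph{simple} $V_{-3/4}(\spo)$ (``a contradiction, since $V_k(\g)$ is simple''), so they do not transfer verbatim to an unidentified quotient sitting inside $V_1(G(3))$. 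A second, related problem is your exclusion of an $8\omega_1$-singular vector for the $so(3)$ factor: discarding summands of non-integral conformal weight presupposes that the Sugawara vector of the subalgebra computes the integer grading of $V_1(G(3))$ on the relevant singular vectors, i.e.\ a conformality statement of the very kind being proved; this is harmless only after one knows the composite embedding is conformal, which is precisely the paper's short argument (for that particular factor you could instead simply quote the decomposition of $V_1(G_2)$ from \cite{KS}). Replacing your treatment of the left arrow by the paper's two-line subtraction repairs the proof using only Lemma \ref{singular-G3}, Theorem \ref{super-super}(2) and \cite{KS}.
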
 
\begin{proof}
By Lemma \ref{singular-G3} there is a conformal embedding of 
$V_{-3/4} (sl(2))\otimes V_1(G_2)\hookrightarrow V_1(G(3))$.  By using the conformal embedding of  $V_1(sl(2)) \otimes V_3(sl(2))$ in $V_1 (G_2)$ we conclude that there is chain of conformal embeddings
$$V_1 (sl(2)) \otimes V_3(sl(2))  \otimes V_{-3/4} (sl(2))\hookrightarrow V_{1} (G_2) \otimes V_{-3/4} (sl(2))\hookrightarrow V_1(G(3)),$$ so the embedding
$$
V_1 (sl(2)) \otimes V_3(sl(2))  \otimes V_{-3/4} (sl(2))\hookrightarrow V_1(G(3)
$$
is conformal.

Since the embedding $V_1(sl(2)) \otimes  \mathcal V_{-3/4} (spo(2|3)) \hookrightarrow V_1(G(3))$ is conformal we deduce that the embedding 
$$
V_1 (sl(2)) \otimes V_3(sl(2))  \otimes V_{-3/4} (sl(2))  \hookrightarrow V_1(sl(2)) \otimes  \mathcal V_{-3/4} (spo(2|3))
$$
is conformal as well.
\end{proof}

 \begin{theorem} \label{spo-G3}Let $\be_1$, $\be_2$ be the simple roots for the distinguished set of positive roots for $spo(2|3)$. Then
 $$ V_1 (\g) = V_1 (sl(2))\otimes V_{-3/4}(spo(2|3))  \oplus L_{sl(2)}(\omega_1)\otimes L_{spo(2|3)}(V_8) . $$
 where $V_8$ is the unique irreducible $8$-dimensional representation of $spo(2|3)$ (see \cite{dictionary}).
 \end{theorem}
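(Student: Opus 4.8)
The plan is to apply Theorem \ref{general} to the order-two automorphism $\sigma=e^{\pi\sqrt{-1}\omega_2^\vee}$ of $\g=G(3)$, whose fixed-point subalgebra is $\g^0=sl(2)\times\spo$ and whose $(-1)$-eigenspace is $\g^1=V_{sl(2)}(\omega_1)\otimes V_{\spo}(\beta_1+\tfrac32\beta_2)$, with $V_8=V_{\spo}(\beta_1+\tfrac32\beta_2)$. Here $t=1$ and $\mu_1=(\omega_1,\beta_1+\tfrac32\beta_2)$, so the decomposition \eqref{d} produced by Theorem \ref{general} is exactly
$$V_1(\g)=\mathcal V_1(\g^0)\oplus L_{\g^0}(\mu_1)=V_1(sl(2))\otimes V_{-3/4}(\spo)\;\oplus\;L_{sl(2)}(\omega_1)\otimes L_{\spo}(V_8),$$
the identification $V_{\mathbf k}(\g^0)=V_1(sl(2))\otimes V_{-3/4}(\spo)$ coming from the levels computed in Theorem \ref{super-super} and Proposition \ref{spo-simple}. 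Thus the whole statement reduces to verifying the hypothesis of Theorem \ref{general}: among the weights $\nu$ of $\g^0$-primitive vectors in $V_{\g^0}(\mu_1)\otimes V_{\g^0}(\mu_1)$, only $\nu=\mu_0=0$ may carry a $\widehat\g^0$-primitive vector in $V_1(\g)$.

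First I would list those weights. On the $sl(2)$ factor $V_{sl(2)}(\omega_1)^{\otimes 2}=V_{sl(2)}(2\omega_1)\oplus V_{sl(2)}(0)$, and on the $\spo$ factor the composition factors of $V_8\otimes V_8$ were already found before Proposition \ref{spo-simple} to be $V_{\spo}(2\beta_1+3\beta_2)$, $V_{\spo}(\beta_1+3\beta_2)$, $V_{\spo}(2\beta_1+2\beta_2)$ and $V_{\spo}(0)$ (twice), giving eight weights $\nu=(\nu_1,\nu_2)$. For each I would compute the conformal weight $\Delta_\nu=\Delta^{sl(2),1}_{\nu_1}+\Delta^{\spo}_{\nu_2}$ of a hypothetical $\widehat\g^0$-primitive vector, using $\Delta^{sl(2),1}_{0}=0$, $\Delta^{sl(2),1}_{2\omega_1}=\tfrac23$, and evaluating $\Delta^{\spo}_{\nu_2}$ by restricting $\nu_2$ to the even part $sp(2)\times so(3)$ (via $\beta_1=\delta_1-\epsilon_1$, $\beta_2=\epsilon_1$) and applying the formula $h_{n,m}=\tfrac{n(n+2)}{5}+\tfrac{m(m+2)}{20}$ of Lemma \ref{singular}. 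This yields $\Delta^{\spo}_0=0$, $\Delta^{\spo}_{2\beta_1+2\beta_2}=\tfrac85$, $\Delta^{\spo}_{\beta_1+3\beta_2}=\tfrac95$ and $\Delta^{\spo}_{2\beta_1+3\beta_2}=2$. Since $V_1(\g)$ is $\Z_{\ge0}$-graded, a $\widehat\g^0$-primitive vector can occur only at integral conformal weight; running through the eight sums, the only weights with $\Delta_\nu\in\ganz_+$ are $\nu=0$ (which is $\mu_0$ and contributes the vacuum) and $\nu=(0,2\beta_1+3\beta_2)$, with $\Delta=2$. For the six remaining weights $\Delta_\nu\notin\ganz_+$, so condition \eqref{finiote} of Remark \ref{conditionsfinite} holds and they raise no obstruction.

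The proof therefore hinges on a single point, which I expect to be the main difficulty: showing that $V_1(\g)$ has no $\widehat\g^0$-singular vector of weight $(0,2\beta_1+3\beta_2)$, where the conformal-weight test is useless because $\Delta=2$. I would settle it by a fusion-rules argument in the spirit of Lemmas \ref{singular} and \ref{singular-G3}. A useful structural observation is that the $\sigma$-grading of $V_1(\g)$ coincides with the parity of the $sl(2)$-weight label, since $\g^1$ carries odd label while $\g^0$ carries even label; hence a vector of weight $(0,2\beta_1+3\beta_2)$, having $sl(2)$-label $0$, lies in the $\sigma$-even part and restricts to a $\widehat{sl(2)}\otimes\widehat{sp(2)}\otimes\widehat{so(3)}$-singular vector of weight $(0;2\omega_1;2\omega_1)$ at conformal weight $2$. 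Assuming such a $\widehat\g^0$-singular vector $w$ existed with $W=\mathcal V_1(\g^0)\cdot w$, I would compute $V_{\g^0}(\mu_1)\cdot W$, whose $\spo$-constituents come from $V_8\otimes V_{\spo}(2\beta_1+3\beta_2)$; discarding every constituent of non-integral total conformal weight leaves a short, forced chain of singular vectors that ultimately produces a $\widehat\g$-singular vector, contradicting the simplicity of $V_1(\g)$. Proposition \ref{osp(3|2)-dec}, which exhibits $L_{sp(2)}(2\omega_1)\otimes L_{so(3)}(2\omega_1)$ at conformal weight $2$ explicitly inside the simple algebra $V_{-3/4}(\spo)$, where it is manifestly not $\widehat\spo$-singular, serves as a cross-check that this candidate weight is already accounted for within the even part. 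Once this last singular-vector analysis is in place the hypothesis of Theorem \ref{general} is verified, and that theorem simultaneously gives the simplicity of $\mathcal V_1(\g^0)$ and the asserted decomposition.
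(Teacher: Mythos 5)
Your overall strategy coincides with the paper's: reduce Theorem \ref{spo-G3} to the hypothesis of Theorem \ref{general} and test each primitive weight of $V_8\otimes V_8$ through its conformal weight. The problem lies in which conformal weight you compute. The quantity required by Theorem \ref{general} and Remark \ref{conditionsfinite} is the Sugawara weight of the ideal $\spo$ itself, namely $\frac{(\nu_2,\nu_2+2\rho_{\spo})}{2(u_2(1)+h^\vee_{\spo})}$, not the $sp(2)\times so(3)$--Sugawara weight $h_{n,m}$ of the restriction of $\nu_2$ to the even part. For $\nu_2=2\be_1+3\be_2=2\delta+\epsilon$ one has $(\nu_2,\nu_2+2\rho_{\spo})=0$ (the weight is atypical), so a $\widehat{\g}^0$--primitive vector of weight $(0,2\be_1+3\be_2)$ would have conformal weight $0$ and hence be a multiple of $\vac$, which has the wrong $\h$--weight. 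This is exactly how the paper disposes of that case in one line; no fusion-rules analysis is needed. The fusion argument you leave as a sketch (``a short, forced chain of singular vectors that ultimately produces a $\widehat\g$--singular vector'') is precisely the step your proposal does not carry out, and as written it is a genuine gap. Note, incidentally, that the very discrepancy between your value $2$ and the Sugawara value $0$ already proves non-existence: by Proposition \ref{spo-simple} the conformal vectors $\omega_{sl(2)\times sp(2)\times so(3)}$ and $\omega_{\g^0}=\omega_\g$ coincide, so both numbers would be eigenvalues of the same operator on the same putative vector.

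Conversely, for $\nu_2=\be_1+3\be_2$ the $\spo$--Sugawara weight is $6\in\ganz_+$, so this -- not $2\be_1+3\be_2$ -- is the case that actually requires work; the paper excludes it by showing that $6$ is not a sum of admissible conformal weights of $V_1(sl(2))\otimes V_3(sl(2))\otimes V_{-3/4}(sl(2))$--primitive vectors. Your even-part computation $\tfrac95\notin\ganz$ does dispose of this case, and legitimately so once one adds the justification just indicated (a $\widehat{\g}^0$--primitive vector is in particular $\widehat{sl(2)}^{\times 3}$--primitive, and the two conformal vectors agree); this is in fact a slightly more direct form of the paper's own argument. So the two tests are complementary, and running both would close the proof with no fusion-rules analysis at all; but your proposal, which relies on the even-part numbers alone, leaves the weight $(0,2\be_1+3\be_2)$ unresolved.
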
 
 
 \begin{proof}
 Let $h_{\l,\mu}$ be the conformal weight of the highest vector of $L_{sl(2)}(\l)\otimes L_{spo(2|3)}(\mu)$.  It turns out that $h_{\l,\mu}$ with $V_{sl(2)}(\l)\otimes V_{spo(2|3)}(\mu)$ occuring in the tensor product $\g^1\otimes \g^1$ is a positive integer only in the following cases
$$
\begin{cases}
\l=0;\ \mu=0&h_{\l,\mu}=0,\\
\l=0;\ \mu=2\be_1+3\be_2&h_{\l,\mu}=0,\\
\l=0;\ \mu=\be_1+3\be_2&h_{\l,\mu}=6.
\end{cases}
$$

The only primitive vector in $V_1(G(3))$ with conformal weight $0$ is $\vac$, so, in order to apply Theorem \ref{general} we are reduced to check that there is no $\widehat \g^0$--primitive vector in $V_1(G(3))$ having conformal weight $6$.
 By Proposition  \ref{spo-simple}, there is a conformal embedding of
$V_1(sl(2))\otimes V_{3}(sl(2))\otimes V_{-3/4}(sl(2))$ in $\mathcal  V_1(\g^0)$.
We next display the possible conformal weights of $V_k(sl(2))$--singular vectors for $k=1,3,-3/4$: 
$$
\begin{tabular}{ccc}
k&\quad&\text{conformal weights}\\
\hline
1&\quad&0,\,1/4\\
3&\quad&0,\,3/20,\,2/5,\,3/4\\
-3/4&\quad&0,\,3/5,\,8/5,\,3
\end{tabular}
$$
 One cannot obtain $6$ as a sum of these values. This concludes the proof.
\end{proof}

 We are now ready to prove Theorem \ref{G2inG(3)}. The proof follows from Theorem \ref{spo-G3} and Proposition \ref{osp(3|2)-dec} by essentially repeating  the argument of \cite[Proposition 6.3]{C-RIMS}.
\begin{proof}[Proof of Theorem \ref{G2inG(3)}]
By Lemma \ref{singular-G3}, $V_1(G(3))$ is completely reducible as a $V_1(\g_{\bar 0})$ module. Thus we can write
$$
V_1(\g)=V_{-3/4}(sl(2))\otimes V_1(G_2)\oplus \sum_{\l,\mu}m_{\l,\mu}L_{sl(2)}(\l)\otimes L_{G_2}(\mu)
$$
with $\l\in\{0,\omega_1,2\omega_1,3\omega_1\}$ and $\mu\in\{0,\omega_1\}$.
Since the conformal weight of the highest weight vector of $L_{G_2}(\omega_1)$ is $2/5$, we see that $m_{\l,\mu}=0$ except when 
$$
(\l,\mu)\in\{(3\omega_1,0),(\omega_1,\omega_1),(2\omega_1,\omega_1)\}.
$$
We now check that in these cases $m_{\l,\mu}=1$. We have \cite{KS}:
$$
V_1(G_2)=V_{1}(sl(2))\otimes V_{3}(sl(2))\oplus L_{sl(2)}(\omega_1)\otimes L_{sl(2)}(3\omega_1)
$$
while, as $V_{1}(sl(2))\otimes V_{3}(sl(2))$--module,
$$
 L_{G_2}(\omega_1)=V_{1}(sl(2))\otimes L_{sl(2)}(2\omega_1)\oplus L_{sl(2)}(\omega_1)\otimes L_{sl(2)}(\omega_1),
$$
so
\begin{align*}
&V_1(\g)=\\&V_{-3/4}(sl(2))\otimes( V_{1}(sl(2))\otimes V_{3}(sl(2))\oplus L_{sl(2)}(\omega_1)\otimes L_{sl(2)}(3\omega_1))\\
&\oplus 
m_{3\omega_1,0}L_{sl(2)}(3\omega_1)\otimes (V_{1}(sl(2))\otimes V_{3}(sl(2))\oplus L_{sl(2)}(\omega_1)\otimes L_{sl(2)}(3\omega_1))\\
&\oplus 
m_{\omega_1,\omega_1}L_{sl(2)}(\omega_1)\otimes (V_{1}(sl(2))\otimes L_{sl(2)}(2\omega_1)\oplus L_{sl(2)}(\omega_1)\otimes L_{sl(2)}(\omega_1))\\
&\oplus 
m_{2\omega_1,\omega_1}L_{sl(2)}(2\omega_1)\otimes (V_{1}(sl(2))\otimes L_{sl(2)}(2\omega_1)\oplus L_{sl(2)}(\omega_1)\otimes L_{sl(2)}(\omega_1)).
\end{align*}

As $V_{-3/4} (sl(2)) \otimes V_3 (sl(2) )$--module,
$$
L_{spo(2|3)}(\be_1+3/2\be_2)=\sum c_{i,j}L_{sl(2)}(i\omega_1)\otimes L_{sl(2)}(j\omega_1),
$$
with $0\le i,j\le3$.
Since the highest weight vectors occuring in $L_{sl(2)}(\omega_1)\otimes L_{spo(2|3)}(\be_1+3/2\be_2)$ must have integral conformal weight, we have that $c_{i,j}=0$ unless $(i,j)\in\{(1,1),(2,1),(0,3),(3,3)\}$.

Combining Theorem \ref{spo-G3} and Proposition \ref{osp(3|2)-dec}, we obtain
\begin{align*}
V_1(\g)&=V_1 (sl(2))\otimes (  V_{-3/4} (sl(2))  \oplus     L_{sl(2)} (3 \omega_1)  )\otimes V_3 (sl(2))\\  
&\oplus V_1 (sl(2))\otimes (  L_{sl(2)} ( \omega_1) \oplus  L_{sl(2)} (2 \omega_1)  ) \otimes L_{sl(2)}  (2\omega_1)\\
&\oplus L_{sl(2)}(\omega_1)\otimes (c_{0,3}V_{-3/4}(sl(2))\oplus c_{3,3} L_{sl(2)}(3\omega_1))\otimes L_{sl(2)}(3\omega_1)\\
&\oplus L_{sl(2)}(\omega_1)\otimes (c_{1,1}L_{sl(2)}(\omega_1)\oplus c_{2,1} L_{sl(2)}(2\omega_1))\otimes L_{sl(2)}(\omega_1).
\end{align*}
Comparing coefficients we obtain the result.
\end{proof}
 
 \begin{remark}
 As a byproduct of the above proof we also obtain that, as a $V_{-3/4} (sl(2)) \otimes V_3 (sl(2) )$--module, 
\begin{align*}
L_{spo(2|3)}(\be_1+3/2\be_2)&=(V_{-3/4}(sl(2))\oplus L_{sl(2)}(3\omega_1))\otimes L_{sl(2)}(3\omega_1)\\
&\oplus(L_{sl(2)}(\omega_1)\oplus L_{sl(2)}(2\omega_1))\otimes L_{sl(2)}(\omega_1).
\end{align*}
 \end{remark}
 
 \section{Free field realization of $osp(m|2n)$: a new approach}\label{FFR}
 
In this section we show that the free field realization of $osp(m|2n)$, $n>0$, given in \cite{Kw} fits nicely in the general theory of conformal embeddings. Here we provide a proof based on a fusion rules argument.

Consider the superspace $\C^{m|2n}$ equipped with the standard supersym\-metric form $\langle\cdot,\cdot\rangle_{m|2n}$ given in \cite{KacReps} (sometimes denoted by  $\langle\cdot,\cdot\rangle$ if $m,n$ are clear from the context) . Let $V=\Pi \C^{m|2n}$, where $\Pi$ is the parity reversing functor. Let $M_{(m|2n)}$ be the universal vertex algebra generated by $V$ with $\l$--bracket
\begin{equation}\label{lambdaproduct}
[v_\lambda w]=\langle w,v\rangle.
\end{equation}
Let $\{e_i\}$ be the standard basis of $V$ and let $\{e^i\}$ be its dual basis with respect to $\langle \cdot,\cdot\rangle$ (i. e. $\langle e_i,e^j\rangle=\d_{ij}$). In this basis the $\lambda$-brackets are given by 
$$
[{e_h}_\lambda e_{m-k+1}]=\delta_{hk},\quad
[{e_{m+i}}_\lambda e_{m+2n-j+1}]=-\delta_{ij},\quad
[{e_{m+n+i}}_\lambda e_{m+n-j+1}]=\delta_{ij},
$$
for $h,k=1,\ldots m,\,\,i,j= 1\ldots,n$.

In the case $m= 0$ (resp. $n=0$), we write $M_{(n)} := M_{(0|2n)}$ (resp. $F_{(m/2)}:= M_{(m|0)}$. This notation is consistent with  those used in \cite{AKMPP-Selecta} and  \cite{AKMPP-IMRN}. Clearly, we have the isomorphism:
$$  M_{(m|2n)} \cong F_{(m/2)} \otimes M_{(n)}. $$

\begin{proposition}\label{EmdedFree}
  There is a non-trivial homomorphism $$\Phi:V^{1} (osp(m|2n)) \rightarrow M_{(m|2n)}$$ uniquely determined by
\begin{equation}\label{embedospM}
X\mapsto 1/2\sum_{i} :X(e_i)e^i:,\quad X\in osp(m|2n).
\end{equation}
\end{proposition}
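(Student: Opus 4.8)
The plan is to invoke the universal property of $V^1(osp(m|2n))$. Since this vertex algebra is freely generated by $\g=osp(m|2n)$ subject only to the affine relations, the assignment $X\mapsto \widehat{X}:=\half\sum_i :X(e_i)e^i:$ extends to a (unique) vertex algebra homomorphism precisely when the images satisfy the level-$1$ $\lambda$-bracket relations
$$[\widehat{X}_\lambda \widehat{Y}]=\widehat{[X,Y]}+\lambda\,(X|Y)\,\vac,\qquad X,Y\in\g,$$
where $(\cdot|\cdot)$ is the normalized invariant form used throughout the paper. Thus everything reduces to two applications of the non-commutative Wick formula, and uniqueness is then automatic from the universal property.

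First I would compute $[\widehat{X}_\lambda v]$ for a generator $v\in V$. Applying Wick's formula to $\widehat{X}=\half\sum_i:(Xe_i)e^i:$ and using that the generator brackets $[v_\lambda w]=\langle w,v\rangle$ are $\lambda$-independent scalars, the double-bracket integral term vanishes (since $[a_\mu\vac]=0$), and the two single-contraction terms collapse via the expansions $v=\sum_i\langle v,e^i\rangle e_i$ and the $osp$-invariance $\langle Xa,b\rangle=-(-1)^{p(X)p(a)}\langle a,Xb\rangle$ to
$$[\widehat{X}_\lambda v]=X(v),$$
with no $\lambda$-term. This records that $V$ carries the standard $osp(m|2n)$-module structure under the zero modes, exactly as required.

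Next I would feed this back into a second Wick computation for $[\widehat{X}_\lambda \widehat{Y}]=\half\sum_j[\widehat{X}_\lambda:(Ye_j)e^j:]$. The single-contraction (normally ordered) terms produce $\half\sum_j\left(:X(Ye_j)\,e^j:\pm:(Ye_j)\,X(e^j):\right)$, and reindexing with the completeness relation $\sum_i e_i\otimes e^i$ together with invariance of the form reassembles these into $\half\sum_j:([X,Y]e_j)e^j:=\widehat{[X,Y]}$. The genuinely new contribution is the integral term: since $[\widehat{X}_\lambda(Ye_j)]=X(Ye_j)$ is again a generator, the integrand $[\,X(Ye_j)_\mu\,e^j\,]=\langle e^j,XYe_j\rangle$ is a scalar, and $\half\int_0^\lambda(\cdots)\,d\mu$ contributes $\half\lambda\sum_j\langle e^j,XYe_j\rangle\,\vac$. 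Summing the diagonal over dual bases yields a supertrace of $XY$ in the standard representation, which by the half-supertrace normalization is exactly $\lambda(X|Y)\vac$, pinning the level at $k=1$. Non-triviality is then immediate, as $\widehat{X}\ne 0$ whenever $X\ne 0$.

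The main obstacle is bookkeeping rather than conceptual: one must track the Koszul signs $(-1)^{p(\cdot)p(\cdot)}$ consistently through both Wick computations, and verify that the parity reversal $V=\Pi\,\C^{m|2n}$ is precisely what turns the symplectic form on the even part and the symmetric form on the odd part into the single supersymmetric form whose quadratic elements close on $osp(m|2n)$. The one substantive identity to check with care is that $\sum_j\langle e^j,XYe_j\rangle$ reproduces the standard-representation supertrace of $XY$ with the sign that matches the chosen invariant form, so that the level comes out to be $+1$; any sign slip here would shift the central term and hence the level.
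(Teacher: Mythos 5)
Your proposal is correct and follows essentially the same route as the paper: the paper likewise first establishes $[\Phi(X)_\lambda v]=X(v)$ for $v\in V$ by Wick's formula, then applies Wick again to obtain $[\Phi(X)_\lambda\Phi(Y)]=\tfrac12\sum_i:[X,Y](e_i)e^i:+\tfrac12\lambda\,str(XY)$, which is exactly the level-one relation for the half-supertrace form. Your additional remarks on the universal property, non-triviality, and the sign bookkeeping for the supertrace identity are consistent with (and slightly more explicit than) the paper's argument.
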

\begin{proof}Recall that the $\lambda$--bracket of $V^1(osp(m|2n))$ is given by
$$
[X_\lambda Y]=[X,Y]+\tfrac{1}{2}\lambda\, str(XY).
$$

A straightforward computation using Wick formula shows that, if $X\in osp(m|2n)$ and $v\in V$, 
\begin{equation}\label{actiononv}
[\Phi(X)_\l v]=X(v).
\end{equation}

Applying \eqref{actiononv} and the Wick formula one obtains that
\begin{align*}
[(\tfrac{1}{2}&\sum_i:X(e_i)
e^i:)_\l (\tfrac{1}{2}\sum_i:Y(e_i)
e^i:)]=\tfrac{1}{2}\sum_{i}:[X,Y](e_i)e^i:+\tfrac{1}{2}\l\,\, str(XY).
\end{align*}
\end{proof}

A Virasoro vector for $M_{(m|2n)}$ is
$$
\omega=\tfrac{1}{2}\sum_i :T(e_i)e^i:.
$$
If $m\ne 2n+1$, let $\omega_{osp(m|2n)}$ be the Virasoro vector of $V^1(osp(m|2n))$ given by the Sugawara construction.
 \begin{lemma}\label{isconformal}Assume $m\ne2n+1$. Then
 $$\Phi(\omega_{osp(m|2n)})=\omega.$$
 \end{lemma}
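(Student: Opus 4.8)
The plan is to prove the equality of the two weight--two vectors $\omega$ and $\Phi(\omega_{osp(m|2n)})$ by checking that they act identically on the generating subspace $V$, and then to upgrade this to an equality of vectors via a uniqueness argument. Throughout I set $\omega':=\Phi(\omega_{osp(m|2n)})$ and $\Delta:=\omega-\omega'$. Since $\Phi$ is a vertex algebra homomorphism and $k=1$, we have $\omega'=\frac{1}{2(1+h^\vee)}\sum_a\,{:}\Phi(y_a)\Phi(x_a){:}$, where $\{x_a\},\{y_a\}$ are dual bases of $osp(m|2n)$; the hypothesis $m\neq 2n+1$ is exactly what guarantees $1+h^\vee\neq 0$, so that this Sugawara expression makes sense.

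First I would record the relevant primarity. By \eqref{actiononv} (applied to the generators in \eqref{embedospM}) one has $\Phi(X)_{(0)}v=X(v)$ and $\Phi(X)_{(p)}v=0$ for all $p\ge 1$, $v\in V$, $X\in osp(m|2n)$; thus each $e_i\in V$ is a primary vector for the affine structure, sitting in the vector representation $\C^{m|2n}$. The main step is then to compute $\omega'_{(p)}e_i$ for $p\ge 0$ using the Sugawara form. For $p\ge 2$ the positive current modes annihilate $e_i$, giving $\omega'_{(p)}e_i=0$; the operator $\omega'_{(0)}$ is a derivation of all products agreeing with $T$ on the generators and on $\vac$, hence $\omega'_{(0)}=T$ on all of $M_{(m|2n)}$; and for $p=1$ the primarity yields $\omega'_{(1)}e_i=\frac{C_V}{2(1+h^\vee)}\,e_i$, where $C_V$ is the eigenvalue of the Casimir $C_\g$ on the vector representation. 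The arithmetic heart of the lemma is the coincidence $C_V=1+h^\vee$ (i.e. $k+h^\vee$ at $k=1$), which forces $\omega'_{(1)}e_i=\tfrac12 e_i$. Since $\omega$ likewise makes each $e_i$ primary of conformal weight $\tfrac12$, this shows $\Delta_{(p)}e_i=0$ for all $p\ge 0$ and all $i$.

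To conclude that $\Delta=0$, I would first propagate $\Delta_{(p)}e_i=0$ $(p\ge 0)$ through the whole algebra: by the Borcherds commutator formula $\Delta_{(p)}$ commutes with every mode $(e_i)_{(q)}$ for $p\ge 0$, and since $\Delta_{(p)}\vac=0$ while $M_{(m|2n)}$ is generated by the $e_i$, it follows that $\Delta_{(p)}=0$ as an operator for every $p\ge 0$. Finally, $M_{(m|2n)}\cong F_{(m/2)}\otimes M_{(n)}$ is a simple vertex superalgebra, nonnegatively graded with one--dimensional weight--zero space, so it carries a nondegenerate invariant bilinear form $(\cdot,\cdot)$. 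The vector $\Delta$ is quasiprimary of conformal weight $2$ (note $\omega_{(2)}\Delta=\omega'_{(2)}\Delta=0$ because $\Delta_{(2)}=0$), so invariance gives $(\Delta,b)=\pm(\vac,\Delta_{(3)}b)=0$ for every $b$, using $\Delta_{(3)}=0$; nondegeneracy then yields $\Delta=0$, that is $\Phi(\omega_{osp(m|2n)})=\omega$.

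The main obstacle is the numerical identity $C_V=1+h^\vee$: verifying that the quadratic Casimir of $osp(m|2n)$ on its vector representation, computed in the supertrace normalization of the form used here, equals $k+h^\vee$ at $k=1$. This is precisely the level--$1$ coincidence that underlies the conformality of the free--field embedding, and keeping track of the superalgebra form normalization (and excluding the critical case $m=2n+1$) is the delicate point; the rest is either standard affine bookkeeping or a routine application of the invariant form on the simple vertex superalgebra $M_{(m|2n)}$.
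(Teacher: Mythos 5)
Your skeleton matches the paper's: reduce the identity to the action of the modes of $\Phi(\omega_{osp(m|2n)})$ on the generating space $V$, observe via Wick's formula that everything collapses to the eigenvalue $C_V$ of the Casimir on $\C^{m|2n}$, and win because $C_V=1+h^\vee$. (Your concluding step is heavier than necessary: once you know $\Delta_{(p)}=0$ for all $p\ge 0$, skew-symmetry gives $[\omega_\lambda\Delta]=-[\Delta_{-\lambda-T}\omega]=0$, hence $2\Delta=\omega_{(1)}\Delta=0$; no invariant bilinear form is needed. But that is not where the problem lies.)

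The genuine gap is that you never prove the identity you yourself single out as ``the arithmetic heart'': $C_V=1+h^\vee=m-2n-1$. This is not a deferrable afterthought --- it is essentially the entire content of the lemma, and the paper spends most of its proof on it. Moreover, the natural route to it --- supertrace the Casimir on $\C^{m|2n}$ to get $(m-2n)C_V$ on one side and $2\,\text{\rm sdim}\,osp(m|2n)=(m-2n)(m-2n-1)$ on the other --- degenerates exactly when $m=2n$, since then $\text{\rm sdim}\,\C^{m|2n}=0$ and the equation reads $0=0$; the paper must treat $m=2n$ separately by computing $(\delta_1,\delta_1+2\rho)$ explicitly for $C(2)$ and $D(n,n)$. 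Your hypothesis $m\ne 2n+1$ does not exclude this case, and your sketch gives no indication of how you would handle it. A secondary issue: you assert that $\omega'_{(0)}$ agrees with $T$ on the generators before running the derivation argument, but $\omega'_{(0)}e_i=Te_i$ is itself a nontrivial Wick computation (equivalent to $e_i(0)\omega'=-\tfrac{1}{2}Te_i$), not a formal consequence of $e_i$ being affine-primary; the paper sidesteps this by computing only $v(n)\omega'$ for $n>0$ and then invoking simplicity of $M_{(m|2n)}$. So: right architecture, but the load-bearing computation is missing and one auxiliary identity is asserted rather than proved.
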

\begin{proof}
It is well known that $M_{(m|2n)}$ is simple, so it is enough to show that $v(n)(\omega-\Phi(\omega_{osp(m|2n)}))=0$ for all $n>0$.

Since $[v_\l \omega]=\tfrac{1}{2}\l v$ for all $v\in V$, we need only to show that $$v(n)\Phi(\omega_{osp(m|2n)})=\delta_{n1}\tfrac{1}{2} v$$ for all $n>0$.
Using \eqref{actiononv} and the  Wick formula we see that, for $n>0$,
\begin{equation}\label{actiononL}
v(n):\Phi(X)\Phi(Y):\,\,\,=\delta_{n1} (-1)^{p(YX)p(v)+p(YX)+p(Y)p(X)}Y(X(v)).
\end{equation}

\noindent Fix a basis $\{x_i\}$ of $osp(m|2n)$ and let $\{x^i\}$ be its dual basis (i.e. $\half str(x_ix^j)=\d_{ij}$). By \eqref{actiononL}, if $n>0$,
\begin{align*}
\sum_iv(n):\Phi(x^i)\Phi(x_i):&=\delta_{n1} (\sum_i(-1)^{p(x_i)}x_i(x^i(v)))\\&=\delta_{n1} (\sum_ix^i(x_i(v)))=\d_{n1}Cv,
\end{align*}
  where $C$ is the eigenvalue of the action of the  Casimir $\sum_i x^i x_i$ on $\C^{m|2n}$.
  
   To compute this eigenvalue assume first $m\ne 2n$.
   We observe that $$str(\sum_i x^i x_i)=(m-2n)C.$$ On the other hand 
\begin{align*}
str(\sum_i x^i x_i)&=\sum_i (-1)^{p(x_i)}str(x_ix^i)\\&=2\,\sdim(osp(m|2n))=(m-2n)(m-2n-1).
\end{align*}
It follows that $C=m-2n-1$, hence
$$
v(n)\omega_{osp(m|2n)}=\d_{n1}\frac{m-2n-1}{2(1+m-2n-2)}v=\d_{n1}\tfrac{1}{2}v.
$$

We now deal with the case $m=2n$ with a more explicit calculation: recall  that $osp(2n|2n)$ is the simple Lie superalgebra of type $C(2)$ if $n=1$ and of type $D(n,n)$ if $n>1$.  We use the description of roots given in \cite{KacReps}. We choose a set of positive roots so that $\d_1\pm\e_i$ and $\d_1\pm\d_i$ are positive roots. With this choice $\d_1$ is the highest weight of $\C^{(2n|2n)}$. Calculating explicitly $(\d_1,\d_1+2\rho)$ one finds that $C=m-2n-1=-1$ also in these cases.
\end{proof}

\begin{lemma}\label{isconformaleven}If $m>1$, the embedding of $\mathcal V_1(so(m)\times sp(2n))$ in $M_{(2n|m)}$ is conformal.
\end{lemma}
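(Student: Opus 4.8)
The plan is to exploit the tensor factorization $M_{(m|2n)}\cong F_{(m/2)}\otimes M_{(n)}$ recorded above, under which the even currents split cleanly between the two factors, and then to reduce conformality of the even part to two one-factor statements, each of which is an instance of Lemma \ref{isconformal}.

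First I would observe that, writing $\g_{\bar 0}=so(m)\oplus sp(2n)$, formula \eqref{embedospM} places the images of the two ideals in different tensor factors: since $so(m)$ acts on the fermionic subspace $V_{\bar 1}\cong\C^m$ and annihilates the bosonic subspace $V_{\bar 0}\cong\C^{2n}$, one has $\Phi(so(m))\subset F_{(m/2)}$, and symmetrically $\Phi(sp(2n))\subset M_{(n)}$. Consequently the Sugawara vector of $\mathcal V_1(\g_{\bar 0})$ is additive, $\omega_{\g_{\bar 0}}=\omega_{so(m)}+\omega_{sp(2n)}$, with $\omega_{so(m)}\in F_{(m/2)}$ and $\omega_{sp(2n)}\in M_{(n)}$; while the conformal vector $\omega=\tfrac12\sum_i:T(e_i)e^i:$ of $M_{(m|2n)}$ splits, according to whether the index $i$ is fermionic or bosonic, as $\omega=\omega_F+\omega_M$, where I define $\omega_F=\tfrac12\sum_{i\text{ ferm}}:T(e_i)e^i:$ and $\omega_M=\tfrac12\sum_{i\text{ bos}}:T(e_i)e^i:$, the canonical conformal vectors of $F_{(m/2)}$ and $M_{(n)}$. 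Because these decompositions respect the two tensor factors, the desired equality $\omega_{\g_{\bar 0}}=\omega$ is equivalent to the two separate identities $\omega_{so(m)}=\omega_F$ and $\omega_{sp(2n)}=\omega_M$.

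To get each identity I would specialize Lemma \ref{isconformal}. The restriction of the map \eqref{embedospM} to $so(m)$ is precisely the map \eqref{embedospM} for the degenerate algebra $osp(m|0)=so(m)$ realized in $M_{(m|0)}=F_{(m/2)}$; since the level and the invariant form on $so(m)$ depend only on the $\C^m$-block of the supertrace form and are unaffected by the bosonic variables, Lemma \ref{isconformal} applied to $osp(m|0)$ (whose hypothesis $m\ne 1$ is exactly our assumption $m>1$) yields $\omega_{so(m)}=\omega_F$. Likewise the restriction to $sp(2n)$ is the map \eqref{embedospM} for $osp(0|2n)=sp(2n)$ realized in $M_{(0|2n)}=M_{(n)}$, and Lemma \ref{isconformal} applied to $osp(0|2n)$ (its hypothesis $0\ne 2n+1$ holds automatically) gives $\omega_{sp(2n)}=\omega_M$. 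Adding the two identities finishes the proof.

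The one genuine edge case, and the point I would treat with care, is $m=2$: here $osp(2|0)=so(2)$ is abelian rather than basic classical simple, so Lemma \ref{isconformal} does not literally cover the fermionic factor. In that case $\mathcal V_1(so(2))$ is a rank-one Heisenberg vertex algebra, and I would instead invoke the boson--fermion identification $F_{(1)}\cong V_L$ with $L=\Z\alpha$, $\langle\alpha,\alpha\rangle=1$, used before Proposition \ref{decCn1}: the $so(2)$-current is a multiple of $\alpha$, and its Heisenberg Sugawara vector $\tfrac12:\alpha\alpha:$ coincides with $\omega_F$. The main obstacle is thus not any single hard computation but rather the bookkeeping needed to certify that the level of each even factor inside the full realization agrees with its level in the corresponding standalone free-field realization, so that the two invocations of Lemma \ref{isconformal} are legitimate; the symplectic-boson factor $sp(2n)_{-1/2}\subset M_{(n)}$ is the more delicate of the two, being the non-unitary realization of \cite{FF}.
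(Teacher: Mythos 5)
Your argument is correct and is essentially the paper's own proof: the paper also deduces the lemma by applying Lemma \ref{isconformal} in the two degenerate cases $n=0$ and $m=0$ and tensoring the resulting conformal embeddings $\mathcal V_1(so(m))\subset M_{(m|0)}$ and $\mathcal V_1(sp(2n))\subset M_{(0|2n)}$. Your extra discussion of the abelian case $m=2$ and of the level bookkeeping is additional care the paper leaves implicit, but it does not change the route.
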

\begin{proof}
By Lemma \ref{isconformal} in case $n=0$ and in case $m=0$,
$$
\mathcal V_1(so(m)\times sp(2n))=\mathcal V_1(so(m))\otimes \mathcal V_1(sp(2n))\subset M_{(m|0)}\otimes M_{(0|2n)}=M_{(m|2n)}
$$
is a conformal embedding.
\end{proof}

By \eqref{lambdaproduct} the map  $-Id$ on $V$ induces an involution of $M_{(m|2n)}$. Let $M_{(m|2n)}=M_{(m|2n)}^+\oplus M_{(m|2n)}^-$ be the corresponding eigenspace decomposition. Since $M_{(m|2n)}$ is simple, $M_{(m|2n)}^+$ is a simple vertex algebra and $M_{(m|2n)}^-$ is a simple $M_{(m|2n)}^+$--module.

\begin{theorem}Assume $n\ge 1$.
Then the image of $\Phi$ is simple; hence there is a conformal embedding of $V_1(osp(m|2n))$ in $M_{(m|2n)}$. Moreover 
$$
M_{(m|2n)}^+=V_1(osp(m|2n)),\quad M_{(m|2n)}^-=L_{osp(m|2n)}(\C^{m|n}).
$$
so that $M_{(m|2n)}$ is completely reducible as  $V_1(osp(m|2n))$--module and the decomposition is given by
$$
M_{(m|2n)}=V_1(osp(m|2n))\oplus L_{osp(m|2n)}(\C^{(m|n)}). 
$$
\end{theorem}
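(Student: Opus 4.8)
The plan is to leverage two facts already in hand: $M:=M_{(m|2n)}$ is simple, and by Lemma \ref{isconformal} the map $\Phi$ is conformal, $\Phi(\omega_{osp(m|2n)})=\omega$. Since every generator $\Phi(X)=\tfrac12\sum_i:X(e_i)e^i:$ is quadratic in the $e_i$, it is fixed by the involution $\sigma=-\mathrm{Id}$; hence $W:=\mathrm{Im}\,\Phi$ is a \emph{conformal} vertex subalgebra of $M^+:=M_{(m|2n)}^+$. Because $M^+$ is simple and $M^-:=M_{(m|2n)}^-$ is a simple $M^+$-module, the whole theorem collapses to the single assertion $W=M^+$: granting it, $W$ is simple and hence equals $V_1(osp(m|2n))$; the complement $M^-$ is then the simple $M^+$-module generated by its lowest piece, which is $V=\Pi\,\C^{m|2n}$, sitting in conformal weight $\tfrac12$ and carrying the vector representation (by \eqref{actiononv}, $\Phi(X)_{(0)}v=X(v)$), forcing $M^-\cong L_{osp(m|2n)}(\C^{m|2n})$ and the decomposition $M=M^+\oplus M^-$.

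To prove $W=M^+$ I would transplant the proof of Theorem \ref{general} to the simple vertex algebra $M$ (that proof uses only simplicity, the conformal grading and the dot product \eqref{dot}, never that the ambient object is affine). Set $U=\C\mathbf 1\oplus V$ and $\mathcal U=W\cdot U=W+W\cdot V$. As in that proof, $\mathcal U$ is a vertex subalgebra as soon as $U\cdot U\subseteq\mathcal U$, and since $V$ strongly generates $M$ this yields $\mathcal U=M$; the $\sigma$-grading $W\subseteq M^+$, $W\cdot V\subseteq M^-$ then splits this as $M^+=W$, $M^-=W\cdot V$ exactly as wanted. Now $U\cdot U=\C\mathbf 1+V+V\cdot V$ with $V\cdot V\subseteq M^+$, so everything reduces to $V\cdot V\subseteq W$, which I would establish by the maximal-$n$ argument of Theorem \ref{general}: a failure would produce a $\widehat{osp(m|2n)}$-primitive vector in $M$ whose weight $\nu$ is the highest weight of an $osp(m|2n)$-component of $V\otimes V$ not already accounted for by $W$.

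The computational heart is thus the branching $V\otimes V=\C\oplus osp(m|2n)\oplus V(2\varpi_1)$ together with the test \eqref{finiote} of Remark \ref{conditionsfinite}. The trivial summand contributes only $\mathbf 1\in W$, and the adjoint (highest weight $\theta$) is realized in conformal weight $1$ by the currents, which lie in $W$. With $h^\vee=m-2n-2$, so that $k+h^\vee=m-2n-1$ and $\Delta_{\varpi_1}=\tfrac12$ as recovered in the proof of Lemma \ref{isconformal}, one computes $\Delta_\theta=\frac{h^\vee}{1+h^\vee}=\frac{m-2n-2}{m-2n-1}$ and $\Delta_{2\varpi_1}=1+\frac{(\varpi_1,\varpi_1)}{m-2n-1}$. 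Since $M^+$ is integrally graded, whenever $m-2n\notin\{0,1,2\}$ both values lie outside $\ZZ_+$; thus \eqref{finiote} holds for $V(\theta)$ and for $V(2\varpi_1)$, no new primitive vector can occur, $V\cdot V\subseteq W$, and the proof closes.

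The main obstacle is the two exceptional families. When $m=2n$ one gets $\Delta_\theta=2$, and when $m=2n+2$ one gets $\Delta_{2\varpi_1}=2$; in each case the offending component of $V\otimes V$ acquires integral conformal weight and \eqref{finiote} no longer excludes it. (The critical value $m=2n+1$, where $k+h^\vee=0$, is excluded from the outset, being precisely where Lemma \ref{isconformal} fails.) For these two families I would rule out the would-be $\widehat{osp(m|2n)}$-primitive vector in conformal weight $2$ directly, by a singular-vector computation of the type carried out in Lemma \ref{singular}, or else by importing the explicit $\widehat{so(m)}$- and $\widehat{sp(2n)}$-decompositions of the Fock space $M_{(m|2n)}=F_{(m/2)}\otimes M_{(n)}$ and the complete reducibility of $M_{(n)}$ from \cite{AP2}. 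A secondary point to monitor is that for the smallest $m,n$ the components $osp(m|2n)$ and $V(2\varpi_1)$ of $V\otimes V$ may degenerate, so the list of candidate primitive weights must be rechecked there.
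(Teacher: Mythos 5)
Your overall architecture is the paper's: reduce everything to $V\cdot V\subset W$ via the dot product and the argument of Theorem \ref{general}, then kill the candidate primitive weights by showing their Sugawara conformal weights are not non-negative integers. The identification of $M^-$ and the exclusion of the critical case $m=2n+1$ are also fine. But the computational core has two genuine problems. First, the branching $V\otimes V=\C\oplus osp(m|2n)\oplus V(2\varpi_1)$ is wrong for $n\ge 1$ and $m\ne 2$: in the distinguished ordering the highest weight of $V$ is $\varpi_1=\delta_1$ and the highest root is $\theta=2\delta_1=2\varpi_1$, so your ``adjoint'' and ``$V(2\varpi_1)$'' are the \emph{same} component (which is why your two formulas return the same number, $1-\tfrac{1}{m-2n-1}$), while the genuine third composition factor, of highest weight $\delta_1+\delta_2$ (degenerating to $\omega_2$ when $m=0$ and to $\delta_1+\epsilon_1$ when $n=1$), is absent from your analysis. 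Moreover, since tensor products of $osp(m|2n)$-modules need not be completely reducible, one must control \emph{all} composition factors; the paper sidesteps this by bounding the possible primitive weights by the set $HW$ of $so(m)\times sp(2n)$-highest weights of $V\otimes V$, which is robust and explicitly listed case by case.

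Second, and more seriously, the families $m=2n$ and $m=2n+2$ — infinitely many cases, and precisely the ones where integral conformal weights appear — are left unresolved; ``a singular-vector computation of the type of Lemma \ref{singular}'' is not a proof, and carrying it out uniformly in $n$ is exactly the hard part. The paper disposes of these uniformly with a tool your proposal lacks: Lemma \ref{isconformaleven} shows that the \emph{even-part} embedding $\mathcal V_1(so(m)\times sp(2n))\hookrightarrow M_{(m|2n)}$ is also conformal, so a $\widehat\g$-primitive vector of weight $\nu$ is automatically $\widehat{so(m)\times sp(2n)}$-primitive and its $L_0$-eigenvalue can be computed with either Sugawara vector. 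For $\nu=2\delta_1,\delta_1+\delta_2$ the even-part value is the level $-1/2$ symplectic weight $\tfrac{2(n+1)}{2n+1}$ resp.\ $\tfrac{2n}{2n+1}$, never an integer regardless of $m$; for $\nu\in\mathrm{span}(\epsilon_i)$ and for $\nu=\epsilon_1+\delta_1$ the two Sugawara values simply disagree. This double-counting of conformal weights is the key idea that makes the argument close in all cases, and it is missing from your proposal.
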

\begin{proof} Recall from \eqref{dot} the definition of the dot product  of two   subspaces in a vertex algebra.
Set 
$$\mathcal V_1(osp(m|2n))=\Phi(V^1(osp(m|2n)),\quad \mathcal V_1(\C^{(m|2n)})=\mathcal V_1(osp(m|2n))\cdot \C^{(m|2n)}.$$ Clearly $\mathcal V_1(osp(m|2n))\subset M^+_{(m|2n)}$ and $\mathcal V_1(\C^{(m|2n)})\subset M^-_{(m|2n)}$.  We will show that 
\begin{equation}\label{dotsquare}
\mathcal V_1(\C^{(m|2n)})\cdot \mathcal V_1(\C^{(m|2n)})\subset \mathcal V_1(osp(m|2n)),
\end{equation} so that $U = \mathcal V_1(osp(m|2n)) \oplus \mathcal V_1(\C^{(m|2n)})$ is a vertex subalgebra of $M_{(m|2n)}$. Since this vertex subalgebra  contains all generators of $M_{(m|2n)}$, we conclude that $U = M_{(m|2n)}$. This proves the statement. 

Let us first prove the case $m=0$.
\begin{itemize}
\item Let $n=1$. In this case $osp(0|2)=sl(2)$ and $\C^{(0|2)} =V_{A_1}(\omega_1)$. By using the decomposition of $sl(2)$--modules
$$ V_{A_1} (\omega_1) \otimes V_{A_1} (\omega_1) = V_{A_1} (2\omega_1) \oplus V_{A_1} (0),$$
and the fact that a  primitive vector  vector of $sl(2)$-weight $2 \omega_1$ has conformal weight $h_{2\omega_1} = \frac{2}{3} \notin {\Z}$, we conclude that  \eqref{dotsquare} holds.

\item Let $n \ge 2$. In this case $osp(0|2n)=sp(2n)$ and $\C^{(0|2n)}=V_{C_n}(\omega_1)$. Then we use the tensor product decomposition 
$$ V_{C_n} (\omega_1) \otimes V_{C_n} (\omega_1) = V_{C_n} (2\omega_1) \oplus V_{C_n} (\omega_2)  \oplus V_{C_n} (0)$$
and the fact that primitive vectors of $C_n$-weight $2 \omega_1$ and $\omega_2$ have conformal weight
$$ h_{2 \omega_1} = \frac{2 (n+1)}{2n +1} \notin {\Z}, \quad  h_{ \omega_2} = \frac{2n}{2n+1}  \notin {\Z}, $$
to conclude that \eqref{dotsquare} holds.
\end{itemize}

Now let us consider the case $m \ge 1$. If $V$ is a $so(m)$-module and $W$ is a $sp(2n)$-module we let $V \widehat \otimes W$ be the corresponding 
$so(m)\times sp(2n)$-module. As $so(m)\times sp(2n)$--module, $$\C^{(m|2n)}=\C^m\widehat \otimes V_{sp(2n)}(0)\oplus V_{so(m)}(0)\widehat \otimes \C^{2n}$$ (here $V_{so(m)}(0)=\C$ if $m=1$), so
\begin{align*}&\C^{(m|2n)}\otimes \C^{(m|2n)}\\&=(\C^m\widehat \otimes V_{sp(2n)}(0)\oplus V_{so(m)}(0)\widehat\otimes \C^{2n})\otimes(\C^m\widehat\otimes V_{sp(2n)}(0)\oplus V_{so(m)}(0)\widehat\otimes \C^{2n})\\
&=(\C^m\otimes \C^m)\widehat\otimes V_{sp(2n)}(0)+2(\C^m\widehat\otimes \C^{2n})+V_{so(m)}(0)\widehat \otimes (\C^{2n}\otimes \C^{2n}).
\end{align*}
Let $\epsilon_i, \delta_i\in \h^*$ be as in \cite{KacReps}. Let $HW$ be the set of nonzero highest weights occurring in the decomposition of  $\C^{(m|2n)}\otimes \C^{(m|2n)}$ as $so(m)\times sp(2n)$--module. Then
$$
HW=\begin{cases}\{2\delta_1,\delta_1+\delta_2,\delta_1\}&\text{if $m=1$, $n>1$}\\
\{2\delta_1,\delta_1\}&\text{if $m=1$, $n=1$}\\
\{2\epsilon_1,-2\epsilon_1, 2\delta_1,\delta_1+\delta_2,\epsilon_1+\delta_1,-\epsilon_1+\delta_1\}&\text{if $m=2$, $n>1$}\\
\{2\epsilon_1,-2\epsilon_1, 2\delta_1,\epsilon_1+\delta_1,-\epsilon_1+\delta_1\}&\text{if $m=2$, $n=1$}\\
\{2\epsilon_1,\epsilon_1, 2\delta_1,\delta_1+\delta_2,\epsilon_1+\delta_1\}&\text{if $m=3$, $n>1$}\\
\{2\epsilon_1,\epsilon_1, 2\delta_1,\epsilon_1+\delta_1\}&\text{if $m=3$, $n=1$}\\
\{2\epsilon_1,\epsilon_1+\epsilon_2, \epsilon_1-\epsilon_2,2\delta_1,\delta_1+\delta_2,\epsilon_1+\delta_1\}&\text{if $m=4$, $n>1$}\\
\{2\epsilon_1,\epsilon_1+\epsilon_2, \epsilon_1-\epsilon_2,2\delta_1,\epsilon_1+\delta_1\}&\text{if $m=4$, $n=1$}\\
\{2\epsilon_1,\epsilon_1+\epsilon_2,2\delta_1,\delta_1+\delta_2,\epsilon_1+\delta_1\}&\text{if $m\ge5$, $n>1$}\\
\{2\epsilon_1,\epsilon_1+\epsilon_2,2\delta_1,\epsilon_1+\delta_1\}&\text{if $m\ge5$, $n=1$}\\
\end{cases}.
$$

We choose the set of positive roots in $osp(m|2n)$ so that
\begin{equation}\label{rhod}
2\rho=\sum_{i=1}^n(2n-m-2i+2)\delta_i+\sum_{i=1}^{\lfloor m/2\rfloor}(m-2i)\epsilon_i.
\end{equation}
 If $\l$ is the highest weight of a $osp(m|2n)$ composition factor of $\C^{(m|2n)}\otimes \C^{(m|2n)}$ then it must occur in $HW$.
 If $m>1$ and $\l= 2\delta_1,\delta_1+\delta_2$, then, by the first part of the proof and Lemma \ref{isconformaleven}, we see that its conformal weight computed using  $\omega_{so(m)\times sp(2n)}$ is not an integer.  If  $\l\in span(\epsilon_i)$, then
 $(\l,\l+2\rho)=(\l,\l+2\rho_0)$, hence its conformal weight is
 $$\frac{(\l,\l+2\rho)}{2(m-2n-1)}=\frac{(\l,\l+2\rho_0)}{2(m-2n-1)}\ne \frac{(\l,\l+2\rho_0)}{2(m-1)},
 $$
contradicting Lemma \ref{isconformaleven}.
If $\l=\epsilon_1+\delta_1$ then, by Lemma \ref{isconformaleven}, we must have 
$
\frac{2m-2n-2}{2(m-2n-1)}=1$, which implies $n=0$. If $m=2$ and $\l=-\epsilon_1+\delta_1$ then the conformal weight is
$ \frac{-2n+2}{2(-2n+1)}\notin\ZZ$ if $n>1$ and it is $0$ if $n=1$. Finally, if $m=1$, then the conformal weight  of the elements of $HW$ computed using $\omega_{osp(1|2n)}$ is not an integer.

\end{proof}

\section{ The conformal embedding  $so(2n+8) \times sp(2n) \hookrightarrow osp(2n+8 \vert 2n)$ at $k = -2$}
 \label{osp-conformal}

\subsection{Semi-simplicity of the embedding}

In this subsection we  prove the semi-simplicity of the embedding $so(2n+8) \times sp(2n) \hookrightarrow osp(2n+8 \vert 2n)$ at $k = -2$. The corresponding decomposition will be obtained in Subsection \ref{decom-osp-2}.

\begin{theorem} \label{simplicity-d}  
(1). The vertex algebra $\mathcal V_{-2} (\g_{\bar 0})$ is simple and isomorphic to $V_{-2} (so(2n +8)) \otimes V_1 (sp(2n))$.
 
\noindent (2). $V_{-2} (\g)$ is semi-simple as $\mathcal V_{-2} (\g_{\bar 0})$--module.
\end{theorem}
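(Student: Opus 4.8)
The plan is to establish (1) first and then derive (2) from the representation theory of $V_{-2}(so(2n+8))$ developed in \cite{AKMPP-IMRN}. Decompose $\g_{\bar 0}=so(2n+8)\oplus sp(2n)$ into its two ideals; by Theorem \ref{cf}(4) the conformal level $k=-2$ induces level $-2$ on the $so(2n+8)$ factor and level $1$ on the $sp(2n)$ factor. Since $V_{-2}(\g)$ is simple, the currents $X(-1)\vac$ with $X\in\g_{\bar 0}$ generate the subalgebra $\mathcal V_{-2}(\g_{\bar 0})$, and because the two ideals commute this subalgebra is the image of $\mathcal V_{-2}(so(2n+8))\cdot\mathcal V_1(sp(2n))$ under the dot product, i.e.\ a quotient of $V^{-2}(so(2n+8))\otimes V^1(sp(2n))$. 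It therefore suffices to show that each of the two factors collapses to the corresponding simple affine vertex algebra, and that the resulting product is simple.

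To control the two factors I would use the explicit realization of a non-simple quotient $\mathcal Q$ of $V^{-2}(\g)$. The point is to verify in $\mathcal Q$ that the singular vector $e_\theta(-1)^2\vac$ of conformal weight $2$ generating the maximal ideal of $V^1(sp(2n))$ (with $\theta$ the highest root), together with the generators of the maximal ideal of $V^{-2}(so(2n+8))$ explicitly described in \cite{AKMPP-IMRN}, already vanish. Consequently they vanish in the simple quotient $V_{-2}(\g)$ as well, so the $sp(2n)$-currents generate $V_1(sp(2n))$ and the $so(2n+8)$-currents generate the simple $V_{-2}(so(2n+8))$. Since $V_1(sp(2n))=V_1(C_n)$ is rational and $V_{-2}(so(2n+8))=V_{-2}(D_{n+4})$ is simple, their tensor product is a simple vertex algebra. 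The canonical surjection $\mathcal Q\twoheadrightarrow V_{-2}(\g)$ restricts on even parts to a map whose image is $\mathcal V_{-2}(\g_{\bar 0})$; as this image is a nonzero quotient of the simple algebra $V_{-2}(so(2n+8))\otimes V_1(sp(2n))$, it is isomorphic to it, which is (1).

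For (2), granting (1), the tensor product $\mathcal V_{-2}(\g_{\bar 0})=V_{-2}(so(2n+8))\otimes V_1(sp(2n))$ is semisimple in the category $KL_k$: indeed $V_{-2}(D_{n+4})$ is semisimple in $KL_{-2}$ by \cite{AKMPP-IMRN} and $V_1(C_n)$ is rational. As $V_{-2}(\g)$ is $\ZZ_{\geq 0}$-graded by the Sugawara $L_0$ with finite-dimensional graded pieces that are finite-dimensional $\g_{\bar 0}$-modules, it is an object of $KL$ over $\mathcal V_{-2}(\g_{\bar 0})$, hence completely reducible. The main obstacle is the realization step in (1): producing the quotient $\mathcal Q$ explicitly and checking that the even currents generate precisely the simple tensor product — in particular that the $so(2n+8)$-ideal collapses to $V_{-2}(D_{n+4})$ — which is exactly where the special features of the level $-2$ and the results of \cite{AKMPP-IMRN} enter.
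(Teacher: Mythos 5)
Your reduction of part (2) to part (1) is fine, and your overall strategy for (1) --- use a free-field realization of a quotient $\mathcal Q$ of $V^{-2}(\g)$ to kill singular vectors, hence force the even currents to generate the simple tensor product --- is the right starting point and is how the paper begins. But there is a genuine gap at the central step. The free-field realization only shows the vanishing of the conformal-weight-two singular vectors: $e_{-\theta'}(-1)^2\vac$ on the $sp(2n)$ side and the vector $w_1$ of \cite{AKMPP} on the $so(2n+8)$ side. This yields a surjection $\mathcal R_{-2}(D_{n+4})\otimes V_1(sp(2n))\twoheadrightarrow \mathcal V_{-2}(\g_{\bar 0})$, where $\mathcal R_{-2}(D_{n+4})$ is the quotient of $V^{-2}(D_{n+4})$ by the ideal generated by $w_1$ alone; by \cite{AKMPP-IMRN} this quotient is \emph{not} simple --- it has a unique nontrivial ideal generated by the singular vector $w_\ell=\bigl(\sum_{i=2}^{\ell}e_{\epsilon_1-\epsilon_i}(-1)e_{\epsilon_1+\epsilon_i}(-1)\bigr)^{\ell-3}\vac$ with $\ell=n+4$. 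In other words, the maximal ideal of $V^{-2}(so(2n+8))$ is not visible in the realization, so your assertion that its generators ``already vanish'' in $\mathcal Q$ is precisely what fails: the image of the $so(2n+8)$-currents in the free-field quotient is $\mathcal R_{-2}(D_{n+4})$, not $V_{-2}(D_{n+4})$, and consequently you cannot conclude simplicity of $\mathcal V_{-2}(\g_{\bar 0})$ by saying it is a nonzero quotient of a simple algebra.

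The missing ingredient is an argument showing that $w_\ell$ vanishes in $V_{-2}(\g)$, and this is where the paper does its real work: assuming $w_\ell\ne 0$, i.e.\ that there is a $\widehat\g_{\bar 0}$-singular vector of $\g_{\bar 0}$-weight $((2n+2)\omega_1,0)$, one uses the fusion rules for $\mathcal R_{-2}(D_{n+4})$ and $V_1(C_n)$ together with the simplicity of $V_{-2}(\g)$ to produce inductively a chain of nonzero singular vectors $v_{2n+2-r,r}$ of weights $((2n+2-r)\omega_1,\omega_r)$ for $r=1,\dots,n$ (the competing weights are excluded because their conformal weights $h[2n+1-r,r-1]$, $h[2n+3-r,r+1]$ are non-integral); the endpoint $v_{n+2,n}$ then forces a singular vector of weight $((n+1)\omega_1,\omega_{n-1})$ whose conformal weight $h[n+1,n-1]=n+1-\tfrac{1}{n+2}$ is not an integer, a contradiction. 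Without this (or an equivalent) argument your proof of (1) is incomplete. Given (1), your derivation of (2) from the rationality of $V_1(C_n)$ and the semisimplicity of $KL_{-2}$ for $V_{-2}(D_{n+4})$ matches the paper and is correct.
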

\begin{proof}
Let $\ell  = n + 4 $ and  $\mathcal R_{-2} (D_{\ell })$ be the vertex algebra defined  in \cite[Section 6.1]{AKMPP-IMRN} (denoted there by $\mathcal V_{-2} (D_{\ell })$) as the  quotient of $V^{-2}(D_{\ell})$ by the ideal generated by  singular vector $w_1$ defined by formula (23) in \cite{AKMPP}. 
Recall that highest weight $\mathcal R_{-2} (D_{\ell })$--modules in $KL_{-2}$  must have highest weight $r \omega_1$ with respect to $D_{\ell}$ where $r \in {\Z} _{\ge 0} $.  
  Let $\theta'$ be the maximal root in $sp(2n)$ and let $e_{-\theta'}$ be the root 
  vector corresponding to the root $-\theta'$. Then $V_1 (sp(2n))$ is the quotient of $V^1(sp(2n))$ by the ideal generated by singular vector $e_{-\theta'} (-1) ^2{\bf 1}$. Using the  Fock-space realization of $osp(2n+8, 2n)$  at level $k=-2$, we conclude from Proposition  \ref{homo-osp-2}  that $w_1$ and $e_{-\theta'} (-1) ^2{\bf 1}$ vanish on a certain quotient of $V^{-2} (\g)$. In particular, these vectors must vanish on the simple quotient $V_{-2} (\g)$. \par We deduce that 
there is a surjective homomorphism 
 $$\mathcal R_{-2} (D_{\ell}) \otimes V_{1} (sp(2(\ell -4))) \rightarrow \mathcal V_{-2} (\g_{\bar 0}) .$$

In order to prove that $\mathcal V_{-2} (\g_{\bar 0})  $ s simple, it suffices to prove  the vanishing of the singular vector 
\bea
  w_{\ell} =
\Big(\sum _{i=2}^{ \ell} e_{\epsilon_1 - \epsilon_i}(-1)
e_{\epsilon_1 + \epsilon_i}(-1)\Big) ^{\ell -3} {\bf 1} .\nonumber \eea
(It is proved in \cite{AKMPP-IMRN} that  $w_{\ell}$  generates a unique non-trivial ideal in $\mathcal R_{-2} (D_{\ell})$).

Denote by $h[r,s]$ the conformal weight of any $ \widehat {\g}_{\bar 0}$--singular vector $v_{r,s}$ in $V_{-2} (\g)$ of $\g_{\bar 0}$--weight $(r\omega_1, \omega_s)$. By direct calculation we see that
$$ h[r,s]:= \frac{r (2n+6 +r) + (2n+2 -s) s}{4 (n+2)}. $$
In particular:
\begin{itemize}
 \item[(1)] $h[ 2n +2 - r ,r] =  2 n + 2 - r \in {\Z}_{\ge 0} $ for every $r \in \{ 0, \dots, n  \}$, 
 \item[(2)]  $h[2 n + 2-r,r-2 ] = 2 n  +1 - r + \frac{r}{2 + n}  \notin {\Z}_{\ge 0} $  for every $r  =0, \dots, n+1$,
 \item[(3)]  $h[2 n + 2 -r,r+2] = 3 + 2 n -\frac{2 + r}{2 + n} - r \notin {\Z}_{\ge 0} $  for every $r  =0, \dots, n-1$.
  \item[(4)] $h[ r ,r] =  r   $ for every  $ r \in {\Z}_{\ge 0}$.
    \item[(5)] $h[r +1, r-1] =  r +  \frac{1 + r}{2 + n}  \notin {\Z}_{\ge 0} $  for every $r  =0, \dots,  n$.
     \item[(6)] $h[r -1, r+1] =  r -  \frac{1 + r}{2 + n}  \notin {\Z}_{\ge 0} $  for every $r  =0, \dots,  n$.
\end{itemize}

By using the tensor product decomposition of $D_{\ell}$--modules
\bea   &&V_{D_{\ell}} (\omega_1) \otimes V_{D_{\ell}} (i \omega_1)  = V_{D_{\ell}} ( (i+1) \omega_1)  \oplus V_{D_{\ell}} ((i-1) \omega_1) \oplus  V_{D_{\ell}} ( \omega_2)  \label{decomp-d} \eea and the classification of irreducible $\mathcal R_{-2} (D_{\ell})$--modules 
we get the following fusion rules for $\mathcal R_{-2} (D_{\ell})$:
\bea   &&L_{-2}(\omega_1) \times L_{-2} (i \omega_1)   \subset  L_{-2} ((i+1)\omega_1  ) + L_{-2} ((i-1)\omega_1 ) \quad (i \ge 1). \label{fusion-d}\eea

It is well known that the fusion ring for $V_{1} (C_n)$ is isomorphic to the fusion ring for $V_{n} (sl(2))$
(the so-called rank-level duality).  Note that $V_{1} (\omega_n) $ is a simple current $V_{1} (C_n)$--module.
We have the following fusion rules
\bea  L_1(\omega_1)    \times L_{1} ( \omega_i ) &=& L_{1} (\omega_{i+1}  ) + L_{1} (\omega_{i-1} ) \quad (1 \le i \le n-1),  \nonumber \\
  L_1(\omega_1)    \times L_{1} ( \omega_n ) &=& L_{1} ( \omega_{n-1} ). \label{fusion-c}\eea

Assume now that there is a $ \widehat {\g}_{\bar 0}$--singular vector $v_{2n+2, 0}$ in $V_{-2} (\g)$ of $\g_{\bar 0}$--weight $((2 n +2)\omega_1, 0)$, i.e., that $w_{\ell }  \ne 0$.  
 We prove by induction that there is a   non-trivial  $ \widehat {\g}_{\bar 0}$--singular vector $v_{2n+2-r,r}$ of weight $( (2n+2-r) \omega_1, \omega_r)$  for each  $r=1, \dots, n$. 
 Using the fusion rules described above we see that $V_{-2} (\g)$ must contain non-trivial $ \widehat {\g}_{\bar 0}$--singular vector $v_{2n+1, 1}$ of $\g_{\bar 0}$--weight $((2n +1)  \omega_1, \omega_1)$ and  conformal weight $h[2n+1,1 ]=2n+1$. This gives the  induction basis. The inductive assumption says  that there is a non-trivial singular $ \widehat {\g}_{\bar 0}$--singular vector $v_{2n+2-r, r}$ of $\g_{\bar 0}$--weight $((2 n +2-r)\omega_1, \omega_r)$ for $1 \le r \le n-1$.  Its conformal weight is $h[2n+2-r,r] = 2n+2-r$. Using fusion rules and simplicity of $V_{-2}(\g)$ we conclude that at least one  of three following  $ \widehat {\g}_{\bar 0}$--singular vectors must occur:
 $$v_{2n +1-r, r+1} ,  v_{2n+1-r, r -1}, v_{2n +3-r, r+1}.$$ 
 Since $h[2n+1-r, r-1], h[2n+3-r, r-1] $ are not integers, we deduce that  $v_{2n +1-r, r+1}$ must occur. 
This completes the induction step.

 In particular, taking $r=n$  we get a singular vector $v_{n+2,n} $ of $\g_{\bar 0}$--weight
$( (n+2)\omega_1, \omega_n)$ having conformal weight $n+2$.
 Using fusion rules again we get  a  $ \widehat {\g}_{\bar 0}$--singular vector $v_{n+1, n-1}$ of $\g_{\bar 0}$--weight $((n+1)\omega_1, \omega_{n-1})$.
But the conformal weight of this singular vector is  $$h[ n+1, n-1 ]=  1 + n - \frac{1}{2 + n}  \notin {\Z}. $$ A contradiction. This proves that $w_{\ell} = 0$ in $\mathcal V_{-2} (\g)$. Therefore
$\mathcal V_{-2} (\g_{\bar 0} ) $ is a simple vertex algebra isomorphic to 
$V_{-2} (D_{\ell}) \otimes V_1 (C_n)$.  This proves assertion (1). Claim (2) follows from the fact that $V_1 (C_n)$ is a rational vertex algebra and  that the   category $KL_{-2}$ for  the vertex algebra $V_{-2} (D_{\ell}) $  is semi--simple (cf. \cite{AKMPP-IMRN}).
\end{proof}

\subsection{Realization of $osp(2n +8 \vert 2n)$ at level $k=-2$}

Combining Theorem 7.2 (2) of  \cite{AKMPP-Selecta} with Proposition 6.1 of \cite{AKMPP-IMRN} we can construct a chain of embeddings
\begin{equation}\label{embedR}
\mathcal R_{-2}(D_m)=\mathcal V_{-1/2}(so(2m))\subset V_{-1/2}(sp(4m))\hookrightarrow M_{(0|4m)},
\end{equation}
By the Symmetric Space Theorem  (see e.g. \cite{AKMPP-Selecta}, \cite{AKMPP-Indam}) we have also the chain of embeddings
\begin{equation}\label{embedSST}
V_1(sp(2n))=\mathcal V_1(sp(2n))\subset V_1(so(4n))\hookrightarrow M_{(4n|0)}.
\end{equation}
These embeddings give rise to an   embedding $$\Phi_0 :  \mathcal R_{-2} (D_m) \otimes V_1 (sp(2n)) \rightarrow M_{(4n \vert 4m)}.$$   

Consider the superspace $\C^{0|2}\otimes\C^{2m|2n}\simeq \C^{4n|4m}$. It is equipped with the supersymmetric form $\langle v\otimes w,u\otimes z\rangle=(-1)^{p(w)p(u)}\langle v,u\rangle_{0|2}\langle w,z\rangle_{2m|2n}$. Since the form is obviously invariant for $sp(2)\times osp(2m|2n)$ we obtain an embedding 
$$sl(2)\times osp(2m|2n)\hookrightarrow osp(4n|4m)
$$
hence a homomorphism
$$
\Phi\!: \!V^{-2} (osp(2m |2n)) \to \mathcal V_1(sl(2)\times osp(2m|2n))\!\!\subset V_1(osp(4n|4m))\hookrightarrow M_{(4n   \vert 4 m)}.$$

\begin{proposition} \label{homo-osp-2}
	There exists a  vertex algebra homomorphism $$\Phi: V^{-2} (osp(2n + 8 \vert 2n)) \rightarrow M_{(4n   \vert 4n +16)}$$ such that 
	$$ \Phi (V^{-2}  ( so(2 n +8 )  \times sp(2n) ) = \mathcal  R_{-2} (D_{n+4} ) \otimes V_1 (sp(2n)). $$
	\end{proposition}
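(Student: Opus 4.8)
The plan is to realize $\Phi$ as the composition of the Lie superalgebra embedding $osp(2m|2n)\hookrightarrow osp(4n|4m)$ obtained by tensoring with $\C^{0|2}$ (with $m=n+4$, so that $2m=2n+8$ and $4m=4n+16$) with the level-one free field realization of Proposition \ref{EmdedFree}, exactly as sketched in the discussion preceding the statement. Concretely, for $X\in osp(2m|2n)$ I would set $\Phi(X)=\tfrac12\sum_i :(I_2\otimes X)(e_i)e^i:$ using the generators of $M_{(4n|4m)}$ attached to $\C^{4n|4m}=\C^{0|2}\otimes\C^{2m|2n}$, where $I_2$ is the identity of $\C^{0|2}$ and $e_i$ runs over a basis of $\Pi\,\C^{4n|4m}$. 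Since the product form is $sp(2)\times osp(2m|2n)$-invariant, $I_2\otimes X$ indeed lands in $osp(4n|4m)$, and Proposition \ref{EmdedFree} together with \eqref{actiononv}--\eqref{actiononL} makes the resulting composite a vertex algebra homomorphism. The one point needing verification is the level: the $\lambda$-bracket inherited from $V^1(osp(4n|4m))$ contributes the form $\tfrac12\, str_{\C^{4n|4m}}\big((I_2\otimes X)(I_2\otimes Y)\big)=\tfrac12\,\sdim(\C^{0|2})\, str_{\C^{2m|2n}}(XY)=-\,str_{\C^{2m|2n}}(XY)$, which is precisely the $k=-2$ form of $osp(2m|2n)$ in the normalization of Theorem \ref{cf}. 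Hence the map factors through $V^{-2}(osp(2n+8|2n))$ with target $M_{(4n|4n+16)}$, establishing existence.

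For the statement about the even part I would decompose the underlying superspace according to the parity of the factors, $\C^{4n|4m}=\big(\C^{0|2}\otimes(\C^{2m|2n})_{\bar 1}\big)\oplus\big(\C^{0|2}\otimes(\C^{2m|2n})_{\bar 0}\big)$, the first summand being the $4n$-dimensional even (orthogonal) part $\C^{4n|0}$ and the second the $4m$-dimensional odd (symplectic) part $\C^{0|4m}$ of $\C^{4n|4m}$. Under $X\mapsto I_2\otimes X$ the factor $so(2m)$ acts only on $(\C^{2m|2n})_{\bar 0}$, hence only on the symplectic part, so $\Phi|_{so(2m)}$ is exactly the free field realization of $so(2m)\subset sp(4m)$ inside $M_{(0|4m)}$ of \eqref{embedR}; likewise $sp(2n)$ acts only on $(\C^{2m|2n})_{\bar 1}$, hence only on the orthogonal part, so $\Phi|_{sp(2n)}$ is the realization of $sp(2n)\subset so(4n)$ inside $M_{(4n|0)}$ of \eqref{embedSST}. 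Because these two realizations live in the commuting tensor factors of $M_{(4n|4m)}=M_{(4n|0)}\otimes M_{(0|4m)}$, the restriction of $\Phi$ to the even subalgebra coincides with the embedding $\Phi_0$, and its image is the internal tensor product of the two images, namely $\mathcal R_{-2}(D_{n+4})\otimes V_1(sp(2n))$.

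The main obstacle I expect is purely the bookkeeping of normalizations, so that all three levels come out correctly and consistently. One must check simultaneously that the induced form on $osp(2m|2n)$ is the level $-2$ form, that its restriction to $so(2m)$ is the level $-2$ form of $D_{n+4}$ (matching $\mathcal R_{-2}(D_{n+4})=\mathcal V_{-1/2}(so(2m))$ of \eqref{embedR}), and that its restriction to $sp(2n)$ is the level $1$ form of $C_n$ (matching \eqref{embedSST}); these are exactly the values $u_1(-2)=-2$ and $u_2(-2)=1$ recorded in case (4) of Theorem \ref{cf}, so the numerology is consistent, but the sign conventions hidden in the supertrace of a tensor product and in the parity-reversal $\Pi$ must be tracked with care. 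Once these identifications are in place, the desired equality $\Phi\big(V^{-2}(so(2n+8)\times sp(2n))\big)=\mathcal R_{-2}(D_{n+4})\otimes V_1(sp(2n))$ follows from the fact, noted in the text, that $\Phi_0$ is an embedding.
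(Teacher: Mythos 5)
Your proposal is correct and follows essentially the same route as the paper: the map $\Phi$ is the composition of the tensor-product embedding $sl(2)\times osp(2m|2n)\hookrightarrow osp(4n|4m)$ (from $\C^{0|2}\otimes\C^{2m|2n}\simeq\C^{4n|4m}$) with the level-one free field realization of Proposition \ref{EmdedFree}, the level $-2$ coming from $\sdim\C^{0|2}=-2$, and the identification of the even part with $\Phi_0$ via the parity splitting into the factors $M_{(4n|0)}$ and $M_{(0|4m)}$, which recovers the chains \eqref{embedSST} and \eqref{embedR}. The paper's proof is exactly this observation, stated more tersely.
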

\begin{proof}
The action  of $sp(2n)$ on $\C^{0|2}\otimes \C^{0|2n}$ defines the embedding $sp(2n)\subset so(4n)$ and in turn the chain of embeddings in \eqref{embedSST}.

Likewise the action  of $so(2m)$ on $\C^{0|2}\otimes \C^{2m|0}$ defines the embedding $so(2m)\subset sp(4m)$ and the chain of embeddings in \eqref{embedR}. Thus the map $\Phi_0$ is just the restriction to $\mathcal V_1(so(2m)\times sp(2n))$ of the embedding
$$\mathcal V_1(osp(2m|2n))\subset V_1(osp(4n|4m))\subset M_{(4n|4m)}=M_{(4n|0)}\otimes M_{(0|4m)}.
$$
\end{proof}
 
 We now provide explicit formulas for the odd generators of $osp(2m|2n)$.
 
Let $\{e_j\}_{j=1,2}$ be the standard basis of $\C^{0|2}$ and $\{f_j\}_{j=1,2m+2n}$ the standard basis of $\C^{2m|2n}$. By our choice of the forms $\langle\cdot,\cdot\rangle_{r|s}$, the corresponding dual bases are, respectively, $\{e^1,e^2\}$ with $e^1=e_2$, $e^2=-e_1$ and $\{f^j\}$ with
\begin{align*}
f^j=f_{2m-j+1},\ &(j=1,\cdots,2m),\ f^{2m+j}=f_{2m+2n-j+1},\ (j=1,\cdots,n),\\
& f^{2m+n+j}=-f_{2m+n-j+1},\ (j=1,\cdots,n).
\end{align*}
Let $E_{i,j}$ be the elementary matrix in the chosen basis $\{f_j\}$ of $\C^{2m|2n}$, i.e. $E_{ij}(f_r)=\d_{rj}f_i$. 
Then  $E_{i,2n+2m-j+1}-E_{2m+j,2m-i}\in osp(2m|2n)_{\bar1}$ for $1\le i\le 2m, 1\le j\le n$.

Set $v_{i,j}=e_i\otimes f_j$ and $v^{i,j}=(-1)^{p(f_j)}e^i\otimes f^j$. Clearly $\langle v_{i,j},v^{r,s}\rangle=\d_{ir}\d_{js}$.  Since $X\in osp(2m|2n)$ embeds in $osp(4n|4m)$ letting  $X$ act as $I\otimes X$ on $\C^{0|2}\otimes \C^{2m|2n}$, we obtain from \eqref{embedospM} that
\begin{align*}
&\Phi(E_{r,2n+2m-s+1}-E_{2m+s,2m-r})\\
&=1/2\sum_{ij} :(I\otimes (E_{r,2n+2m-s+1}-E_{2m+s,2m-r+1}))(v_{i,j})v^{i,j}:\\
&=1/2\sum_{i=1,2}(:v_{i,r}v^{i,2n+2m-s+1}:-:v_{i,2m+s}v^{i,2m-r+1}:).
\end{align*}

We now rewrite these odd elements in terms of the standard generators of $M_{(4n|4m)}$.
Set 
$$
\phi_i=
\begin{cases}\tfrac{\sqrt{-1}}{\sqrt{2}}(v_{1,2m+i}+v_{2, 2m+2n-i+1})&i=1,\cdots n,\\
\tfrac{1}{\sqrt{2}}(v_{1,2m+i-n}-v_{2, 2m+3n-i+1})&i=n+1,\cdots 2n,\\
\tfrac{1}{\sqrt{2}}(v_{1,2m+i-n}+v_{2, 2m+3n-i+1})&i=2n+1,\cdots 3n,\\
\tfrac{\sqrt{-1}}{\sqrt{2}}(v_{1,2m+i-2n}-v_{2, 2m+4n-i+1})&i=3n+1,\cdots 4n,
\end{cases}
$$
so that 
\begin{equation}\label{fermirel}
[{\phi_i}_\l \phi_j]=\langle \phi_j,\phi_i\rangle=\d_{ij}.
\end{equation}

Set also
$$
a^+_i=v_{2,i},\ a^-_i=v_{1,2m-i+1},\quad i=1,\ldots,2m
$$
so that 
\begin{equation}\label{weylrel}
[(a^\pm_i)_\l a^\pm_j]=0,\ [(a^+_i)_\l a^-_j]=-[(a^-_i)_\l a^+_j]=\langle a^-_j, a^+_i\rangle =\d_{ij}.
\end{equation}

Since
\begin{align*}
&v^{2,2m-r+1}=-v_{1,r}=-a^-_{2m-r+1},\\
&v^{1,2n+2m-s+1}=v_{2,2m+s}=\tfrac{1}{\sqrt{2}}(\phi_{3n-s+1}+\sqrt{-1}\phi_{4n-s+1}),
\end{align*}
and
$$
v^{1,2m-r+1}=v_{2,r}=a^+_r,\ v^{2,2n+2m-s+1}=-v_{1,2m+s}=-\tfrac{1}{\sqrt{2}}(\phi_{n+s}-\sqrt{-1}\phi_{s}),
$$
we have
\begin{align*}
&\Phi(E_{r,2n+2m-s+1}-E_{2m+s,2m-r})\\&=\tfrac{1}{2\sqrt{2}}(:a^-_{2m-r+1}(\phi_{3n-s+1}+\sqrt{-1}\phi_{4n-s+1}):-:(\phi_{n+s}-\sqrt{-1}\phi_{s})a^+_r:)\\
&-\tfrac{1}{2\sqrt{2}}(:a^+_r(\phi_{n+s}-\sqrt{-1}\phi_{s}):+:(\phi_{3n-s+1}+\sqrt{-1}\phi_{4n-s+1})a^-_{2m-r+1}:)\\
&=\tfrac{1}{\sqrt{2}}(:a^-_{2m-r+1}(\phi_{3n-s+1}+\sqrt{-1}\phi_{4n-s+1}):-:a^+_r(\phi_{n+s}-\sqrt{-1}\phi_{s}):).
\end{align*}

Recall that, if $1\le r\le m$, $1\le s\le n$, then  $E_{r,2n+2m-s+1}-E_{2m+s,2m-r}$ is the root vector $x_\a$ with $\a=\e_r+\d_s$.
Set
\begin{align*}
v_i&=\Phi(x_{\e_1+\d_i})\\&=\tfrac{1}{\sqrt{2}}(:a^-_{2m}(\phi_{3n-i+1}+\sqrt{-1}\phi_{4n-i+1}):-:a^+_1(\phi_{n+i}-\sqrt{-1}\phi_{i}):).
\end{align*}

\begin{proposition} \label{vectors-wi} Set $W_i=:v_iv_{i-1}\dots v_1:$.
Then the vectors $W_i$ are singular vectors in $M_{(4n|4m)}$ for $\widehat {so(2m)\times sp(2n)}$.
\end{proposition}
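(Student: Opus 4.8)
The plan is to verify directly the two defining properties of a $\widehat{\g_{\bar 0}}$-singular vector, where $\g_{\bar 0}=so(2m)\times sp(2n)$: that $\Phi(x)(p)W_i=0$ for every $x\in\g_{\bar 0}$ and every $p\ge 1$, and that $\Phi(e_\a)(0)W_i=0$ for every positive root vector $e_\a$ of $\g_{\bar 0}$. The whole argument is representation-theoretic and never needs the explicit free-field formula for $v_i$: it uses only that $\Phi$ is a vertex algebra homomorphism out of $V^{-2}(osp(2m|2n))$ and that $v_i=\Phi(x_{\e_1+\d_i})$ is the image of an odd root vector of weight $\e_1+\d_i$.

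First I would record two elementary $\l$-bracket computations. Since $\g_{\bar 0}$ is even and the invariant form pairs even with even and odd with odd, for $x\in\g_{\bar 0}$ one has $(x\mid x_{\e_1+\d_l})=0$, hence $[\Phi(x)_\l v_l]=\Phi([x,x_{\e_1+\d_l}])$ with no $\l$-term; in particular $\Phi(x)(p)v_l=0$ for all $p\ge1$. Likewise, because $2\e_1$ is not a root of $so(2m)$, the bracket $[x_{\e_1+\d_j},x_{\e_1+\d_{j'}}]$ (of weight $2\e_1+\d_j+\d_{j'}$) vanishes, and the form $(x_{\e_1+\d_j}\mid x_{\e_1+\d_{j'}})$ vanishes as well, so $[v_j{}_\l v_{j'}]=0$ for all $j,j'$. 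Thus the odd vectors $v_1,\dots,v_n$ are mutually free: the iterated normal ordered product $W_i=\,:v_iv_{i-1}\cdots v_1:$ is well defined, totally antisymmetric in its factors, and vanishes whenever two factors coincide.

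Next I would prove, by induction on $i$ using the non-commutative Wick formula, that $[\Phi(x)_\l W_i]$ is independent of $\l$ for every $x\in\g_{\bar 0}$ (by linearity we may take $x$ a root vector or a Cartan element, so its weight $\mu_x$ is a single even root or $0$); equivalently $\Phi(x)(p)W_i=0$ for all $p\ge1$, which is property (a). Writing $W_i=\,:v_iW_{i-1}:$, the Wick formula yields two direct insertion terms, which are $\l$-independent by the inductive hypothesis and the first bracket computation, plus the double-contraction term $\int_0^\l[\Phi(w^x_i)_\mu W_{i-1}]\,d\mu$, where $w^x_i=[x,x_{\e_1+\d_i}]$ is odd of weight $\mu_x+\e_1+\d_i$. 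The key point is that this integral term vanishes: for every $l$ the weight of $[w^x_i,x_{\e_1+\d_l}]$ is $\mu_x+2\e_1+\d_i+\d_l$, which cannot be a weight of $so(2m)\oplus sp(2n)$ (it has simultaneous nonzero support in the $\e$- and $\d$-directions, and the $2\e_1$ forbids its $\e$-part from being either $0$ or an $so(2m)$ root), while the corresponding odd--odd form again vanishes; hence $[\Phi(w^x_i)_\mu v_l]=0$ for all $l$, and a further Wick expansion gives $[\Phi(w^x_i)_\mu W_{i-1}]=0$. I expect this weight bookkeeping --- showing that the double contractions die because the derived odd elements $w^x_i$ have trivial operator products with all the $v_l$ --- to be the main obstacle, in the sense that it is the one place where the specific $D$--$C$ root combinatorics (absence of a $2\e_1$ root) is essential.

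Finally, for property (b) I would feed in the $\l$-independent formula just obtained, namely $\Phi(e_\a)(0)W_i=\sum_{l=1}^i\pm\,:v_i\cdots \Phi([e_\a,x_{\e_1+\d_l}])\cdots v_1:$. If $\a$ is a positive root of $so(2m)$ then $[e_\a,x_{\e_1+\d_l}]=0$, since $\e_1$ is the highest weight of the vector representation $\C^{2m}$ on which the positive $so(2m)$ root vectors act; if $\a=\d_a+\d_b$ or $\a=2\d_a$ then the $sp(2n)$-weight $\d_l+\a$ is not a weight of $\C^{2n}$, so the bracket is again $0$. The only surviving case is $\a=\d_a-\d_b$ with $a<b$, where $[e_{\d_a-\d_b},x_{\e_1+\d_l}]$ is a nonzero multiple of $x_{\e_1+\d_a}$ precisely when $l=b$; this replaces the factor $v_b$ by $v_a$ in $W_i$, and since $a<b\le i$ the factor $v_a$ is already present, so by the antisymmetry established in the second step the result vanishes. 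Combining (a) and (b), and observing that $W_i$ is a weight vector, shows that $W_i$ is $\widehat{\g_{\bar 0}}$-singular.
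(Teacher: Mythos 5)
Your verification of the two singularity conditions is sound and follows, at its core, the same route as the paper: induction on $i$ via the non-commutative Wick formula applied to $W_i=\,:v_iW_{i-1}:$, with the double-contraction term killed because the derived odd elements have trivial $\l$-brackets with every $v_l$. You organize this more systematically than the paper does: you check all positive modes of all of $\g_{\bar 0}$ and all positive root vectors by a uniform weight argument (no weight of the form $\mu_x+2\e_1+\d_i+\d_l$ can be $0$ or a root of the direct sum $so(2m)\oplus sp(2n)$), whereas the paper checks only the simple raising operators $x_{\e_j-\e_{j+1}},x_{\e_{m-1}+\e_m},x_{\d_j-\d_{j+1}},x_{2\d_n}$ together with the affine ones coming from $x_{-\e_1-\e_2}$ and $x_{-2\d_1}$, computing the relevant brackets case by case. (Your parenthetical justification is slightly off when $x$ is a root vector for $-\e_1-\e_b$, since then the $\e$-part $\e_1-\e_b$ of the weight \emph{is} a $D$-root; but the conclusion survives because the $\d$-part $\d_i+\d_l$ is then nonzero, so the total weight still has mixed support.) The observation that $[{v_j}_\l v_{j'}]=0$, hence that the ordered product vanishes when a factor repeats, is exactly the mechanism the paper invokes in the form $x_{\e_1+\d_{i-1}}(-1)x_{\e_1+\d_{i-1}}(-1)=0$; your treatment of $x_{\d_a-\d_b}$ with $a<b\le i$ generalizes it correctly.

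There is, however, one genuine omission: you never show that $W_i\ne 0$ in $M_{(4n|4m)}$, and a singular vector is by definition nonzero; the nonvanishing is moreover what is actually used downstream, in Proposition \ref{components-identification} and Theorem \ref{dosp}. Your argument is deliberately realization-free, and for that very reason it cannot detect nonvanishing: everything you prove would hold verbatim if all the $v_i$ were zero. This is precisely the point where the explicit free-field formulas become unavoidable. The paper concludes its proof by expanding $W_i$ in the generators of $M_{(4n|4m)}$ and isolating the single monomial in which every factor $v_l$ contributes its $a^-$-summand, namely a nonzero multiple of $:(a^-_{2m})^i(\phi_{3n-i+1}+\sqrt{-1}\phi_{4n-i+1})\cdots(\phi_{3n}+\sqrt{-1}\phi_{4n}):$, all remaining terms containing at least one factor $a^+_1$. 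You need to add such a computation (or some other identification of a nonzero coefficient of $W_i$ in a PBW-type basis) to complete the proof.
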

\begin{proof}
We need to show that 
\begin{equation}\label{lambafin}
[(x_{\e_j-\e_{j+1}})_\l W_i],[(x_{\e_{m-1}+\e_{m}})_\l W_i],[(x_{\d_j-\d_{j+1}})_\l W_i],[(x_{2\d_n})_\l W_i]\in \l M_{(4n|4m)}
\end{equation}
and that
\begin{equation}\label{lambdaaff}
[(x_{-\e_1-\e_{2}})_\l W_i]=x_{-\e_1-\e_{2}}(0) W_i,\ [(x_{-2\d_1})_\l W_i]=x_{-2\d_1}(0) W_i.
\end{equation}

These formulas are proven by induction on $i$. The base of the induction is $i=1$, where the formulas are satisfied since $v_1$ is a  highest weight vector for the action of $so(2m)\times sp(2n)$ on $osp(2m|2n)_{\bar 1}$.

If $i>1$, then, by Wick formula
\begin{align*}
[(x_{\a})_\l W_i]&=:[(x_{\a})_\l(x_{\e_1+\d_i})]W_{i-1}:+:x_{\e_1+\d_i}[(x_\a)_\l W_{i-1}]:\\&+\int_0^\l[[(x_{\a})_\l(x_{\e_1+\d_i})]_\mu W_{i-1}]d\mu.
\end{align*}

In order to check \eqref{lambafin}, by the induction hypothesis and the fact that 
$$
[(x_{\e_j-\e_{j+1}})_\l x_{\e_1+\d_i}]=[(x_{\e_{m-1}+\e_{m}})_\l x_{\e_1+\d_i}]=[(x_{2\d_n})_\l x_{\e_1+\d_i}]=0,
$$
$$
[(x_{\d_j-\d_{j+1}})_\l x_{\e_1+\d_i}]=\d_{i,j+1}N_{\d_j-\d_{j+1},\e_1+\d_i}x_{\e_1+\d_{i-1}},
$$
we need only to show that $
:x_{-\e_2+\d_{i}}W_{i-1}:=0$ and this follows readily since $x_{\e_1+\d_{i-1}}(-1)x_{\e_1+\d_{i-1}}(-1)=0$

In order to check \eqref{lambdaaff}, by the induction hypothesis and the fact that 
$$
[(x_{-2\d_1})_\l x_{\e_1+\d_i}]=0,\ [(x_{-\e_1-\e_2})_\l x_{\e_1+\d_i}]=N_{-\e_1-\e_2,\e_1+\d_i}x_{-\e_2+\d_i}
$$
we need only to show that $
[(x_{-\e_2+\d_{i}})_\mu W_{i-1}]=0$. An easy induction on $r$ shows that $[(x_{-\e_2+\d_{i}})_\mu W_{r}]=0$ for $1\le r< i$.

It remains to show that $W_i\ne0$ in $M_{(4n|4m)}$. By the defining relations \eqref{fermirel}--\eqref{weylrel} of $M_{(4n|4m)}$, we can write
\begin{align*}
W_i&=\,\,:(a^-_m)^i(\phi_{3n-i+1}+\sqrt{-1}\phi_{4n-i+1})\dots(\phi_{3n}+\sqrt{-1}\phi_{4n}):\\
&+\sum_{j=1}^{i}:(a^-_m)^{i-j}(a^+_1)^jc_j(\phi):
\end{align*}
with $c_j(\phi)\in M_{(4n|0)}$. The result follows.
\end{proof}

\subsection{Decomposition}
\label{decom-osp-2}

Let $\omega_{sug}$ be the Sugawara Virasoro vector in\break   $\mathcal  V_{-2} (osp(2n +8 \vert 2n))\subset V_1(osp(4n \vert 4n+16))$, $\omega^1$ the Sugawara Virasoro vector in $\mathcal R_{-2} (D_{n+4})$ and $\omega^2$, the Sugawara Virasoro vector in $V_{1} (C_n)$.
We want to investigate the  embedding $$\mathcal R_{-2}(so(2(n+4))) \otimes V_{1} (sp(2n)) \hookrightarrow \mathcal V_{-2} (\g).$$
Define 
$$\Omega = \omega_{sug} - \omega^1 - \omega^2.$$
Set for shortness $\g=osp(2n +8 \vert 2n)$.
\begin{proposition} \label{non-conf} Assume that $n \ge 2$. 
\begin{itemize}
\item[(1)]  The embedding $\mathcal R_{-2} (so_{2(n+4)}) \otimes V_{1} (sp(2n)) \hookrightarrow \mathcal  V_{-2} (\g)$ is not conformal for $n \ge 2$. 
\item[(2)]  $\Omega$ is a non-trivial Virasoro vector of central charge $c=0$. 
\item[(3)] There exists a non-trivial singular vector in $\mathcal V_{-2} (\g)$ of $\g_{\bar 0}$--weight  $(0, \omega_2)$ and conformal weight $2$. 
\end{itemize}
\end{proposition}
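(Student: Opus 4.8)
The plan is to derive (1) and the nontriviality asserted in (2) from part (3), after first recording that $\Omega$ is a Virasoro vector of central charge $0$. Recall that $\omega^1+\omega^2$ is a conformal vector for the subalgebra $\mathcal V_{-2}(\g_{\bar 0})=\mathcal R_{-2}(D_{n+4})\otimes V_1(C_n)$, and that inside $\mathcal V_{-2}(\g)$ the vectors $x(-1)\vac$, $x\in\g_{\bar 0}$, are primary of conformal weight $1$ for both $\omega_{sug}$ and $\omega^1+\omega^2$. By the standard coset construction $\Omega=\omega_{sug}-\omega^1-\omega^2$ is then a Virasoro vector commuting with $\g_{\bar 0}$, hence with all of $\mathcal V_{-2}(\g_{\bar 0})$, with central charge $c(\omega_{sug})-c(\omega^1)-c(\omega^2)$. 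A direct Sugawara computation gives $c(\omega_{sug})=-14$ (here $\mathrm{sdim}\,\g=28$), $c(\omega^1)=-\tfrac{(n+4)(2n+7)}{n+2}$ and $c(\omega^2)=\tfrac{n(2n+1)}{n+2}$; since $-(n+4)(2n+7)+n(2n+1)=-14(n+2)$, the last two sum to $-14$, so $c(\Omega)=0$. (Equivalently, $c(\Omega)=0$ is forced because in the simple quotient $V_{-2}(\g)$ the embedding $\g_{\bar 0}\subset\g$ is conformal, by Theorem \ref{cf}.) As conformality of the embedding is precisely the condition $\Omega=0$, both (1) and the nontriviality in (2) reduce to proving $\Omega\neq 0$.

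The main work is part (3): producing a nonzero $\widehat\g_{\bar 0}$-singular vector $v$ of $\g_{\bar 0}$-weight $(0,\omega_2)$ and $\omega_{sug}$-conformal weight $2$. Such a $v$ cannot lie in $\mathcal V_{-2}(\g_{\bar 0})$, since a $\widehat\g_{\bar 0}$-singular vector of weight $(0,\omega_2)$ would have trivial $\widehat{D_{n+4}}$-component and a weight-$\omega_2$ $\widehat{C_n}$-singular component, whereas the only $\widehat{C_n}$-singular vector of $V_1(C_n)$ is the vacuum. Hence I would look for $v$ among the weight-$(0,\omega_2)$, conformal-weight-$2$ normally ordered products $:x_\alpha x_\beta:$ with $\alpha,\beta\in\g_{\bar 1}$, taking the combination that contracts the $so(2n+8)$-indices against the invariant form (giving the trivial $D_{n+4}$-weight) while keeping the $sp(2n)$-indices in the primitive antisymmetric piece of highest weight $\omega_2$. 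Imposing $x(1)v=x(2)v=0$ for $x\in\g_{\bar 0}$ and $e_\gamma(0)v=0$ for the simple roots $\gamma$ of $\g_{\bar 0}$ pins down $v$ up to a scalar; this is a $\lambda$-bracket computation with the structure constants, of the kind carried out in Lemmas \ref{struct_const} and \ref{sing-two-f4}. The nonvanishing of $v$ in $\mathcal V_{-2}(\g)=\Phi(V^{-2}(\g))$ I would then verify in the free-field realization $M_{(4n|4n+16)}$, rewriting $\Phi(v)$ through the generators $a^{\pm}_i,\phi_i$ and exhibiting an uncancelled leading monomial, exactly as in the proof of Proposition \ref{vectors-wi}. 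This nonvanishing is the principal obstacle, since a priori the singular combination could fall in the kernel of $\Phi$.

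Granting (3), the remaining steps are immediate. Being $\widehat\g_{\bar 0}$-singular of weight $(0,\omega_2)$, the vector $v$ has $(\omega^1+\omega^2)$-conformal weight equal to the Sugawara value $\tfrac{(\omega_2,\omega_2+2\rho)_{C_n}}{2(1+h^\vee_{C_n})}=\tfrac{n}{n+2}$, while by part (3) its $\omega_{sug}$-conformal weight is $2$. Therefore $\Omega_0v=(\omega_{sug})_0v-(\omega^1+\omega^2)_0v=\bigl(2-\tfrac{n}{n+2}\bigr)v=\tfrac{n+4}{n+2}\,v\neq 0$ for every $n\geq 2$, so $\Omega\neq 0$. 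This simultaneously shows that $\Omega$ is a nontrivial Virasoro vector of central charge $0$, completing (2), and that $\omega_{sug}\neq\omega^1+\omega^2$, i.e. that the embedding $\mathcal R_{-2}(so(2(n+4)))\otimes V_1(sp(2n))\hookrightarrow\mathcal V_{-2}(\g)$ is not conformal, which is (1). The hypothesis $n\geq 2$ enters precisely because $\omega_2$ is a fundamental weight of $C_n$ only then.
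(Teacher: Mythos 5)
Your reduction of (1) and (2) to (3) is sound: the central charge computation giving $c(\Omega)=0$ is correct, and if a nonzero $\widehat\g_{\bar 0}$--singular vector $v$ of $\g_{\bar 0}$--weight $(0,\omega_2)$ and $(\omega_{sug})_0$--eigenvalue $2$ exists, then $\Omega_0 v=\bigl(2-\tfrac{n}{n+2}\bigr)v\ne0$ indeed forces $\Omega\ne0$. But this inverts the paper's logic --- the paper proves (1)--(2) first and then deduces (3) --- and it makes part (3), which you only sketch, carry the entire weight of the proposition. You never actually produce the singular combination, verify that it is killed by the positive modes of $\widehat\g_{\bar 0}$, or --- the decisive point --- show that its image under $\Phi$ in $M_{(4n\vert 4n+16)}$ is nonzero. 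You correctly flag this nonvanishing as ``the principal obstacle,'' but flagging it is not overcoming it. The obstacle is real: since in the simple quotient $V_{-2}(\g)$ one has $\omega_{sug}=\omega^1+\omega^2$ (Theorem \ref{cf}), a $\widehat\g_{\bar 0}$--singular vector of weight $(0,\omega_2)$ and integral conformal weight cannot survive there (its $(\omega^1+\omega^2)_0$--eigenvalue $\tfrac{n}{n+2}$ is not an integer), so your $v$ must lie in the maximal ideal of $\mathcal V_{-2}(\g)$. Deciding that it nonetheless survives in $\mathcal V_{-2}(\g)=\Phi(V^{-2}(\g))$ amounts to locating this quotient precisely between $V^{-2}(\g)$ and $V_{-2}(\g)$, and exhibiting an ``uncancelled leading monomial'' for general $n$ is exactly the computation you do not carry out.

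The paper circumvents this entirely and proves the implications in the opposite direction. It assumes $\Omega=0$ and compares, for an irreducible highest weight $\mathcal V_{-2}(\g)$--module of highest weight $(i\omega_1,\omega_j)$, the conformal weight $\Delta_{i,j}$ of its highest weight vector computed from $\omega_{sug}$ with the value $h[i,j]$ computed from $\omega^1+\omega^2$; the equation $\Delta_{i,j}=h[i,j]$ forces $(i,j)=(0,0)$, while the free-field realization supplies modules in $KL_{-2}$ with nonzero highest weight --- a contradiction. This yields (1) and (2) with no explicit singular vector, and (3) then follows by solving $\Delta_{i,j}=2$, whose only solution is $(i,j)=(0,2)$. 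To salvage your order of argument you would have to carry out the free-field computation in full; otherwise you should replace part (3) as the foundation by an indirect, module-theoretic argument of the paper's kind.
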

\begin{proof}
Assume that  $\Omega = 0$ in $\mathcal V_{-2} (\g)$, so we have a conformal embedding  $\mathcal R_{-2} (so_{2(n+4)}) \otimes V_{1}(sp(2n)) \hookrightarrow \mathcal V_{-2} (\g)$. Assume that $L(i,j)$ is an irreducible highest weight $\mathcal V_{-2} (\g)$--module with $\g_{\bar 0}$--weight $(i \omega_1, \omega_j)$, where $i, j  \in {\Z}_{\ge 0}$, $1 \le j \le n$. Recalling from \eqref{rhod} the expression of $2\rho$, we compute that the conformal weight is  given by 
$$ \Delta_{i,j}=  \frac{i^2 + j  + (2n+6) i  +6j  + j (j-1) }{8} .$$
We have
\bea  &&  \Delta_{i,j}=h[i,j]  \label{equation}  \\ \iff  && \frac{   (2n+6+ i ) i  +j (j+6)  }{8} = \frac{i (2n+6 +i) + (2n+2 -j) j}{4 (n+2)} \nonumber \\
\iff && n (2n+6+ i ) i   + (n+2) j (j+6)  = 2 j (2n+2-j) \nonumber
\eea
Assume that $i \ge 1$. Since $n (2n+6 ) >  2n (n+2) \ge 2j (2n+2-j)$ for $j=0, \dots, n$ we conclude that there are no solutions of the equation (\ref{equation}).
Assume that $i=0$. We have the equation
$$ j ( (n+2) (j+6) - 2 (2n+2-j) )   = 0  \iff j    (   j  + 2 )  (n+4)= 0.$$
Therefore, the only solution of the equation (\ref{equation}) is $(i,j) = (0,0).$
But using the free-field realization it is easy to see that there exist representations of $\mathcal V_{-2} (\g)$ in $KL_{-2}$ with highest weight different from  $(0,0)$.  Therefore, the embedding   $\mathcal R_{-2} (so(2(n+4))) \otimes V_{1} (sp(2n)) \hookrightarrow \mathcal V_{-2} (\g)$ cannot be conformal. This proves assertions (1) and (2).

Let us prove assertion (3).
Since $\Omega \ne 0$, $\mathcal V_{-2} (\g)$ contains a singular vector of conformal weight $2$. The classification of $\mathcal R_{-2}(so(2(n+4))) $ and $V_{1} (sp(2n))$--modules implies that such singular vector has $\g_{\bar 0}$--weight $(i \omega_1, \omega_j)$ for certain $i  \in {\Z}_{\ge 0}$, $1 \le j \le n$.
We see that $ \Delta_{i,j} = 2 \iff i = 0, j = 2.$
 \end{proof}
 
 \begin{remark}
 We have proved in \cite{AKMPP-IMRN}, using quantum reduction,  that the vertex algebra $V_{-2}(\g)$ has a unique irreducible module in $KL_{-2}$. Note that the proof of the previous proposition gives a new proof of this result. More precisely, each irreducible $V_{-2}(\g)$--module in $KL_{-2}$ has  $\g_{\bar 0}$--highest weight $(i \omega_1, \omega_j)$. The pair $(i,j)$ satisifes  (\ref{equation}), and the calculation in the proof of Proposition \ref{non-conf}  gives that $(i,j)=(0,0)$. Therefore, $V_{-2}(\g)$ is the unique irreducible  $V_{-2}(\g)$--module in $KL_{-2}$.
 \end{remark}

\begin{lemma} \label{lema-tezina}  For $1 \le i \le  n$, the vectors $W_i$ are not $\widehat{\g}$--singular.

\end{lemma}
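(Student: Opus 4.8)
The plan is to prove the statement by an explicit computation in the free-field realization $M_{(4n|4n+16)}$: for each $i$ I will produce a single raising operator of $\widehat{\g}$ that does not annihilate $W_i$. By Proposition~\ref{vectors-wi} the vector $W_i$ is already singular for the positive part of $\widehat{\g}_{\bar 0}$, hence it is killed by every positive even root vector and by every positive mode of a current $\Phi(x)$ with $x\in\g_{\bar 0}$. Consequently $W_i$ can fail to be $\widehat{\g}$--singular only through the odd part, and it suffices to exhibit one odd raising operator acting nontrivially on it.

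The operator I would use is the zero mode of the odd \emph{positive} root vector $x_{\d_1-\e_1}$ (recall that in the distinguished system the positive odd roots are exactly the $\d_j\pm\e_l$). Since the zero mode $\Phi(x)(0)$ implements the adjoint action of $x$ and is therefore a superderivation of $M_{(4n|4n+16)}$, and since $\Phi(x_{\d_1-\e_1})(0)\,v_k=\Phi([x_{\d_1-\e_1},x_{\e_1+\d_k}])$ is, up to a nonzero scalar, the \emph{even} $sp(2n)$--current $\Phi(x_{\d_1+\d_k})$ for $k\ge 2$ and $\Phi(x_{2\d_1})$ for $k=1$, I obtain
\begin{equation*}
x_{\d_1-\e_1}(0)\,W_i=\sum_{k=1}^{i}\pm\,:v_i\cdots v_{k+1}\,\Phi\bigl([x_{\d_1-\e_1},x_{\e_1+\d_k}]\bigr)\,v_{k-1}\cdots v_1:\,,
\end{equation*}
a sum of normally ordered monomials in which exactly one odd factor $v_k$ has been replaced by an even current. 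For $i=1$ this already gives $x_{\d_1-\e_1}(0)W_1$ proportional to $\Phi(x_{2\d_1})\neq 0$, which settles the base case and is the model for the general argument.

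For general $i$ the remaining task is to verify that the displayed sum is nonzero in $M_{(4n|4n+16)}$, i.e. that the $i$ summands do not cancel. Here I would reuse the normal-form expansion of $W_i$ established at the end of the proof of Proposition~\ref{vectors-wi}: the currents $\Phi(x_{\d_1+\d_k})$ act only on the fermionic generators $\phi_\bullet$, leaving the bosonic factor $(a^-_m)^{i}$ of the leading term untouched, so I can isolate a monomial carrying the top bosonic power whose fermionic indices pin down a unique value of $k$ and hence cannot be cancelled by the other summands. The main obstacle is precisely this non-cancellation bookkeeping --- tracking the Koszul signs and the exact fermionic output of each $\Phi(x_{\d_1+\d_k})$ to confirm that the surviving monomial has nonzero coefficient --- which is of the same flavour as, but somewhat more delicate than, the nonvanishing verification in Proposition~\ref{vectors-wi}. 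As a conceptual cross-check, once one knows that $W_i\neq 0$ in the simple quotient $V_{-2}(\g)$, the conclusion is immediate: a nonzero $\widehat{\g}$--singular vector of conformal weight $i\ge 1$ would generate a proper ideal, contradicting the simplicity of $V_{-2}(\g)$ proved in Theorem~\ref{simplicity-d}.
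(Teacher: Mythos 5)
Your overall strategy is genuinely different from the paper's, and it has a real gap at its center. For $i\ge 2$ the entire content of your argument is the non-cancellation of the $i$ summands in $x_{\d_1-\e_1}(0)W_i=\sum_{k=1}^i\pm c_k\,{:}v_i\cdots v_{k+1}\Phi([x_{\d_1-\e_1},x_{\e_1+\d_k}])v_{k-1}\cdots v_1{:}$, and this is precisely what you defer as ``bookkeeping.'' It is not routine: all $i$ summands lie in the same $\h$--weight space $\lambda_i+\d_1-\e_1$ and all carry the same top bosonic power $(a^-_{2m})^{i-1}$, so nothing separates them a priori. Worse, your separating device --- ``the fermionic indices pin down a unique value of $k$'' --- is suspect, because $\Phi(x_{\d_1+\d_k})$ is a quadratic fermionic current of weight $\d_1+\d_k$ and therefore reintroduces exactly the weight-$\d_k$ fermionic directions (the combination $\phi_{3n-k+1},\phi_{4n-k+1}$) that were removed from the product by deleting $v_k$; the $k$-th and $k'$-th summands can thus share monomials and could in principle cancel. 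Only the case $i=1$ is actually complete in your write-up. Your closing ``conceptual cross-check'' is also circular within the paper's development: the nonvanishing of $W_i$ in the simple quotient $V_{-2}(\g)$ (Proposition \ref{components-identification}) is \emph{deduced from} this lemma, so it cannot be assumed in its proof; the lemma concerns $W_i$ as an element of $\Phi(V^{-2}(\g))\subset M_{(4n|4n+16)}$, which is not yet known to be simple at this stage.

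For contrast, the paper's proof avoids all free-field computation. If $W_i$ were $\widehat\g$--singular it would be a highest weight vector of weight $\lambda_i=i\e_1+\d_1+\cdots+\d_i$, hence an eigenvector of $(\omega_{sug})_0$ with the Sugawara eigenvalue $h_{\lambda_i}=(\lambda_i,\lambda_i+2\rho)/8=i(i+2n+6)/4$. On the other hand $W_i$ is a normally ordered product of $i$ generators, each of conformal weight $1$ for $(\omega_{sug})_0$, so its actual conformal weight is $i\ne i(i+2n+6)/4$; contradiction. If you want to salvage your route, you would either have to carry out the fermionic expansion honestly (tracking which quadratic monomials occur in each $\Phi(x_{\d_1+\d_k})$ and exhibiting one surviving monomial), or replace the non-cancellation argument by something structural --- but the conformal-weight comparison already does the job in two lines.
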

\begin{proof}
Assume that $W_i$ is singular for $\widehat{\g}$. Then it generates the highest weight module with highest weight
$ \lambda_i = i \varepsilon_1 + \delta_1 + \cdots + \delta_i. $
The conformal weight of $W_i$ is  
\begin{align*}  h_{\lambda_i}& = \frac{ (\lambda_i, \lambda_i + 2 \rho )} { 2 ( k + h^{\vee}) }  = \frac{ (\lambda_i, \lambda_i + 2 \rho )} { 8 }  = \frac{i^2 + i + (2n+6) i  +6i + i (i-1) }{8} \\&= \frac{  i (  i + 2n+6) }{4}. 
\end{align*} 
So $(\omega_{sug})_0 W_i = h_{\lambda_i} W_i. $
On the other hand, $W_i$ has conformal weight $i$ in $\mathcal V_{-2} (\g)  \subset M_{(4n \vert 4n +16)}$,  which is different from  $h_{\lambda_i}$, and this is a  contradiction.
\end{proof}
\begin{proposition} \label{components-identification}  
For  $1 \le i \le n$, the vectors $W_i$ are nonzero in $V_{-2} (\g)$ and 
\bea  \mathcal W_i =  \mathcal V_{-2} (\g_{\bar 0})\cdot W_i \cong L_{-2} (i \omega_1) \otimes L_1 (\omega_i).  \label{comp-simp} \eea
\end{proposition}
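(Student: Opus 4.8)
The plan is to deduce both assertions from the free-field realization $\Phi$ of Proposition \ref{homo-osp-2} together with the complete reducibility established in Theorem \ref{simplicity-d}. Recall that $\Phi$ realizes $V_{-2}(\g)$ inside $M_{(4n \vert 4n+16)}$ and that, by construction, $W_i=\Phi\big(x_{\varepsilon_1+\delta_i}(-1)\cdots x_{\varepsilon_1+\delta_1}(-1)\vac\big)$ is the normally ordered product of the odd generators $v_j=\Phi(x_{\varepsilon_1+\delta_j})$; in particular $W_i$ is a genuine element of (the realization of) $V_{-2}(\g)$. The strategy is then: first transfer the nonvanishing of $W_i$ from $M_{(4n \vert 4n+16)}$ to $V_{-2}(\g)$, and second use semisimplicity to recognize the cyclic module $\mathcal W_i$ as an irreducible of the predicted highest weight.

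For the nonvanishing I would invoke Proposition \ref{vectors-wi}, which already shows $W_i\neq 0$ in $M_{(4n \vert 4n+16)}$. Since $V_{-2}(\g)$ is realized as a vertex subalgebra of $M_{(4n \vert 4n+16)}$ and $W_i$ lies in this subalgebra, it follows that $W_i\neq 0$ in $V_{-2}(\g)$. Here Lemma \ref{lema-tezina} enters as a consistency check: because $W_i$ is \emph{not} $\widehat{\g}$--singular, its nonvanishing is compatible with the simplicity of $V_{-2}(\g)$, whereas a nonzero $\widehat{\g}$--singular vector of nonzero weight could never survive in a simple affine vertex superalgebra.

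For the identification, recall from the construction (and the proof of Lemma \ref{lema-tezina}) that $W_i$ is $\widehat{\g}_{\bar 0}$--singular with $\g_{\bar 0}$--weight $\lambda_i=i\varepsilon_1+\delta_1+\cdots+\delta_i$. Under $\g_{\bar 0}=so(2n+8)\times sp(2n)=D_{n+4}\times C_n$ this reads $(i\omega_1,\omega_i)$, since $\omega_1=\varepsilon_1$ for $D_{n+4}$ and $\omega_i=\delta_1+\cdots+\delta_i$ for $C_n$. By Theorem \ref{simplicity-d} we have $\mathcal V_{-2}(\g_{\bar 0})=V_{-2}(D_{n+4})\otimes V_1(C_n)$, and $V_{-2}(\g)$ is completely reducible over it with irreducible constituents of the form $L_{-2}(r\omega_1)\otimes L_1(\omega_s)$. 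In such a semisimple module the cyclic submodule $\mathcal W_i=\mathcal V_{-2}(\g_{\bar 0})\cdot W_i$ generated by the single nonzero $\widehat{\g}_{\bar 0}$--singular vector $W_i$ is irreducible: projecting $W_i$ onto the isotypic components, each nonzero projection is again a singular vector of weight $(i\omega_1,\omega_i)$, hence a highest weight vector of a constituent of exactly that highest weight, and a single vector generates only a diagonal copy of the corresponding irreducible. This yields $\mathcal W_i\cong L_{-2}(i\omega_1)\otimes L_1(\omega_i)$.

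I expect the main obstacle to be the passage to the \emph{simple} quotient in the nonvanishing step: one must know that the Fock-space realization $\Phi$ embeds $V_{-2}(\g)$ faithfully into $M_{(4n \vert 4n+16)}$, rather than merely mapping the universal algebra $V^{-2}(\g)$ there, so that nonvanishing in $M_{(4n \vert 4n+16)}$ genuinely transfers to $V_{-2}(\g)$. Once this faithfulness is granted, the remainder of the argument is essentially formal, relying only on the complete reducibility of $V_{-2}(\g)$ as a $\mathcal V_{-2}(\g_{\bar 0})$--module (Theorem \ref{simplicity-d}), which itself rests on the rationality of $V_1(C_n)$ and the semisimplicity of $KL_{-2}$ for $V_{-2}(D_{n+4})$ from \cite{AKMPP-IMRN}.
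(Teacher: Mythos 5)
Your identification step (semisimplicity from Theorem \ref{simplicity-d} plus the fact that $W_i$ is a nonzero $\widehat{\g}_{\bar 0}$--singular vector of weight $(i\omega_1,\omega_i)$ forces $\mathcal W_i\cong L_{-2}(i\omega_1)\otimes L_1(\omega_i)$) is fine and matches the paper. The gap is exactly where you suspected it, and it cannot be waved away: the faithfulness you ``grant'' is false. The map $\Phi$ of Proposition \ref{homo-osp-2} is defined on the \emph{universal} algebra $V^{-2}(\g)$, and its image $\mathcal V_{-2}(\g)\subset M_{(4n\vert 4n+16)}$ is explicitly a \emph{non-simple} quotient of $V^{-2}(\g)$ (the paper says so when announcing the proof, and Proposition \ref{non-conf}(3) exhibits a nontrivial $\widehat{\g}_{\bar 0}$--singular vector in $\mathcal V_{-2}(\g)$ of weight $(0,\omega_2)$; since $L_{-2}(0)\otimes L_1(\omega_2)$ does not occur in the final decomposition of Theorem \ref{dosp}, that vector must lie in the kernel of $\mathcal V_{-2}(\g)\twoheadrightarrow V_{-2}(\g)$, so the kernel is nonzero). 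Consequently, $W_i\neq 0$ in $M_{(4n\vert 4n+16)}$ only tells you $W_i\neq 0$ in $\mathcal V_{-2}(\g)$; it does not rule out that $W_i$ lies in the maximal proper ideal and hence vanishes in the simple quotient $V_{-2}(\g)$. This is the entire content of the nonvanishing assertion, and your argument does not address it.

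The paper closes this gap by contradiction: take $j$ minimal with $W_j=0$ in $V_{-2}(\g)$; the fusion-rule constraints \eqref{fusion-d}--\eqref{fusion-c} together with the integrality conditions on the conformal weights $h[r,s]$ force such a $W_j$ to be a $\widehat{\g}$--singular vector (not merely $\widehat{\g}_{\bar 0}$--singular) in $\mathcal V_{-2}(\g)$, and Lemma \ref{lema-tezina} rules this out by comparing the Sugawara conformal weight $h_{\lambda_j}=j(j+2n+6)/4$ with the actual conformal weight $j$ of $W_j$ in the free-field realization. So Lemma \ref{lema-tezina} is not a ``consistency check'' as you describe it; it is the engine of the proof of nonvanishing. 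To repair your write-up you would need to replace the faithfulness claim by this (or an equivalent) argument showing $W_i$ is not in the maximal ideal of $\mathcal V_{-2}(\g)$.
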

\begin{proof}
 Assume that there is $j \in \{1, \dots, n\}$ so that $W_j = 0$ and that there are no non-trivial  singular vectors  in $\mathcal V_{-2} (\g)$ of weight $(i \omega_1, \omega_{i})$ for $i < j$.  By using fusion rules, we conclude that $W_j$ is singular in $\mathcal V_{-2} (\g)$. This  contradicts   Lemma \ref{lema-tezina}.  Using the formulas given  in Proposition  \ref{vectors-wi} we get (\ref{comp-simp}).
\end{proof}

\begin{theorem}\label{dosp} We have the following decomposition of $V_{-2} (\g)$:
$$ V_{-2}(\g) = \bigoplus_{i = 0} ^{n} L_{-2} (i\omega_1) \otimes L_1(\omega_i).$$
\end{theorem}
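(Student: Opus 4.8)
The plan is to show that $\mathcal U:=\bigoplus_{i=0}^{n}\mathcal W_i$, where $\mathcal W_i=\mathcal V_{-2}(\g_{\bar 0})\cdot W_i\cong L_{-2}(i\omega_1)\otimes L_1(\omega_i)$ comes from Proposition \ref{components-identification}, is a vertex subalgebra of $V_{-2}(\g)$. By Theorem \ref{simplicity-d} the $\mathcal W_i$ are pairwise non--isomorphic irreducible $\mathcal V_{-2}(\g_{\bar 0})$--modules (so the sum is direct) and $V_{-2}(\g)$ is completely reducible over $\mathcal V_{-2}(\g_{\bar 0})$, whence every dot product $\mathcal W_i\cdot\mathcal W_j$ is a direct sum of modules of the form $L_{-2}(k\omega_1)\otimes L_1(\omega_l)$. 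Since $\mathcal W_0=\mathcal V_{-2}(\g_{\bar 0})\supseteq\g_{\bar 0}$ and $\mathcal W_1$ is the $\widehat\g_{\bar 0}$--submodule generated by $\g_{\bar 1}$, we have $\g\subseteq\mathcal W_0\oplus\mathcal W_1\subseteq\mathcal U$; as $\g$ generates $V_{-2}(\g)$, once $\mathcal U$ is a subalgebra we conclude $V_{-2}(\g)=\langle\g\rangle\subseteq\mathcal U\subseteq V_{-2}(\g)$, which is the asserted decomposition.

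First I would record that each $\mathcal W_i$ is stable under the translation operator $T$ of $V_{-2}(\g)$. Writing $T=\omega_{sug}(0)=(\omega^1+\omega^2)(0)+\Omega(0)$ with $\Omega$ as in Proposition \ref{non-conf}, the summand $(\omega^1+\omega^2)(0)$ preserves $\mathcal W_i$ because it is a mode of $\mathcal V_{-2}(\g_{\bar 0})$, while a direct check gives $\Omega(0)x=0$ for every $x\in\g_{\bar 0}$; hence $[\Omega(0),x(m)]=(\Omega(0)x)(m)=0$, so $\Omega(0)$ is a $\mathcal V_{-2}(\g_{\bar 0})$--module endomorphism and preserves the isotypic components $\mathcal W_i$. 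Consequently $\mathcal U$ is $T$--stable and, by \cite{BK}, the dot product is commutative and associative on the $\mathcal W_i$. Therefore the subalgebra generated by $\mathcal W_0\oplus\mathcal W_1$ equals $\sum_{b\ge0}\mathcal W_1^{\cdot b}$, and it suffices to prove $\mathcal W_1\cdot\mathcal W_i\subseteq\mathcal U$ for $0\le i\le n$; an induction on $b$ then gives $\mathcal W_1^{\cdot b}\subseteq\mathcal U$, so $\mathcal U=\langle\mathcal W_0\oplus\mathcal W_1\rangle$ is a subalgebra.

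The key computation is the fusion step. Factoring $\mathcal V_{-2}(\g_{\bar 0})=V_{-2}(D_{n+4})\otimes V_1(C_n)$ and using the fusion rules \eqref{fusion-d} for $\mathcal R_{-2}(D_{n+4})$ and \eqref{fusion-c} for $V_1(C_n)$, the composition factors of $\mathcal W_1\cdot\mathcal W_i$ lie among $L_{-2}(k\omega_1)\otimes L_1(\omega_l)$ with $k\in\{i-1,i+1\}$ and $l\in\{i-1,i+1\}$ (only $l=n-1$ when $i=n$, since $\omega_{n+1}$ is not a fundamental weight of $C_n$). Because the generators $\mathcal W_0,\mathcal W_1$ have integral $\g_{\bar 0}$--Sugawara conformal weights $h[0,0]=0$ and $h[1,1]=1$, the intertwining selection rule forces every occurring factor to have $h[k,l]\in\Z$. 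The off--diagonal factors are then excluded by items (5),(6) in the proof of Theorem \ref{simplicity-d}: $h[i+1,i-1]=i+\tfrac{i+1}{n+2}\notin\Z$ and $h[i-1,i+1]=i-\tfrac{i+1}{n+2}\notin\Z$ for $0\le i\le n-1$, and likewise $h[n+1,n-1]=n+1-\tfrac1{n+2}\notin\Z$. Hence only the diagonal factors survive, giving $\mathcal W_1\cdot\mathcal W_i\subseteq\mathcal W_{i-1}\oplus\mathcal W_{i+1}$ for $1\le i\le n-1$, $\mathcal W_1\cdot\mathcal W_n\subseteq\mathcal W_{n-1}$, and $\mathcal W_1\cdot\mathcal W_0=\mathcal W_1$, all contained in $\mathcal U$.

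The main obstacle is making this fusion step fully rigorous: one must justify that $\mathcal W_1\cdot\mathcal W_i$ is genuinely bounded by the affine fusion rules (this is the ``fusion rules argument'', resting on the semisimplicity of $KL_{-2}$ from Theorem \ref{simplicity-d}) and note that the truncation at $\omega_n$ on the $C_n$ side is essential—it prevents the dangerous factor $L_{-2}((n+2)\omega_1)\otimes L_1(\omega_n)$, whose conformal weight $h[n+2,n]=n+2$ is an integer and would \emph{not} be removed by the weight argument, from ever being produced. Once the closure $\mathcal W_1\cdot\mathcal W_i\subseteq\mathcal U$ is in hand, the remaining assembly is routine.
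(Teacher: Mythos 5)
Your overall strategy coincides with the paper's: bound the constituents of $\mathcal W_1\cdot\mathcal W_i$ by the fusion rules \eqref{fusion-d}, \eqref{fusion-c}, discard the off--diagonal types because $h[i+1,i-1]$ and $h[i-1,i+1]$ are not integers, and then close up under the dot product using that $\g\subset \mathcal W_0\oplus\mathcal W_1$ generates $V_{-2}(\g)$. Your remark that the truncation of the $C_n$ fusion ring at $\omega_n$ is what blocks the integral-weight type $L_{-2}((n+2)\omega_1)\otimes L_1(\omega_n)$ is exactly the point the paper exploits.

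There is, however, a genuine gap at the step $\mathcal W_1\cdot\mathcal W_i\subseteq\mathcal W_{i-1}\oplus\mathcal W_{i+1}$. The fusion-rules-plus-conformal-weight argument controls only the \emph{isomorphism types} of the simple constituents of $\mathcal W_1\cdot\mathcal W_i$: they are copies of $L_{-2}((i\pm1)\omega_1)\otimes L_1(\omega_{i\pm 1})$, each occurring at most once. It does not show that these copies coincide with the particular submodules $\mathcal W_{i\pm1}=\mathcal V_{-2}(\g_{\bar 0})\cdot W_{i\pm1}$ sitting inside $V_{-2}(\g)$. If some isotype $L_{-2}(j\omega_1)\otimes L_1(\omega_j)$ occurred in $V_{-2}(\g)$ with multiplicity $m_j\ge 2$, the constituent of $\mathcal W_1\cdot\mathcal W_i$ of that type could be a copy other than $\mathcal W_j$, the containment in $\mathcal U=\bigoplus_i\mathcal W_i$ would fail, and your induction on $b$ would not close. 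Ruling this out is precisely the content of the last paragraph of the paper's proof: assuming $m_j\ge 2$ for a minimal $j\ge 2$, strong generation and associativity of the dot product force a second singular vector $v_{j,j}$, linearly independent from $W_j$, to lie in $\mathcal W_1\cdot\mathcal W_{j-1}$, which would give the component $V_{D_{n+4}}(j\omega_1)\otimes V_{C_n}(\omega_j)$ multiplicity greater than one in the tensor product underlying \eqref{decomp-d}--\eqref{fusion-c}, a contradiction. (You can repair the $(i+1)$-direction directly by observing that $W_{i+1}=v_{i+1}(-1)W_i\in\mathcal W_1\cdot\mathcal W_i$, so the unique $(i+1,i+1)$-constituent must be $\mathcal W_{i+1}$; but the $(i-1)$-direction still needs the multiplicity-one statement.) You flag "justifying the fusion bound" as the main obstacle, but the genuinely missing piece is this multiplicity argument; once it is supplied, your closure argument and the paper's proof become essentially the same.
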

\begin{proof}
By \cite{AKMPP-IMRN}, the  irreducible $V_{-2} (so(2(n+4)))$--modules in $KL_{-2}$ are
$$  L_{-2} (i\omega_1), \ i= 1, \dots, n. $$
Therefore the irreducible $\mathcal V_{-2} (\g_{\bar 0}) $--modules  which can appear in the decomposition of  $V_{-2}(\g)$  have the form
$$ V_{i \omega_1, \omega_j}:= L_{-2} (i\omega_1) \otimes L_1 (\omega_j),  \ i, j= 1, \dots, n. $$
Note also that $\mathcal W_j = V_{j \omega_1, \omega_j}$, and that  all components in the decomposition of $V_{-2}(\g)$ appear in the fusion products 
  $$\underbrace{  \mathcal W_1 \cdots \mathcal W_1}_{k  \ \mbox{times}}, $$
  where $k$ is a positive integer.
Using fusion rules we get that
$$ \mathcal W_1 \cdot  V_{i \omega_1, \omega_i} \subset V_{(i+1) \omega_1, \omega_{i+1}}  + V_{(i-1) \omega_1, \omega_{i-1}} + V_{(i+1) \omega_1, \omega_{i-1}}  + V_{(i-1) \omega_1, \omega_{i+1}}.$$
But since $h[i+1,i-1], h[i-1,i+1] \notin {\Z}$,    the components $V_{(i+1) \omega_1, \omega_{i-1}}$, $V_{(i-1) \omega_1, \omega_{i+1}}$ can not appear. Therefore we can not get components  $V_{i \omega_1, \omega_j}$ for $i \ne j$. This implies that 
$$ V_{-2}(\g) = \bigoplus_{i = 0} ^{n}  m_i L_{-2} (i\omega_1) \otimes L_1(\omega_i) $$
for certain multiplicities $m_i \in {\Z}_{\ge 0}$.
 From Proposition \ref{components-identification} we have that   $V_{-2}(\g) $ contains $W_i$, which is a  $\widehat {\g}_{\bar 0}$--singular vector of $\g_{\bar 0}$--weight $(i \omega_1, \omega_{i})$ for $1 \le i \le n$.  So, all $m_i$ are greater or equal than $1$. Clearly $m_1=1$.
Assume now that there is $j \in {\Z}_{\ge 2}$ such that $m_{j } \ge 2$ and $m_i = 1$ for $i <  j$.  Using  the fact that $V_{-2} (\g)$ is strongly generated by $\g$, we conclude that a singular vector $v_{j,j}$  of $\g_{\bar 0}$--weight $(j \omega_1, \omega_{j})$ must appear in the fusion product
$$\underbrace{  \mathcal W_1 \cdots \mathcal W_1}_{ j  \ \mbox{times}}.$$
Using the associativity of the fusion product,  we  get that 
$$ v_{j,j} \in  \mathcal W_1 \cdot  \mathcal W_{j-1}. $$
 But if $v_{j,j}$ and $W_j$ are linearly independent,  we conclude that  the component $V_{D_{n+4}}  (j  \omega_1)  \otimes V_{C_n} (\omega_{j})$ in the tensor product 
 $$ (V_{D_{n+4}}  (( j-1)  \omega_1)  \otimes V_{C_n} (\omega_{ j -1}) )  \otimes  (V_{D_{n+4}}  (\omega_1)  \otimes V_{C_n} (\omega_{1}) )  $$ 
 has multiplicity  strictly  greater than $1$. This contradicts  the fusion/tensor product decomposition rules (\ref{decomp-d})--(\ref{fusion-c}).
 So $m_j = 1$ for $j=0, \dots, n$. The claim  follows.
 \end{proof}

The case $n=1$ is slightly different. We present  a direct  proof.

\begin{theorem}\label{ddosp}
	In the case $n=1$, $V_{-2}(\g) $  is  a simple--current extension of $\mathcal V_{-2} ( \g_{\bar 0})$.
\end{theorem}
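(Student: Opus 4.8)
The plan is to realize $V_{-2}(\g)$, with $\g=osp(10|2)$, as a $\ZZ_2$ simple current extension of its even subalgebra $\mathcal V_{-2}(\g_{\bar 0})\cong V_{-2}(so(10))\otimes V_1(sp(2))$. The starting point is Theorem \ref{simplicity-d}: part (1) identifies $\mathcal V_{-2}(\g_{\bar 0})$ as this simple tensor product, and part (2) guarantees that $V_{-2}(\g)$ is completely reducible as a $\mathcal V_{-2}(\g_{\bar 0})$--module. As a $\g_{\bar 0}$--module the odd part is $\g_{\bar 1}=V_{so(10)}(\omega_1)\otimes V_{sp(2)}(\omega_1)$, which is irreducible; hence the $\mathcal V_{-2}(\g_{\bar 0})$--submodule of $V_{-2}(\g)$ it generates is $M:=L_{-2}(\omega_1)\otimes L_1(\omega_1)$ (this is the module $\mathcal W_1$ of Proposition \ref{components-identification}).

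The heart of the argument, and the genuinely $n=1$ feature, is that $L_{-2}(\omega_1)$ is a simple current for $V_{-2}(so(10))$. Indeed, by the classification of irreducible objects of $KL_{-2}$ recalled in the proof of Theorem \ref{dosp}, for $n=1$ the only irreducible $V_{-2}(so(10))$--modules in $KL_{-2}$ are $L_{-2}(0)$ and $L_{-2}(\omega_1)$; in particular $L_{-2}(2\omega_1)\notin KL_{-2}$. Feeding this into the fusion rule \eqref{fusion-d}, namely $L_{-2}(\omega_1)\times L_{-2}(\omega_1)\subset L_{-2}(2\omega_1)+L_{-2}(0)$, the forbidden channel must drop out, so $L_{-2}(\omega_1)\times L_{-2}(\omega_1)=V_{-2}(so(10))$. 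On the symplectic factor $sp(2)\cong sl(2)$ the rank--level rule \eqref{fusion-c} gives $L_1(\omega_1)\times L_1(\omega_1)=V_1(sp(2))$. Multiplying the two, $M\times M=\mathcal V_{-2}(\g_{\bar 0})$, so $M$ is self--dual and invertible, i.e.\ a $\ZZ_2$ simple current.

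I would then close the argument by the dot--product (fusion rules) technique used throughout the paper. The identity $M\times M=\mathcal V_{-2}(\g_{\bar 0})$ forces every irreducible constituent of the dot product $M\cdot M$ to be isomorphic to $\mathcal V_{-2}(\g_{\bar 0})$; and $M\cdot M$ is nonzero since already $\g_{\bar 1}\cdot\g_{\bar 1}$ recovers $[\g_{\bar 1},\g_{\bar 1}]=\g_{\bar 0}$. Hence $M\cdot M=\mathcal V_{-2}(\g_{\bar 0})$, so $\mathcal U:=\mathcal V_{-2}(\g_{\bar 0})\oplus M$ is closed under the dot product and is a vertex subalgebra of $V_{-2}(\g)$. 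As $\mathcal U$ contains $\g_{\bar 0}\oplus\g_{\bar 1}=\g$, which strongly generates $V_{-2}(\g)$, we conclude $\mathcal U=V_{-2}(\g)$. The sum is direct by the parity grading, and a brief weight count confirms both multiplicities are one: the weight--$0$ space is $\CC\vac$ and the weight--$1$ space is exactly $\g$, whose odd part $\g_{\bar 1}$ is the irreducible top of $M$. Thus $V_{-2}(\g)=\mathcal V_{-2}(\g_{\bar 0})\oplus M$ is an extension by a single simple current, as claimed.

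The main obstacle is precisely pinning down the self--fusion of the vector module $L_{-2}(\omega_1)$, and this is exactly where $n=1$ is special: for $n\ge 2$ the module $L_{-2}(2\omega_1)$ does belong to $KL_{-2}$ and survives in the fusion product, so $L_{-2}(\omega_1)$ fails to be invertible and, in accordance with Theorem \ref{dosp}, the extension is only a finite, non--simple--current one. The whole proof therefore rests on the smallness of $KL_{-2}$ for $D_5$ at level $-2$, which is what makes the vector module a simple current.
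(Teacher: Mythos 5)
Your proposal is correct and follows essentially the same route as the paper: the key point in both is that the classification of irreducible $V_{-2}(so(10))$--modules in $KL_{-2}$ kills the $2\omega_1$ and $\omega_2$ channels in the self--fusion of $L_{-2}(\omega_1)$, making it (and hence $M=L_{-2}(\omega_1)\otimes L_1(\omega_1)$) a simple current. You merely spell out more explicitly the dot--product closing step that the paper leaves implicit.
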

 \begin{proof}
 	By using classification of irreducible $V_{-2} (so(10))$--modules from \cite{AKMPP-IMRN}  and tensor product decomposition
 	$$V_{D_5}(\omega_1) \otimes V_{D_5}(\omega_1) = V_{D_5}(2 \omega_1) \oplus V_{D_5}(\omega_2)  \oplus V_{D_5}(0),$$ 
 	we get that $V_{-2} (\omega_1)$ is a simple--current $V_{-2} (so(10))$--module. Since $L_1 (\omega_1)$ is also a simple--current $V_1(sl(2))$--module, we get that $V_{-2} (osp(10, 2))  $ is a simple--current extension of $V_{-2} (so(10)) \otimes V_1 (sl(2))$ and that
 	$$ V_{-2}(\g) =   V_{-2} (so(10)) \otimes V_1 (sl(2))  \bigoplus L_{-2} (\omega_1) \otimes  L_1 (\omega_1),$$
hence the claim holds.
 \end{proof}

For $n \ge 2$,  $V_{-2}(\g) $  is not a simple--current extension of $\mathcal V_{-2} ( \g_{\bar 0})$. This follows from the following fusion rules:
\begin{cor}  We have the following fusion product inside of $V_{-2} (\g)$:
\bea \mathcal W_1 \cdot \mathcal W_{i} & =&   \mathcal W_{i-1}  \oplus   \mathcal W_{i+1}  \quad (1 \le i \le n-1) \nonumber \\
\mathcal W_1 \cdot \mathcal W_{n} & =&   \mathcal W_{n-1}.  \nonumber \eea
\end{cor}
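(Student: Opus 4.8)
The plan is to upgrade the inclusions already obtained in the proof of Theorem~\ref{dosp} to equalities, using that the dot product on the simple vertex superalgebra $V_{-2}(\g)$ is commutative, associative and has no zero divisors. Throughout I write $\mathcal W_0=\mathcal V_{-2}(\g_{\bar 0})$, so that Theorem~\ref{dosp} reads $V_{-2}(\g)=\bigoplus_{i=0}^n\mathcal W_i$ with each $\mathcal W_i\cong L_{-2}(i\omega_1)\otimes L_1(\omega_i)$ irreducible and nonzero (Proposition~\ref{components-identification}), and with $\mathcal W_0\cdot\mathcal W_i=\mathcal W_i$, i.e. $\mathcal W_0$ is the identity for the dot product.

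First I would record the containment exactly as in the proof of Theorem~\ref{dosp}. The $C_n$--factor of \eqref{fusion-c} gives $L_1(\omega_1)\times L_1(\omega_i)=L_1(\omega_{i-1})\oplus L_1(\omega_{i+1})$ for $1\le i\le n-1$ and $L_1(\omega_1)\times L_1(\omega_n)=L_1(\omega_{n-1})$, so any diagonal summand $\mathcal W_l=L_{-2}(l\omega_1)\otimes L_1(\omega_l)$ of $V_{-2}(\g)$ occurring in $\mathcal W_1\cdot\mathcal W_i$ must have $l\in\{i-1,i+1\}$ (resp. $l=n-1$); together with the $D_\ell$--fusion \eqref{fusion-d} this yields $\mathcal W_1\cdot\mathcal W_i\subseteq\mathcal W_{i-1}\oplus\mathcal W_{i+1}$ for $1\le i\le n-1$ and $\mathcal W_1\cdot\mathcal W_n\subseteq\mathcal W_{n-1}$.

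For the reverse inclusions I would proceed in two steps. The upper neighbour is explicit: by Proposition~\ref{vectors-wi} one has $W_{i+1}=v_{i+1}(-1)W_i$ with $v_{i+1}\in\mathcal W_1$ (it lies in the top component $\g_{\bar 1}$) and $W_i\in\mathcal W_i$, hence $W_{i+1}\in\mathcal W_1\cdot\mathcal W_i$, and applying $\mathcal W_0$ gives $\mathcal W_{i+1}=\mathcal W_0\cdot W_{i+1}\subseteq\mathcal W_1\cdot\mathcal W_i$ for $0\le i\le n-1$. For the lower neighbour I would exploit that $\mathcal W_n$ is a simple current: since $L_1(\omega_n)$ is a simple current of $V_1(C_n)$ (recorded before \eqref{fusion-c}), its $C_n$--factor forces $\mathcal W_n\cdot\mathcal W_j\subseteq\mathcal W_{n-j}$, which is nonzero by the no--zero--divisor property and lies in the irreducible $\mathcal W_{n-j}$, whence $\mathcal W_n\cdot\mathcal W_j=\mathcal W_{n-j}$ for all $j$. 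In particular $\mathcal W_n\cdot\mathcal W_n=\mathcal W_0$, so $M\mapsto\mathcal W_n\cdot M$ is an inclusion--preserving involution on the $\mathcal W_0$--submodules of $V_{-2}(\g)$. Dotting the desired inclusion $\mathcal W_{i-1}\subseteq\mathcal W_1\cdot\mathcal W_i$ with $\mathcal W_n$ and using commutativity and associativity turns it into $\mathcal W_{n-i+1}\subseteq\mathcal W_1\cdot\mathcal W_{n-i}$, which is precisely the upper--neighbour inclusion for the index $n-i$ already proved. This gives $\mathcal W_{i-1}\subseteq\mathcal W_1\cdot\mathcal W_i$ for $1\le i\le n$, and combining with the containment yields both stated formulas.

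The main obstacle is the lower neighbour $\mathcal W_{i-1}$: both the fusion/decomposition bound and the explicit vectors $W_{i+1}$ produce only the upper neighbour, and a direct construction of elements of $\mathcal W_{i-1}$ inside $\mathcal W_1\cdot\mathcal W_i$ (for instance by computing $u(1)w$ with $u\in\g_{\bar 1}$ and $w$ in the top of $\mathcal W_i$) would require verifying non--vanishing on a specific isotypic component, which is delicate. The simple--current symmetry $i\mapsto n-i$ is the device that avoids this, reducing every lower--neighbour inclusion to an already--established upper--neighbour one; the only extra input it uses beyond Theorem~\ref{dosp} is the simple--current property of $L_1(\omega_n)$.
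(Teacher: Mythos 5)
Your argument is correct. Note that the paper states this corollary without any proof at all, treating it as an immediate consequence of Theorem \ref{dosp}; what Theorem \ref{dosp} and the fusion rules \eqref{fusion-d}, \eqref{fusion-c} actually give for free is only the containment $\mathcal W_1\cdot\mathcal W_i\subseteq\mathcal W_{i-1}\oplus\mathcal W_{i+1}$ (resp.\ $\subseteq\mathcal W_{n-1}$) together with nonvanishing of the product, which settles the case $i=n$ but not the occurrence of both summands for $1\le i\le n-1$. You supply the missing halves cleanly and entirely within the paper's own toolkit: the upper neighbour comes from the explicit relation $W_{i+1}=v_{i+1}(-1)W_i$ with $v_{i+1}\in\g_{\bar 1}(-1)\vac\subseteq\mathcal W_1$ (Proposition \ref{vectors-wi} plus the nonvanishing in Proposition \ref{components-identification}), and the lower neighbour is obtained by showing $\mathcal W_n\cdot\mathcal W_j=\mathcal W_{n-j}$ — which follows from the simple-current property of $L_1(\omega_n)$ recorded before \eqref{fusion-c}, the absence of zero divisors, and the irreducibility of the $\mathcal W_l$ — and then using the resulting involution $M\mapsto\mathcal W_n\cdot M$ to convert each lower-neighbour inclusion into an already-proved upper-neighbour one at index $n-i$. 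The only input not literally displayed in the paper is the general fusion rule $L_1(\omega_n)\times L_1(\omega_j)=L_1(\omega_{n-j})$, but this is exactly what the rank--level duality with $V_n(sl(2))$ invoked in the proof of Theorem \ref{simplicity-d} yields, so it is fully covered. In short: the proposal is a complete and valid proof of a statement the paper leaves to the reader, and the simple-current involution is a nice device that avoids any direct computation on the lower isotypic component.
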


Finally, our result implies the following coset realization of $V_{-2} (so(2(n+4)))$:

\begin{cor} We have
$$ V_{-2}(so(2(n+4))) \cong  \frac{osp(2n+8 \vert 2n)_{-2}}{sp(2n)_{1}} := \mbox{Com}_{V_{-2}(osp(2n+8 \vert 2n)) } (V_1(sp(2n))). $$
\end{cor}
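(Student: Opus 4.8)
The plan is to read the commutant straight off the decomposition of $V_{-2}(\g)$ established in Theorem \ref{dosp}. Recall from Theorem \ref{simplicity-d}(1) that $\mathcal V_{-2}(\g_{\bar 0})\cong V_{-2}(so(2n+8))\otimes V_1(sp(2n))$ and that
$$V_{-2}(\g)=\bigoplus_{i=0}^{n}L_{-2}(i\omega_1)\otimes L_1(\omega_i),$$
where the distinguished copy of $V_1(sp(2n))$ sits as $\mathbf 1\otimes V_1(sp(2n))$ in the $i=0$ summand and its fields act only on the second tensor factor of each summand. Because $so(2n+8)$ and $sp(2n)$ commute inside $\g_{\bar 0}$, the inclusion $V_{-2}(so(2n+8))=L_{-2}(0\cdot\omega_1)\otimes\mathbf 1\subseteq \mbox{Com}_{V_{-2}(\g)}(V_1(sp(2n)))$ is immediate; the whole content of the statement is the reverse inclusion.

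First I would invoke the standard description of the commutant: since $V_1(sp(2n))$ is strongly generated by the vectors $x(-1)\mathbf 1$ with $x\in sp(2n)$, a vector $v\in V_{-2}(\g)$ lies in the commutant if and only if $x(p)v=0$ for all $x\in sp(2n)$ and all $p\ge 0$ (the cases $p>0$ and $p=0$ encoding annihilation by positive modes and $sp(2n)$--invariance, respectively). Since these modes act on the second factor and preserve each summand $L_{-2}(i\omega_1)\otimes L_1(\omega_i)$, the commutant decomposes as $\bigoplus_{i=0}^{n}L_{-2}(i\omega_1)\otimes\big(L_1(\omega_i)\big)^{\widehat{sp(2n)}_{\ge 0}}$, where the exponent denotes the space of vectors killed by all $x(p)$, $p\ge 0$. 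Everything thus reduces to computing these invariant spaces inside $L_1(\omega_i)$.

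The key step is to show that this invariant space equals $\C\mathbf 1$ for $i=0$ and $\{0\}$ for $1\le i\le n$. For this I would use the Sugawara operator $\omega^2_1=L_0^{sp}$ of $V_1(sp(2n))$: on a vector $w$ with $x(p)w=0$ for all $p\ge 0$, every positive--mode term of $L_0^{sp}$ annihilates $w$ and the surviving zero--mode term is the horizontal Casimir, which acts by $0$ because $w$ is $sp(2n)$--invariant; hence $L_0^{sp}w=0$. On the other hand $L_0^{sp}$ acts on $L_1(\omega_i)$ with minimal eigenvalue equal to the conformal weight $\Delta_{\omega_i}=\tfrac{(\omega_i,\omega_i+2\rho)}{2(n+2)}$ of its top, which is strictly positive for $i\ge 1$ and zero only for $i=0$. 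Therefore $L_0^{sp}w=0$ forces $w=0$ when $i\ge 1$ and $w\in\C\mathbf 1$ when $i=0$. Combining, the commutant equals $L_{-2}(0\cdot\omega_1)\otimes\C\mathbf 1\cong V_{-2}(so(2n+8))$, and since this coincides with the image of the canonical embedding it is an isomorphism of vertex algebras. The main obstacle is exactly this positivity argument: one must rule out vacuum--like vectors of strictly positive conformal weight hiding in $L_1(\omega_i)$, and the Sugawara computation is what disposes of them cleanly (for $n=1$ this is consistent with the direct analysis underlying Theorem \ref{ddosp}).
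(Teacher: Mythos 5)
Your proposal is correct and follows exactly the route the paper intends: the corollary is stated as an immediate consequence of the decomposition in Theorem \ref{dosp}, and your argument (commutant computed summand by summand, with the Sugawara positivity argument killing the vacuum-like vectors in $L_1(\omega_i)$ for $i\ge 1$) is the standard way of making that deduction explicit. Nothing is missing.
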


  \vskip10pt
  \footnotesize{
  \noindent{\bf D.A.}:  Department of Mathematics, Faculty of Science,  University of Zagreb, Bijeni\v{c}ka 30, 10 000 Zagreb, Croatia;
{\tt adamovic@math.hr}
  

\noindent{\bf P.MF.}: Politecnico di Milano, Polo regionale di Como,
Via Valleggio 11, 22100 Como,
Italy; {\tt pierluigi.moseneder@polimi.it}

\noindent{\bf P.P.}: Dipartimento di Matematica, Sapienza Universit\`a di Roma, P.le A. Moro 2,
00185, Roma, Italy; {\tt papi@mat.uniroma1.it}

\noindent{\bf O.P.}:  Department of Mathematics, Faculty of Science, University of Zagreb, Bijeni\v{c}ka 30, 10 000 Zagreb, Croatia;
{\tt perse@math.hr}

}

\begin{thebibliography}{100}
 \bibitem{A-1994} D. Adamovi\' c, \emph{ Some rational vertex algebras}. Glasnik Matematicki \textbf{29} (49) (1994), 25--40, arXiv:q-alg/9502015v2

\bibitem{AdM} D. Adamovi\'c, A. Milas , \emph{Vertex operator algebras associated to modular invariant representations for $A_1 ^ {(1)}$},  Mathematical Research Letters \textbf{2} (1995) , 563--575


\bibitem{AdM-2018} D. Adamovi\' c, A. Milas,  On some vertex algebras related to  $V_{-1}(\mathfrak{sl} (n) )$ and their characters,  arXiv:1805.09771 [math.RT].
\bibitem{A} D.~Adamovi\'c, O.~Per\v{s}e, \emph{Some General Results on Conformal Embeddings of Affine Vertex Operator Algebras}, Algebr. Represent. Theory \textbf{16} (2013), no. 1, 51--64.
\bibitem{AP2} D.~Adamovi\'c, O.~Per\v{s}e, \emph{Fusion Rules and Complete Reducibility of
Certain Modules for Affine Lie Algebras},  Journal of algebra and its applications \textbf{13}, 1350062 (2014) (18 pages)
\bibitem{AKMPP} D.~Adamovi\'c, V.~G. Kac, P.~M\"oseneder Frajria, P.~Papi, O.~Per\v{s}e, Finite vs infinite decompositions in conformal embeddings, {\it Comm. Math. Phys.}
{\bf 348} (2016), 445--473.
\bibitem{AKMPP-JA} D.~Adamovi\'c, V.~G. Kac, P.~M\"oseneder Frajria, P.~Papi, O.~Per\v{s}e, Conformal embeddings of affine vertex algebras in minimal
$W$-algebras I: Structural results,  {\it J. Algebra}, \textbf{500}, (2018), 117-152.
\bibitem{AKMPP-JJM} D.~Adamovi\'c, V.~G. Kac, P.~M\"oseneder Frajria, P.~Papi, O.~Per\v{s}e, Conformal embeddings of affine vertex algebras in minimal
$W$-algebras II: decompositions, {\it Japanese Journal of Mathematics}, September 2017, Volume 12, Issue 2, pp 261--315
\bibitem{AKMPP-Selecta} D.~Adamovi\'c, V.~G. Kac, P.~M\"oseneder Frajria, P.~Papi, O.~Per\v{s}e, On classification of non-equal rank affine conformal embeddings and applications,  {\it Selecta Mathematica}, (2018) \textbf{24}, 2455--2498,
\bibitem{AKMPP-IMRN} D.~Adamovi\'c, V.~G. Kac, P.~M\"oseneder Frajria, P.~Papi, O.~Per\v{s}e,  An application of collapsing levels to the representation theory of affine vertex algebras, {\it International Mathematics Research Notices}, https://doi.org/10.1093/imrn/rny237, arXiv:1801.09880 [math.RT].
\bibitem{AKMPP-Indam} D.~Adamovi\'c, V.~G. Kac, P.~M\"oseneder Frajria, P.~Papi, O.~Per\v{s}e,  Kostant's pairs of Lie type and conformal embeddings, to appear in {\it Affine, Vertex and W-algebras}, Springer Indam Series n. 37, ISBN 978-3-030-32905-1

\bibitem{BK} B-~Bakalov, V.~G.  Kac,\emph{
Field algebras.}
Int. Math. Res. Not. 2003, \textbf{3}, 123Ð159. 
\bibitem{C-RIMS} T. Creutzig, Lectures on vertex operator superalgebra extensions, available at  https://sites.google.com/a/lab.twcu.ac.jp/voa2018/seminar-mini-course/mini-course-by-thomas-creutzig
\bibitem{CG} T. Creutzig, D. Gaiotto, Vertex Algebras for S-duality, arXiv:1708.00875
  \bibitem{DeW} B. S. DeWitt, P. van Nieuwenhuizen, \emph{Explicit construction of the exceptional superalgebras $F(4)$ and $G(3)$}, J. Math. Phys. \textbf{23}, No. 10, (1982), 1953--1963.
\bibitem{DL} C. Dong, J. Lepowsky, Generalized vertex algebras and relative vertex operators, Progress in Mathematics, Birkh\" auser, Boston (1993).
\bibitem{FF}  A.J.~Feingold,  I.B.~Frenkel \emph{Classical affine algebras}, Adv. Math. \textbf{56} (1985), 117--172.
   \bibitem{dictionary} L.~Frappat, A.~Sciarrino,  P.~Sorba, 
\emph{Dictionary on Lie algebras and superalgebras},  Academic Press, 2000.
\bibitem{Hum} J.E.~Humphreys, \emph{Introduction to Lie Algebras and Representation Theory}, Springer GTM 9
  \bibitem{KacLie}
V.~G. Kac, \emph{Lie Superalgebras}, Adv. in Math., \textbf{26}, (1977), 8--96.
\bibitem{KacReps}
V.~G. Kac, \emph{Representations of classical Lie superalgebras},
in Differential geometrical methods in mathematical physics, II, Lecture Notes in Math. 676, Springer, Berlin,
(1978), 597--626.
 \bibitem{KS} V.~G. Kac, M.~Sanielevici, \emph{Decomposition of representations of exceptional affine algebras with respect to conformal subalgebras}, Physical Review D, \textbf{37}, No. 8 (1988), 2231--2237.
 \bibitem{KWW}V.~G. Kac and M.~Wakimoto, \emph{Modular and conformal invariance constraints
  in representation theory of affine algebras}, Adv. in Math. \textbf{70}
  (1988), 156--236.
 \bibitem{KacV}
V.~G. Kac, \emph{Vertex algebras for beginners}, second ed., University Lecture
  Series, vol.~10, American Mathematical Society, Providence, RI, 1998.

 \bibitem{Kw}V.~G. Kac and M.~Wakimoto,\emph{
Integrable highest weight modules over affine superalgebras and Appell's function.}
Comm. Math. Phys. \textbf{215} (2001), no. 3, 631--682.
 \bibitem{KW} V.~G. Kac, M.~Wakimoto \emph{
Quantum reduction and representation theory of superconformal algebras}, Adv. Math., \textbf{185} (2004), pp. 400--458
  \bibitem{KMPX}V.~G. Kac, P.~M\"oseneder Frajria, P.~Papi, F.~Xu, \emph{
 Conformal embeddings and simple current extensions},{\it International Mathematics Research Notices}, (2015), no. 14, 5229--5288
 

\bibitem{P-2007}   O.~Per\v{s}e, \emph{ Vertex operator algebras associated to type B affine Lie algebras on admissible half-integer levels}  J. Algebra 307 (2007), 215--248




\bibitem{We} Michael D. Weiner, 
Bosonic construction of vertex operator para-algebras from symplectic affine Kac-Moody algebras, Mem. Amer. Math. Soc. 135 (1998), no. 644, viii+106 pp. 
  \end{thebibliography}
\end{document}